\newcommand{\er}{\mathbb R}
\newcommand{\nat}{\mathbb N}
\newcommand{\hyp}{({\it H})}
\newcommand{\hyplang}{({\it H$_{\it{Langevin}}$})}
\newcommand{\hyplangi}{({\it H$_{\it{Langevin}}$})-($i$)}
\newcommand{\hyplangii}{({\it H$_{\it{Langevin}}$})-($ii$)}
\newcommand{\hypedp}{({\it H$_{\rm{MVFP}}$})}
\newcommand{\hypedpi}{({\it H$_{\it{MVFP}}$})-($i$)}
\newcommand{\hypedpii}{({\it H$_{\it{MVFP}}$})-($ii$)}
\newcommand{\hypedpiii}{({\it H$_{\it{MVFP}}$})-($iii$)}
\newcommand{\hypbarriere}{({\it H$_{f_0,q}$})}
\newtheorem{theorem}{Theorem}[section]
\newtheorem{corollary}[theorem]{Corollary}
\newtheorem{definition}[theorem]{Definition}
\newtheorem{lemma}[theorem]{Lemma}
\newtheorem{proposition}[theorem]{Proposition}
\newtheorem{remark}[theorem]{Remark}
\newcommand{\Cc}{\mathcal C}
\newcommand{\Dd}{\mathcal D}
\newcommand{\Ff}{\mathcal F}
\newcommand{\Hh}{\mathcal H}
\newcommand{\Ll}{\mathcal L}
\newcommand{\Oo}{{\mathcal O}}
\newcommand{\Tt}{\mathcal T}
\newcommand{\Uu}{\mathcal U}
\newcommand{\Yy}{{\mathcal Y}}
\newcommand{\Vv}{\mathcal V}
\newcommand{\DD}{\mathbb D}
\newcommand{\EE}{\mathbb E}
\newcommand{\QQ}{\mathbb Q}
\newcommand{\NN}{\mathbb N}
\newcommand{\PP}{\mathbb P}
\def\ind{{\mathbbm{1}}}
\newcommand{\trans}{\mathcal{T}}
\newcommand{\rhofp}{\rho^{\text{FP}}}
\newcommand{\okphasespace}{(\mathcal{D}\times\er^d)\cup\Sigma^{-}}
\newcommand{\okphasespacebis}{(\mathcal{D}\times\er^d){\cup{(\Sigma \setminus\Sigma^0)}}}
\newcommand{\nd}{n_{\mathcal{D}}}
\newcommand{\VVV}{{\vartheta}}
\newcommand{\SSS}{{\mathscr{S}}}
\newcommand{\TTT}{{\mathscr{T}}}
\newcommand{\VoneT}{{V_1(\omega,Q_T)}}
\numberwithin{equation}{section}
\title{Lagrangian stochastic models with specular boundary condition}
\author{Mireille Bossy\thanks{INRIA, France;
mireille.bossy@inria.fr} \and Jean-Fran\c{c}ois Jabir\thanks{CIMFAV, Universidad de Valparaíso, Chile; jean-francois.jabir@uv.cl. The second author acknowledges the CONICYT Program PAI/ACADEMIA No. 79112030, the FONDECYT Iniciaci\'on en Investigaci\'on No. 11130705 and the Programa Iniciativa Cientifica Milenio No. 130062 through the Nucleus Millennium Stochastic Models of Complex and Disordered Systems for their support.}}
\date{\today}
\begin{document}
\maketitle
\begin{abstract}
In this paper, we prove the well-posedness of a conditional McKean Lagrangian stochastic model, endowed with the specular boundary condition, and further the mean no-permeability condition, in a smooth bounded confinement domain $\Dd$. This result extends our previous work \cite{jabir-11a}, where  the confinement domain was the upper-half plane and where the specular boundary condition  has been constructed  owing to some well known results on the law of the passage times at zero of the  Brownian primitive. The extension of the construction to more general confinement domain exhibits  difficulties that we handle by combining stochastic calculus and the analysis of kinetic equations. As a prerequisite for the study of the  nonlinear case, we construct a Langevin process confined in $\overline{\Dd}$ and satisfying the specular boundary condition. We then use PDE techniques to construct the time-marginal densities of the nonlinear process from which we are able to exhibit the conditional McKean Lagrangian stochastic model.
\end{abstract}

\noindent
\textbf{Key words:} Lagrangian stochastic model; mean no-permeability; trace problem; McKean-Vlasov-Fokker-Planck equation. \\\smallskip
\textbf{AMS 2010 Subject classification:} 58J65, 34B15, 35Q83, 35Q84.

\section{Introduction}\label{sec:Intro}
We are interested in the well-posedness of the stochastic process $((X_{t},U_{t});\,0\leq t \leq T)$, for any arbitrary finite time $T>0$, whose time-evolution is given by
\begin{equation}\label{eq:NonlinearConfinedLangevin}
\left\{
\begin{aligned}
&X_{t} =X_{0} +\int_{0}^{t} U_{s}\,ds,\\
&U_{t} = U_{0}+\int_{0}^{t}B[X_s; \rho(s)] ds + \sigma W_{t}+K_{t},\\
&K_{t}=- \sum_{0<s\leq t}
2\left(U_{s^{-}}\cdot n_{\Dd}(X_{s})\right)\nd(X_{s})
\ind_{\left\{X_{s}\in\partial\Dd\right\}},\\
&\rho(t) \mbox{ is the probability density of }(X_t,U_t)\mbox{ for all }t \in (0, T],
\end{aligned}
\right.
\end{equation}
where $(W_{t},t\geq 0)$ is a standard $\er^{d}$-Brownian motion, the diffusion $\sigma$ is a positive  constant, $\Dd$ is an open  bounded domain of $\er^d$, and $\nd$ is the outward normal unit vector of $\partial\Dd$. Eq. \eqref{eq:NonlinearConfinedLangevin} provides a  Lagrangian model describing, at each time $t$, the position $X_t$ and the velocity $U_t$ of a  particle confined within $\overline{\Dd}$.

The drift coefficient $B$  is the mapping from $\Dd\times L^{1}(\Dd\times\er^d)$ to $\er^d$ defined by
\begin{equation}\label{eq:DriftDefinitionEDP}
B[x;\psi]=\left\{
\begin{aligned}
&\frac{\displaystyle \int_{\mathbb{R}^{d}} b(v)\psi(t,x,v)dv}
{\displaystyle \int_{\mathbb{R}^{d}}\psi(t,x,v)dv}\mbox{ whenever }
\int_{\er^{d}}\psi(t,x,v)dv\neq 0,\\
&0\mbox{ otherwise,}
\end{aligned}
\right.
\end{equation}
where $b:\er^{d}\rightarrow \er^{d}$ is a given measurable function. Formally the function $(t,x) \mapsto B[x;\rho(t)]$ in \eqref{eq:NonlinearConfinedLangevin} corresponds to the conditional expectation   $(t,x) \mapsto \EE [b(U_t) | X_t = x]$ and the velocity equation in \eqref{eq:NonlinearConfinedLangevin}  rewrites
\[U_{t} = U_{0}+\int_{0}^{t}\EE[b(U_{s})|X_{s}]ds + \sigma W_{t}+K_{t}.\]
In the dynamics of $U$, the c{\`a}dl{\`a}g process $(K_{t})$  confines the component $X$ in  $\overline{\Dd}$  by reflecting the velocity of the outgoing particle. This particular confinement is linked with the specular boundary condition:
\begin{equation}\label{eq:BCond}
\gamma(\rho)(t,x,u)
= \gamma(\rho)(t,x,u-2(u\cdot n_{\Dd}(x))n_{\Dd}(x)),~dt\otimes d\sigma_{\partial \Dd}\otimes du\mbox{-a.e. on }(0,T)\times\partial\Dd\times\er^{d},
\end{equation}
where $\sigma_{\partial \Dd}$ denotes the surface measure of $\partial\Dd$ and where $\gamma(\rho)$ stands for the trace of the probability density $\rho$ on $(0,T)\times\partial\Dd\times\er^{d}$.
As already noticed in \cite[Corollary $2.4$]{jabir-11a}, under  integrability and positiveness properties on $\gamma(\rho)$, the specular condition \eqref{eq:BCond} implies the mean no-permeability boundary condition:
\begin{equation}\label{eq:nopermeability}
\frac{\displaystyle
\int_{\er^{d}} (u\cdot n_{\Dd}(x))\gamma(\rho)(t,x,u)\,du}
{\displaystyle \int_{\er^{d}}
\gamma(\rho)(t,x,u)\,du}=0,~\mbox{for}~dt\otimes d\sigma_{\partial\Dd}
\mbox{-a.e.}~(t,x)\in(0,T)\times\partial\Dd.
\end{equation}
The function $(t,x)\mapsto \dfrac{\int_{\er^{d}}(u\cdot n_{\Dd}(x))\gamma(\rho)(t,x,u)\,du}{\int_{\er^{d}} \gamma(\rho)(t,x,u)\,du}$,  on  $(0,T)\times\partial\Dd$, serves here as a formal representation of the normal component of the bulk velocity  at the boundary,
 so that \eqref{eq:nopermeability} can be seen as
\begin{equation*}
\EE[(U_{t}\cdot \nd(X_t))|X_{t} = x]=0,~\mbox{for}~dt\otimes d\sigma_{\partial\Dd}
\mbox{-a.e.}~(t,x)\in(0,T)\times\partial\Dd.
\end{equation*}
In view of \eqref{eq:nopermeability}, an appropriate notion of the trace of $\rho$ is given with the following
\begin{definition}\label{def:Trace_density_process}
Let $(\rho(t);\,t\in[0,T])$ be the time-marginal densities of a solution to \eqref{eq:NonlinearConfinedLangevin}. We say that $\gamma(\rho):(0,T)\times\partial\Dd\times\er^d\rightarrow\er$ is the trace of $(\rho(t);~t\in[0,T])$ along $(0,T)\times\partial\Dd\times\er^d$ if it is a nonnegative function $\gamma(\rho)$  satisfying,  for all $t$ in $(0,T]$, $f$ in $\Cc^{\infty}_{c}([0,T]\times\overline{\Dd}\times\er^d)$:
\begin{align}\label{Green_formula}
\begin{array}{ll}
&{\displaystyle \int_{\Sigma_{t}}
\left(u\cdot n_{\Dd}(x)\right)\gamma(\rho)(s,x,u)f(s,x,u)\,ds
\,d\sigma_{\partial \Dd}(x)\,du}\\
&{\displaystyle = -\int_{\Dd\times\er^{d}}f(t,x,u)\rho_{t}(x,u)\,dx\,du
+\int_{\Dd\times\er^{d}}f(0,x,u)\rho_{0}(x,u)\,dx\,du }\\
&\quad{\displaystyle +\int_{Q_{t}} \left(\partial_{s}f + u\cdot\nabla_x f+B[\cdot;\rho_{\cdot}]\cdot \nabla_{u}f+\frac{\sigma^2}{2}\triangle_u f\right)(s,x,u)
\rho_{s}(x,u)\,ds\,dx\,du}\\
\end{array}
\end{align}
and, for $dt\otimes d\sigma_{\partial \Dd}$ a.e. $(t,x)$ in $(0,T)\times\partial\Dd$,
\begin{subequations}
\begin{align}
&\label{nopermeabilitycondition_1}\int_{\er^{d}} |(v\cdot \nd(x))| \gamma(\rho)(t,x,v)\,dv < +\infty,\\
&\label{nopermeabilitycondition_2}\int_{\er^{d}}\gamma(\rho)(t,x,v)\,dv > 0.
\end{align}
\end{subequations}
\end{definition}
In addition to the well-posedness of \eqref{eq:NonlinearConfinedLangevin}, we prove that the solution admits a trace in a sense of Definition \ref{def:Trace_density_process} and thus satisfies the specular condition and the mean no-permeability boundary condition \eqref{eq:BCond}-\eqref{eq:nopermeability}.

Our interest in the model \eqref{eq:NonlinearConfinedLangevin} and its connection with \eqref{eq:nopermeability} arises with the modeling of boundary conditions of the Lagrangian stochastic models for turbulent flows. These models are developed in the context of Computational Fluid Dynamics (CFD) and feature a class of
stochastic differential equations with singular coefficients (we refer to Bernardin {\it et al.} \cite{jabir-10b}, Bossy {\it et al.} \cite{jabir-10a}
for an account of the various theoretical and computational issues related to these models). The design of boundary conditions for the Lagrangian stochastic models according to some Dirichlet condition or some physical wall law,  the analysis of their effects on the nonlinear dynamics and their momenta are among the current challenging issues raised by the use of Lagrangian stochastic models in CFD.

In the kinetic theory of gases, the specular boundary condition belongs to the family of the Maxwell boundary conditions which model the interaction (reflection, diffusion and absorption phenomena) between gas particles and solid surface (see Cercignani \cite{cercignani-88}).
Specifically, the specular boundary condition models the reflection of the particles at the boundary of a totally elastic wall (no loss of mass nor energy).

The intrinsic difficulty to the well-posedness of \eqref{eq:NonlinearConfinedLangevin} lies in the study of the  hitting times $\{\tau_n,n\geq 0\}$ of the particle position $(X_{t})$ on the boundary $\partial\Dd$, defined by
\begin{equation*}
 \begin{cases}
\tau_{n}=
\inf\{{ \tau_{n-1} < t \leq T} ; ~X_{t}\in\partial\Dd\},\mbox{ for}~n \geq 1,\\
\tau_{0}=0,
\end{cases}
\end{equation*}
which must tend to infinity to ensure that $(K_t)$ is well defined (with the convention that $\inf\{\emptyset\} = + \infty$).
By Girsanov Theorem, it is not difficult to see that the sequence $\{\tau_{n};\,n\geq 0\}$ is related to the attaining times of the primitive of the Brownian motion on a smooth  surface.

In the previous work \cite{jabir-11a}, we established the  well-posedness of  \eqref{eq:NonlinearConfinedLangevin} in the case where $\Dd$ is the upper half-plane $\er^{d-1}\times(0,+\infty)$. In this situation, only one component of the process is confined in $[0,+\infty)$, and our construction of the confined process mainly relies on the explicit distribution of the zero-sets of the primitive of one dimensional Brownian motion given in  McKean \cite{McKean1963} and Lachal \cite{Lachal1997}. To the best of our knowledge, similar results on these attaining times have only been extended in the case of bounded interval.
Note also that in the case treated in \cite{jabir-11a}, the existence problem of trace functions in the sense of Definition \ref{def:Trace_density_process} is solved thanks to the explicit construction of the confined linear Langevin process.

Here some new  difficulties are enhanced by the boundary reflection generalized to any smooth bounded domain $\Dd$.
Those difficulties  appear first in the construction of the confined linear  Langevin process (see Eq. \eqref{eq:LinearConfinedLangevin} that corresponds to \eqref{eq:NonlinearConfinedLangevin} with $b=0$), next in the treatment of the McKean nonlinearity in \eqref{eq:NonlinearConfinedLangevin} and in the verification of the mean-no-permeability condition.

The approach that we propose in this paper strongly mixes stochastic analysis with PDE analysis.

\subsection{Main result}

From now on, we implicitly assume  that $\sigma$ is positive and that $\Dd$ is an open bounded domain in $\er^d$. In addition, the set of hypotheses for the main theorem below is denoted by \hyp. In this set we distinguish \hyplang, the hypotheses for the construction of the linear Langevin  process, and \hypedp\, the hypotheses for the well-posedness of the nonlinear 
Vlasov-Fokker-Planck equation related to \eqref{eq:NonlinearConfinedLangevin} (see Eq. \eqref{eq:ConditionalMcK-VFP}), as follows:
\begin{description}
\item{\hyplangi} The initial condition $(X_{0},U_{0})$ is assumed to be distributed according to a given initial law $\mu_{0}$ having its support in $\Dd\times\er^{d}$ and such that  $\int_{\Dd\times\er^{d}}\left(|x|^{2}+|u|^{2}\right)\mu_{0}(dx,du)<+\infty$.
\item{\hyplangii} The boundary ${\partial \Dd}$ is a compact $\Cc^3$ submanifold of $\er^d$. \end{description}
\begin{description}
\item{\hypedpi} $b:\er^{d}\rightarrow \er^{d}$ {is a bounded measurable function}.
\item{\hypedpii} The initial law $\mu_{0}$ has a density $\rho_{0}$ in the weighted  space $L^{2}(\omega,\Dd\times\er^{d})$ with $\omega(u):=(1+|u|^{2})^{\frac{\alpha}{2}}$ for some $\alpha>d \vee 2$ (see the Notation subsection below for a precise definition).
\item{\hypedpiii} There exist two measurable functions $\underline{P}_{0}$, $\overline{P}_{0}:\er^{+}\longrightarrow \er^{+}$ such that
\begin{align*}
&0 < {\displaystyle \underline{P}_{0}(|u|)\leq\rho_{0}(x,u)\leq \overline{P}_{0}(|u|),~\mbox{a.e. on}~\Dd\times\er^{d};}
\\
\mbox{and } \quad&{\displaystyle \int_{\er^{d}} (1+|u|)\omega(u)\overline{P}_{0}^2(|u|)du<+\infty.}
\end{align*}
\end{description}
Let us precise the notion of solution that we consider for \eqref{eq:NonlinearConfinedLangevin}. A probability measure $\QQ$, in the sample space $\TTT:=\Cc([0,T];\overline{\Dd})\times\DD([0,T];\er^{d})$ with canonical process $(x(t),u(t); t\in[0,T])$,  is a solution in law to \eqref{eq:NonlinearConfinedLangevin} if for all $t\in[0,T]$, $\QQ\circ(x(t),u(t))^{-1}$ admits a density function $\rho(t)$ with $\rho(0)=\rho_{0}$ and there exists an $\er^{d}$-Brownian motion $(w(t);\,t\geq 0)$ under $\QQ$, such that $\QQ$-a.s.
\begin{equation*}
\left\{
\begin{aligned}
x(t) &= x(0) + \int_{0}^{t} u(s)\,ds,\\
u(t) &= u(0) + \int_{0}^{t}B[x(s);\rho(s)]\,ds+\sigma w(t) -\sum_{\substack{0 < s \leq t}} 2\left(u(s^{-})\cdot n_{\Dd}(x(s))\right)n_{\Dd}(x(s))
\ind_{\displaystyle \left\{x(s)\in \partial \Dd\right\}}.
\end{aligned}
\right.
\end{equation*}
We further introduce the set
\begin{align*}
\Pi_{\omega}&:=\left\{\QQ\mbox{, probability measure on}\,\TTT\,\mbox{s.t.,}\,\mbox{for all}\,t\in[0,T],\,\QQ\circ(x(t),u(t))^{-1}\in L^{2}(\omega;\Dd\times\er^{d})\right\}.
\end{align*}
\begin{theorem}\label{thm:main}
Under \hyp, there exists a unique solution in law to \eqref{eq:NonlinearConfinedLangevin} in $\Pi_{\omega}$.

Moreover the set of time-marginal densities $(\rho(t,\cdot),t\in[0,T])$ is in $\VoneT$ and admits a trace $\gamma(\rho)$ in the sense of Definition \ref{def:Trace_density_process} which satisfies the no-permeability boundary condition \eqref{eq:nopermeability}.
\end{theorem}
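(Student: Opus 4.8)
The plan is to decouple \eqref{eq:NonlinearConfinedLangevin} into a stochastic construction of the \emph{linear} confined Langevin dynamics and a purely analytic resolution of the associated nonlinear kinetic equation, and then to glue the two together. I would first treat the linear case \eqref{eq:LinearConfinedLangevin} (more generally the dynamics with a given bounded measurable drift $(t,x)\mapsto b_t(x)$ in place of $B[\cdot;\rho(\cdot)]$). The crucial point is to show that the sequence of boundary hitting times $\{\tau_n\}$ is a.s. non-accumulating on $[0,T]$, so that $(K_t)$, and hence the process $(X_t,U_t)$, is well defined up to time $T$. By Girsanov's theorem one removes the drift and reduces to the contact set of the primitive of an $\er^d$-valued Brownian motion with $\partial\Dd$; I would then localize near $\partial\Dd$, flatten the boundary using its $\Cc^3$ regularity \hyplangii, and control non-accumulation by comparison with the known one-dimensional (bounded interval) description of the zero set of the Brownian primitive, together with uniform estimates on the curvature remainder terms produced by the flattening. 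Once the process is built, I would show that its time-marginals $(\rho(t))$ are absolutely continuous, solve the linear kinetic Fokker--Planck equation with specular boundary condition in the weak sense, belong to $\VoneT$ by hypoelliptic energy estimates, and admit a trace $\gamma(\rho)$ in the sense of Definition~\ref{def:Trace_density_process}: the Green formula \eqref{Green_formula} follows from It\^o's formula applied to $f(s,X_s,U_s)$ between successive reflections, and the specular identity \eqref{eq:BCond} from the velocity-reflection rule defining $K_t$.

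Second, I would solve the nonlinear McKean--Vlasov--Fokker--Planck equation \eqref{eq:ConditionalMcK-VFP}. Because the drift $B[x;\rho(t)]$ is a quotient of velocity-averages of $\rho$, the resolution requires a two-sided Maxwellian-type control $0<\underline{P}(t,|u|)\le\rho(t,x,u)\le\overline{P}(t,|u|)$ ensuring that the denominator $\int_{\er^d}\rho(t,x,v)\,dv$ stays bounded away from $0$ uniformly in $x$; such bounds propagate in time from \hypedpiii\ via barrier (sub/super-solution) arguments for the kinetic operator, which is where the hypothesis \hypbarriere\ enters. I would then run a fixed-point scheme: to an admissible drift associate, through the linear construction above, the density $\rho$ solving the linearized kinetic equation, define the updated drift $B[\cdot;\rho(\cdot)]$, and show this map has a fixed point using a priori bounds in $L^2(\omega;\Dd\times\er^d)$ and in $\VoneT$ together with velocity-averaging compactness, and uniqueness via a Gr\"onwall estimate on the difference of two solutions in a suitable weighted norm, exploiting \hypedpi\ and \hypedpii.

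Finally I would conclude. Given the unique density $(\rho(t))$ produced by the analytic step, the map $(t,x)\mapsto B[x;\rho(t)]$ is a fixed bounded measurable function, so the linear construction of the first step yields a probability $\QQ$ on $\TTT$ solving \eqref{eq:NonlinearConfinedLangevin}; a consistency check identifies its time-marginals with $\rho$, hence $\QQ\in\Pi_\omega$. Uniqueness in law follows because any solution in $\Pi_\omega$ has time-marginals solving \eqref{eq:ConditionalMcK-VFP} in the class where uniqueness holds, and once the density is pinned down the equation becomes linear, where uniqueness in law was already established. The trace assertion and the no-permeability condition are inherited from the linear step: $\gamma(\rho)$ satisfies \eqref{eq:BCond}, and integrating \eqref{eq:BCond} against $u\cdot n_{\Dd}(x)$ over $\er^d$, as in \cite[Corollary 2.4]{jabir-11a}, gives \eqref{eq:nopermeability}.

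I expect the main obstacle to be the non-accumulation of the hitting times $\{\tau_n\}$ for a general smooth domain: there is no explicit description of the contact set of the Brownian primitive with a curved surface, so this step requires a delicate localization and perturbation analysis, with the one-dimensional results serving only as a comparison tool. The second delicate point is the propagation of the two-sided bounds on $\rho$, on which the very definition of the nonlinear drift rests; without them the quotient in \eqref{eq:DriftDefinitionEDP} is not controlled and the fixed-point argument cannot be closed.
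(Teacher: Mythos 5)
Your three-step decomposition (confined linear Langevin process, analytic resolution of the nonlinear McKean--Vlasov--Fokker--Planck equation with Maxwellian bounds, gluing via Girsanov and marginal identification) is the skeleton of the paper's proof, and your reading of where the difficulties lie --- non-accumulation of $\{\tau_n\}$ for a curved boundary, and propagation of the two-sided Maxwellian control that keeps $B[\cdot;\rho]$ well defined --- is exactly right. Still, two places in your plan are either incomplete or steer away from what makes the argument close.

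First, on non-accumulation. Localizing and straightening the boundary reduces one hitting time to a passage-time-at-zero problem for the one-dimensional Brownian primitive, which McKean's theorem handles. But the straightening is only valid on a single bounded patch $\Uu_i$: the process can and will leave $\Uu_i$ and enter a different patch, after which one must re-straighten from scratch with a different chart. So you do not get one global sequence of primitive zero-crossings; you get an interlaced sequence of boundary hits $\beta_1^n$ and patch-exit times $\zeta_n$, and you must rule out accumulation of both. The paper's Lemma~\ref{lem:AsymptoticsExitTimesOfStraightening} does this for the patch exits by a Borel--Cantelli-type argument (the chart covering has a uniform geometric gap $\gamma_0>0$ between patches, so the free Langevin flow needs at least an a.s.-infinitely-often-positive amount of time to traverse it). Your sketch does not address this second sequence, and the ``uniform estimates on the curvature remainder terms'' you invoke are not what is needed: after straightening, the curvature terms are drift terms removable by Girsanov, so they do not change whether the relevant event has probability one; it is the repeated chart switching that creates the danger.

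Second, on the trace and $\VoneT$ regularity. You propose to establish the Green formula, the membership in $\VoneT$, and the existence of the trace $\gamma(\rho)$ for the time-marginals of the SDE \emph{directly}, from It\^o's formula between reflections and from ``hypoelliptic energy estimates.'' The paper deliberately avoids this, because getting weak $\nabla_u$-regularity and $L^2(\Sigma^\pm_T)$-trace estimates for the density of a degenerate kinetic diffusion with specular jumps at the boundary is precisely the hard thing. Instead, the paper constructs all the regularity on the PDE side (existence of a weak solution $\rhofp\in\VoneT$ with trace functions via Carrillo-type variational theory plus Maxwellian barriers, Section~\ref{sec:PDEApproach}), and transfers it to the stochastic side only through the uniqueness of a \emph{linear mild equation} \eqref{eq:mild-bis}: one shows that both $\rhofp$ and the time-marginals of the Girsanov-transformed law $\QQ$ solve \eqref{eq:mild-bis}, and that \eqref{eq:mild-bis} has at most one $\Cc([0,T];L^2)$-solution. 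For that, what is needed on the SDE side is not $\VoneT$-regularity of the marginals but only the $L^2$- and $L^p$-stability of the stopped semigroup kernels $\Gamma^\psi_n$ (Corollaries~\ref{coro:KernelEstimate}, \ref{coro:KernelLpEstimate}), which rests on the Dirichlet problem~\eqref{eq:VFPDirichlet} and the Feynman--Kac formula --- a much more tractable piece of PDE analysis. If you try to prove Sobolev regularity and trace for the SDE marginals directly, you will likely find yourself reconstructing the PDE theory anyway, so the shortcut does not shorten.

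Two smaller points. The fixed-point step for the specular boundary condition in the paper does not use velocity-averaging compactness; the Maxwellian bounds together with a maximum-principle argument make the iteration $f\mapsto S(f)$ monotone and order-bounded, and the fixed point is obtained as a monotone limit (Proposition~\ref{prop:VFPlineairespec}); the nonlinear drift is then treated by a Picard iteration with a Gronwall contraction in $\VoneT$ (Proposition~\ref{prop:FixedPts}). Compactness would give existence but not obviously uniqueness, whereas the monotone/Gronwall route gives both. Finally, your uniqueness argument (``once the density is pinned down the equation becomes linear'') is morally right, but to make it precise you need a uniqueness statement at the level of the mild equation, since the comparison is made between time-marginal densities in $\Pi_\omega$ and not pathwise; this is exactly what Lemma~\ref{lem:NonlinearUniquenessResult} and the weighted mild equation~\eqref{eq:EqNonlinearMildKernel} provide.
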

The precise definition of the weighted Sobolev space $\VoneT$ is given in the Notation subsection below.

The rest of the paper is devoted to the proof of Theorem \ref{thm:main}.
In Section \ref{sec:Linearcase}, we set the {\it linear} basis of our approach: we construct the solution to the confined linear Langevin process (solution to \eqref{eq:LinearConfinedLangevin}) and we study the property of its semi-group. This latter will rely on a Feynman-Kac interpretation of the semi-group and the analysis of  the  boundary value problem
\begin{eqnarray}
\left\{
\begin{aligned}
&\partial_{t} f(t,x,u) - (u\cdot \nabla_{x}f(t,x,u)) -\frac{\sigma^2}{2}\triangle_{u}f(t,x,u)= 0,~\forall~(t,x,u)\in(0,T]\times\Dd \times\mathbb{R}^{d},\\
&\lim_{t\rightarrow 0^{+}}f(t,x,u) = f_{0}(x,u),~\forall~(x,u)\in\Dd \times\mathbb{R}^{d},\\
&f(t,x,u) = q(t,x,u),~\forall~(t,x,u)\in\Sigma^{+}_{T},
\end{aligned}\right.
\end{eqnarray}
for which we prove the existence of a smooth solution, continuous at the boundary (see Theorem \ref{thm:L2VFPDirichlet}).

In Section \ref{sec:PDEApproach}, using a PDE approach, we construct a set of density functions  $(\rho(t);t\in[0,T])$ that features the time-marginal densities of a solution in law to \eqref{eq:NonlinearConfinedLangevin}. More precisely, we construct a weak solution to the following  nonlinear McKean-Vlasov-Fokker-Planck equation with specular boundary condition:
\begin{align*}
\left\{\begin{array}{l}
\partial_{t} \rho + (u\cdot\nabla_{x}\rho) + (B[\cdot~;\rho]\cdot\nabla_{u}\rho)
-\frac{\sigma^{2}}{2}\triangle_{u} \rho = 0,~\mbox {on}~(0,T]\times\Dd \times\mathbb{R}^{d},\\
\rho(0,x,u) = \rho_{0}(x,u),~\mbox{ on}~\Dd \times\mathbb{R}^{d},\\
\gamma(\rho)(t,x,u) = \gamma(\rho)(t,x,u-2(u\cdot \nd(x))\nd(x)),~\mbox{ on}~(0,T)\times\partial\Dd \times\mathbb{R}^{d},
\end{array}\right.
\end{align*}
where $\gamma(\rho)$ stands for the trace of $\rho$ in the sense of Definition \ref{def:Trace_density_process} (see Theorem   \ref{thm:Existence_result} for the existence result). In particular, the verification of the properties \eqref{nopermeabilitycondition_1} and \eqref{nopermeabilitycondition_2} is obtained thank to the construction of Maxwellian bounds for the solution to the nonlinear PDE and its trace at the boundary.
Starting from this solution, we set a drift $B(t,x) = B[x;\rho(t)]$ from \eqref{eq:DriftDefinitionEDP} and   we construct a process candidate to be a solution of \eqref{eq:NonlinearConfinedLangevin} using {a } change of probability measure from the confined Langevin law constructed in Section  \ref{sec:Linearcase}.  We  achieve the proof of Theorem \ref{thm:main}  in Section \ref{sec:LawReconstruction},  by proving that the resulting set of time-marginal densities coincides with the solution to the McKean-Vlasov-Fokker-Planck equation considered in Section \ref{sec:PDEApproach}. We also prove the uniqueness in law for the solution of
\eqref{eq:NonlinearConfinedLangevin} in $\Pi_{\omega}$.
\subsection{Notation}
For all $t\in(0,T]$, we introduce the time-phase space
\begin{equation*}
Q_{t}:=(0,t)\times\Dd\times\er^{d},
\end{equation*}
and the boundary sets:
\begin{align*}
\begin{array}{lll}
&\Sigma^{+}:=\left\{(x,u)\in\partial\Dd \times\er^{d}~\mbox{s.t.}~(u\cdot \nd(x))>0\right\},&\quad\Sigma^{+}_{t}:=(0,t)\times\Sigma^{+},\\
&\Sigma^{-}:=\left\{(x,u)\in\partial\Dd\times\er^{d}~\mbox{s.t.}~(u\cdot \nd(x))<0\right\},&\quad\Sigma^{-}_{t}:=(0,t)\times\Sigma^{-},\\
&\Sigma^{0}:=\left\{(x,u)\in\partial\Dd\times\er^{d}~\mbox{s.t.}~(u\cdot \nd(x))=0\right\},&\quad\Sigma^{0}_{t}:=(0,t)\times\Sigma^{0},
\end{array}
\end{align*}
and further $\Sigma_{T}:=\Sigma^{+}_{T}\cup \Sigma^{0}_{T}\cup \Sigma^{-}_{T} = (0,T)\times\partial\Dd\times\er^{d}$.
Denoting by $d\sigma_{\partial \Dd}$ the surface measure on $\partial\Dd$, we introduce the product measure on $\Sigma_{T}$:
\[d\lambda_{\Sigma_{T}}:=dt\otimes d\sigma_{\partial \Dd}(x)\otimes du.\]
We set the Sobolev space
\begin{equation*}
\Hh(Q_{t})=L^{2}((0,t)\times\Dd;H^{1}(\er^{d}))
\end{equation*}
equipped with the norm $\Vert\,\Vert_{\Hh(Q_{t})}$ defined by
\begin{equation*}
\|\phi\|^{2}_{\Hh(Q_{t})} = \|\phi\|^{2}_{L^{2}(Q_{t})} +
\|\nabla_{u}\phi\|^{2}_{L^{2}(Q_{t})}.
\end{equation*}
We denote by $\Hh'(Q_{t})$, the dual space of $\Hh(Q_{t})$, and by
$\left(~,~\right)_{\Hh'(Q_{t}),\Hh(Q_{t})}$, the inner product between $\Hh'(Q_{t})$ and $\Hh(Q_{t})$.

We define the weighted Lebesgue space
\begin{equation*}
L^{2}(\omega,Q_t):=\left\{\psi:Q_t\rightarrow\er~ \mbox{ s.t } \sqrt{\omega}\psi\in L^{2}(Q_t)\right\},
\end{equation*}
with the weight function $u\mapsto \omega(u)$ on the velocity variable
\begin{equation}
\label{def:Poids}
\omega(u):=(1+|u|^{2})^{\frac{\alpha}{2}},\,\mbox{  for }\,\alpha>d\vee 2.
\end{equation}
We endow $L^{2}(\omega,Q_t)$ with the norm $\Vert\,\Vert_{L^{2}(\omega,Q_t)}$ defined by
$\|\phi\|^{2}_{L^{2}(\omega,Q_t)} = \|\sqrt{\omega} \phi\|_{L^2(Q_t)}$.

We introduce the weighted Sobolev space
\begin{equation*}
\Hh(\omega,Q_t):=\left\{\psi\in L^{2}(\omega,Q_t)~ \mbox{ s.t } \left|\nabla_{u}\psi\right|\in L^{2}(\omega,Q_t)\right\}
\end{equation*}
with the norm
$\|\,\|_{\Hh(\omega,Q_t)}$ defined by
\begin{equation*}
\|\phi\|^{2}_{\Hh(\omega,Q_t)} = \|\phi\|^{2}_{L^{2}(\omega,Q_t)} +
\|\nabla_{u}\phi\|^{2}_{L^{2}(\omega,Q_t)}.
\end{equation*}
Finally, we define the set
\begin{equation*}
\VoneT=\Cc\left([0,T]; L^{2}(\omega,\Dd\times\er^{d})\right)\cap \Hh(\omega,Q_T),
\end{equation*}
equipped with the norm
\begin{equation*}
\Vert\phi\Vert^{2}_{\VoneT} = \max_{t\in\left[0,T\right]}\left\{\int_{\Dd \times\er^{d}}
\omega(u)\left|\phi(t,x,u)\right|^{2}\,dx\,du\right\} + \int_{Q_{T}} \omega(u)|\nabla_{u} \phi(t,x,u)|^{2}\,dt\,dx\,du.
\end{equation*}
We further introduce the spaces
\begin{align*}
&L^{2}(\Sigma^{\pm}_{T})
=\big\{\psi:\Sigma^{\pm}_{T}\rightarrow \er \mbox{ s.t }
\int_{\Sigma^{\pm}_{T}} |(u\cdot \nd(x))|\left|\psi(t,x,u)\right|^{2}
d\lambda_{\Sigma_{T}}(t,x,u)<+\infty \big\},\\
&L^{2}(\omega,\Sigma^{\pm}_{T})=\big\{\psi:\Sigma^{\pm}_{T}\rightarrow \er \mbox{ s.t }
\int_{\Sigma^{\pm}_{T}} \omega(u)|(u\cdot \nd(x))|\left|\psi(t,x,u)\right|^{2}
d\lambda_{\Sigma_{T}}(t,x,u)<+\infty \big\},
\end{align*}
equipped with their respective norms
\begin{equation*}
\begin{aligned}
&\Vert \psi\Vert_{L^{2}(\Sigma^{\pm}_{T})}
=\sqrt{\int_{\Sigma^{\pm}_{T}} |(u\cdot \nd(x))|~\left|\psi(t,x,u)\right|^{2}
d\lambda_{\Sigma_{T}}(t,x,u)},\\
&\Vert \psi\Vert_{L^{2}(\omega,\Sigma^{\pm}_{T})}
=\sqrt{\int_{\Sigma^{\pm}_{T}} \omega(u)|(u\cdot \nd(x))|~\left|\psi(t,x,u)\right|^{2}
d\lambda_{\Sigma_{T}}(t,x,u)}.
\end{aligned}
\end{equation*}

\section{Preliminaries: the confined Langevin process}\label{sec:Linearcase}
In this section, we prove the well-posedness of the  confined linear Langevin  equation, namely there exists a unique solution, defined on a filtered probability space
$(\Omega,\Ff,(\Ff_t;\,t\in[0,T]),\PP)$ endowed with an $\er^d$-Brownian motion $W$, to
\begin{equation}\label{eq:LinearConfinedLangevin}
\left\{
\begin{aligned}
X_{t} &= x_{0} + \int_{0}^{t} U_{s}\,ds,\\
U_{t} &= u_{0} + \sigma W_{t}+K_{t},\\
K_{t}&= - 2\sum_{\substack{0 \leq s \leq t}}\left(U_{s^{-}}\cdot n_{\Dd}(X_{s})\right)n_{\Dd}(X_{s})
\ind_{\displaystyle \left\{X_{s}\in \partial \Dd\right\}},~\forall t\in[0,T],
\end{aligned}\right.
\end{equation}for any $(x_0,u_0) \in (\Dd\times\er^{d})\cup(\Sigma\setminus\Sigma^{0})$.
We further investigate some properties  of its semigroup (notably the $L^{p}$-stability).

\subsection{Well-posedness of \eqref{eq:LinearConfinedLangevin}}
We focus our well-posedness result to  the case where $(x_0,u_0) \in  \okphasespace$,  which is the situation where either the particle starts inside $\Dd$ or  starts at  boundary with an  ingoing velocity. We naturally  extend the solution  to the case where $(x_0,u_0)\in \Sigma^{+}$ (namely the situation of an initial outgoing velocity) by defining (with the flow notation) the solution of \eqref{eq:LinearConfinedLangevin} starting from $(x_0,u_0)\in \Sigma^{+}$ by
\begin{align*}
\left((X_t,U_t)^{0,x_0,u_0};\,t\in[0,T]\right) =
\left((X_t,U_t)^{0,x_0,u_0-2(u_0\cdot\nd(x_0))\nd(x_0)};\,t\in[0,T]\right).
\end{align*}

The construction presented hereafter takes advantage of the regularity of $\partial \Dd$ to locally straighten the boundary, in the same manner than the construction of  the diffracted process across a submanifold in \cite{BoChMaTa-10}. This allows us to adapt the one dimensional construction proposed in \cite{jabir-11a} and based on the explicit law of the sequence of passage times at zero of the 1D-Brownian motion primitive (see \cite{McKean1963}, \cite{Lachal1997}). Our main result is the following:
\begin{theorem}\label{thm:LinearSolutionByStraightening}
Under \hyplangii, for any $(x_0,u_0) \in \okphasespace$, there exists a  weak solution to \eqref{eq:LinearConfinedLangevin}. Moreover, the sequence of hitting times
\[ \tau_n = \inf\{ \tau_{n-1} < t\leq T ~;~ X_t \in \partial \Dd\},\mbox{  for } n\geq1,\quad \tau_0 = 0, \]
is well defined and grows to infinity. The pathwise uniqueness holds for the solution of \eqref{eq:LinearConfinedLangevin}.
\end{theorem}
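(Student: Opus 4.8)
### Proof strategy for Theorem 2.3

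The plan is to reduce the confined Langevin dynamics, locally in space and time, to the one-dimensional model of \cite{jabir-11a}, for which the passage-time structure of the Brownian primitive is explicitly known. First I would use \hyplangii\ to cover a neighborhood of $\partial\Dd$ by finitely many charts in which a $\Cc^3$-diffeomorphism $\Phi$ straightens the boundary, so that $\Dd$ locally becomes a half-space $\{y_d>0\}$; this is the construction used for the diffracted process in \cite{BoChMaTa-10}. The key point is to track how the Langevin generator $u\cdot\nabla_x+\frac{\sigma^2}{2}\triangle_u$ and the specular reflection $u\mapsto u-2(u\cdot\nd(x))\nd(x)$ transform under $(x,u)\mapsto(\Phi(x),D\Phi(x)u)$: the reflection becomes, to leading order, the sign flip of the $d$-th velocity component, and the transformed $d$-th position component still has a diffusive velocity modulo a drift and a bounded remainder coming from the curvature of $\partial\Dd$. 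That remainder is handled by a Girsanov change of measure, exactly as announced in the introduction, so that under the new measure the normal component behaves like a genuine integrated Brownian motion reflected specularly at $0$.

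Second, in the straightened, post-Girsanov picture I would invoke the one-dimensional construction of \cite{jabir-11a}: starting from $(x_0,u_0)\in\okphasespace$, i.e.\ either inside $\Dd$ or on $\partial\Dd$ with strictly ingoing velocity, the first hitting time $\tau_1$ of $\partial\Dd$ is (locally) the first zero of a Brownian primitive started away from $0$ or started at $0$ with negative slope, whose law is explicit via \cite{McKean1963}, \cite{Lachal1997}. At $\tau_1$ the velocity is reflected; crucially, because the Brownian primitive at one of its zeros has, almost surely, a nonzero one-sided derivative, the post-reflection velocity is again strictly ingoing, so one is back in the same situation and can iterate. Concatenating these pieces across charts and across the successive excursions defines a solution $(X_t,U_t)$ on a random time interval $[0,\sup_n\tau_n)$, together with the driving Brownian motion $W$ (reconstructed from the local pieces and the Girsanov densities).

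Third, I would prove $\tau_n\uparrow\infty$, which I expect to be the main obstacle. The danger is an accumulation of hitting times in finite time, i.e.\ a Zeno phenomenon. The one-dimensional results give quantitative control on the gaps $\tau_{n}-\tau_{n-1}$: at a zero of the Brownian primitive with ingoing slope $-v<0$, the next zero occurs after a time whose conditional law, given $v$, has a density bounded below near the origin only like a power of the gap, so that a Borel–Cantelli / renewal argument shows $\sum_n(\tau_n-\tau_{n-1})$ has infinite expectation-type divergence almost surely, hence cannot accumulate. The subtlety in the bounded domain is that between two hits the particle may traverse from one chart to another and the ingoing slopes at successive hits are not i.i.d.; I would control this by a uniform (in the chart and in the starting slope) lower bound on the time spent away from $\partial\Dd$ after each reflection, using compactness of $\partial\Dd$ and the $\Cc^3$ bound on the curvature, together with the moment assumption \hyplangi\ to rule out the velocity blowing up. Once $\tau_n\uparrow\infty$ is known on $[0,T]$, $K_t$ is a finite sum on each $[0,t]$ and the process solves \eqref{eq:LinearConfinedLangevin} globally.

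Finally, pathwise uniqueness: given two solutions on the same probability space with the same $W$, they must agree up to $\tau_1$ because on $[0,\tau_1)$ the dynamics is the free Langevin system $X_t=x_0+\int_0^t U_s\,ds$, $U_t=u_0+\sigma W_t$, which is pathwise unique, and $\tau_1$ is then the same deterministic functional of the common path; the reflection at $\tau_1$ is a deterministic map of $(X_{\tau_1},U_{\tau_1^-})$, so the solutions agree on $[0,\tau_1]$, and one propagates the equality across all $\tau_n$ and hence, using $\tau_n\uparrow\infty$, on all of $[0,T]$. The one delicate case is a start on $\Sigma\setminus\Sigma^0$, which is covered by the convention reducing $\Sigma^+$ to $\Sigma^-$ and by noting that a start on $\Sigma^-$ immediately pushes the particle into $\Dd$, so the same argument applies from time $0^+$.
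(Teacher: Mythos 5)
Your overall architecture matches the paper's: straighten the boundary with the diffeomorphisms of \cite{BoChMaTa-10}, remove the curvature terms by Girsanov, invoke McKean and Lachal for the zero set of the integrated Brownian motion, iterate across reflections and charts, and get pathwise uniqueness by induction on the hitting times. The gap is exactly where you flag it, in the non-accumulation step, but the mechanism you propose there does not work.

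You suggest a quantitative renewal/Borel--Cantelli bound on the gaps $\tau_n-\tau_{n-1}$, which would require a lower bound on the time to the next hit that is uniform in the ingoing slope. No such bound exists: by Brownian scaling, the first return time to zero of the integrated Brownian motion started from $(0,-v)$ is $v^2$ times the return time from $(0,-1)$, so the gaps degenerate as the slope $v$ goes to zero, and nothing in your proposal excludes $v_n\to 0$ along the sequence of reflections. But ruling out $v_n\to 0$ is precisely what McKean's theorem gives, and it gives it qualitatively: starting away from $(0,0)$, the pair $(\Yy^{(d)}_t,\Vv^{(d)}_t)$ a.s.\ never visits $(0,0)$, so every zero of the $\Cc^1$ path $\Yy^{(d)}$ is isolated and the hitting times cannot accumulate, with no quantitative gap bound ever needed. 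For the same reason, the ``uniform lower bound on the time spent away from $\partial\Dd$ after each reflection'' that you propose for the chart-switching subtlety does not exist either. What Lemma~\ref{lem:AsymptoticsExitTimesOfStraightening} actually controls is something weaker and boundary-independent: there is a constant $\gamma_0>0$, determined by the chart atlas alone, such that the position cannot traverse distance $\gamma_0$ in time $\Tt$ infinitely often in a row; this follows from Markov's inequality for $\sup_{t\le T}|u_t|$ and a conditional Borel--Cantelli argument. The missing idea in your proposal is exactly this separation: McKean qualitatively for the within-chart boundary hits, a geometric $\gamma_0$-based argument for the chart exits. (Minor: \hyplangi\ concerns moments of the initial law and gives no pathwise velocity bound; what is actually used is that $\sup_{t\le T}|u_0+\sigma W_t|$ is a.s.\ finite.)
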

For the sake of completeness, we recall some results related to the local straightening of the boundary $\partial\Dd$  as given in \cite{BoChMaTa-10}.  Since  $\partial\Dd$ is $\Cc^3$, one can construct a $\Cc^{2}_b$-mapping $\pi$  from a neighborhood $\mathcal{N}$ of $\partial\Dd$ to $\partial\Dd$ such that
$$ |x-\pi(x)| = d(x,\partial\Dd),\quad\forall x\in\mathcal{N}, $$
where $d(x,\partial\Dd)$ denotes the distance between $x$ and the set $\partial\Dd$. Note that, reducing ${\cal N}$ if necessary, we can always assume that $\pi$ is $\Cc^2_b(\overline{{\cal N}})$. For all $x\in{\cal N}$, we set
\begin{equation}
\label{eq:signed-distance}
\varsigma(x):=\big((x-\pi(x))\cdot \nd(\pi(x))\big),
\end{equation}
so that  $\varsigma(x)$ is the signed distance to $\partial\Dd$ (positive in $\er^{d}\setminus\overline{\Dd}$, negative in $\Dd$) and is of
class $\Cc^2_b({\cal N})$.  We still denote by
$\varsigma$ a $\mathcal{C}^2(\er^d)$-extension of this function to the whole Euclidean space.  It is well-known (see e.g.~\cite[p.\:355]{GilbTrud-83}) that
\begin{equation}  \label{eq:nabla-d}
\nabla\varsigma(x)=\nd(\pi(x)),\quad  \forall x\in{\cal N}.
\end{equation}
\begin{proposition} [see \cite{BoChMaTa-10}, Proposition 2.1]\label{prop:straightening}
Under \hyplangii, there exist a family of bounded open subsets of ${\cal N}$, $\{\Uu_1,\ldots,\Uu_{M-1}\}$
  such that $\partial\Dd\subset\cup_{i=1}^{M-1}\Uu_i$, and a family of {$\er^{d}$-valued }functions
  $\{\psi_1,\ldots,\psi_{M-1}\}$ such that, for all $1\leq i\leq M-1$, $\psi_i=(\psi_i^{(1)},\ldots,\psi_i^{(d)})$ is
  a $\Cc^2_b$ diffeomorphism from $\Uu_i$ to $\psi_i(\Uu_i)$, admitting a $\Cc^2_b$ extension
  on $\overline{\Uu_i}$ and satisfying for all $x\in\overline{\Uu_i}$
\begin{eqnarray}
  \left\{\begin{array}{rl}
    \psi_i^{(d)}(x) =&\varsigma (x), \label{eq:straightening} \\
    \nabla\psi_i^{(k)}(x)\cdot \nd(\pi(x))=&0,\quad \forall k\in\{1,2,\ldots,d-1\}, \label{eq:ortho} \\
    \displaystyle\frac{\partial \psi_i^{-1}}{\partial x_d}(\psi_i(x))=&\nd(\pi(x)). \label{eq:inverse-psi}
    \end{array}\right.
    \end{eqnarray}
\end{proposition}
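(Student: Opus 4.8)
The plan is to construct each $\psi_i$ as the composition of the tubular-neighbourhood coordinate map $x\mapsto(\pi(x),\varsigma(x))$ with a chart of the hypersurface $\partial\Dd$. Two elementary geometric facts make the three identities \eqref{eq:straightening}--\eqref{eq:inverse-psi} fall out: (i) near $\partial\Dd$ one has the orthogonal decomposition $x=\pi(x)+\varsigma(x)\,\nd(\pi(x))$, so that $x\mapsto(\pi(x),\varsigma(x))$ is a diffeomorphism onto its range; and (ii) the nearest-point projection is constant along normal segments, $\pi\bigl(x+t\,\nd(\pi(x))\bigr)=\pi(x)$ for small $t$, equivalently $D\pi(x)\,\nd(\pi(x))=0$. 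Then \eqref{eq:straightening} will be the \emph{definition} of the last component, \eqref{eq:ortho} will be a direct consequence of (ii), and \eqref{eq:inverse-psi} will come from the explicit inverse produced by (i).

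\textbf{Step 1: tubular structure.} Shrinking $\mathcal{N}$, one may assume it is the tube $\{p+s\,\nd(p)\,:\,p\in\partial\Dd,\ |s|<\delta\}$ for some $\delta>0$. Using \eqref{eq:signed-distance}, the fact that $x-\pi(x)$ is normal to $\partial\Dd$ at $\pi(x)$, and $|\nd|=1$, one gets $x=\pi(x)+\varsigma(x)\,\nd(\pi(x))$ for every $x\in\mathcal{N}$; hence the $\Cc^2_b$ map $J(x):=(\pi(x),\varsigma(x))$ is injective on $\mathcal{N}$ and is a $\Cc^2$ diffeomorphism from $\mathcal{N}$ onto $\partial\Dd\times(-\delta,\delta)$, with $J^{-1}(p,s)=p+s\,\nd(p)$. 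Moreover, for $x\in\mathcal{N}$ and $|t|$ small, $x+t\,\nd(\pi(x))=\pi(x)+(\varsigma(x)+t)\,\nd(\pi(x))$ lies in $\mathcal{N}$ at distance $|\varsigma(x)+t|$ from $\partial\Dd$, uniquely realised by $\pi(x)$, so $\pi\bigl(x+t\,\nd(\pi(x))\bigr)=\pi(x)$; differentiating at $t=0$ gives $D\pi(x)\,\nd(\pi(x))=0$. These are the standard regularity properties of the signed distance to a compact $\Cc^3$ hypersurface, and are where \hyplangii\ is used; note that one derivative is lost, so $\pi$, $\varsigma$, $\nd\circ\pi$ are $\Cc^2_b$ and no more.

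\textbf{Step 2: charts and definition of $\psi_i$.} For each $p\in\partial\Dd$ choose a relatively open $V\subset\partial\Dd$ with $p\in V$ and a $\Cc^2_b$ diffeomorphism $\theta\colon V\to\theta(V)\subset\er^{d-1}$ admitting a $\Cc^2_b$ extension to $\overline V$ (possible since $\partial\Dd$ is a compact $\Cc^3$ manifold); by compactness extract a finite subfamily $V_1,\dots,V_{M-1}$ with charts $\theta_1,\dots,\theta_{M-1}$, so $\partial\Dd\subset\bigcup_i V_i$. Set $\Uu_i:=\pi^{-1}(V_i)\cap\mathcal{N}$, a bounded open subset of $\mathcal{N}$ with $V_i\subset\Uu_i$ and $J(\Uu_i)=V_i\times(-\delta,\delta)$. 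Since $\theta_i\times\mathrm{id}$ is a $\Cc^2$ diffeomorphism on $V_i\times(-\delta,\delta)$, the map
\[
\psi_i:=(\theta_i\times\mathrm{id})\circ J\big|_{\Uu_i},\qquad\text{i.e.}\qquad \psi_i(x)=\bigl(\theta_i(\pi(x)),\,\varsigma(x)\bigr)\ \ \text{for }x\in\Uu_i,
\]
is a $\Cc^2$ diffeomorphism from $\Uu_i$ onto $\psi_i(\Uu_i)\subset\er^d$, and the $\Cc^2_b$ bounds on $\overline{\Uu_i}$ follow from those of $\pi,\varsigma$ on $\overline{\mathcal{N}}$ and of $\theta_i$ on $\overline{V_i}$. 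Together with $\partial\Dd\subset\bigcup_i V_i\subset\bigcup_i\Uu_i$, this produces the required family.

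\textbf{Step 3: the three identities (and main obstacle).} On each $\Uu_i$: \eqref{eq:straightening}, $\psi_i^{(d)}(x)=\varsigma(x)$, is immediate from the definition. For \eqref{eq:ortho}, since $\psi_i^{(k)}=\theta_i^{(k)}\circ\pi$ for $k\le d-1$ and $\pi\bigl(x+t\,\nd(\pi(x))\bigr)=\pi(x)$, the function $t\mapsto\psi_i^{(k)}\bigl(x+t\,\nd(\pi(x))\bigr)$ is constant near $t=0$, so its derivative $\nabla\psi_i^{(k)}(x)\cdot\nd(\pi(x))$ vanishes. For \eqref{eq:inverse-psi}, from $\psi_i=(\theta_i\times\mathrm{id})\circ J$ and $J^{-1}(p,s)=p+s\,\nd(p)$ one gets $\psi_i^{-1}(z',z_d)=\theta_i^{-1}(z')+z_d\,\nd\bigl(\theta_i^{-1}(z')\bigr)$ on $\psi_i(\Uu_i)$, hence $\partial\psi_i^{-1}/\partial x_d\,(z',z_d)=\nd\bigl(\theta_i^{-1}(z')\bigr)$; evaluating at $(z',z_d)=\psi_i(x)$, where $\theta_i^{-1}(z')=\pi(x)$, gives $\partial\psi_i^{-1}/\partial x_d\,(\psi_i(x))=\nd(\pi(x))$. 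The only genuine work is Step 1 — fixing the tube, establishing the decomposition $x=\pi(x)+\varsigma(x)\nd(\pi(x))$, the diffeomorphism property of $J$, and $D\pi(x)\nd(\pi(x))=0$, with all constructions kept bounded and $\Cc^2$ up to the boundary under the $\Cc^3$ hypothesis; once that is in hand, $\psi_i$ is a composition of diffeomorphisms and the three identities are one-line checks.
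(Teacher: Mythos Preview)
Your proof is correct and is the natural tubular-neighbourhood construction one expects here. Note, however, that the paper does not actually prove this proposition: it is quoted verbatim from \cite{BoChMaTa-10}, Proposition~2.1, and no argument is given in the present text. So there is no ``paper's own proof'' to compare against; your write-up supplies precisely the standard argument that the cited reference contains --- define $\psi_i$ as the composition of the tubular map $x\mapsto(\pi(x),\varsigma(x))$ with a local chart of the hypersurface, and read off \eqref{eq:straightening}--\eqref{eq:inverse-psi} from the explicit inverse $\psi_i^{-1}(z',z_d)=\theta_i^{-1}(z')+z_d\,\nd(\theta_i^{-1}(z'))$ and the identity $D\pi(x)\,\nd(\pi(x))=0$.
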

Note that by~(\ref{eq:straightening}), $\psi_i(\Uu_i\cap\partial\Dd)\subset\er^{d-1}\times\{0\}$, which
justifies the term ``local straightening'' of the boundary.

Let $\Uu_M$ be an open subset of $\er^d$ such that $\partial\Dd\cap\Uu_M=\emptyset$  and $\cup_{i=1}^M\Uu_i=\er^d$, and set ${\psi}_M(x):=x$ on $\Uu_M$.
\begin{proof}[Proof of Theorem \ref{thm:LinearSolutionByStraightening}]
For any $(x,u) \in \okphasespace$, we consider the flow of processes $((x_t,u_t)^{s,x,u};\,{s}\leq t\leq T)$ in $\er^{2d}$, defined by
\begin{align}\label{eq:freelangevin}
\left\{\begin{array}{l}
x_t^{s,x,u} =x + \int^t_{{s}} u_{r}^{s,x,u} dr, \\
u_t^{s,x,u} = u + \sigma( W_t-W_s).
\end{array}\right.
\end{align}
For notation convenience,  we set $(x_t,u_t):=(x_t,u_t)^{{0,x_{0},u_{0}}}$ for $(x_0,u_0) \in \okphasespace$, and assume that the process is constructed on the canonical filtered probability space
$(\Omega,\Ff,(\Ff_{t};\,t\in[0,T]),\PP_{x_0,u_0})$. We introduce the index $i_1$ that corresponds to the smallest index of the open subsets for which ${x_{0}}$ is the most ``deeply'' contained:
\begin{align*}
i_{1}&=\min\left\{i \in \{1,\ldots, M\}; \,d(x_{0},\er^d\setminus\Uu_{i})=\max_{1\leq m\leq M}d(x_{0},\er^d\setminus\Uu_{m})\right\}.
\end{align*}
We consider also the exit time
\begin{equation*}
\zeta_{1}=\inf\left\{t\geq 0\,;\,x_{t}\notin\Uu_{i_{1}}\right\}.
\end{equation*}
If $i_1=M$, we set for all $t\leq \zeta_{1}$,
\begin{align*}
(X_t,{U}_t) = (x_t,u_t).
\end{align*}
Else, as the diffeomorphism $\psi_{i_1}$ is $\Cc^2_b$ on $\mathcal {\overline{U}}_{i_1}$
and  satisfies \eqref{eq:straightening}, we can  apply the It\^o formula between 0 and $t \leq \zeta_1$ to the vector process  $(Y_{t},V_{t})=((Y^{(k)}_{t},V^{(k)}_{t});\,k=1,...,d)$ given by
\begin{align*}
(Y^{(k)}_t, V^{(k)}_t) := \Big( \psi^{(k)}_{i_1}(x_t), (\nabla \psi^{(k)}_{i_1}(x_t) \cdot u_t)\Big)
=\Big( \psi^{(k)}_{i_1}(x_t), \sum_{l=1}^{d}\partial_{x_{l}} \psi^{(k)}_{i_1}(x_t) u^{(l)}_t\Big).
\end{align*}
For all $0\leq t\leq \zeta_1$,  we obtain that $(Y, V)$ is a solution to the following system of SDE: for $k=1,\ldots,d$,
\begin{equation}
\label{eq:StraightedLangevin}
\left\{
\begin{aligned}
Y^{(k)}_{t} = & {\displaystyle \psi_{i_1}({x_{0}}) + \int_{0}^{t} V^{(k)}_{s} ds,}\\
V^{(k)}_{t} =& \left(\nabla \psi^{(k)}_{i_1}({x_{0}}) \cdot {u_0}\right)
 + \sigma \int_0^t \sum_{l=1}^d  \left(\displaystyle{\ind_{\{Y_s\in \psi_{i_1}(\mathcal {\overline{U}}_{i_1})\}}} \partial_{x_{l}}\psi^{(k)}_{i_1}(\psi^{-1}_{i_1}(Y_s))  + \displaystyle{\frac{1}{\sqrt{d}}\ind_{\{Y_s\notin \psi_{i_1}(\mathcal {U}_{i_1})\}}}\right)
 dW^{(l)}_s, \\
& + \int_0^t
\displaystyle{\ind_{\{Y_s\in \psi_{i_1}(\mathcal {U}_{i_1})\}}}
\sum_{1\leq l,n\leq d}\partial^2_{x_l,x_n}\psi^{(k)}_{i_1}(\psi^{-1}_{i_1}(Y_s))(\nabla \psi^{-1}_{i_1}(Y_s) V_s)^{(l)}(\nabla \psi^{-1}_{i_1}(Y_s) V_s)^{(n)}\; ds.
\end{aligned}\right.
\end{equation}
The SDE above has a non-homogeneous diffusion coefficient and a drift coefficient with quadratic growth.
Nevertheless, since $\max_{t\in[0,T]}|u_{t}|^{2}$ is finite $\PP$-a.s., the same holds true for $\max_{t\in[0,T]}|V_{t\wedge \zeta_1}|^{2}$   so that the solution does not explode at finite time and is pathwise unique.

Note that, from  \eqref{eq:straightening} and \eqref{eq:nabla-d}, $|\nabla \psi_{i_1}^{(d)}(x)| =1$ on $\Uu_{i_1}$,   so that the stochastic integral in $V^{(d)}_{t}$ is a local  martingale with a quadratic variation given by
\begin{equation*}
\Big \langle \sum_{l=1}^d  \left(\displaystyle{\ind_{\{Y_s\in \psi_{i_1}(\mathcal {\overline{U}}_{i_1})\}}} \partial_{x_{l}}\psi^{(d)}_{i_1}(\psi^{-1}_{i_1}(Y_s))  + \displaystyle{\frac{1}{\sqrt{d}}\ind_{\{Y_s\notin \psi_{i_1}(\mathcal {U}_{i_1})\}}}\right)
 dW^{(l)}_s \Big \rangle_{t} = t.
\end{equation*}
By Levy characterization, it follows that
\begin{align*}
\widetilde{W}^{(d)}_t  : = \int_0^t \sum_{l=1}^d  \left(\displaystyle{\ind_{\{Y_s\in \psi_{i_1}(\mathcal {\overline{U}}_{i_1})\}}} \partial_{x_{l}}\psi^{(d)}_{i_1}(\psi^{-1}_{i_1}(Y_s))  + \displaystyle{\frac{1}{\sqrt{d}}\ind_{\{Y_s\notin \psi_{i_1}(\mathcal {U}_{i_1})\}}}\right)
 dW^{(l)}_s
\end{align*}
is a standard Brownian motion in $\er$. We also set  $$\widetilde{W}^{(k)}_t  : =\int_0^t \sum_{l=1}^d ({\ind_{\{Y_s\in \psi_{i_1}(\mathcal {\overline{U}}_{i_1})\}}} \partial_{x_{l}}\psi^{(k)}_{i_1}(\psi^{-1}_{i_1}(Y_s))  + {\frac{1}{\sqrt{d}}\ind_{\{Y_s\notin \psi_{i_1}(\mathcal {U}_{i_1})\}}}) dW^{(l)}_s,$$
for $k=1, \ldots, d-1$. Now, from the identity $\psi^{-1}(\psi(x)) = x$, we easily derive that 
$$\sum_{i=1}^d \partial_{x_i}{\left(\psi^{-1}\right)^{(k)}}(\psi(x))\partial_{x_l}\psi^{(i)}(x){=\delta_{kl}},$$
 {where $\delta_{kl}$ is the Kronecker delta}, and
\[\left(\psi_{i_1}^{-1}(Y_t), \nabla\psi^{-1}_{i_1}(Y_t) V_t\right) = (x_t,u_t).\]
Then for any component $k$, the drift term of  $V^{(k)}_{t}$ writes
\begin{equation*}
\int_0^t \displaystyle{\ind_{\{Y_s\in \psi_{i_1}(\mathcal {\overline{U}}_{i_1})\}}}\sum_{1\leq l,n\leq d}\partial^2_{x_l,x_n}\psi^{(k)}_{i_1}(x_s) u_s^{(l)}u_s^{(n)} ds
\end{equation*}
and is such  that
\begin{align*}
&\EE \left[\int_0^t \left(\displaystyle{\ind_{\{Y_s\in \psi_{i_1}(\mathcal {\overline{U}}_{i_1})\}}}\sum_{1\leq l,n\leq d}\partial^2_{x_l,x_n}\psi^{(k)}_{i_1}(x_s)u_s^{(l)}u_s^{(n)}\right)^2 ds\right]\\
& \quad \leq \sup_{1\leq l,n\leq d}\|\partial^2_{x_l,x_n}\psi^{(k)}_{i_1}\|_{L^\infty(\Uu_{i_1})} \EE\left[ \int_0^t\left(u_s^{(l)}u_s^{(n)}\right)^2 ds\right] < +\infty.
\end{align*}
Consequently (see Lipster-Shiryaev \cite[Theorem 7.4]{LipShi-77}), the law of $((Y_{t},V_{t});\,t\in[0,T])$
is absolutely continuous w.r.t. the law of $((\Yy_{t},\Vv_{t});\,t\in[0,T])$, solution to
\begin{equation}\label{eq:equivLangevinStraightened}
\left\{
\begin{aligned}
\Yy^{(k)}_{t} &= {\displaystyle \psi^{(k)}_{i_1}(x_{0}) + \int_{0}^{t} \Vv^{(k)}_{s} ds,}\\
\Vv^{(k)}_{t} &= \left(\sum_{l=1}^{d}\partial_{x_{l}} \psi^{(k)}_{i_1}({x_0})   u^{(l)}_{0}\right) + \sigma \widetilde{W}^{(k)}_t.
\end{aligned}
\right.
\end{equation}
In particular, $\Yy^{(d)}$ is the primitive of the Brownian motion $\widetilde{W}^{(d)}$ and   McKean \cite{McKean1963} has shown that if $(\Yy^d_{0},\Vv^d_{0})\neq (0,0)$ then, $\PP$-almost surely, the path  $t \mapsto (\Yy^{(d)}_{t}, \Vv^{(d)}_{t})$ never crosses $(0,0)$.
Thus the sequence of passage times at zero of $(\Yy^{(d)}_{t})$ tends to infinity, as $(x_0,u_0) \in \okphasespace$, and  the same holds true for the sequence of passage times at zero $(\beta^{1}_{n},n\geq 1, \beta^1_0 = 0)$ of $(Y^{(d)}_t)$ as well as for the sequence of hitting times of the boundary $\partial \Dd$ by  $(x_t)$.
In particular
\[\beta^{1}_1 = \inf\{t>0; \Yy^{(d)}_{t} =0\} = \inf\{t>0; Y^{(d)}_{t}=0\} = \inf\{t>0; x_t \in \partial \Dd\}.\]
Now we set
\begin{align*}
(X_t,U_t) = (x_t,u_t),  \mbox{ for all } 0\leq t<  \beta^{1}_1\wedge \zeta_1.
\end{align*}
Suppose that $\beta^{1}_{1}<\zeta_1$. At time $\beta^{1}_{1}$, as $x_{\beta^{1}_{1}}\in \partial \Dd$ with $(u_{\beta^{1}_{1}}\cdot \nd(x_{\beta^{1}_{1}})) >0$ $\PP$-a.s., we reflect the velocity as follows:
\begin{eqnarray*}
(X_{\beta^{1}_{1}},U_{\beta^{1}_{1}}) =
\left(x_{\beta^{1}_{1}},u_{{\beta^{1}_{1}}^-}
-2(u_{{\beta^{1}_{1}}^-}\cdot\nd(x_{\beta^{1}_{1}}))\nd(x_{\beta^{1}_{1}})\right).
\end{eqnarray*}
We resume the first step of our construction: we set $\theta_0 = \zeta_0 = \beta^0_1 = 0$ and  we have defined
\begin{align*}
i_{1}&=\min\left\{i \in \{1,\ldots, M\}; \;d(X_{\theta_0},\er^d\setminus\Uu_{i})=\max_{1\leq m\leq M}d(X_{\theta_0},\er^d\setminus\Uu_{m})\right\},\\
\zeta_{1}  &=\inf\{t>  \theta_0;\;x_{t}\notin\Uu_{i_{1}}\}, \\
\beta^{1}_1 &=\inf\{t >  \theta_0;\;x_t \in \partial \Dd\}.
\end{align*}
We set $\theta_1 = \beta^{1}_{1}\wedge \zeta_1$ and
\begin{align*}
(X_{t},U_{t})  & =  (x_{t},u_{t}), \mbox{ for all } \theta_0\leq  t < \theta_1, \\
\mbox{and } \quad(X_{\theta_1},U_{\theta_1}) & = \left(x_{\theta_1},u_{\theta_1^{-} }
-2( u_{\theta_1^{-} }\cdot\nd(x_{\theta_1}))\nd(x_{\theta_1}) \ind_{\{X_{\theta_1}\in \partial \Dd\}}\right).
\end{align*}
We iterate the construction as follows: assume that we have constructed  the process $(X_t,U_t)$ on $[0,\theta_{n}\wedge T]$.  We  define
\begin{align}
i_{n+1}&=\min\left\{i \in \{1,\ldots, M\}; \;d(X_{\theta_n},\er^d\setminus\Uu_{i})=\max_{1\leq m\leq M}d(X_{\theta_n},\er^d\setminus\Uu_{m})\right\}, \label{def:i_n+1}\\
\zeta_{n+1}  &=\inf\{t > \theta_n;\;x_{t}\notin\Uu_{i_{n+1}}\}, \label{def:zeta_n+1}\\ \beta^{n+1}_{1} & = \inf\{t > \theta_n;\;x_t \in \partial \Dd\} \label{def:beta_n+1},
\end{align}
where now  $(x_t,u_t)$ denotes  the  solution of  \eqref{eq:freelangevin} starting at $(\theta_n,X_{\theta_n},U_{\theta_n})$.
We set
\begin{align}
\theta_{n+1} & = \beta^{n+1}_1\wedge \zeta_{n+1}, \label{def:theta_n}\\
(X_{t},U_{t})  & =  (x_{t},u_{t})^{\theta_n,X_{\theta_n},U_{\theta_n}},\mbox{ for all } \theta_{n}\leq  t < \theta_{n+1}, \label{def:process-stepn+1}\\
\mbox{and} \quad(X_{\theta_{n+1}},U_{\theta_{n+1}}) & = \left(x_{\theta_{n+1}},u_{\theta_{n+1}^{-} }
-2( u_{\theta_{n+1}^{-} }\cdot\nd(x_{\theta_{n+1}}))\nd(x_{\theta_{n+1}}) \ind_{\{X_{\theta_{n+1}}\in \partial \Dd\}} \right). \label{def:reflection-stepn+1}
\end{align}
Note that the index $n$ above is always greater or equal to  the number of times $(X_t, t\leq \theta_n)$ hits the boundary.
By construction $(X_{t},U_{t}) $ satisfies  \eqref{eq:LinearConfinedLangevin} and the sequence of hitting times $\{\tau_m;  m\geq 0\}$ is well defined as each $\tau_m$ corresponds to some $\beta^{n}_1$.
We conclude on the existence of a solution to \eqref{eq:LinearConfinedLangevin} with Lemma \ref{lem:AsymptoticsExitTimesOfStraightening} proved below.

The pathwise uniqueness result is consequence of the well-posedness of the hitting time sequence $\{\tau_m,m\geq 0\}$: consider $({X},{U})$ and $(\widetilde{X},\widetilde{U})$, two  solutions to~\eqref{eq:LinearConfinedLangevin} defined on the same probability space, endowed with the same Brownian motion. Let us denote by $\widetilde{\tau}_{1}$  the first hitting time of $\widetilde{X}$,  we observe that
$\widetilde{\tau}_{1}\wedge T =\tau_{1}\wedge T$ due to the continuity of $X$ and $\widetilde{X}$. It follows that
$U_{\tau_{1}\wedge T}=\widetilde{U}_{\tau_{1}\wedge T}$, so that
$(X,U)$ and $(\widetilde{X},\widetilde{U})$ are equal up to $\tau_{1}\wedge T$.
By induction, one checks that this assertion holds true up to $\tau_{n}\wedge T$ for all $n\in\nat$. As $\tau_{n}$ tends to $+\infty$ $\PP_{x_0,u_0}$-a.s., $(X,U)$ and $(\widetilde{X},\widetilde{U})$ are equal on $[0,T]$.

\end{proof}
\begin{lemma}\label{lem:AsymptoticsExitTimesOfStraightening} For any $(x_0,u_0) \in \okphasespace$,  $\PP_{x_0,u_0}$-a.s., the sequence $\{\theta_{n};n\in\mathbb{N}\}$ given in \eqref{def:theta_n} grows to infinity as $n$ tends to $+\infty$.
\end{lemma}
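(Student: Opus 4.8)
The plan is to argue by contradiction: suppose that on an event of positive probability the increasing sequence $\{\theta_n\}$ has a finite limit $\theta_\infty \le T$. The key dichotomy is that each increment $\theta_{n+1}-\theta_n$ is governed by either the exit time $\zeta_{n+1}$ from the chart $\Uu_{i_{n+1}}$ or the boundary-hitting time $\beta^{n+1}_1$. First I would handle the geometric fact that, because $\{\Uu_1,\dots,\Uu_{M-1}\}$ is a finite cover of the compact set $\partial\Dd$ and $\Uu_M$ completes a cover of $\er^d$, there is a uniform $\delta>0$ such that any point $x$ lies at distance at least $\delta$ from $\er^d\setminus\Uu_{i}$ for the index $i$ selected by the "deepest containment" rule in \eqref{def:i_n+1}; consequently, to exit $\Uu_{i_{n+1}}$ the position must travel a distance at least $\delta$. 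Since $\max_{t\in[0,T]}|u_t|$ (equivalently $\max_{t\in[0,T]}|U_t|$, which only changes by velocity reflections that preserve the norm) is $\PP_{x_0,u_0}$-a.s. finite — call it $R(\omega)<\infty$ — the displacement over an interval of length $h$ is at most $R(\omega)\,h$. Hence on $\{\theta_\infty \le T\}$, for $n$ large enough that $\theta_{n+1}-\theta_n < \delta/(1+R(\omega))$, an exit through $\partial\Uu_{i_{n+1}}$ is impossible, so $\theta_{n+1}=\beta^{n+1}_1$, i.e. eventually every increment is a genuine boundary hit.

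Next I would rule out infinitely many boundary hits accumulating before $T$. The argument runs exactly as in the construction of the solution: after straightening via $\psi_{i_{n+1}}$, the last coordinate $Y^{(d)}$ is (up to an absolutely continuous change of measure, by the Girsanov-type argument already invoked with \cite[Theorem 7.4]{LipShi-77}) a Brownian primitive $\Yy^{(d)}$ started from $(\Yy^{(d)}_0,\Vv^{(d)}_0)=(\varsigma(X_{\theta_n}),(\nabla\psi^{(d)}_{i_{n+1}}(X_{\theta_n})\cdot U_{\theta_n}))=(0,(U_{\theta_n}\cdot\nd(X_{\theta_n})))$ with strictly positive velocity (the reflection in \eqref{def:reflection-stepn+1} guarantees the ingoing-then-reflected velocity is outward-pointing, hence $\Vv^{(d)}_0>0$). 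By McKean's result \cite{McKean1963}, starting from $(0,v)$ with $v\neq 0$ the path $(\Yy^{(d)},\Vv^{(d)})$ a.s. does not return to $(0,0)$, and the successive zeros of $\Yy^{(d)}$ form a sequence tending to infinity; transferred through the change of measure and through $\psi_{i_{n+1}}$, this says that on $[\theta_n,\zeta_{n+1})$ the hitting times of $\partial\Dd$ by $(x_t)$ are well separated and in particular $\beta^{n+1}_1>\theta_n$ strictly with no accumulation. Combined with the strong Markov property at each $\theta_n$, an accumulation point $\theta_\infty<\infty$ of the $\theta_n$ would force infinitely many zeros of a Brownian primitive (started each time from the origin with nonzero velocity) inside a bounded time window, which contradicts the a.s. local finiteness of the zero set away from a two-sided crossing of $(0,0)$.

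The main obstacle is making the last step uniform in $n$: McKean's theorem gives, for a \emph{fixed} starting velocity $v\neq 0$, a.s. non-accumulation of zeros, but here the starting velocities $v_n=(U_{\theta_n}\cdot\nd(X_{\theta_n}))$ are random and a priori could degenerate to $0$ along the subsequence, which would invalidate the naive Borel–Cantelli bound. I would control this by working with the law of the zero set of the Brownian primitive as a function of the (nonzero) initial velocity — using the scaling $(\Yy^{(d)}_t,\Vv^{(d)}_t)$ under $v\mapsto \lambda v$, $t\mapsto\lambda^{2}t$ — to obtain that the first return time to a neighbourhood of $\partial\Dd$, conditionally on $\Ff_{\theta_n}$, stochastically dominates $v_n^{2}$ times a fixed positive random variable; summing and using that $\sum_n v_n^2 = +\infty$ on the accumulation event (because the velocities are bounded and the process must keep hitting), together with a conditional Borel–Cantelli / Lévy extension argument, yields the contradiction. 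Alternatively, and more cleanly, one can invoke that between consecutive $\theta_n$ the position process $x$ is a genuine (time-changed) Langevin path, so the event $\{\theta_\infty<\infty\}$ would put the confined Langevin path in a position where it has infinitely many boundary contacts in finite time with velocity component returning to zero, i.e. the straightened path $(\Yy^{(d)},\Vv^{(d)})$ crosses $(0,0)$, which is the precise null event excluded by McKean; this reduces the whole lemma to a single application of \cite{McKean1963} after patching the charts, and I expect the write-up to follow that route.
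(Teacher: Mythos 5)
The paper's proof splits the claim in two and treats each half separately: it takes as given, from the proof of Theorem~\ref{thm:LinearSolutionByStraightening}, that the boundary-hit times $\{\beta_1^n\}$ do not accumulate, and then shows that the chart-exit times $\{\zeta_n\}$ do not accumulate either by a conditional Borel--Cantelli argument. Concretely, by Markov's inequality the event $A_k=\{\sup_{t\le\Tt}|x_{\zeta_{k-1}+t}-x_{\zeta_{k-1}}|\le\gamma_0\}$ has conditional probability at least $1-\frac{\Tt}{\gamma_0}\EE[\sup_{t\le T}|u_t|]$; a telescoping estimate shows $A_k$ occurs infinitely often, hence $\zeta_k-\zeta_{k-1}\ge\Tt$ infinitely often. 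Your proposal takes a genuinely different and arguably cleaner route. You argue by contradiction and replace the in-mean Markov bound by a pathwise one: since $\sup_{t\le T}|u_t|=R(\omega)<\infty$ a.s., on $\{\theta_\infty\le T\}$ the increments $\theta_{n+1}-\theta_n$ eventually become too short for the position to travel the uniform distance required to exit the chart chosen by the deepest-containment rule, so eventually every $\theta_{n+1}$ is a boundary hit. This makes the geometric constant do the whole job for chart exits deterministically, rather than probabilistically. Both proofs rest on the same two ingredients ($\gamma_0>0$ and a.s.\ finite sup of $|u|$); yours simply uses them in a more elementary way.

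For the remaining step (ruling out accumulating boundary hits) you worry, unnecessarily, about the random starting velocities $v_n=(U_{\theta_n}\cdot\nd(X_{\theta_n}))$ degenerating, and your first sketched fix via scaling plus another conditional Borel--Cantelli is not carried out and is not needed. Your second, ``clean'' observation is the right one, and it is worth spelling out why it closes the gap: $\varsigma(X_t)=\psi^{(d)}_i(X_t)$ is chart-independent by \eqref{eq:straightening}, and after a Girsanov change of measure the pair $(\varsigma(X_t),\,(U_t\cdot\nd(\pi(X_t))))$ has the law of $(|\Yy_t|,\,\mathrm{sgn}(\Yy_t)\Vv_t)$ for a \emph{single} unreflected Brownian primitive $(\Yy,\Vv)$; the velocity sign-flips at the boundary are absorbed into replacing the driving Brownian motion by $\widetilde W_t=\int_0^t \mathrm{sgn}(\Yy_s)\,dW_s$, still Brownian by L\'evy's characterization. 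The zero set of $\varsigma(X)$ therefore has the law of the zero set of $\Yy$, and if zeros accumulated at some $t_\infty<\infty$ then by Rolle $\Vv$ would have zeros accumulating there too, forcing $(\Yy,\Vv)(t_\infty)=(0,0)$, the precise null event excluded by McKean. Thus the $v_n$ are allowed to decay to $0$; that is not an obstruction. Once this step is written out, your contradiction argument is complete and self-contained, whereas the paper's proof of the lemma is devoted only to the chart-exit half and refers the boundary-hit half back to Theorem~\ref{thm:LinearSolutionByStraightening}.
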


We also emphasize that, starting from $(x_0,u_0) \in \okphasespace$, $\PP_{x_0,u_0}$-almost surely any path of  the free process $(x_t,u_t)$  solution of \eqref{eq:freelangevin} never crosses  $\Sigma^0$. Indeed, the probability for the solution of \eqref{eq:freelangevin} to    cross $\Sigma^0$ is dominated by the probability that a piece of straightened path (starting in $\Dd\times\er^{d}$) crosses $(\er^{d-1}\times\{0\})^2$ which is nil by the McKean result.

\begin{proof}[Proof of Lemma \ref{lem:AsymptoticsExitTimesOfStraightening}]

We already know that the sequence $\{\beta_1^n;\,n\in\nat\}$ is well defined and grows to infinity. So we only have to prove that $\{\zeta_n;\,n\in\nat\}$ grows to infinity. We simplify the presentation of the proof by considering the sequence of stopping times  $\{\zeta_n;\,n\in\nat\}$ omitting the intercalation with $\{\beta_1^n;\,n\in\nat\}$:
\begin{align}
i_{n+1}&=\min\left\{i \in \{1,\ldots, M\}; \;d(x_{\zeta_n},\er^d\setminus\Uu_{i})=\max_{1\leq m\leq M}d(x_{\zeta_n},\er^d\setminus\Uu_{m})\right\},\label{def:i_n2}\\
\zeta_{n+1}  &=\inf\{t\geq \zeta_n\,;\,x_{t}\notin\Uu_{i_{n+1}}\},\;\; \zeta_0 =0 \label{def:zeta_n2}
\end{align}
where $((x_t,u_t);\,t\in[0,+\infty))$ is the solution of \eqref{eq:freelangevin} starting at $(0,x_0,u_0)$.

For any $i\in\{1,\ldots,M\}$ and any $x\in\partial{\Uu}_i$,
$\max_{j\not=i}d(x,\er^{d}\setminus\Uu_j)$ is strictly  positive and continuous with respect to $x\in\partial\Uu_i$. Since ${\Uu}_i$ is bounded for $1\leq i\leq M-1$ and $\partial{\cal
  U}_M\subset\bigcup_{i=1}^{M-1}\Uu_i$, the set $\partial\Uu_i$ is compact for any $i\in\{1,\ldots,M\}$. Hence, we can define the strictly positive constant $\gamma_0$ as the minimal distance  that allows the process $(x_t)$ to go from one $\Uu_i$ to another $\Uu_j$,
\begin{align*}
\gamma_0:=\min_{1\leq i\leq M}\:\inf_{x\in\partial{\Uu}_i}\:\max_{j\not=i}d(x,\er^d\setminus{\Uu}_j)>0.
\end{align*}
The idea is to prove that, almost surely, $\zeta_k-\zeta_{k-1}\geq \Tt$ infinitely often w.r.t. $k\geq 1$ for $\Tt$ small enough. We fix a constant $\Tt>0$ and  we consider $A_k = \{ \sup_{0\leq t \leq \Tt}|x_{\zeta_{k-1} + t} - x_{\zeta_{k-1}}| \leq \gamma_0 \}$, such that
$$
\PP(A_k^c\mid{\cal F}_{\zeta_{k-1}}) \leq \frac{\Tt}{\gamma_0}\EE \left[\sup_{0\leq t \leq T} | u_t|\right].
$$
Since $A_k\in{\cal F}_{\zeta_k}$ for all $k\geq 1$,  for all $m<n$,
\begin{align*}
\PP\Big(\bigcap_{m\leq k\leq n}A_k^c\Big)
&=\EE\Big[\ind_{\{\bigcap_{m\leq k\leq n-1}A_k^c\}}
\PP(A_n^c\mid{\cal F}_{\zeta_{n-1}})\Big]\\
& \leq\PP\Big(\bigcap_{m\leq k \leq n-1}A_k^c\Big)\left(\frac{\Tt}{\gamma_0}\EE \left[\sup_{0\leq t \leq T} | u_t|\right]\right)\leq\ldots\leq\left( \frac{\Tt}{\gamma_0}\EE \left[\sup_{0\leq t \leq T} | u_t|\right]\right)^{n-m+1}.
\end{align*}
Therefore, choosing $\Tt$ such that $\frac{\Tt}{\gamma_0}\EE [\sup_{0\leq t \leq T} | u_0 + \sigma W_t|]<1$, it follows that
$$
\lim_{n\rightarrow+\infty}\PP\Big(\bigcap_{m\leq k\leq n}A_k^c\Big)=0,\quad\forall m\geq 1.
$$
This entails that the events $A_k$ occur infinitely often $\PP$-a.s. We thus found $\Tt>0$ such that the events $\{\zeta_k-\zeta_{k-1}\geq \Tt\}$  occur a.s. infinitely often.
\end{proof}

The following lemma is used  in Section \ref{sec:LawReconstruction} to achieve  the construction of the nonlinear process.
\begin{lemma}\label{lem:abs-cont-wrt-surface-mesure} Let $(X,U)$ be the confined Langevin process solution to \eqref{eq:LinearConfinedLangevin}  and $\{\tau_{n};  n\in \nat\}$ be the related sequence of hitting times defined in Theorem \ref{thm:LinearSolutionByStraightening}. Then, for all $(x_0,u_0) \in \okphasespacebis$,  for all integer $K>1$, the measure  $\sum_{n=0}^{K}\PP_{x_0,u_0}(\tau_{n}\in dt,X_{\tau_{n}}\in dx,U_{\tau_{n}}\in du)$ is absolutely continuous w.r.t the  measure $\lambda_{\Sigma_{T}}(dt,dx,du) =dt\otimes d\sigma_{\partial \Dd}(x)\otimes du$.
\end{lemma}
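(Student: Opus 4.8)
Here is how I would attack Lemma~\ref{lem:abs-cont-wrt-surface-mesure}.

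\medskip

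\noindent\textbf{Strategy.} The plan is to reduce the statement, by the strong Markov property, to an absolute‑continuity statement for a \emph{single} passage of the particle at $\partial\Dd$, and then to transfer that statement to the straightened reference process of \eqref{eq:equivLangevinStraightened} via the change of measure already used in the proof of Theorem~\ref{thm:LinearSolutionByStraightening}. First I would reduce to one hit: at each $\tau_{n}$ the velocity is reflected, so $(X_{\tau_{n}},U_{\tau_{n}})\in\Sigma^{-}\subset\Sigma\setminus\Sigma^{0}$, and on $[\tau_{n},\tau_{n+1})$ the process coincides with the free Langevin flow \eqref{eq:freelangevin} started at $(X_{\tau_{n}},U_{\tau_{n}})$. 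Hence, with $N((x,u),\cdot):=\PP_{x,u}(\tau_{1}\in ds,\,X_{\tau_{1}}\in dx',\,U_{\tau_{1}}\in du')$ the first‑hit kernel, the law of $(\tau_{n},X_{\tau_{n}},U_{\tau_{n}})$ is an $n$‑fold composition of $N$ started from $(x_{0},u_{0})$ (up to the obvious time shift, with the reflection applied to the exit point). Since the image of an absolutely continuous measure under a Markov kernel all of whose values are absolutely continuous is again absolutely continuous, and the sum over $n\le K$ is finite, it suffices to prove $N((x_{0},u_{0}),\cdot)\ll\lambda_{\Sigma_{T}}$ for every $(x_{0},u_{0})\in\okphasespacebis$ (the $n=0$ term, carried by $\{t=0\}$, is $\lambda_{\Sigma_{T}}$‑negligible). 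Using the strong Markov property at the first time $(x_{t})$ enters, close to $\partial\Dd$, a chart $\Uu_{i}$ of Proposition~\ref{prop:straightening}, together with the finiteness of the atlas, I may further assume $x_{0}\in\Uu_{i}$ with the hit occurring in $\Uu_{i}\cap\partial\Dd$ before the chart is left.

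\medskip

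\noindent\textbf{Reduction to the reference process.} On $\Uu_{i}$ I would apply $\psi_{i}$ exactly as in the proof of Theorem~\ref{thm:LinearSolutionByStraightening}: $(Y,V)$ solves \eqref{eq:StraightedLangevin}, and by \cite[Theorem~7.4]{LipShi-77} (the relevant drift being square‑integrable, as there) its law is absolutely continuous with respect to that of the reference process $(\Yy,\Vv)$ of \eqref{eq:equivLangevinStraightened}, whose $d$‑th component $\Yy^{(d)}$ is the primitive of a standard Brownian motion $\widetilde W^{(d)}$. As $(\tau_{1},X_{\tau_{1}},U_{\tau_{1}})$ is a measurable functional of the path $(Y,V)$ and $\psi_{i}$ is a diffeomorphism carrying $\partial\Dd\cap\Uu_{i}$ onto $\{y^{(d)}=0\}$ (so it transports the $\lambda_{\Sigma_{T}}$‑class to the class of $dt\otimes dy'\otimes dv$, $y'\in\er^{d-1}$, $v\in\er^{d}$), it is enough to show that the law of $\big(\tau,\Yy^{(1)}_{\tau},\dots,\Yy^{(d-1)}_{\tau},\Vv^{(1)}_{\tau},\dots,\Vv^{(d)}_{\tau}\big)$, with $\tau:=\inf\{t>0:\Yy^{(d)}_{t}=0\}$, is absolutely continuous on $(0,T)\times\er^{d-1}\times\er^{d}$. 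Since $(x_{0},u_{0})\notin\Sigma^{0}$ one has $(\Yy^{(d)}_{0},\Vv^{(d)}_{0})\neq(0,0)$, so McKean's result \cite{McKean1963} gives $\tau>0$ a.s., and Lachal's explicit formulas \cite{Lachal1997} for the first passage at $0$ of an integrated Brownian motion and the velocity there yield that $(\tau,\Vv^{(d)}_{\tau})$ has an absolutely continuous law on $(0,T)\times\er$.

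\medskip

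\noindent\textbf{The tangential coordinates.} It remains to bring in the other $2(d-1)$ coordinates. Set $G'(x):=\big(\nabla\psi^{(j)}(x)\cdot\nabla\psi^{(k)}(x)\big)_{j,k\le d-1}$, which is positive definite since $D\psi$ is invertible, $\beta^{(1:d-1)}_{t}:=\int_{0}^{t}G'(x_{s})^{-1/2}\,d\widetilde W^{(1:d-1)}_{s}$ and $\beta^{(d)}:=\widetilde W^{(d)}$. By the orthogonality \eqref{eq:ortho} the joint bracket of $(\widetilde W^{(1:d-1)},\widetilde W^{(d)})$ is block‑diagonal, so $\beta=(\beta^{(1:d-1)},\beta^{(d)})$ has bracket $I\,t$ and is, by Lévy's characterization, a standard $d$‑dimensional Brownian motion; in particular $\beta^{(1:d-1)}$ is independent of $\mathcal{G}:=\sigma(\widetilde W^{(d)}_{t};t\ge0)=\sigma(\beta^{(d)})$, while $\tau$ and $\Vv^{(d)}_{\tau}$ are $\mathcal{G}$‑measurable, and $\widetilde W^{(1:d-1)}_{t}=\int_{0}^{t}A(x_{s})\,d\beta^{(1:d-1)}_{s}$ with $A:=G'^{1/2}$ invertible. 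By disintegration over $\mathcal{G}$ it now suffices to prove that, $\PP$‑a.s., the conditional law given $\mathcal{G}$ of $\big(\Yy^{(1:d-1)}_{\tau},\Vv^{(1:d-1)}_{\tau}\big)$ is absolutely continuous on $\er^{2(d-1)}$ — combined with the absolute continuity of the law of $(\tau,\Vv^{(d)}_{\tau})$, a Fubini argument then gives absolute continuity of the full joint law. Conditionally on $\mathcal{G}$ (which freezes the value $\tau=s$ and under which $\beta^{(1:d-1)}$ stays a Brownian motion), the pair $\big(\Vv^{(1:d-1)}_{\tau},\Yy^{(1:d-1)}_{\tau}\big)$ is, up to an affine map, $\Big(\int_{0}^{s}A(x_{r})\,d\beta^{(1:d-1)}_{r},\ \int_{0}^{s}\!\!\int_{0}^{\rho}A(x_{r})\,d\beta^{(1:d-1)}_{r}\,d\rho\Big)$: a Kolmogorov‑type integrated‑martingale functional of $\beta^{(1:d-1)}$, smooth in the Malliavin sense and with $\beta^{(d)}$ entering only as a frozen parameter, whose Malliavin matrix is a.s.\ nondegenerate (as $r\to s$ the Malliavin derivative of the pair tends to $\binom{A(x_{s})}{(s-r)\,A(x_{s})}$, which bounds the matrix below near $r=s$ by a positive multiple of $\bigl(\begin{smallmatrix}\delta&\delta^{2}/2\\\delta^{2}/2&\delta^{3}/3\end{smallmatrix}\bigr)\otimes A(x_{s})A(x_{s})^{\mathsf T}$). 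Hence it has a conditional density, and pulling back through $\psi_{i}$ and summing over the charts and over $n\le K$ concludes.

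\medskip

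\noindent\textbf{Main obstacle.} The delicate point is precisely this last step: the tangential straightened coordinates do not decouple from the normal Brownian motion $\widetilde W^{(d)}$, because the coefficients $A(x_{s})$ depend on the whole position $x_{s}$ and hence on $\widetilde W^{(d)}$, so one cannot simply invoke independence and a Gaussian computation — the conditional Malliavin/hypoellipticity argument (Kolmogorov's example with $\widetilde W^{(d)}$ frozen) is genuinely required. By contrast, the stability of absolute continuity under the kernel composition of the first step and the localization in a single chart are routine.
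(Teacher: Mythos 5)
Your reduction follows the same route as the paper: strong Markov to a single boundary hit, localization in a chart $\Uu_i$ of Proposition~\ref{prop:straightening}, Liptser--Shiryaev change of measure to the drift-free straightened process $(\Yy,\Vv)$, Lachal's explicit law for the normal pair $(\tau,\Vv^{(d)}_\tau)$, and finally the surface-measure characterization to pull back through $\psi_i^{-1}$. Where you diverge is on the tangential coordinates. The paper simply asserts that $\beta^1_1$ is independent of the $(d-1)$-first components of $(\Yy,\Vv)$ and then tensors the two densities. You instead observe that, because $\psi_i$ need not be an isometry, the tangential martingales $\widetilde W^{(j)}=\int \nabla\psi^{(j)}_{i_1}(x_s)\cdot dW_s$ ($j<d$) are not Brownian motions: they carry a random, position-dependent bracket coupled to $\widetilde W^{(d)}$ through $x_s$, so the covariation orthogonality coming from~\eqref{eq:ortho} does not by itself yield the stated independence. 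That is a real subtlety, and it is good to flag it. Your replacement --- renormalize to $\beta^{(1:d-1)}=\int G'^{-1/2}(x_s)\,d\widetilde W^{(1:d-1)}_s$ so that $(\beta^{(1:d-1)},\widetilde W^{(d)})$ becomes a genuine $d$-dimensional Brownian motion by L\'evy, then condition on $\sigma(\widetilde W^{(d)})$ and obtain an a.s.\ conditional density for $(\Yy^{(1:d-1)}_\tau,\Vv^{(1:d-1)}_\tau)$ by Malliavin nondegeneracy of the Kolmogorov-type pair at the frozen time $\tau$ --- is a more robust way to close the step, and unlike the blunt independence assertion it does not rely on any flatness of the chart.

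Two cautions on the tangential step itself. First, the phrase ``with $\widetilde W^{(d)}$ frozen, $\beta^{(d)}$ entering only as a parameter'' overstates the decoupling: $A(x_r)=G'(x_r)^{1/2}$ is a functional of the whole driving noise, $\beta^{(1:d-1)}$ included, so after conditioning on $\sigma(\widetilde W^{(d)})$ it is still a random adapted integrand, and the partial Malliavin derivative of $\int_0^{\tau}A(x_r)\,d\beta^{(1:d-1)}_r$ carries, besides $A(x_r)$, a tail $\int_r^{\tau}D_r A(x_u)\,d\beta^{(1:d-1)}_u$. Your nondegeneracy argument does survive --- the tail is lower order as $r\to\tau$ and the leading $A(x_\tau)$, $(\tau-r)A(x_\tau)$ block produces the usual $\delta$, $\delta^2/2$, $\delta^3/3$ lower bound --- but the ``frozen parameter'' description should be dropped and the tail term acknowledged. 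Second, you should explicitly note that $G'$ is uniformly elliptic on $\overline\Uu_i$: the rows $\nabla\psi^{(j)}_{i_1}$, $j<d$, are linearly independent (since $D\psi_{i_1}$ is invertible) and all orthogonal to $\nabla\psi^{(d)}_{i_1}$, and $\overline\Uu_i$ is compact, which is what makes $A$ invertible with a uniform lower bound. With these two points tightened, your proof is complete, and arguably more careful than the paper's on this single step.
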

\begin{proof}
Owing to the strong Markov property of $(X,U)$, it is sufficient to prove  that for all $(x_0,u_0)\in\Sigma^{-}$, the probability $\PP_{x_0,u_0}\circ(\tau_{1},X_{\tau_{1}},U_{\tau_{1}})^{-1}$ is absolutely continuous w.r.t. $\lambda_{\Sigma_{T}}$. Using the same notation than in the proof of Theorem \ref{thm:LinearSolutionByStraightening}, for $(x_0,u_0)$ corresponding to some $(x_{\zeta_n},u_{\zeta_n})$ in the iterative construction (\eqref{def:i_n+1}-\eqref{def:reflection-stepn+1}),  it is sufficient to prove that  $\PP_{x_0,u_0}\circ(\beta^{1}_{1},x_{\beta^{1}_{1}},u_{\beta^{1}_{1}})^{-1}$ and $\lambda_{\Sigma_{T}}$ are equivalent, where $((x_t,u_t);\, t\in[0,T])$ is the solution of \eqref{eq:freelangevin}. Let $i_1$ be the index of the subset $\Uu_{i_1}$ such that $x^{x_0,u_0}_{\beta^{1}_{1}} \in \Uu_{i_1}\cap \partial\Dd$, $\psi_{i_1}(x_t) = Y_t$,
$(\nabla\psi_{i_1}(x_t)\cdot u_{t})=V_{t}$ and $\beta^{1}_{1}=\inf\{t>0; Y^{(d)}_{t}=0\}$ (with $((Y_t,V_t);\,t\in[0,T])$ solution to \eqref{eq:StraightedLangevin}). Then for any measurable test function $f$,
\begin{equation*}
\EE_{\PP_{x_0,u_0}}\left[f(\beta^{1}_{1},x_{\beta^{1}_{1}},u_{\beta^{1}_{1}})\right] =
\EE_{\PP_{y_0,v_0}}\left[f(\beta^{1}_{1},\psi^{-1}_{i_1}(Y_{\beta^{1}_{1}}),(\nabla\psi^{-1}_{i_{1}}(Y_{\beta^{1}_{1}})\cdot V_{\beta^{1}_{1}}))\right],
\end{equation*}
where $(y_{0},v_{0}):=(\psi^{-1}_{i_{1}}(x_{0}),(\nabla\psi^{-1}_{i_{1}}\cdot u_{0}))$. At this point, and owing to the equivalence between the laws of $((Y_{t},V_{t});\,t\in[0,T])$ and  $((\Yy_{t},\Vv_{t});\,t\in[0,T])$, solution to \eqref{eq:equivLangevinStraightened}, we are reduce to prove that the law of $(\beta^{1}_{1},\psi^{-1}_{i}(\Yy_{\beta^{1}_{1}}),(\nabla\psi^{-1}_{i_{1}}(\Yy_{\beta^{1}_{1}})\cdot V_{\beta^{1}_{1}}))$ is absolutely continuous w.r.t. $\lambda_{\Sigma_{T}}$. Let us first recall that the joint law of $(\beta^{1}_{1},\Yy^{(d)}_{\beta^{1}_{1}},\Vv^{(d)}_{\beta^{1}_{1}})$ is explicitly known (see \cite[Theorem 1]{Lachal1997}) and is absolutely continuous w.r.t. the product measure $dt\otimes du^{(d)}$. Furthermore,  $\beta^{1}_{1}$ is independent of the $(d-1)$-first components of  $(\Yy_t,\Vv_{t}, t\in[0,T])$. Hence we remark that the law of $(\beta^{1}_{1},\Yy_{\beta^{1}_{1}},\Vv_{\beta^{1}_{1}})$ is absolutely continuous w.r.t. $dt\otimes dy'\otimes du$, where $dy'$ is the surface measure of $\partial (\er^{d-1}\times \er^+)$.

Let us next recall the characterization of the surface measure $\sigma_{\partial\Dd}$ (see e.g  \cite[Chapter 5]{Adams-1975}). As $\partial \Dd$ is $\Cc^3$, it can be (locally) represented as the graph of a $\Cc^3$ function:  for all $x\in\partial\Dd$, there exists an open neighborhood  $\Uu_x\subset \er^d$ of $x$ and a $\Cc^3_b(\er^{d-1})$ function $\phi_x$, such that for all $y\in\partial \Dd  \cap \overline{\Uu_x}$, $y^{(d)} = \phi_x(y^{(1)},\ldots,y^{(d-1)})$  and
 $\Dd \cap \Uu_x = \{ y = (y',y^{(d)}) \in \Uu_x, \text{ s.t } y^{(d)} < \phi_x(y')\}$. Hence, for all $\Cc^{\infty}$-function $g$ with compact support in  $\partial \Dd  \cap \overline{\Uu_x}$, we have that
\[\int_{\partial \Dd} g(x) d\sigma_{\partial\Dd}(x)  =  \int_{\er^{d-1} } g((y',\phi_x(y'))) \sqrt{ 1 + | \nabla \phi_x(y')|^2} dy'.\]
Owing to this characterization and the preceding remark on the law of $(\beta^{1}_{1},\Yy_{\beta^{1}_{1}},\Vv_{\beta^{1}_{1}})$, we conclude on the result.
\end{proof}

\subsection{On the semigroup of the confined Langevin process}
In this section we investigate some estimates related to the semigroup associated  to the solution of the SDE  \eqref{eq:LinearConfinedLangevin}; namely, for a nonnegative test function $\psi \in \Cc^{\infty}_c(\Dd\times\er^d)$, for all $(x,u)\in\okphasespacebis$, we define
\begin{equation}
\label{eq:EqMildKernel}
\Gamma^{\psi}(t,x,u):=\EE_{\PP}\left[\psi(X^{x,u}_{t},U^{x,u}_{t})\right],
\end{equation}
where $((X^{x,u}_{t},U^{x,u}_{t});\,t\in[0,T])$ is the solution of \eqref{eq:LinearConfinedLangevin} starting from $(0,x,u)$ and  $((X^{s,x,u}_{t},U^{s,x,u}_{t});\,t\in[0,T])$ is the solution of \eqref{eq:LinearConfinedLangevin} starting from $(s,x,u)$.

As in \cite{jabir-11a}, this semigroup is of particular interest for the construction of the solution to the nonlinear equation \eqref{eq:NonlinearConfinedLangevin}. Due to the pathwise uniqueness of the confined Langevin process, one has that, for all $0\leq s\leq t\leq T$,
\begin{equation}
\Gamma^{\psi}(t-s,x,u)=\EE_{\PP}\left[\psi(X^{s,x,u}_{t},U^{s,x,u}_{t})\right],
\end{equation}
so that the  estimates hereafter can be extended to the semigroup transitions of the process.

We consider also the semigroup related to the stopped process:
\begin{align*}
\Gamma^{\psi}_{n}(t,x,u)=\EE_{\PP}\left[\psi(X^{x,u}_{t\wedge\tau^{x,u}_{n}},U^{x,u}_{t\wedge\tau^{x,u}_{n}})\right],
\end{align*}
where $\{\tau^{x,u}_{n};n\in\nat\}$ is  the sequence of hitting times defined in Theorem \ref{thm:LinearSolutionByStraightening} and $\Gamma^{\psi}_{0}(t,x,u)=\psi(x,u)$.
The estimates on $\{\Gamma^{\psi}_{n};\,n\geq 1\}$ and $\Gamma^{\psi}$ rely on the following PDE result, the proof of which is postponed in the next Subsection \ref{sec:PDEStudy}.
\begin{theorem}\label{thm:L2VFPDirichlet}
Assume \hyplang. Given two nonnegative functions $f_{0}\in L^{2}(\Dd\times\er^{d})\cap\Cc_{b}(\Dd\times\er^{d})$ and
$q\in L^{2}(\Sigma^{+}_{T})\cap\Cc_{b}(\Sigma^{+}_{T})$, there exists a unique nonnegative function $f\in \Cc^{1,1,2}_{b}(Q_{T})\cap \Cc((0,T]\times (\overline{\Dd}\times \er^d\setminus\Sigma^{0}))\cap L^{2}((0,T)\times\Dd;H^{1}(\er^{d}))$ which is a solution to
\begin{equation}\label{eq:VFPDirichlet}
\left\{
\begin{aligned}
&\partial_{t} f(t,x,u) - (u\cdot\nabla_x f(t,x,u))-\frac{\sigma^{2}}{2}\triangle_u f(t,x,u) = 0,~\textrm{ for all}~(t,x,u)\in Q_{T},\\
&f(0,x,u) = f_{0}(x,u),~\textrm{ for all}~(x,u)\in\Dd \times\er^{d},\\
&f(t,x,u) = q(t,x,u),~\textrm{ for all}~
(t,x,u)\in\Sigma^{+}_{T}.
\end{aligned}
\right.
\end{equation}
In addition, for $(x^{x,u}_{t},u^{x,u}_{t};\,t\in[0,T])$ solution to \eqref{eq:freelangevin} starting from $(x,u)\in\Dd\times\er^{d}$ at $t=0$ and $\beta^{x,u}:=\inf\{t>0\,;\,x^{x,u}_{t}\in\partial\Dd\}$, we have
\begin{equation}
\label{eq:FeynmanKacInterpretation}
f(t,x,u)=\EE_{\PP}
\left[f_0(x^{x,u}_{t},u^{x,u}_{t})
\ind_{\displaystyle\{t\leq\beta^{x,u}\}}\right]
+\EE_{\PP}\left[q(t-\beta^{x,u},
x^{x,u}_{\beta^{x,u}},
u^{x,u}_{\beta^{x,u}})
\ind_{\displaystyle\{t>\beta^{x,u}\}}\right].
\end{equation}
Furthermore, for all $t\in(0,T)$, $f$ satisfies the energy equality:
\begin{equation}\label{estim:GeneralEnergyEquality}
\Vert f(t)\Vert^{2}_{L^{2}(\Dd\times\er^{d})}+\Vert f\Vert^{2}_{L^{2}(\Sigma^{-}_{t})}+\sigma^{2}\Vert\nabla_{u}f\Vert^{2}_{L^{2}(Q_{t})}=
\Vert f_0\Vert^{2}_{L^{2}(\Dd\times\er^{d})}+\Vert q\Vert^{2}_{L^{2}(\Sigma^{+}_{t})},
\end{equation}
and the $L^{p}$-estimate: for $1\leq p<+\infty$,
\begin{equation}
\label{estim:GeneralLpEquality}
\Vert f(t)\Vert^{p}_{L^{p}(\Dd\times\er^{d})}+
\Vert f\Vert^{p}_{L^{p}(\Sigma^{-}_{t})}\leq \Vert f_{0}\Vert^{p}_{L^{p}(\Dd\times\er^{d})}+\Vert q\Vert^{p}_{L^{p}(\Sigma^{+}_{t})}.
\end{equation}
\end{theorem}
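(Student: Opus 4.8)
The plan is to treat the forward kinetic equation \eqref{eq:VFPDirichlet} as a degenerate (hypoelliptic) transport-diffusion problem, splitting the work into an $L^2$-variational existence/uniqueness part, a probabilistic identification part, and a regularity part. First I would establish existence and uniqueness of a weak solution in the energy space $\Hh(Q_T) = L^2((0,T)\times\Dd; H^1(\er^d))$ with the boundary datum $q$ on the incoming set $\Sigma^+_T$ (note that for the \emph{forward} Kolmogorov operator $\partial_t - u\cdot\nabla_x - \tfrac{\sigma^2}{2}\triangle_u$ the ``characteristics enter'' at $\Sigma^+$, which is why the Dirichlet condition is imposed there and the trace on $\Sigma^-_T$ is free). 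This is the standard kinetic-transport variational setting of Carrillo and of Degond--Mas-Gallic: one works with the bilinear form associated to the operator, uses the weak formulation tested against functions vanishing on $\Sigma^-_T$, and closes the estimate by the Green/integration-by-parts identity for the streaming operator, which produces exactly the boundary terms $\|f\|^2_{L^2(\Sigma^-_t)}$ and $\|q\|^2_{L^2(\Sigma^+_t)}$ appearing in \eqref{estim:GeneralEnergyEquality}. A Lax--Milgram / Lions-type argument (or a vanishing-viscosity regularization $-\varepsilon\triangle_x$ followed by passage to the limit) then yields a unique weak solution, and testing the equation against $f$ itself gives the energy \emph{equality} \eqref{estim:GeneralEnergyEquality} rather than merely an inequality.

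Second, I would identify this weak solution with the Feynman--Kac functional \eqref{eq:FeynmanKacInterpretation}. Define $g(t,x,u)$ to be the right-hand side of \eqref{eq:FeynmanKacInterpretation}, where $(x^{x,u}_\cdot, u^{x,u}_\cdot)$ solves the \emph{free} Langevin SDE \eqref{eq:freelangevin} and $\beta^{x,u}$ is its first hitting time of $\partial\Dd$; the observations already recorded in the excerpt (after Lemma \ref{lem:AsymptoticsExitTimesOfStraightening}) guarantee that, starting from $\Dd\times\er^d$, the free process $\PP$-a.s. never touches $\Sigma^0$, so $(x^{x,u}_{\beta^{x,u}}, u^{x,u}_{\beta^{x,u}}) \in \Sigma^+$ a.s. and $g$ is well defined with $q$ evaluated on $\Sigma^+_T$. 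One checks $g$ is a weak solution of \eqref{eq:VFPDirichlet} by the strong Markov property and Itô's formula applied to $\phi(s, x^{x,u}_s, u^{x,u}_s)$ for smooth test functions $\phi$, or equivalently by Dynkin's formula up to $t\wedge\beta^{x,u}$; continuity of $q$ on $\Sigma^+_T$ and of $f_0$ on $\Dd\times\er^d$, together with continuity of the hitting time $\beta^{x,u}$ in $(x,u)$ away from $\Sigma^0$, gives $g \in \Cc((0,T]\times(\overline\Dd\times\er^d\setminus\Sigma^0))$ and the attainment of the boundary/initial data. By the uniqueness from the first step, $f = g$.

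Third, for the interior smoothness $f \in \Cc^{1,1,2}_b(Q_T)$ I would invoke hypoellipticity: the operator $\partial_t - u\cdot\nabla_x - \tfrac{\sigma^2}{2}\triangle_u$ satisfies Hörmander's bracket condition (the fields $\partial_{u_i}$ together with their brackets with $u\cdot\nabla_x$ generate the full tangent space), so any distributional solution is $\Cc^\infty$ in $(t,x,u)$ on the open set $Q_T$; local hypoelliptic estimates plus the global $L^2$ bound upgrade this to bounded derivatives of the stated orders on $Q_T$. Finally, the $L^p$-estimate \eqref{estim:GeneralLpEquality} for $1\le p<\infty$ follows either by a Stampacchia-type truncation (testing the equation against $p|f|^{p-2}f$, using nonnegativity of $f$ and the same streaming Green identity, which now yields an inequality because the boundary term has a sign), or more cheaply from the probabilistic representation: since $f = g \ge 0$ and, by \eqref{eq:FeynmanKacInterpretation}, $f$ is an average of $f_0$ and $q$ along characteristics with the partition $\{t\le\beta\}\cup\{t>\beta\}$, Jensen's inequality against $x\mapsto x^p$ bounds $\int |f(t)|^p$ plus the boundary loss on $\Sigma^-_t$ by $\|f_0\|_p^p + \|q\|_{L^p(\Sigma^+_t)}^p$. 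The main obstacle I anticipate is the boundary analysis: making the integration-by-parts (Green) formula for the streaming operator rigorous so that the traces on $\Sigma^\pm_T$ genuinely live in $L^2(\Sigma^\pm_T)$ (the weighted boundary spaces of the Notation subsection) and the energy \emph{equality} holds — this requires a careful trace theorem for the kinetic transport operator near $\Sigma^0$, which is precisely where the explicit one-dimensional structure after straightening (and the McKean/Lachal law of the Brownian-primitive hitting time) is used to rule out pathological mass concentration on the grazing set.
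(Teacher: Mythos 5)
Your overall plan---variational $L^2$ existence in the Carrillo/Degond framework, a Feynman--Kac identification, interior smoothness via hypoellipticity, energy equality and $L^p$ bounds via the kinetic Green formula and truncation---agrees in spirit with the paper's. The paper also imports the weak well-posedness, the energy equality and the $L^p$ estimate from Carrillo (Proposition~\ref{propo:VFPDirWellposed}), and uses Bouchut's hypoelliptic estimates for the interior $\Cc^{1,1,2}$ regularity (Proposition~\ref{prop:Schauder_estimate}). Where you differ is the order of the Feynman--Kac step, and that is where the gap lies.

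You propose to \emph{define} $g$ as the right-hand side of \eqref{eq:FeynmanKacInterpretation}, to ``check $g$ is a weak solution by the strong Markov property and It\^o / Dynkin,'' and then to conclude $f=g$ ``by the uniqueness from the first step.'' But the uniqueness you have at that stage is uniqueness of weak solutions in $\Hh(Q_{T})=L^{2}((0,T)\times\Dd;H^{1}(\er^d))$ with $L^{2}$-traces on $\Sigma^{\pm}_{T}$, and invoking it requires first showing that $g$ belongs to that class: $g\in\Hh(Q_T)$, $\gamma^{\pm}(g)\in L^{2}(\Sigma^{\pm}_{T})$, and $\gamma^{+}(g)=q$. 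Your argument only yields pointwise continuity of $g$ on $(0,T]\times(\overline\Dd\times\er^d\setminus\Sigma^{0})$, which controls neither $\nabla_u g$ in $L^2$ nor the weighted boundary integrability near the grazing set $\Sigma^{0}$, where the measure $|(u\cdot n_{\Dd})|\,d\lambda_{\Sigma_T}$ degenerates. ``It\^o for test functions'' is also circular here: to read off the PDE from Dynkin's formula one must already know $g$ is $\Cc^{1,1,2}$, which is itself what hypoellipticity is supposed to give you \emph{after} you know $g$ solves the equation distributionally.

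The paper breaks this circle by running the logic the other way. It first shows the variational solution $f$ is smooth in the interior, then proves continuity of $f$ up to $\Sigma^{+}_{T}$ by constructing explicit local \emph{barrier functions} built from the signed distance $\varsigma$ and a quadratic bump (Proposition~\ref{prop:ContUp2RelevantBoundary}), comparing the positive part $(f-\overline\omega_\epsilon)^{+}$ with $f$ inside the weak formulation on a small cylinder. Only then does it apply It\^o to the now-smooth $f(t-s,\cdot,\cdot)$ along the free Langevin flow stopped at $\beta^{y,v}_{\delta}$, and the barrier-granted continuity lets $\delta\to 0$ so that $f(t-\beta,x_\beta,u_\beta)$ is replaced by $q(t-\beta,x_\beta,u_\beta)$, giving \eqref{eq:FeynmanKacInterpretation}; continuity up to $\Sigma^{-}_{T}$ then falls out of the Feynman--Kac formula together with continuity of $(y,v)\mapsto\beta^{y,v}$ (Proposition~\ref{prop:ContU2IrrelevantBound}, via Darling--Pardoux and the fact that free paths a.s.\ avoid $\Sigma^0$). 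The barrier-function step is the ingredient missing from your plan; without it (or a substitute argument establishing that your $g$ lies in $\Hh(Q_T)$ with $L^2$ traces, e.g.\ by deriving an energy identity for $g$ directly from the McKean/Lachal hitting-time density after straightening), the identification $f=g$ is not justified.
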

From Theorem \ref{thm:L2VFPDirichlet}, we deduce the following result for $\{\Gamma^{\psi}_{n}, n\geq 1\}$:
\begin{corollary}\label{coro:KernelSmoothness}
Assume \hyplang. Then, for all nonnegative $\psi\in\Cc_{c}(\Dd\times\er^d)$ and all $n\in \nat^*$, $\Gamma^{\psi}_{n}$  is a nonnegative function in $\Cc^{1,1,2}_{b}(Q_{T})\cap \Cc(\overline{Q_{T}}\setminus\Sigma^{0})$ and satisfies the PDE
\begin{equation}\label{eq:backwardVFP}
\left\{\begin{aligned}
&\partial_{t}\Gamma^{\psi}_{n}(t,x,u)-(u\cdot \nabla_{x}\Gamma^{\psi}_{n}(t,x,u))-\frac{\sigma^{2}}{2}\triangle_{u}\Gamma^{\psi}_{n}(t,x,u)=0,~\textrm{ for all}~(t,x,u) \in Q_{T},\\
&\Gamma^{\psi}_{n}( 0,x,u)=\psi(x,u),~\textrm{ for all}~(x,u)\in\Dd\times\er^{d},\\
&\Gamma^{\psi}_{n}(t,x,u)=\Gamma^{\psi}_{n-1}(t,x,u-2(u\cdot\nd(x))\nd(x)),~
\textrm{ for all}~(t,x,u)\in\Sigma^{+}_{T}.
\end{aligned}\right.
\end{equation}
In addition, the set $\{\Gamma^{\psi}_{n},n\geq 1, \Gamma^{\psi}_{0} = \psi\}$ belongs to $L^{2}((0,T)\times\Dd;H^1(\er^d))$ and satisfies the energy equality
\begin{align}\label{ineq:ApproxSobolevEstimate_L2}
\Vert \Gamma^{\psi}_{n}(t)\Vert^{2}_{L^{2}(\Dd\times\er^{d})}+\sigma^{2}\Vert\nabla_{u}\Gamma^{\psi}_{n}\Vert^{2}_{L^{2}(Q_{t})}+\Vert \Gamma^{\psi}_{n}\Vert^{2}_{L^{2}(\Sigma^{-}_{t})}=
\Vert \psi\Vert^{2}_{L^{2}(\Dd\times\er^{d})}+\Vert \Gamma^{\psi}_{n-1}\Vert^{2}_{L^{2}(\Sigma^{-}_{t})}.
\end{align}
\end{corollary}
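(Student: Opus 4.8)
The plan is to prove the statement by induction on $n$, using Theorem \ref{thm:L2VFPDirichlet} at each step with a carefully chosen boundary datum $q$. For the base case $n=1$, observe that up to the first hitting time $\tau_1$ the confined process $(X^{x,u}_t,U^{x,u}_t)$ coincides with the free Langevin process $(x^{x,u}_t,u^{x,u}_t)$ of \eqref{eq:freelangevin}, and at $\tau_1 = \beta^{x,u}$ the process stops. Hence $\Gamma^{\psi}_1(t,x,u) = \EE_{\PP}[\psi(x^{x,u}_t,u^{x,u}_t)\ind_{\{t\leq\beta^{x,u}\}}] + \EE_{\PP}[\psi(x^{x,u}_{\beta^{x,u}},u^{x,u}_{\beta^{x,u}})\ind_{\{t>\beta^{x,u}\}}]$. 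Comparing with the Feynman-Kac formula \eqref{eq:FeynmanKacInterpretation}, this is exactly the value at $(t,x,u)$ of the solution $f$ to \eqref{eq:VFPDirichlet} with $f_0 = \psi$ and boundary datum $q(t,x,u) = \psi(x,u)\big|_{\Sigma^{+}_T}$. To apply Theorem \ref{thm:L2VFPDirichlet} I must check that this $q$ lies in $L^2(\Sigma^+_T)\cap\Cc_b(\Sigma^+_T)$: since $\psi\in\Cc_c(\Dd\times\er^d)$ it is bounded and continuous, and since $\psi$ has compact support inside $\Dd$ (away from $\partial\Dd$), its restriction to $\Sigma^+_T$ is actually identically zero — so $q\equiv 0$ here, trivially in the required space. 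Thus $\Gamma^{\psi}_1 = f$ inherits from Theorem \ref{thm:L2VFPDirichlet} the regularity $\Cc^{1,1,2}_b(Q_T)\cap\Cc(\overline{Q_T}\setminus\Sigma^0)$, the PDE \eqref{eq:backwardVFP} (with $\Gamma^{\psi}_0 = \psi$ on $\Sigma^+_T$, consistent since both sides vanish there), membership in $L^2((0,T)\times\Dd;H^1(\er^d))$, and the energy equality \eqref{ineq:ApproxSobolevEstimate_L2} specialized from \eqref{estim:GeneralEnergyEquality} with $q\equiv 0$.

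For the inductive step, assume $\Gamma^{\psi}_{n-1}$ has the claimed properties. By the strong Markov property at $\tau_1$ and the definition of the stopped semigroup, one gets the renewal identity
\begin{equation*}
\Gamma^{\psi}_n(t,x,u) = \EE_{\PP}\big[\psi(x^{x,u}_t,u^{x,u}_t)\ind_{\{t\leq\beta^{x,u}\}}\big] + \EE_{\PP}\big[\Gamma^{\psi}_{n-1}\big(t-\beta^{x,u},x^{x,u}_{\beta^{x,u}}, u^{x,u}_{\beta^{x,u}} - 2(u^{x,u}_{\beta^{x,u}}\cdot\nd(x^{x,u}_{\beta^{x,u}}))\nd(x^{x,u}_{\beta^{x,u}})\big)\ind_{\{t>\beta^{x,u}\}}\big],
\end{equation*}
because at $\tau_1$ the velocity is specularly reflected and the process restarts with $n-1$ reflections remaining. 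The point $x^{x,u}_{\beta^{x,u}}\in\partial\Dd$ comes with an outgoing pre-reflection velocity, so the reflected velocity is ingoing; therefore the argument of $\Gamma^{\psi}_{n-1}$ lies in $(\Dd\times\er^d)\cup(\Sigma\setminus\Sigma^0)$, where $\Gamma^{\psi}_{n-1}$ is defined and continuous (its continuity up to $\overline{Q_T}\setminus\Sigma^0$ is exactly the inductive hypothesis). Comparing once more with \eqref{eq:FeynmanKacInterpretation}, $\Gamma^{\psi}_n$ is the Feynman-Kac representation of the solution $f$ to \eqref{eq:VFPDirichlet} with $f_0 = \psi$ and boundary datum $q(t,x,u) := \Gamma^{\psi}_{n-1}(t,x,u-2(u\cdot\nd(x))\nd(x))$ on $\Sigma^+_T$. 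I then verify that this $q\in L^2(\Sigma^+_T)\cap\Cc_b(\Sigma^+_T)$: continuity and boundedness follow from the inductive regularity of $\Gamma^{\psi}_{n-1}$ together with continuity of $\nd$ on the $\Cc^3$ boundary (note that $(x,u)\in\Sigma^+$ maps to $(x,u-2(u\cdot\nd(x))\nd(x))\in\Sigma^-\subset\overline{Q_T}\setminus\Sigma^0$, away from the bad set); and the $L^2(\Sigma^+_T)$ bound follows from the $L^2(\Sigma^-_T)$ membership of $\Gamma^{\psi}_{n-1}$ (inductive hypothesis) via the change of variables $u\mapsto u-2(u\cdot\nd(x))\nd(x)$, which is a measure-preserving involution swapping $\Sigma^+$ and $\Sigma^-$ and preserving $|(u\cdot\nd(x))|$. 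Theorem \ref{thm:L2VFPDirichlet} then delivers all the asserted properties for $\Gamma^{\psi}_n = f$; in particular the energy equality \eqref{estim:GeneralEnergyEquality} reads $\Vert\Gamma^{\psi}_n(t)\Vert^2_{L^2} + \Vert\Gamma^{\psi}_n\Vert^2_{L^2(\Sigma^-_t)} + \sigma^2\Vert\nabla_u\Gamma^{\psi}_n\Vert^2_{L^2(Q_t)} = \Vert\psi\Vert^2_{L^2} + \Vert q\Vert^2_{L^2(\Sigma^+_t)}$, and the last norm equals $\Vert\Gamma^{\psi}_{n-1}\Vert^2_{L^2(\Sigma^-_t)}$ by the above change of variables, which is precisely \eqref{ineq:ApproxSobolevEstimate_L2}.

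The main obstacle I anticipate is the rigorous justification of the renewal identity and the identification of $\Gamma^{\psi}_n$ with the Feynman-Kac solution $f$ near the boundary: one must control the behavior of $\Gamma^{\psi}_{n-1}$ at its singular set $\Sigma^0$ and ensure that $\PP_{x,u}$-a.s. the hitting point $(x^{x,u}_{\beta^{x,u}},u^{x,u}_{\beta^{x,u}})$ avoids $\Sigma^0$ — this is guaranteed by the remark following Lemma \ref{lem:AsymptoticsExitTimesOfStraightening} (the free process a.s. never crosses $\Sigma^0$ when started in $(\Dd\times\er^d)\cup(\Sigma\setminus\Sigma^0)$), so the composition $\Gamma^{\psi}_{n-1}(t-\beta^{x,u},\cdot,\cdot)$ is well-defined and the expectation makes sense; one also invokes $\beta^{x,u} < \infty$ a.s. only when needed and otherwise the indicator $\ind_{\{t>\beta^{x,u}\}}$ handles it. A secondary technical point is continuity of $q$ on $\Sigma^+_T$ up to, but not including, $\Sigma^0_T$, which matches exactly the hypothesis format of Theorem \ref{thm:L2VFPDirichlet} once one notes $q$ is only required continuous and bounded on $\Sigma^+_T$ (an open set in the relative topology, disjoint from $\Sigma^0$); boundedness of $q$ and of $\Gamma^{\psi}_{n-1}$ follows by iterating the maximum-principle bound $\Vert\Gamma^{\psi}_n\Vert_\infty \leq \Vert\psi\Vert_\infty$ implicit in \eqref{estim:GeneralLpEquality} as $p\to\infty$, or directly from the probabilistic representation since $0\leq\Gamma^{\psi}_n\leq\Vert\psi\Vert_\infty$.
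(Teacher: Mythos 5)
Your proof is correct and follows essentially the same route as the paper: induction on $n$, a strong-Markov renewal identity expressing $\Gamma^\psi_n$ through $\Gamma^\psi_{n-1}$ at the first hitting time with specular reflection, verification that $q = \Gamma^\psi_{n-1}(\cdot,\cdot,u-2(u\cdot\nd)\nd)$ lies in $L^2(\Sigma^+_T)\cap\Cc_b(\Sigma^+_T)$ via the measure-preserving reflection change of variables, identification of $\Gamma^\psi_n$ with the Feynman--Kac solution of Theorem \ref{thm:L2VFPDirichlet}, and the energy identity inherited from \eqref{estim:GeneralEnergyEquality}. The only presentational difference is that you put the base case first and fold the boundary-value verification into the renewal identity, whereas the paper derives the trace value of $\Gamma^\psi_n$ on $\Sigma^+_T$ separately (via a convergent sequence $(x_m,u_m)\to(x,u)\in\Sigma^+$ and Proposition \ref{prop:ContU2IrrelevantBound}) before invoking the PDE theorem; both routes rest on the same ingredients and you correctly flag that step as the delicate point.
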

\begin{proof}
For $n> 1$, let us assume that $\Gamma^{\psi}_{n-1}\in \Cc(\overline{Q_{T}}\setminus\Sigma^{0})$ with
$\Gamma^{\psi}_{n-1}|_{\Sigma^{-}_{T}}\in L^2(\Sigma^{-}_{T})$. Since  $x\mapsto n_{\Dd}(x)$ is continuous on $\partial\Dd$,
the function $(t,x,u)\mapsto \Gamma^{\psi}_{n-1}(t,x,u-2(u\cdot n_{\Dd}(x))n_{\Dd}(x))$ is in $\Cc_{b}(\Sigma^{+}_{T})$. Furthermore, $\Gamma^{\psi}_{n-1}(t,x,u-2(u\cdot n_{\Dd}(x))n_{\Dd}(x))|_{\Sigma^{+}_{T}}$  is in $L^{2}(\Sigma^{+}_{T})$ since, by using the change of variables $u\mapsto \widehat{u} := u-2(u\cdot n_{\Dd}(x))n_{\Dd}(x)$ for fixed $x\in\partial\Dd$,  we have
\begin{equation}\label{prfst:KernelSmoothness}
\begin{aligned}
&\int_{\Sigma^{+}_{T}}|(u\cdot n_{\Dd}(x))|\left(\Gamma^{\psi}_{n-1}(t,x,u-2(u\cdot n_{\Dd}(x))n_{\Dd}(x))\right)^{2}d\lambda_{\Sigma_{T}}(t,x,u)\\
&=\int_{\Sigma^{-}_{T}}|(u\cdot n_{\Dd}(x))|\left(\Gamma^{\psi}_{n-1}(t,x,u)\right)^{2}d\lambda_{\Sigma_{T}}(t,x,u)=\Vert\Gamma^{\psi}_{n-1}
\Vert^{2}_{L^{2}(\Sigma^{-}_{T})}<+\infty.
\end{aligned}
\end{equation}

From the strong Markov property of the solution of \eqref{eq:LinearConfinedLangevin}, we get that for all $(x,u)\in (\Dd\times \er^d)\cup\Sigma^+$,
\begin{align}\label{eq:Markov}
\EE [\psi(X^{x,u}_{t\wedge\tau_n^{x,u}},U^{x,u}_{t\wedge\tau_n^{x,u}}) \ind_{\{\tau_1^{x,u} <  t\}} ]  = \EE [\Gamma^{\psi}_{n-1}(t-\tau_1^{x,u},X^{x,u}_{\tau_1^{x,u}}, U^{x,u}_{\tau_1^{x,u}}) \ind_{\{\tau_1^{x,u} <  t\}}].
\end{align}
Considering a sequence $(x_m,u_m, m\in \nat)$ in $\Dd\times\er^d$ converging to $(x,u)\in \Sigma^+$, and $t>0$, we have, for $m$ large enough
\begin{align*}
({\tau_1^{x_m,u_m}},X^{x_m,u_m}_t,U^{x_m,u_m}_t, t< {\tau_1^{x_m,u_m}}) & = (\beta^{x_m,u_m}, x^{x_m,u_m}_t, u^{x_m,u_m}_t, t < \beta^{x_m,u_m})\\
(X^{x_m,u_m}_{\tau_1^{x_m,u_m}},U^{x_m,u_m}_{\tau_1^{x_m,u_m}} ) & = (x^{x_m,u_m}_{\beta^{x_m,u_m}}, \widehat{u}^{x_m,u_m}_{\beta^{x_m,u_m}}).
\end{align*}
Hence, from the continuity of $(y,v)\mapsto (\beta^{y,v},x^{y,v}_{t},u^{y,v}_{t})$ proved with Proposition
\ref{prop:ContU2IrrelevantBound},
\begin{align*}
\lim_{m\rightarrow +\infty } ( \tau_1^{x_m,u_m},X^{x_m,u_m}_{t\wedge\tau_1^{x_m,u_m}} , U^{x_m,u_m}_{t\wedge\tau_1^{x_m,u_m}}) = (0, x, u - 2(u\cdot \nd(x))\nd(x) ).
\end{align*}
The right-hand side of \eqref{eq:Markov} is then continuous on $(\Dd\times \er^d)\cup\Sigma^+$, as well as
\begin{align*}
\EE [\psi(X^{x,u}_{t\wedge\tau_n^{x,u}},U^{x,u}_{t\wedge\tau_n^{x,u}}) \ind_{\{\tau_1^{x,u} \geq t\}} ]  = \EE [\psi(X^{x,u}_{t\wedge\tau_1^{x,u}},U^{x,u}_{t\wedge\tau_1^{x,u}}) \ind_{\{\tau_1^{x,u} \geq t\}}].
\end{align*}
Moreover,  for $(t,x,u) \in \Sigma^+_T$,
\begin{align*}
\Gamma^{\psi}_{n}(t,x,u) & = \lim_{m\rightarrow +\infty} \left\{ \EE [\psi(X^{x_m,u_m}_{t\wedge\tau_n^{x_m,u_m}},U^{x_m,u_m}_{t\wedge\tau_n^{x_m,u_m}}) \ind_{\{\tau_1^{x_m,u_m} < t\}} ] +
\EE [\psi(X^{x_m,u_m}_{t\wedge\tau_n^{x_m,u_m}},U^{x_m,u_m}_{t\wedge\tau_n^{x_m,u_m}}) \ind_{\{\tau_1^{x_m,u_m} \geq t\}} ]\right\} \\
& = \Gamma^{\psi}_{n-1}(t,x,u-2(u\cdot \nd(x))\nd(x)).
\end{align*}

Now Theorem \ref{thm:L2VFPDirichlet} ensures that there exists a classical solution $f_{n}$ to \eqref{eq:VFPDirichlet} for $f_{0}=\psi$ and $q(t,x,u)=\Gamma^{\psi}_{n-1}(t,x,u-2(u\cdot n_{\Dd}(x))n_{\Dd}(x))$ on $\Sigma^{+}_{T}$. According to \eqref{eq:FeynmanKacInterpretation}, we have, for $(t,x,u)\in Q_T$
\begin{align*}
f_{n}(t,x,u) &=\EE_{\PP}\left[\psi(x^{x,u}_{t},u^{x,u}_{t})\ind_{\displaystyle\{t\leq\beta^{x,u}\}}\right]\\
&\quad +\EE_{\PP}\left[\Gamma^{\psi}_{n-1}\left(t-\beta^{x,u},
x^{x,u}_{\beta^{x,u}},
u^{x,u}_{\beta^{x,u}}-2(u^{x,u}_{\beta^{x,u}}\cdot n_{\Dd}(x^{x,u}_{\beta^{x,u}}))n_{\Dd}(x^{x,u}_{\beta^{x,u}})\right)
\ind_{\displaystyle\{t>\beta^{x,u}\}}\right].
\end{align*}
One can observe that
\begin{equation*}
\EE_{\PP}\left[\psi(x^{x,u}_{t},u^{x,u}_{t})\ind_{\displaystyle\{t\leq\beta^{x,u}\}}\right]
=\EE_{\PP}\left[\psi(X^{x,u}_{t},U^{x,u}_{t})\ind_{\displaystyle\{t\leq\tau_{1}^{x,u}\}}\right]
=\EE_{\PP}\left[\psi(X^{x,u}_{t\wedge \tau^{x,u}_{n}},U^{x,u}_{t\wedge \tau^{x,u}_{n}})\ind_{\displaystyle\{t\leq\tau_{1}^{x,u}\}}\right]
\end{equation*}
and that
\begin{align*}
&\EE_{\PP}\left[\Gamma^{\psi}_{n-1}\left(t-\beta^{x,u},
x^{x,u}_{\beta^{x,u}},
u^{x,u}_{\beta^{x,u}}-2(u^{x,u}_{\beta^{x,u}}\cdot n_{\Dd}(x^{x,u}_{\beta^{x,u}}))n_{\Dd}(x^{x,u}_{\beta^{x,u}})\right)
\ind_{\displaystyle\{t>\beta^{x,u}\}}\right]\\
&=\EE_{\PP}\left[\Gamma^{\psi}_{n-1}(t-\tau_{1}^{x,u},
X^{x,u}_{\tau_{1}^{x,u}},
U^{x,u}_{\tau_{1}^{x,u}})\ind_{\displaystyle\{t>\tau_{1}^{x,u}\}}\right]\\
&=\EE_{\PP}\left[\psi(X^{x,u}_{t\wedge \tau^{x,u}_{n}},U^{x,u}_{t\wedge \tau^{x,u}_{n}})\ind_{\displaystyle\{t>\tau_{1}^{x,u}\}}\right],
\end{align*}
where the second equality follows from the strong Markov property of $(X^{x,u}_{t},U^{x,u}_{t})$. Therefore
\begin{equation*}
f_{n}(t,x,u) =\EE_{\PP}\left[\psi(X^{x,u}_{t\wedge \tau^{x,u}_{n}},U^{x,u}_{t\wedge \tau^{x,u}_{n}})\right]=\Gamma^{\psi}_{n}(t,x,u),
\end{equation*}
from which we deduce that $\Gamma^{\psi}_{n}\in\Cc^{1,1,2}_{b}(Q_{T})\cap \Cc(\overline{Q_{T}}\setminus\Sigma^{0}_{T})\cap L^{2}((0,T)\times\Dd;H^{1}(\er^{d}))$ is a solution to \eqref{eq:backwardVFP} with $\Gamma^{\psi}_{n}|_{\Sigma^{-}_{T}}\in L^{2}(\Sigma^{-}_{T})$. Moreover, according to \eqref{prfst:KernelSmoothness},  for all $t\in(0,T)$,
\begin{align*}
& \Vert \Gamma^{\psi}_{n}(t)\Vert^{2}_{L^{2}(\Dd\times\er^{d})}+
\Vert \Gamma^{\psi}_{n}\Vert^{2}_{L^{2}(\Sigma^{-}_{t})}+\sigma^{2}\Vert \nabla_{u}\Gamma^{\psi}_{n}\Vert^{2}_{L^{2}(Q_{t})}\\
&= \Vert \psi\Vert^{2}_{L^{2}(\Dd\times\er^{d})}+\Vert q\Vert^{2}_{L^{2}(\Sigma^{+}_{t})}=\Vert \psi\Vert^{2}_{L^{2}(\Dd\times\er^{d})}+\Vert \Gamma^{\psi}_{n-1}\Vert^{2}_{L^{2}(\Sigma^{-}_{t})}.
\end{align*}
For $n=1$, setting $f_{0}=\psi$ and $q=\psi|_{\Sigma^{+}_{T}}=0$ (since $\psi$ has its support in the interior of $\Dd\times\er^{d}$), one can check that $\Gamma^{\psi}_{1}\in \Cc^{1,1,2}_{b}(Q_{T})\cap \Cc(\overline{Q_{T}}\setminus\Sigma^{0}_{T})\cap L^{2}((0,T)\times\Dd;H^{1}(\er^{d}))$ satisfies \eqref{eq:backwardVFP} and \eqref{ineq:ApproxSobolevEstimate_L2}.
By induction, we end the proof.
\end{proof}
\begin{corollary}\label{coro:KernelEstimate}
Assume  \hyplang. For all nonnegative $\psi\in\Cc_{c}(\Dd\times\er^{d})$,  $\Gamma^{\psi}$ is a nonnegative function that belongs to $L^{2}((0,T)\times\Dd;H^1(\er^d))$  and satisfies  the energy equality:
\begin{align}\label{ineq:SobolevEstimate_L2}
\Vert \Gamma^{\psi}(t)\Vert^{2}_{L^{2}(\Dd\times\er^{d})}+\sigma^{2}\Vert\nabla_{u}\Gamma^{\psi}\Vert^{2}_{L^{2}(Q_{t})}=
\Vert \psi\Vert^{2}_{L^{2}(\Dd\times\er^{d})},~\forall\,t\in(0,T).
\end{align}
Furthermore, $\Gamma^{\psi}(t)$ is solution in the sense of distributions of
\begin{equation}\label{eq:backwardVFP-bis}
\left\{\begin{aligned}
&\partial_{t}\Gamma^{\psi}-(u\cdot \nabla_{x}\Gamma^{\psi})-\frac{\sigma^{2}}{2}\triangle_{u}\Gamma^{\psi}=0,~\mbox{on }Q_{T},\\
&\Gamma^{\psi}(0,x,u)=\psi(x,u),~\mbox{on }\Dd\times\er^{d},\\
&\Gamma^{\psi}(t,x,u)=\Gamma^{\psi}(t,x,u-2(u\cdot n_{\Dd}(x))n_{\Dd}(x)),~\mbox{on }\Sigma^{+}_{T}.
\end{aligned}\right.
\end{equation}
\end{corollary}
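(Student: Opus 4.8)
The plan is to realize $\Gamma^\psi$ as the monotone limit of the stopped semigroups $\Gamma^\psi_n$ of Corollary~\ref{coro:KernelSmoothness} and to pass to the limit $n\to\infty$ both in the PDE~\eqref{eq:backwardVFP} and in the energy equality~\eqref{ineq:ApproxSobolevEstimate_L2}. The starting observation is that, since $\psi$ is supported in the open set $\Dd\times\er^d$ while $X^{x,u}_{\tau_n}\in\partial\Dd$ for every $n\geq1$, one has for all $(x,u)\in\okphasespacebis$ the identity $\psi(X^{x,u}_{t\wedge\tau^{x,u}_n},U^{x,u}_{t\wedge\tau^{x,u}_n})=\psi(X^{x,u}_t,U^{x,u}_t)\,\ind_{\{t<\tau^{x,u}_n\}}$. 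As $\psi\geq0$ and $\tau^{x,u}_n\uparrow+\infty$ $\PP_{x,u}$-a.s. (Theorem~\ref{thm:LinearSolutionByStraightening}), the sequence $\Gamma^\psi_n$ is nondecreasing in $n$ and converges pointwise to $\Gamma^\psi$, with $0\leq\Gamma^\psi_n\leq\Gamma^\psi\leq\|\psi\|_\infty$. Nonnegativity of $\Gamma^\psi$ is then immediate, and the specular identity on $\Sigma^+_T$ follows directly from the flow-extension definition of the solution of~\eqref{eq:LinearConfinedLangevin} started from $\Sigma^+$, namely $\Gamma^\psi(t,x,u)=\EE_\PP[\psi(X^{x,u}_t,U^{x,u}_t)]=\EE_\PP[\psi(X^{x,u-2(u\cdot\nd(x))\nd(x)}_t,U^{x,u-2(u\cdot\nd(x))\nd(x)}_t)]=\Gamma^\psi(t,x,u-2(u\cdot\nd(x))\nd(x))$.

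\textbf{Uniform estimates and the regularity of $\Gamma^\psi$.} The same identity shows that the boundary traces $\Gamma^\psi_n|_{\Sigma^-_T}$ are also nondecreasing in $n$, so in~\eqref{ineq:ApproxSobolevEstimate_L2} the term $\|\Gamma^\psi_{n-1}\|^2_{L^2(\Sigma^-_t)}$ is bounded by $\|\Gamma^\psi_{n}\|^2_{L^2(\Sigma^-_t)}$; cancelling these yields $\|\Gamma^\psi_n(t)\|^2_{L^2(\Dd\times\er^d)}+\sigma^2\|\nabla_u\Gamma^\psi_n\|^2_{L^2(Q_t)}\leq\|\psi\|^2_{L^2(\Dd\times\er^d)}$, uniformly in $n$ and $t\in(0,T)$. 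By monotone, then dominated, convergence $\Gamma^\psi_n\to\Gamma^\psi$ in $L^2(Q_T)$ with $\Gamma^\psi\in L^2(Q_T)$, while $\{\nabla_u\Gamma^\psi_n\}$ is bounded in $L^2(Q_T)$ and hence converges weakly, along a subsequence, to a limit that must coincide with the distributional derivative $\nabla_u\Gamma^\psi$; thus $\Gamma^\psi\in L^2((0,T)\times\Dd;H^1(\er^d))$. Testing the classical equation~\eqref{eq:backwardVFP} against $\varphi\in\Cc^\infty_c(Q_T)$, integrating by parts (which produces no boundary terms), and passing to the limit using $\Gamma^\psi_n\to\Gamma^\psi$ in $L^2(Q_T)$ gives the distributional transport--diffusion equation of~\eqref{eq:backwardVFP-bis}; the initial condition $\Gamma^\psi(0,\cdot)=\psi$ follows from $\Gamma^\psi(t,x,u)\to\psi(x,u)$ as $t\to0^+$, using the continuity of the free flow (Proposition~\ref{prop:ContU2IrrelevantBound}).

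\textbf{The energy equality: the main obstacle.} Writing~\eqref{ineq:ApproxSobolevEstimate_L2} as $E_n(t)=\|\psi\|^2_{L^2}-c_n(t)$, with $E_n(t):=\|\Gamma^\psi_n(t)\|^2_{L^2(\Dd\times\er^d)}+\sigma^2\|\nabla_u\Gamma^\psi_n\|^2_{L^2(Q_t)}$ and $c_n(t):=\|\Gamma^\psi_n\|^2_{L^2(\Sigma^-_t)}-\|\Gamma^\psi_{n-1}\|^2_{L^2(\Sigma^-_t)}\geq0$, one has $\sum_{k=1}^n c_k(t)=\|\Gamma^\psi_n\|^2_{L^2(\Sigma^-_t)}$ (since $\Gamma^\psi_0=\psi$ vanishes on $\Sigma^-$); it therefore suffices to prove $\sup_n\|\Gamma^\psi_n\|^2_{L^2(\Sigma^-_T)}<+\infty$. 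By the monotonicity this reduces to showing that the incoming $L^2$-flux $\int_0^T\!\int_{\Sigma^-}|(u\cdot\nd(x))|\,\EE_{x,u}[\psi^2(X_s,U_s)]\,d\sigma_{\partial\Dd}(x)\,du\,ds$ is finite, and \emph{this bound is the crux of the argument}. I would obtain it by a change of variables in a boundary layer, relating the flux measure $|(u\cdot\nd)|\,\ind_{\Sigma^-}\,d\sigma_{\partial\Dd}\,du$ to the Lebesgue measure $dx\,du$ on $\Dd\times\er^d$, together with Girsanov's theorem reducing the confined Langevin dynamics to the explicit law of the passage times of the Brownian primitive (the mechanism already used in Lemma~\ref{lem:abs-cont-wrt-surface-mesure}), which controls the cumulated flux over $[0,T]$ by a constant multiple of $\|\psi\|_\infty^2\,|\mathrm{supp}\,\psi|$.

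\textbf{Conclusion.} Granting this flux bound, $\|\Gamma^\psi_n\|^2_{L^2(\Sigma^-_t)}$ is nondecreasing and bounded, hence convergent, so $c_n(t)\to0$ and $E_n(t)\to\|\psi\|^2_{L^2}$. Since $\|\Gamma^\psi_n(t)\|^2_{L^2(\Dd\times\er^d)}\to\|\Gamma^\psi(t)\|^2_{L^2(\Dd\times\er^d)}$ by monotone convergence, this forces $\|\nabla_u\Gamma^\psi_n\|^2_{L^2(Q_t)}$ to converge; combined with the weak convergence $\nabla_u\Gamma^\psi_n\rightharpoonup\nabla_u\Gamma^\psi$ in $L^2(Q_t)$, the convergence of norms upgrades to strong convergence and yields $\|\Gamma^\psi(t)\|^2_{L^2(\Dd\times\er^d)}+\sigma^2\|\nabla_u\Gamma^\psi\|^2_{L^2(Q_t)}=\|\psi\|^2_{L^2(\Dd\times\er^d)}$, which is~\eqref{ineq:SobolevEstimate_L2}. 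Finally, the $L^2(\Sigma^\pm_T)$-convergence of the traces supplied by the same flux bound allows one to pass to the limit in the boundary relation of~\eqref{eq:backwardVFP}, so that $\Gamma^\psi$ satisfies the specular condition of~\eqref{eq:backwardVFP-bis} in the trace sense as well.
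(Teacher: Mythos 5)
Your overall strategy coincides with the paper's: realize $\Gamma^\psi$ as the increasing limit of the stopped semigroups $\Gamma^\psi_n$, pass to the limit in the PDE and in the energy identity~\eqref{ineq:ApproxSobolevEstimate_L2}, and let the two boundary terms cancel in the limit. The monotonicity observation, the telescoping of $c_n(t)=\Vert\Gamma^\psi_n\Vert^2_{L^2(\Sigma^-_t)}-\Vert\Gamma^\psi_{n-1}\Vert^2_{L^2(\Sigma^-_t)}$, and the upgrade from weak to strong convergence of $\nabla_u\Gamma^\psi_n$ via convergence of norms are all correct and well presented.

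The genuine gap is the one you yourself single out as ``the crux of the argument'': you need $\sup_n\Vert\Gamma^\psi_n\Vert^2_{L^2(\Sigma^-_T)}<+\infty$, and your proposed route to it does not close. Lemma~\ref{lem:abs-cont-wrt-surface-mesure} gives only the \emph{qualitative} absolute continuity of each hitting-time law with respect to $\lambda_{\Sigma_T}$, with no uniform quantitative control on the Radon--Nikodym density as $n$ grows, and no control on the $|u|$-weighted second moment required by the $L^2(\Sigma^-_T)$ norm. The claim that the cumulated flux is controlled by $\Vert\psi\Vert_\infty^2\,|\mathrm{supp}\,\psi|$ is not justified and seems unlikely to follow from this route: the process can cross $\partial\Dd$ arbitrarily many times on $[0,T]$, and nothing in the lemma limits the accumulation of boundary mass at large $|u|$.

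The paper's resolution is much more direct and you should use it: pick $\beta\in L^2(\er)$ nonnegative with $\beta(|u|)=1$ on $\mathrm{supp}\,\psi$ and $\psi\le C\beta(|u|)$, then observe that
\begin{equation*}
0\le\Gamma^\psi_n(t,x,u)\le C\,\EE_{\PP}\bigl[\beta(|U^{x,u}_t|)\bigr]=C\,(G(\sigma t)*\beta)(|u|),
\end{equation*}
uniformly in $n$ and $(x,u)$, because $|U^{x,u}_t|=|u_0+\sigma W_t|$ is unchanged by the specular reflection. This Maxwellian envelope extends to the closed boundary by the continuity of each $\Gamma^\psi_n$ up to $\Sigma_T\setminus\Sigma^0_T$ (Corollary~\ref{coro:KernelSmoothness}), and since $\Dd$ is bounded and the Gaussian has all moments, it is immediately in $L^2(\Sigma^-_T)$. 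Substituting this into the right-hand side of~\eqref{ineq:ApproxSobolevEstimate_L2} gives the uniform bound you need, after which your telescoping and strong-convergence argument goes through exactly as you wrote it. So the architecture of your proof is sound, but the key quantitative input must be the Maxwellian domination, not the Girsanov/change-of-variables route.
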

\begin{proof}
We first observe that since $\psi|_{\partial\Dd\times\er^{d}}=0$,
\begin{equation*}
\Gamma^{\psi}_{n}(t,x,u)=\EE_{\PP}\left[\psi(X^{x,u}_{t\wedge\tau^{x,u}_{n}},U^{x,u}_{t\wedge\tau^{x,u}_{n}})\right]=\EE_{\PP}\left[\psi(X^{x,u}_{t},
U^{x,u}_{t})\ind_{\{\tau^{x,u}_{n}\geq t\}}\right].
\end{equation*}
Next, there exists a nonnegative function $\beta\in L^{2}(\er)$ such that $\beta(|u|)=1$ on the support of $\psi$ and $\psi\leq C \beta(|u|)$, with  $C:=\sup_{(x,u)\in\Dd\times\er^d} \psi(x,u)$. Then
\begin{equation*}
0 \leq \Gamma^{\psi}_{n}(t,x,u) \leq C\EE_{\PP}\left[\beta(|U^{x,u}_{t}|)
\ind_{\{\tau^{x,u}_{n}\geq t\}}\right].
\end{equation*}
As $\EE_{\PP}[\beta(|U^{x,u}_{t}|)]$ is equal to the convolution product $(G(\sigma t)*\beta)(|u|)$,   where $G$ denotes the heat kernel on $\er^d$, we obtain
\begin{equation}\label{estim:KernelMaxwellBound}
0 \leq \Gamma^{\psi}_{n}(t,x,u) \leq C (G(\sigma t)*\beta)(|u|),~\mbox{on }Q_{T}.
\end{equation}
Owing to the continuity of $\Gamma^{\psi}_{n}$,  from the interior of $Q_{T}$ to its boundary, \eqref{estim:KernelMaxwellBound} still holds true along $\Sigma^{\pm}_{T}$.

Let us now recall that, for a.e. $(x,u)\in\okphasespacebis$, $\PP_{(x,u)}$-a.s. $\tau_{n}$ grows to $\infty$ as $n$ increases, and then
\begin{equation}\label{eq:KernelsConvergence}
\lim_{n\rightarrow +\infty}\Gamma^{\psi}_{n}(t,x,u)=\Gamma^{\psi}(t,x,u), \mbox{ for a.e. }(t,x,u)\in Q_{T},\,
\lambda_{\Sigma_{T}}\mbox{-a.e. }(t,x,u)\in \Sigma_{T}\setminus\Sigma^{0}_{T}.
\end{equation}
Indeed,
\begin{align*}
\left|\Gamma^{\psi}_{n}(t,x,u)-\Gamma^{\psi}(t,x,u)\right|
=\left|\EE_{\PP}\left[\psi(X^{x,u}_{t},U^{x,u}_{t})\ind_{\{\tau^{x,u}_{n}\leq t\}}\right]\right|
\leq \Vert \psi\Vert_{\infty}\PP(\tau^{x,u}_{n}\leq t).
\end{align*}
In particular, \eqref{estim:KernelMaxwellBound}  is also true for $\Gamma^{\psi}$.
We conclude by the Lebesgue Dominated Convergence Theorem that $\Gamma^{\psi}_{n}$ converges to $\Gamma^{\psi}$ in $L^2(\Dd\times \er^d)$.

Next we deduce that the norms involving $\Gamma^{\psi}_{n}$ in the left-hand side of  \eqref{ineq:ApproxSobolevEstimate_L2}  are finite for all $t$, uniformly in $n$ (as the right-hand side of \eqref{ineq:ApproxSobolevEstimate_L2}  is bounded uniformly in $n$ by the Maxwellian bound \eqref{estim:KernelMaxwellBound}).
Therefore, the estimate \eqref{ineq:ApproxSobolevEstimate_L2} is  also true for $\Gamma^{\psi}$ (see e.g. \cite{brezis-11}), and $\Gamma^{\psi}_{n}$ converges to $\Gamma^{\psi}$ in $L^{2}((0,T)\times\Dd;H^1(\er^d))$.
\end{proof}
\begin{corollary}[$L^{p}$-estimates]\label{coro:KernelLpEstimate}
Given $\psi\in\Cc_{c}(\Dd\times\er^d)$  nonnegative, the kernels $\{\Gamma_{n}^{\psi}, n\geq 1\}$ and $\Gamma^{\psi}$  satisfy for all $p\in [1,+\infty)$
\begin{align}
&\Vert \Gamma^{\psi}_{n}(t)\Vert^{p}_{L^{p}(\Dd\times\er^{d})}+
\Vert \Gamma^{\psi}_{n}\Vert^{p}_{L^{p}(\Sigma^{-}_{t})}
\leq \Vert \psi\Vert^{p}_{L^{p}(\Dd\times\er^{d})}+\Vert \Gamma^{\psi}_{n-1}\Vert^{p}_{L^{p}(\Sigma^{-}_{t})},\quad\forall\,t\in(0,T),\label{ineq:ApproxLpConservation}\\
&\Vert \Gamma^{\psi}(t)\Vert^{p}_{L^{p}(\Dd\times\er^{d})}\leq
\Vert \psi\Vert^{p}_{L^{p}(\Dd\times\er^{d})},\quad\forall\,t\in(0,T)\label{ineq:LpConservation}.
\end{align}
\end{corollary}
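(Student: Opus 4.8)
The plan is to deduce \eqref{ineq:ApproxLpConservation} directly from the $L^{p}$-estimate \eqref{estim:GeneralLpEquality} of Theorem \ref{thm:L2VFPDirichlet}, and then to obtain \eqref{ineq:LpConservation} by letting $n\to+\infty$. By Corollary \ref{coro:KernelSmoothness}, each $\Gamma^{\psi}_{n}$ ($n\geq1$) is the classical solution of the Dirichlet problem \eqref{eq:VFPDirichlet} with initial datum $f_{0}=\psi$ and boundary datum $q(t,x,u)=\Gamma^{\psi}_{n-1}(t,x,u-2(u\cdot\nd(x))\nd(x))$ on $\Sigma^{+}_{T}$ (with the convention $\Gamma^{\psi}_{0}=\psi$, whose trace on $\Sigma^{+}_{T}$ vanishes since $\psi$ is compactly supported in $\Dd\times\er^{d}$). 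Applying \eqref{estim:GeneralLpEquality} to $f=\Gamma^{\psi}_{n}$ for a fixed $p\in[1,+\infty)$ and $t\in(0,T)$ gives
\begin{equation*}
\Vert\Gamma^{\psi}_{n}(t)\Vert^{p}_{L^{p}(\Dd\times\er^{d})}+\Vert\Gamma^{\psi}_{n}\Vert^{p}_{L^{p}(\Sigma^{-}_{t})}\leq\Vert\psi\Vert^{p}_{L^{p}(\Dd\times\er^{d})}+\Vert q\Vert^{p}_{L^{p}(\Sigma^{+}_{t})},
\end{equation*}
so all that remains for \eqref{ineq:ApproxLpConservation} is to identify the boundary term $\Vert q\Vert^{p}_{L^{p}(\Sigma^{+}_{t})}$ with $\Vert\Gamma^{\psi}_{n-1}\Vert^{p}_{L^{p}(\Sigma^{-}_{t})}$.

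For that I would repeat, with the exponent $p$ in place of $2$, the change of variables performed in \eqref{prfst:KernelSmoothness}: for each fixed $x\in\partial\Dd$ the map $u\mapsto\widehat{u}:=u-2(u\cdot\nd(x))\nd(x)$ is an orthogonal involution preserving $du$ and $|(u\cdot\nd(x))|$ and exchanging $\Sigma^{+}$ and $\Sigma^{-}$, whence
\begin{equation*}
\Vert q\Vert^{p}_{L^{p}(\Sigma^{+}_{t})}=\int_{\Sigma^{+}_{t}}|(u\cdot\nd(x))|\,\big|\Gamma^{\psi}_{n-1}(s,x,\widehat{u})\big|^{p}\,d\lambda_{\Sigma_{T}}(s,x,u)=\int_{\Sigma^{-}_{t}}|(u\cdot\nd(x))|\,\big|\Gamma^{\psi}_{n-1}(s,x,u)\big|^{p}\,d\lambda_{\Sigma_{T}}(s,x,u)=\Vert\Gamma^{\psi}_{n-1}\Vert^{p}_{L^{p}(\Sigma^{-}_{t})}.
\end{equation*}
This yields \eqref{ineq:ApproxLpConservation}, the case $n=1$ being the special instance $q=\psi|_{\Sigma^{+}_{T}}=0$. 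All the norms above are finite because, as recalled in the proof of Corollary \ref{coro:KernelEstimate}, the Maxwellian bound $0\leq\Gamma^{\psi}_{n}\leq C\,(G(\sigma t)*\beta)(|u|)$ holds on $\overline{Q_{T}}\setminus\Sigma^{0}_{T}$ for a profile $\beta$ which may be taken compactly supported; since $\Dd$ and $\partial\Dd$ are bounded and $t\mapsto G(\sigma t)*\beta$ stays bounded with Gaussian decay in $u$ on $(0,T)$, the right-hand side lies in $L^{p}(\Dd\times\er^{d})$ and, against $|(u\cdot\nd(x))|\,d\lambda_{\Sigma_{T}}$, in $L^{p}(\Sigma^{\pm}_{t})$.

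Finally, for \eqref{ineq:LpConservation} I would pass to the limit $n\to+\infty$ using monotonicity. Because $\psi$ vanishes on $\partial\Dd\times\er^{d}$ one has $\Gamma^{\psi}_{n}(t,x,u)=\EE_{\PP}\big[\psi(X^{x,u}_{t},U^{x,u}_{t})\ind_{\{\tau^{x,u}_{n}\geq t\}}\big]$; since $\tau_{n}$ is nondecreasing in $n$ and $\psi\geq0$, the sequence $(\Gamma^{\psi}_{n})_{n}$ is nondecreasing, and it converges pointwise to $\Gamma^{\psi}$ on $Q_{T}$ because $\tau_{n}\to+\infty$ a.s. (see \eqref{eq:KernelsConvergence}); by the continuity of the $\Gamma^{\psi}_{n}$ up to $\Sigma^{-}_{t}$ (Corollary \ref{coro:KernelSmoothness}) the monotonicity persists on $\Sigma^{-}_{t}$, so $n\mapsto\Vert\Gamma^{\psi}_{n}\Vert^{p}_{L^{p}(\Sigma^{-}_{t})}$ is nondecreasing (and finite). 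Inserting this into \eqref{ineq:ApproxLpConservation} makes the boundary contributions cancel favorably,
\begin{equation*}
\Vert\Gamma^{\psi}_{n}(t)\Vert^{p}_{L^{p}(\Dd\times\er^{d})}\leq\Vert\psi\Vert^{p}_{L^{p}(\Dd\times\er^{d})}+\Vert\Gamma^{\psi}_{n-1}\Vert^{p}_{L^{p}(\Sigma^{-}_{t})}-\Vert\Gamma^{\psi}_{n}\Vert^{p}_{L^{p}(\Sigma^{-}_{t})}\leq\Vert\psi\Vert^{p}_{L^{p}(\Dd\times\er^{d})},
\end{equation*}
and the monotone convergence theorem (the limit $\Gamma^{\psi}(t)$ lying in $L^{p}(\Dd\times\er^{d})$ by the Maxwellian bound) then gives $\Vert\Gamma^{\psi}(t)\Vert^{p}_{L^{p}(\Dd\times\er^{d})}\leq\Vert\psi\Vert^{p}_{L^{p}(\Dd\times\er^{d})}$. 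I do not expect a genuine obstacle here: the one slightly delicate point is checking that the boundary traces $\Gamma^{\psi}_{n}|_{\Sigma^{-}_{t}}$ are $L^{p}$-integrable and monotone in $n$, and both facts follow from the boundary continuity of $\Gamma^{\psi}_{n}$ together with the Maxwellian bound with a compactly supported $\beta$; the structural ingredient, the $L^{p}$-energy equality \eqref{estim:GeneralLpEquality}, is furnished by Theorem \ref{thm:L2VFPDirichlet}, whose proof is deferred.
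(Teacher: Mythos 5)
Your proof of \eqref{ineq:ApproxLpConservation} follows the paper exactly: apply the $L^{p}$ energy estimate \eqref{estim:GeneralLpEquality} from Theorem~\ref{thm:L2VFPDirichlet} to $\Gamma^{\psi}_{n}$ (viewed as the solution of \eqref{eq:backwardVFP} with initial data $\psi$ and specular boundary data $\Gamma^{\psi}_{n-1}$ on $\Sigma^{+}_{T}$), then use the measure-preserving involution $u\mapsto\widehat{u}$ to identify $\Vert\Gamma^{\psi}_{n}\Vert^{p}_{L^{p}(\Sigma^{+}_{t})}$ with $\Vert\Gamma^{\psi}_{n-1}\Vert^{p}_{L^{p}(\Sigma^{-}_{t})}$, exactly the computation \eqref{prfst:KernelSmoothness} with exponent $p$.

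For \eqref{ineq:LpConservation} you diverge slightly from the paper, in a way that is still correct and arguably a bit cleaner. The paper passes $n\to+\infty$ in \eqref{ineq:ApproxLpConservation} using the dominated convergence theorem on both sides (the Maxwellian bound \eqref{estim:KernelMaxwellBound} supplying integrable domination in $Q_T$ and on $\Sigma^{-}_{t}$), whence both boundary terms converge to the same finite quantity $\Vert\Gamma^{\psi}\Vert^{p}_{L^{p}(\Sigma^{-}_{t})}$ and cancel. You instead observe that the representation $\Gamma^{\psi}_{n}(t,x,u)=\EE_{\PP}[\psi(X^{x,u}_{t},U^{x,u}_{t})\ind_{\{\tau^{x,u}_{n}\geq t\}}]$ together with $\tau_{n}\nearrow$ and $\psi\geq0$ makes $n\mapsto\Gamma^{\psi}_{n}$ nondecreasing, hence $\Vert\Gamma^{\psi}_{n-1}\Vert^{p}_{L^{p}(\Sigma^{-}_{t})}\leq\Vert\Gamma^{\psi}_{n}\Vert^{p}_{L^{p}(\Sigma^{-}_{t})}$, so that \eqref{ineq:ApproxLpConservation} already gives $\Vert\Gamma^{\psi}_{n}(t)\Vert^{p}_{L^{p}}\leq\Vert\psi\Vert^{p}_{L^{p}}$ for every $n$, and then the monotone convergence theorem finishes. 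The two routes buy roughly the same: yours replaces dominated convergence by monotone convergence and requires no control on the limit boundary norm $\Vert\Gamma^{\psi}\Vert_{L^{p}(\Sigma^{-}_{t})}$ (only finiteness at each finite $n$ so that the rearrangement in \eqref{ineq:ApproxLpConservation} is legitimate, which the Maxwellian bound supplies). One small remark: the invocation of the Maxwellian bound at the very end ``so that the limit lies in $L^p$'' is superfluous --- the monotone convergence theorem already yields $\Vert\Gamma^{\psi}(t)\Vert^{p}_{L^{p}}=\lim_{n}\Vert\Gamma^{\psi}_{n}(t)\Vert^{p}_{L^{p}}\leq\Vert\psi\Vert^{p}_{L^{p}}<\infty$ without it.
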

\begin{proof}
Applying estimate \eqref{estim:GeneralLpEquality} to  the solution of \eqref{eq:backwardVFP}, it follows that for all $t\in(0,T)$,
\begin{equation*}
\begin{aligned}
\Vert \Gamma^{\psi}_{n}(t)\Vert^{p}_{L^{p}(\Dd\times\er^{d})}+
\Vert \Gamma^{\psi}_{n}\Vert^{p}_{L^{p}(\Sigma^{-}_{t})}
\leq \Vert \psi\Vert^{p}_{L^{p}(\Dd\times\er^{d})}+\Vert \Gamma^{\psi}_{n}\Vert^{p}_{L^{p}(\Sigma^{+}_{t})}.
\end{aligned}
\end{equation*}
Since $\Vert \Gamma^{\psi}_{n}\Vert^{p}_{L^{p}(\Sigma^{+}_{t})}=\Vert \Gamma^{\psi}_{n-1}\Vert^{p}_{L^{p}(\Sigma^{-}_{t})}$ we deduce \eqref{ineq:ApproxLpConservation}.

Using the convergence of $\Gamma^{\psi}_{n}$ to $\Gamma^{\psi}$ and the uniform bounds  \eqref{estim:KernelMaxwellBound} on $\Sigma_T^-$, we also deduce \eqref{ineq:LpConservation}.
\end{proof}
\subsection{On the boundary value problem \eqref{eq:VFPDirichlet}} \label{sec:PDEStudy}
In this section, we prove Theorem \ref{thm:L2VFPDirichlet}. We consider the inputs $(f_0,q)$ and assume the following
\[
\mbox{\hypbarriere:  }\quad\quad
f_{0}\in L^{2}(\Dd\times\er^{d})\cap\Cc_{b}(\Dd\times\er^{d}) \mbox{ and }
q\in L^{2}(\Sigma^{+}_{T})\cap\Cc_{b}(\Sigma^{+}_{T})\mbox{ are nonnegative functions.}  \]

The main difficulty in the well-posedness of the boundary value problem \eqref{eq:VFPDirichlet} lies in the degeneracy of the diffusion operator and in the fact that we want to obtain the continuity of $f$ up to and along $\Sigma_{T}\setminus\Sigma^{0}_{T}$.  Such a problem has been addressed in Fichera \cite{Fich-1960} for second order differential operators of the form
\begin{equation*}
\Ll(f)(z)=\mbox{Trace}(a(z)\nabla^{2} f(z))+(b(z)\cdot \nabla f(z))+c(z)f(z)-h(z),~z\in\er^N
\end{equation*}
 where $\VVV$ is some smooth bounded open domain of $\er^N$ and, for all $z\in\VVV$,  $a(z)$ is only assumed to be a positive semi-definite matrix,  that is $(\xi \cdot a(z)\xi)\geq 0$, for all $\xi\in\er^{N}$.  Consider PDE of the form
\begin{equation}\label{elliptic_case:interior}
\Ll(f)=0,\,\,\mbox{on } \VVV
\end{equation}
submitted to some Dirichlet boundary condition. Denoting by $\nu(z)$ the unit outward normal vector to  $\partial\VVV$, the boundary $\partial\VVV$ may be split into four parts: the so-called \textit{non-characteristic part} $\Sigma_{3}:=\{z\in\partial \VVV;~(\nu(z)\cdot a(z)\nu(z))> 0\}$,
the {\it relevant} part $\Sigma_{2}:=\{z\in\partial \VVV/\Sigma_{3};~(b(z)\cdot \nu(z)) +\mbox{Trace}(a(z)\nabla\nu(z))> 0\}$,
the {\it irrelevant} part
 $\Sigma_{1}:=\{z\in\partial \VVV/\Sigma_{3};~(b(z)\cdot \nu(z)) +\mbox{Trace}(a(z)\nabla\nu(z)) < 0\}$
 and the {\it sticking}  part
 $\Sigma_{0}:=\{z\in\partial \VVV/\Sigma_{3};~(b(z)\cdot \nu(z))+\mbox{Trace}(a(z)\nabla\nu(z))=0 \}$. The term
\textit{relevant} refers to the boundary part where the boundary condition has to be specified:
\begin{equation}
\label{elliptic_case:boundary}
f=g~\mbox{on }\,\Sigma_{2,3}=\Sigma_{2}\cup\Sigma_{3}.
\end{equation}
The existence of solutions $f$ in $\Cc(\VVV\cup\Sigma_{2,3})$ to \eqref{elliptic_case:interior}-\eqref{elliptic_case:boundary} has been studied by several authors, among them Kohn and Nirenberg \cite{KohNir-67}, Ole{\u\i}nik \cite{Olei-67}, Bony \cite{Bony-69}, and also Manfredini \cite{Manfredini-97} in the context of ultra-parabolic equations with Dirichlet boundary condition along the position$\times$velocity domain (the velocity space is assumed to be  bounded).
Stochastic interpretation of \eqref{elliptic_case:interior}-\eqref{elliptic_case:boundary} has been studied in Stroock and Varadhan \cite{StroockVaradhan1979}, Fre{\u\i}dlin \cite{Freidl-68}, and Friedman \cite{AFriedman1976}. However to the best of our knowledge, the regularity of $f$ along $\Sigma_{1}$ has not been considered outside a few works. We shall mention the works of  Ole{\u\i}nik and Radkevi{\v{c}} \cite{OleiRadk-73} and Taira \cite{Taira-93} who have shown the well-posedness of analytic solutions (on $\VVV$) to the  elliptic equation \eqref{elliptic_case:interior} with the homogeneous boundary condition
\begin{equation*}
f=0,~\mbox{on }\,\Sigma_{2,3},
\end{equation*}
under the particular assumption that the sets $(\Sigma_{i},i=0,1,2,3)$ are closed  and that $\overline{\Sigma_{2}\cup\Sigma_{3}}$ and $\overline{\Sigma_{0}\cup\Sigma_{1}}$ are disjoint. Note that such assumption does not hold in the situation of kinetic equations. In that situation,  existence of weak solution is well known (see, e.g., Degond \cite{degond-86}, Carrillo \cite{carrillo-98}). In particular,  Carrillo \cite{carrillo-98} considers the situation where \eqref{elliptic_case:boundary} is the specular boundary condition \eqref{eq:BCond} and,
establishes the existence of trace functions and a Green identity related to the transport operator $\Tt = \partial_{t}+(u\cdot \nabla_{x})$.

As a preliminary for the proof of Theorem \ref{thm:L2VFPDirichlet}, let us recall a well-known existence result for equation  \eqref{eq:VFPDirichlet}.
\begin{proposition}[Carrillo \cite{carrillo-98}, Theorem $2.2$, Proposition $2.4$  and Lemma $3.4$]\label{propo:VFPDirWellposed}
Given two nonnegative functions $f_{0}\in L^{2}(\Dd\times\er^d)$ and
$q\in L^{2}(\Sigma^{+}_{T})$, there exists a unique nonnegative function $f$ in
$\Cc([0,T];L^{2}(\Dd\times\er^{d}))\cap \Hh(Q_T)$ admitting a nonnegative trace $\gamma(f)\in L^{2}(\Sigma_{T})$ along the boundary $\Sigma_{T}$, satisfying equation \eqref{eq:VFPDirichlet} in the sense that
\begin{equation}\label{eq:VFPDirichletBis}
\begin{aligned}
&\partial_{t}f-(u\cdot \nabla_{x}f) -\frac{\sigma^{2}}{2}\triangle_{u}f=0,~\mbox{ in }\,\Hh'(Q_{T}),\\
&f(t=0,x,u)=f_{0}(x,u),\,\mbox{ on }\,\Dd\times\er^{d},\\
&\gamma(f)(t,x,u)=q(t,x,u),\,\mbox{ on }\,\Sigma^{+}_{T}.
\end{aligned}
\end{equation}
In particular, for all $t\in(0,T)$,
\begin{equation}\label{eq:VFPDirichletEnergy}
\begin{aligned}
\Vert f(t)\Vert^{2}_{L^{2}(\Dd\times\er^{d})}+
\sigma^{2}\Vert\nabla_{u}f\Vert^{2}_{L^{2}(Q_{t})}+
\Vert \gamma(f)\Vert^{2}_{L^{2}(\Sigma^{-}_{t})}
= \Vert f_{0}\Vert^{2}_{L^{2}(\Dd\times\er^{d})}+\Vert q\Vert^{2}_{L^{2}(\Sigma^{+}_{t})}.
\end{aligned}
\end{equation}
If, in addition $f_0\in L^{p}(\Dd\times\er^{d}), q\in L^{p}(\Sigma^{+}_{T})$ for $p\in[1,+\infty)$, then for all $t\in(0,T)$,
\begin{equation*}
\Vert f(t)\Vert^{p}_{L^{p}(\Dd\times\er^{d})}+
\Vert f\Vert^{p}_{L^{p}(\Sigma^{-}_{t})}
+{\sigma^{2}p(p-1)\int_{Q_t}|\nabla_{u}f|^2f^{p-2}}
\leq \Vert f_{0}\Vert^{p}_{L^{p}(\Dd\times\er^{d})}+\Vert q\Vert^{p}_{L^{p}(\Sigma^{+}_{t})}.
\end{equation*}
\end{proposition}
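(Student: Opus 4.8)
The result is classical in the theory of kinetic (ultra-parabolic) boundary value problems, and to prove Proposition~\ref{propo:VFPDirWellposed} I would combine the variational theory of degenerate parabolic equations with a trace theory for the free transport operator $\partial_{t}-u\cdot\nabla_{x}$, in the spirit of Carrillo~\cite{carrillo-98}. \textbf{Step 1 (functional setting and trace theory).} The plan is to work in the kinetic space $\Ww(Q_{T}):=\{f\in\Hh(Q_{T})\,:\,(\partial_{t}-u\cdot\nabla_{x})f\in\Hh'(Q_{T})\}$ and to first establish that every $f\in\Ww(Q_{T})$ admits a time-continuous $L^{2}$ representative and a trace $\gamma(f)$ on $\Sigma_{T}$ lying in $L^{2}_{\mathrm{loc}}(\Sigma_{T};|(u\cdot\nd(x))|\,d\lambda_{\Sigma_{T}})$, together with the Green identity
\[
\int_{\Sigma_{t}}(u\cdot\nd(x))\,\gamma(f)\,\gamma(g)\,d\lambda_{\Sigma_{T}}
=\int_{\Dd\times\er^{d}}\big(f(t)g(t)-f(0)g(0)\big)
-\big((\partial_{t}-u\cdot\nabla_{x})f,g\big)_{\Hh'(Q_{t}),\Hh(Q_{t})}
-\big((\partial_{t}-u\cdot\nabla_{x})g,f\big)_{\Hh'(Q_{t}),\Hh(Q_{t})},
\]
valid for all $f,g\in\Ww(Q_{T})$ and $t\in(0,T]$. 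I would prove this first for functions smooth up to $\overline{Q_{T}}$ by a plain integration by parts (divergence theorem in $x$; no boundary term appears in $u$ since the velocity variable ranges over all of $\er^{d}$), and then extend it to $\Ww(Q_{T})$ by a density and renormalization argument of DiPerna--Lions type (traces being read along the characteristics $\dot x=-u$, $\dot u=0$ of the transport part). The delicate point, and the heart of the matter, is the behaviour near the grazing set $\Sigma^{0}_{T}$, which is absorbed because the weight $|(u\cdot\nd(x))|$ degenerates precisely there.

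\textbf{Step 2 (existence and the energy equality).} I would obtain a solution by a regularization and compactness argument: approximate \eqref{eq:VFPDirichletBis} by a family of well-posed problems --- for instance by adding a small artificial viscosity $-\varepsilon\triangle$ and truncating the velocity domain to $\{|u|<R\}$, or by a Galerkin scheme for the kinetic variational formulation --- for which existence is standard, and derive a priori bounds uniform in the approximation parameters. The key bound is the energy identity obtained by testing the equation against the solution itself and using the integration by parts of Step 1, which yields the uniform control
\[
\|f(t)\|^{2}_{L^{2}(\Dd\times\er^{d})}+\|\gamma(f)\|^{2}_{L^{2}(\Sigma^{-}_{t})}+\sigma^{2}\|\nabla_{u}f\|^{2}_{L^{2}(Q_{t})}\leq\|f_{0}\|^{2}_{L^{2}(\Dd\times\er^{d})}+\|q\|^{2}_{L^{2}(\Sigma^{+}_{t})}
\]
along the approximants (with extra nonnegative terms coming from the regularization). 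Hence the approximants are bounded in $\Hh(Q_{T})\cap\Cc([0,T];L^{2}(\Dd\times\er^{d}))$ with traces bounded in $L^{2}(\Sigma^{-}_{T})$; extracting weak limits and passing to the limit in the linear equation gives a function $f$ solving \eqref{eq:VFPDirichletBis}, the identity $\gamma(f)=q$ on $\Sigma^{+}_{T}$ being read off from the trace theory of Step 1. Weak lower semicontinuity gives the energy inequality ``$\leq$'' in \eqref{eq:VFPDirichletEnergy}, and then taking $g=f$ in the Green identity of Step 1, together with $(\partial_{t}-u\cdot\nabla_{x})f=\tfrac{\sigma^{2}}{2}\triangle_{u}f$, upgrades it to the stated equality.

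\textbf{Step 3 (nonnegativity, $L^{p}$-estimates, uniqueness).} Since $f_{0}\geq 0$ and $q\geq 0$, testing the equation against the negative part of $f$ (via the usual chain-rule lemma for $\Ww(Q_{T})$) and using that the inflow boundary contribution vanishes because $q\geq 0$ gives
\[
\|f^{-}(t)\|^{2}_{L^{2}(\Dd\times\er^{d})}+\|\gamma(f)^{-}\|^{2}_{L^{2}(\Sigma^{-}_{t})}+\sigma^{2}\|\nabla_{u}f^{-}\|^{2}_{L^{2}(Q_{t})}\leq\|f_{0}^{-}\|^{2}_{L^{2}(\Dd\times\er^{d})}=0,
\]
hence $f\geq 0$. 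For $1<p<\infty$ I would test the approximate equation against $p\,f^{p-1}$ (using positivity): the time term yields $\tfrac{d}{dt}\|f\|^{p}_{L^{p}}$, the transport term yields the boundary contribution $\int_{\partial\Dd\times\er^{d}}(u\cdot\nd(x))\gamma(f)^{p}$, and the velocity Laplacian yields a nonnegative dissipation term proportional to $\int_{Q_{t}}|\nabla_{u}f|^{2}f^{p-2}$; integrating in time, using $\gamma(f)=q$ on $\Sigma^{+}_{T}$, and passing to the limit (with Fatou for the dissipation term) gives the claimed $L^{p}$-inequality, the case $p=1$ following by regularizing $s\mapsto|s|$ by smooth convex functions. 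Uniqueness is then immediate: the difference of two solutions solves the homogeneous problem ($f_{0}=0$, $q=0$), so the energy equality forces $\|f(t)\|_{L^{2}(\Dd\times\er^{d})}=0$ for every $t$. The main obstacle in this program is Step 1 --- the rigorous trace theory and Green formula for the degenerate transport--diffusion operator --- on which both the correct interpretation of the boundary condition and the passage from the energy inequality to the energy equality rely; it is nontrivial precisely because of the degeneracy at $\Sigma^{0}_{T}$ and because the transport part brings no regularization, so that the traces live only in a weighted $L^{2}$-space rather than a classical one.
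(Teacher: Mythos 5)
The paper does not prove Proposition~\ref{propo:VFPDirWellposed}: it cites Carrillo directly, and in Appendix~B.3 adapts the same argument to prove the weighted variant Lemma~\ref{lem:ExistVFPlineaire}. Your Step~1 (trace theory and Green identity on the kinetic space) is the content of Carrillo's Lemma~2.3, recalled in the paper as Lemma~\ref{lem:Density} and Lemma~\ref{lem:Derive_Carrillo}, and your use of it to upgrade the energy inequality to the equality \eqref{eq:VFPDirichletEnergy} by taking $g=f$ in the Green formula is exactly right; the sign test for nonnegativity and the renormalized $L^p$ estimate are likewise standard and correct (modulo the usual regularize-and-Fatou step for $p\neq 2$, which you flag).

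Where you diverge from Carrillo and from the paper is the existence step, and as written I think this is a real gap. Carrillo, and the paper in Appendix~B.3, obtain existence from the abstract Lions--Magenes variational lemma (Theorem~\ref{Existence_JLLions}): one introduces a bilinear form $A$ on $\Hh(Q_T)\times F$ with $F$ the smooth test functions vanishing on $\{T\}\times\Dd\times\er^{d}$ and on $\Sigma^{+}_{T}$, an exponential time shift to make $A$ coercive on $F$, and one gets a solution in one stroke; the initial and inflow conditions are then read off from the Green formula because of the way $F$ was chosen. Your proposal instead regularizes by $-\varepsilon\triangle$ plus a velocity truncation. That is not as innocuous as it sounds: an artificial viscosity in $x$ makes the problem parabolic in $x$ and therefore requires Dirichlet data on \emph{all} of $\partial\Dd\times\er^{d}$, not only on the inflow part $\Sigma^{+}_{T}$; whatever you prescribe on $\Sigma^{-}\cup\Sigma^{0}$ is artificial, and passing $\varepsilon\to 0$ creates a boundary layer precisely on the grazing and outflow sets whose control (and the stability of the resulting traces) is at least as delicate as the original problem. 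The velocity truncation raises the analogous issue at $\{|u|=R\}$. You do offer ``a Galerkin scheme for the kinetic variational formulation'' as an alternative; if that is meant to be the Lions projection lemma then the gap closes, but as stated Step~2 replaces the cleanest and most robust part of Carrillo's argument by a substantially harder one without addressing the difficulties it creates.
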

\begin{remark}\label{rem:LinkBetweenEquationsFormulation} Eq. \eqref{eq:VFPDirichletBis} provides a variational formulation of the abstract Cauchy problem \eqref{eq:VFPDirichlet} in the sense that
if $f$ and $\gamma(f)$ satisfy Eq. \eqref{eq:VFPDirichletBis} then, for all $t\in[0,T]$, for all $\psi\in\Cc^{\infty}_{b}(\overline{Q}_{t})$, we have
\begin{equation}\label{eq:VFPDirichletBisBis}
\begin{aligned}
&\int_{Q_{t}}f(s,x,u)\left(\partial_{s}\psi-(u\cdot \nabla_{x}\psi)-\frac{\sigma^{2}}{2}\triangle_{u}\psi\right)(s,x,u)\,ds\,dx\,du\\
&=\int_{\Dd\times\er^{d}}\big[\psi(s,x,u) f(s,x,u)\big]_{s=0}^{s=t}\,dx\,du
-\int_{\Sigma^{-}_{t}}(u\cdot \nd(x))\gamma(f)(s,x,u)\psi(s,x,u)\,d\lambda_{\Sigma_T}(s,x,u)\\
&\quad-\int_{\Sigma^{+}_{t}}(u\cdot \nd(x))q(s,x,u)\psi(s,x,u)\,d\lambda_{\Sigma_T}(s,x,u),
\end{aligned}
\end{equation}
which expresses that $f$ is a solution to \eqref{eq:VFPDirichlet} in the sense of distributions. Let us further notice that the trace function $\gamma(f)$ in $L^{2}(\Sigma_{T})$ is characterized by
the Green formula related to the transport operator $\partial_{t}+(u\cdot \nabla_{x})$ (we refer to Subsection \ref{subsec:LinearVFPequation} for more details).
\end{remark}
Considering the solution $f$ in $\Cc([0,T];L^{2}(\Dd\times\er^{d}))\cap \Hh(Q_T)$ of \eqref{eq:VFPDirichletBis}, given by Proposition \ref{propo:VFPDirWellposed}, we show its interior regularity and its continuity up to and  along $\Sigma_{T}\setminus\Sigma^{0}_T$. The proof of the following proposition is postponed to  Appendix \ref{sec:appendix-shauder}.
\begin{proposition}[Interior regularity]\label{prop:Schauder_estimate} Under \hypbarriere, the unique solution $f$ of \eqref{eq:VFPDirichletBis} belongs to $\Cc^{1,1,2} (Q_T)$.
\end{proposition}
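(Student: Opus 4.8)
The plan is to establish interior smoothness by a bootstrap argument based on the hypoellipticity of the kinetic operator $\Ll = \partial_t - u\cdot\nabla_x - \frac{\sigma^2}{2}\triangle_u$. Observe first that $\Ll$ is of Kolmogorov (H\"ormander) type: writing $Y_0 = \partial_t - u\cdot\nabla_x$ and $Y_j = \partial_{u_j}$, $j=1,\dots,d$, the vector fields $\{Y_1,\dots,Y_d\}$ together with their first-order commutators $[Y_0,Y_j] = \partial_{x_j}$ span the whole tangent space at every point, so H\"ormander's bracket condition holds with step two. Consequently $\Ll$ is hypoelliptic: any distributional solution of $\Ll f = 0$ on the open set $Q_T$ is automatically $\Cc^\infty(Q_T)$. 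By Remark \ref{rem:LinkBetweenEquationsFormulation}, the variational solution $f$ of \eqref{eq:VFPDirichletBis} furnished by Proposition \ref{propo:VFPDirWellposed} is, in particular, a solution of $\Ll f = 0$ in the sense of distributions on $Q_T$ (test against $\psi\in\Cc^\infty_c(Q_T)$, for which the boundary terms in \eqref{eq:VFPDirichletBisBis} vanish). Hence $f\in\Cc^\infty(Q_T)$, and in particular $f\in\Cc^{1,1,2}(Q_T)$, which is the claim.

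Alternatively, and to keep the argument self-contained with the quantitative estimates one needs elsewhere in the paper, I would invoke the Schauder-type theory for ultraparabolic operators of Kolmogorov type developed for equations of exactly this form (see e.g.\ the interior Schauder estimates for $\partial_t - u\cdot\nabla_x - \triangle_u$ in the work of Polidoro and collaborators, or the treatment in the Appendix \ref{sec:appendix-shauder}). The scheme is: (i) localize, fixing a cylinder $Q' \Subset Q_T$ and a cutoff $\chi\in\Cc^\infty_c(Q_T)$ with $\chi\equiv1$ on $Q'$; (ii) use the known $L^2$ regularity $f\in\Hh(Q_T)$ from Proposition \ref{propo:VFPDirWellposed}, i.e.\ $f,\nabla_u f\in L^2_{loc}(Q_T)$, as the starting integrability; (iii) apply the representation of $\chi f$ via the explicit Gaussian-type fundamental solution of the Kolmogorov operator to gain regularity in the $u$-variable directly from the diffusion and in the $x$-variable through the drift coupling; (iv) bootstrap: from $\nabla_u f\in L^2_{loc}$ one obtains $\nabla^2_u f, \nabla_x f\in L^p_{loc}$ for suitable $p$, then embed into H\"older spaces adapted to the intrinsic (Kolmogorov) metric, and iterate until $\nabla_t f, \nabla_x f, \nabla^2_u f$ are all continuous on $Q'$. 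Since $Q'\Subset Q_T$ is arbitrary, this yields $f\in\Cc^{1,1,2}(Q_T)$.

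The main obstacle is purely the anisotropy of the operator: the diffusion acts only on the $d$ velocity variables, so one cannot use classical parabolic regularity directly and must work with the intrinsic scaling $(t,x,u)\mapsto (\lambda^2 t, \lambda^3 x, \lambda u)$ and the associated non-Euclidean H\"older spaces when formulating and iterating the Schauder estimates. Managing this bookkeeping — in particular verifying that the gain of two $u$-derivatives plus one $x$-derivative per step suffices to close the bootstrap and reach continuity of the relevant derivatives — is the technical heart of the argument, and it is precisely what is carried out in Appendix \ref{sec:appendix-shauder}. No boundary issues arise here since the statement is an interior one; the continuity of $f$ up to $\Sigma_T\setminus\Sigma^0_T$, which does require genuinely new arguments (barriers at the boundary), is treated separately.
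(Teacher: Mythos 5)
Your first route—invoking H\"ormander's hypoellipticity theorem—is correct and is genuinely different from what the paper does. The operator $\partial_t-(u\cdot\nabla_x)-\tfrac{\sigma^2}{2}\triangle_u$ can indeed be put in the form $\sum_j Y_j^2+Y_0$ with $Y_j=\tfrac{\sigma}{\sqrt2}\partial_{u_j}$ and $Y_0=-\partial_t+(u\cdot\nabla_x)$, the brackets $[Y_j,Y_0]=\partial_{x_j}$ fill in the missing directions, and together with $Y_0$ they span the full tangent space, so the rank condition holds at step two; and testing \eqref{eq:VFPDirichletBisBis} against $\psi\in\Cc^\infty_c(Q_T)$ does kill all boundary terms and shows $f$ is a distributional solution in the interior. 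This gives $f\in\Cc^\infty(Q_T)$, which is even stronger than the claim. The paper does not take this route: it localizes by a cutoff and bootstraps using Bouchut's quantitative $L^2$ hypoelliptic estimates for kinetic equations (Theorem~1.5 of \cite{bouchut-02}), which gain $D_x^{2/3}f$ and $\nabla_u D_x^{1/3}f$ in $L^2$ from an $L^2$ right-hand side, iterating until enough derivatives are in $L^2_{\mathrm{loc}}$ to conclude by Sobolev embedding into $\Cc^{1,1,2}$. Your Plan~(ii) is in the same spirit as the paper's proof (localize, use regularity adapted to the anisotropic Kolmogorov scaling, bootstrap), though the paper works with fractional $L^2$ gains and Sobolev embedding rather than with intrinsic H\"older/Schauder spaces. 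The trade-off: the H\"ormander argument is short and conceptual but leans on a deep black box and gives no explicit norm control; the Bouchut bootstrap is longer and more technical but yields quantitative local $W^{k,2}$ bounds in terms of $\|f\|_{L^2}$ and $\|\nabla_u f\|_{L^2}$, keeping the argument within the $L^2$ kinetic framework used elsewhere in the paper. Either way the conclusion $f\in\Cc^{1,1,2}(Q_T)$ follows.
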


\begin{proposition}[Continuity up to $\Sigma^{+}_{T}$]\label{prop:ContUp2RelevantBoundary}
Assume \hyplangii~and \hypbarriere.  Let $f\in\Cc^{1,1,2}(Q_{T})\cap
\Cc([0,T];L^{2}(\Dd\times\er^{d}))\cap \Hh(Q_T)$ be the solution to \eqref{eq:VFPDirichletBis} with inputs $(f_{0},q)$. Then $f$ is continuous up to $\Sigma^{+}_{T}$.
\end{proposition}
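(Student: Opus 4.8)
The plan is to exploit the Feynman--Kac type representation of $f$ already available at the interior level, together with the almost-sure non-crossing of $\Sigma^{0}$ by the free Langevin process started from inside. First I would establish that, for $(x,u)\in\Dd\times\er^{d}$, the interior solution $f$ given by Propositions \ref{propo:VFPDirWellposed} and \ref{prop:Schauder_estimate} admits the probabilistic representation
\begin{equation*}
f(t,x,u)=\EE_{\PP}\bigl[f_0(x^{x,u}_{t},u^{x,u}_{t})\ind_{\{t\le\beta^{x,u}\}}\bigr]
+\EE_{\PP}\bigl[q(t-\beta^{x,u},x^{x,u}_{\beta^{x,u}},u^{x,u}_{\beta^{x,u}})\ind_{\{t>\beta^{x,u}\}}\bigr],
\end{equation*}
where $(x^{x,u}_{\cdot},u^{x,u}_{\cdot})$ solves \eqref{eq:freelangevin} from $(0,x,u)$ and $\beta^{x,u}$ is its first hitting time of $\partial\Dd$. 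This is obtained by applying It\^o's formula to $s\mapsto f(t-s,x^{x,u}_{s},u^{x,u}_{s})$ on $[0,t\wedge\beta^{x,u}]$ using the interior $\Cc^{1,1,2}$-regularity and the PDE, localizing away from $\partial\Dd$ and passing to the limit; the boundary term produces $q$ at the hitting time via the Dirichlet condition $f=q$ on $\Sigma^{+}_{T}$, and the contribution of the event $\{x^{x,u}_{\beta^{x,u}}\in\Sigma^{0}\}$ vanishes because, as already noted after Lemma \ref{lem:AsymptoticsExitTimesOfStraightening}, the free process started inside $\Dd$ almost surely never meets $\Sigma^{0}$ (this follows from McKean's result via local straightening). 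One must also justify that $q(t-\beta^{x,u},\cdot,\cdot)$ is integrable and that the stochastic integral term is a true martingale, which uses the $L^{2}$-bound on $q$, the boundedness of $q$, and the moment bounds on $\sup_{s\le T}|u^{x,u}_{s}|$.

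Next I would fix a boundary point $(t_{0},x_{0},u_{0})\in\Sigma^{+}_{T}$ (so $t_{0}>0$, $x_{0}\in\partial\Dd$, $(u_{0}\cdot\nd(x_{0}))>0$) and prove $\lim f(t,x,u)=q(t_{0},x_{0},u_{0})$ as $(t,x,u)\to(t_{0},x_{0},u_{0})$ with $(x,u)\in\Dd\times\er^{d}$. The key deterministic fact is that from such $(x_{0},u_{0})$ the free trajectory is \emph{transversally outgoing}: $\beta^{x,u}\to 0$ as $(x,u)\to(x_{0},u_{0})$, and more precisely $(\beta^{x,u},x^{x,u}_{\beta^{x,u}},u^{x,u}_{\beta^{x,u}})\to(0,x_{0},u_{0})$ in probability (indeed a.s. along the canonical coupling), because near a point of $\Sigma^{+}$ the signed distance $\varsigma(x^{x,u}_{s})$ has derivative $(u^{x,u}_{s}\cdot\nd(\pi(x^{x,u}_{s})))$ close to $(u_{0}\cdot\nd(x_{0}))>0$ for small $s$, forcing a crossing of $\partial\Dd$ in a time that shrinks with $\varsigma(x)=d(x,\partial\Dd)$. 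This continuity statement is exactly what is invoked under the name Proposition \ref{prop:ContU2IrrelevantBound} in the proof of Corollary \ref{coro:KernelSmoothness}, so I would prove it here by a Gronwall/implicit-function argument on $\varsigma(x^{x,u}_{s})$ near $s=0$, uniformly for $(x,u)$ in a neighborhood of $(x_{0},u_{0})$, and also obtain $\sup_{s\le\beta^{x,u}}|x^{x,u}_{s}-x_{0}|\to0$ and $|u^{x,u}_{\beta^{x,u}}-u_{0}|\to0$.

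Finally I would combine the two ingredients. On the event $\{t>\beta^{x,u}\}$, which has probability tending to $1$ (since $\beta^{x,u}\to0$ and $t\to t_{0}>0$), the second term in the representation is $\EE[q(t-\beta^{x,u},x^{x,u}_{\beta^{x,u}},u^{x,u}_{\beta^{x,u}})\ind_{\{t>\beta^{x,u}\}}]$; by the continuity of $q$ on $\Sigma^{+}_{T}$ (hypothesis \hypbarriere), the convergence $(t-\beta^{x,u},x^{x,u}_{\beta^{x,u}},u^{x,u}_{\beta^{x,u}})\to(t_{0},x_{0},u_{0})$, the uniform bound $0\le q\le\|q\|_{\infty}$, and dominated convergence, this term converges to $q(t_{0},x_{0},u_{0})$. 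On the complementary event $\{t\le\beta^{x,u}\}$, whose probability tends to $0$, the first term is bounded by $\|f_{0}\|_{\infty}\,\PP(t\le\beta^{x,u})\to0$. Hence $f(t,x,u)\to q(t_{0},x_{0},u_{0})$, which together with the interior continuity from Proposition \ref{prop:Schauder_estimate} gives continuity of $f$ up to $\Sigma^{+}_{T}$.

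\textbf{Main obstacle.} The delicate point is the second step: establishing, uniformly in a neighborhood of an outgoing boundary point $(x_{0},u_{0})\in\Sigma^{+}$, that the hitting time $\beta^{x,u}$ and the hitting position/velocity depend continuously on $(x,u)$ and that $\beta^{x,u}\to0$. The subtlety is that $\partial\Dd$ is only $\Cc^{3}$ and curved, so one cannot argue with a flat boundary directly; the signed-distance function $\varsigma$ of \eqref{eq:signed-distance}--\eqref{eq:nabla-d} and the straightening charts of Proposition \ref{prop:straightening} must be used to reduce to a one-dimensional problem for $\varsigma(x^{x,u}_{s})=\varsigma(x)+\int_{0}^{s}(u^{x,u}_{r}\cdot\nd(\pi(x^{x,u}_{r})))\,dr$ near $s=0$, and one needs a quantitative lower bound on the integrand there (guaranteed by $(u_{0}\cdot\nd(x_{0}))>0$ and continuity) to conclude the crossing occurs and is stable. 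Pathwise continuity of $(x,u)\mapsto\beta^{x,u}$ at an outgoing point then follows, but handling the case where the limiting trajectory grazes the boundary is avoided precisely because $(u_0\cdot\nd(x_0))>0$ is strict on $\Sigma^{+}$.
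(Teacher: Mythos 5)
Your plan takes an entirely different route from the paper (which uses the classical barrier-function argument of Gilbarg--Trudinger: construct super/sub-solutions $\overline{\omega}_\epsilon$, $\underline{\omega}_\epsilon$ built from $\varsigma$ and a quadratic bump, then compare by an energy/integration-by-parts argument), but the route you propose is \emph{circular} as stated. The crux of your plan is to prove the Feynman--Kac representation \eqref{eq:FeynmanKacInterpretation} first and then extract continuity from it. However, in the paper the Feynman--Kac formula is derived \emph{after} and \emph{from} Proposition~\ref{prop:ContUp2RelevantBoundary}: the derivation applies It\^o's formula to $f(t-s,x^{y,v}_{s\wedge\beta^{y,v}_\delta},u^{y,v}_{s\wedge\beta^{y,v}_\delta})$, which yields
\begin{equation*}
f(t,y,v)=\EE_\PP\bigl[f_0(\cdot)\ind_{\{t\le\beta^{y,v}_\delta\}}\bigr]+\EE_\PP\bigl[f(t-\beta^{y,v}_\delta,x^{y,v}_{\beta^{y,v}_\delta},u^{y,v}_{\beta^{y,v}_\delta})\ind_{\{t>\beta^{y,v}_\delta\}}\bigr],
\end{equation*}
and one must send $\delta\to 0$. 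To replace $f(t-\beta^{y,v}_\delta,\dots)$ by $q(t-\beta^{y,v},\dots)$ in the limit, one needs precisely that $f$ is pointwise continuous up to $\Sigma^{+}_T$ --- the statement you are trying to prove. Your phrase ``the boundary term produces $q$ at the hitting time via the Dirichlet condition $f=q$ on $\Sigma^{+}_T$'' hides the gap: the equality $\gamma(f)=q$ in Proposition~\ref{propo:VFPDirWellposed} holds only in the trace sense ($L^2(\Sigma^{+}_T)$, defined by an $L^2$ limit via Lemma~\ref{lem:Density}), not as a pointwise limit of $f$ along interior sequences approaching a given boundary point. Without Proposition~\ref{prop:ContUp2RelevantBoundary} already in hand, the passage $\delta\to0$ does not identify the limit with $q$.

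To salvage your approach you would need an independent route to \eqref{eq:FeynmanKacInterpretation} that does not presuppose boundary continuity of $f$ --- for instance, a trace-convergence result asserting that $f$ restricted to the level sets $\{d(\cdot,\partial\Dd)=\delta\}$ converges in an appropriate $L^1$ sense (weighted by $|(u\cdot\nd)|$) to $\gamma(f)=q$, plus absolute continuity of the exit law on $\Sigma^{-}$ (Lemma~\ref{lem:abs-cont-wrt-surface-mesure}) to turn a.e.~equality into an a.s.~equality. This is not obviously available from what the paper proves before this point, and it is exactly the difficulty that the barrier method sidesteps: the barrier argument is purely deterministic, localizes near a single outgoing boundary point, and exploits only the strict sign $(u_0\cdot\nd(x_0))>0$ through $L(-\varsigma)=(u\cdot\nabla\varsigma)>\eta>0$. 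Your second and third steps (continuity of $(x,u)\mapsto(\beta^{x,u},x^{x,u}_{\beta^{x,u}},u^{x,u}_{\beta^{x,u}})$ near $\Sigma^{+}$ and dominated convergence) are fine in themselves, but they rest on the representation that you have not legitimately established.
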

\begin{proof} To show the continuity up to the boundary $\Sigma^{+}_{T}$, we follow the classical method of local barrier functions (see e.g. \cite{GilbTrud-83}). Let $(t_0,x_0,u_0)\in \Sigma^{+}_{T}$. Since $q$ is continuous in $\Sigma^{+}_{T}$, we can assume that for any $\epsilon>0$, there exists a neighborhood $\Oo_{t_{0},x_{0},u_{0}}$ of $(t_{0},x_{0},u_{0})$ such that
\begin{equation*}
q(t_{0},x_{0},u_{0})-\epsilon\leq q(t,x,u)\leq q(t_{0},x_{0},u_{0})+\epsilon,~~\forall (t,x,u)\in\Oo_{t_{0},x_{0},u_{0}}\cap\Sigma^{+}_{T}.
\end{equation*}
In addition, since $(u_0\cdot n_{\Dd}(x_0)) >0$, by  reducing $\Oo_{t_0,x_0,u_0}$, we can  assume that $\varsigma$ (the signed distance to $\partial\Dd$ given in \eqref{eq:signed-distance})  is in $\Cc^{2}(\overline{\Oo_{t_0,x_0,u_0}})$ and that $(u\cdot \nabla\varsigma(x))>\eta$ for all $(t,x,u)\in\mathcal{O}_{t_{0},x_{0},u_{0}}$, for some positive $\eta$\footnote{For instance, set $\Dd^\mu := \{x\in \Dd; -\varsigma (x)\leq \mu\}$. Assuming that $\varsigma\in\Cc^{2}$ for all $x\in \Dd^\mu$ for some $\mu>0$, by setting $C_0:=(u_{0}\cdot n_{\Dd}(x_{0})) >0$, one can choose $\delta'\in (0,\mu)$ so that $\delta'\sup_{x\in \Dd^\mu}|\nabla \varsigma(x)| + (\delta' +|u_{0}|)\sup_{x\in\Dd^\mu}|\nabla^2 \varsigma(x)|<C_0/2$.
Therefore, we have
 $(u\cdot n_{\Dd}(x))\geq (u_{0}\cdot n_{\Dd}(x_{0}))-\left|(u_{0}\cdot n_{\Dd}(x_{0}))-(u\cdot \nabla\varsigma(x))\right|\geq C_0-\delta'\sup_{\Dd^\mu}|\nabla\varsigma|+(\delta'+|u_{0}|)\left(\sup_{\Dd^\mu}|\nabla^2\varsigma|\right) >C_0/2 := \eta$.}.
Consequently, by setting $\varrho(x)= -\varsigma(x)$ and
\begin{equation*}
L:=\partial_{t}-(u\cdot\nabla_{x})-\frac{\sigma^{2}}{2}\triangle_{u},
\end{equation*}
we observe that, for all $(t,x,u)\in \Oo_{t_0,x_0,u_0}$,
\begin{equation}\label{proof:barrier-stp1}
L(\varrho)(t,x,u)
=-(u\cdot \nabla\varrho(x))=(u\cdot \nabla\varsigma(x))>\eta> 0.
\end{equation}
Reducing again $\mathcal{O}_{t_{0},x_{0},u_{0}}$, we can assume that $\mathcal{O}_{t_{0},x_{0},u_{0}}$ has the form $(t_{0}-\delta,t_{0}+\delta)\times B_{x_{0}}(\delta')\times B_{u_{0}}(\delta')$ (where $B_{x_{0}}(\delta')$ [resp. $B_{x_{0}}(\delta')$] is the ball centered in $x_{0}$ [resp. $u_{0}$] of radius $\delta'$) for some positive constants $\delta,\delta'>0$ such that $0\leq t_{0}-\delta<t_{0}+\delta\leq T$.

We can construct the barrier functions {related to} $(t_{0},x_{0},u_{0})\in\Sigma^{+}_{T}$ {with}
\begin{equation}\label{VFP_Dirichlet:BarrierFunc}
\begin{aligned}
\overline{\omega}_{\epsilon}(t,x,u)&=q(t_{0},x_{0},u_{0})+\epsilon+k_{\epsilon}{\psi_{x_{0}}(x)}+K_{\epsilon}\varrho(x),\\
\underline{\omega}_{\epsilon}(t,x,u)&=q(t_{0},x_{0},u_{0})-\epsilon-k_{\epsilon}{\psi_{x_{0}}(x)}-K_{\epsilon}\varrho(x).
\end{aligned}
\end{equation}
where $\psi_{x_{0}}(x)=(x-x_0)^{2}$ and where the parameters $k_{\epsilon},\,K_{\epsilon}\in\er^{+}$ are chosen large enough so that, for $M^{+}$ [resp. $M^{-}$] an upper-bound [resp. lower-bound] of $f$ on $\partial \mathcal{O}_{t_{0},x_{0},u_{0}}\cap Q_{T}$, we have
\begin{align*}
&k_{\epsilon}\inf_{\mathcal{O}_{t_{0},x_{0},u_{0}}\cap Q_{T}}L(\psi_{x_{0}})+K_{\epsilon}\inf_{\mathcal{O}_{t_{0},x_{0},u_{0}}\cap Q_{T}}L(\varrho)\geq 0,\\
&k_{\epsilon}\inf_{\partial\mathcal{O}_{t_{0},x_{0},u_{0}}\cap Q_{T}}\psi_{x_{0}}+K_{\epsilon}\inf_{\partial\mathcal{O}_{t_{0},x_{0},u_{0}}\cap Q_{T}}\varrho\geq \left(M^{+}-(q(t_{0},x_{0},u_{0})+\epsilon)\right) \vee \left(q(t_{0},x_{0},u_{0})-\epsilon\right).
\end{align*}
For example, setting $\eta:=\inf_{\overline{\mathcal{O}_{t_{0},x_{0},u_{0}}\cap Q_{T}}}(u \cdot \nabla\varsigma(x))$, as  $ \inf_{\partial\mathcal{O}_{t_{0},x_{0},u_{0}}\cap Q_{T}}\psi_{x_{0}} = \delta'^2$, one can choose  $k_{\epsilon}$ and $K_{\epsilon}$ such that
\begin{align*}
-(\delta')^{2}k_{\epsilon} +K_{\epsilon}\eta =0,\quad \quad k_{\epsilon}(\delta')^{2}
=\left(M^{+}-q(t_{0},x_{0},u_{0})\right) \vee q(t_{0},x_{0},u_{0}).
\end{align*}
Thus, $\overline{\omega}_{\epsilon}$ and $\underline{\omega}_{\epsilon}$ satisfy the  properties
\begin{equation*}
\textit{(P)-}\,\,\left\{
\begin{aligned}
&(a)\,\,\overline{\omega}_{\epsilon}(t,x,u)\geq q(t,x,u)\geq\underline{\omega}_{\epsilon}(t,x,u)\,\,\mbox{  for all }\,(t,x,u)\in \mathcal{O}_{t_{0},x_{0},u_{0}}\cap (0,T)\times\partial\Dd\times\er^{d},\\
&(b)\,\,L(\overline{\omega}_{\epsilon})\geq 0 \geq L(\underline{\omega}_{\epsilon})\,\,\mbox{  for all }\,(t,x,u)\in \mathcal{O}_{t_{0},x_{0},u_{0}}\cap Q_{T},\\
&(c)\,\,\overline{\omega}_{\epsilon}(t,x,u)\geq M^{+},\,\mbox{  and }\,\underline{\omega}_{\epsilon}(t,x,u)\leq M^{-},\,\,\mbox{  for all }\,(t,x,u)\in \partial\mathcal{O}_{t_{0},x_{0},u_{0}}\cap Q_{T},\\
&(d)\,\,\lim_{\epsilon\rightarrow 0^{+}}\overline{\omega}_{\epsilon}(t_{0},x_{0},u_{0})=\lim_{\epsilon\rightarrow 0^{+}}\underline{\omega}_{\epsilon}(t_{0},x_{0},u_{0})=q(t_{0},x_{0},u_{0}).
\end{aligned}
\right.
\end{equation*}
Now we shall prove that, for $f$ the solution to \eqref{eq:VFPDirichlet}, $\underline{\omega}_{\epsilon}\leq f^{}\leq \overline{\omega}_{\epsilon}$ on $\mathcal{O}_{t_{0},x_{0},u_{0}}\cap Q_{T}$. Owing to the property {\it{(P)-}}$(d)$, this allows to conclude that $f^{}(t,x,u)$ tends to $q(t_{0},x_{0},u_{0})$ as $(t,x,u)$ tends to $(t_{0},x_{0},u_{0})$, for all $(t_{0},x_{0},u_{0})$ of $\Sigma^{+}_{T}$.

For the local comparison between $\overline{\omega}_{\epsilon}$ and $f$, let us consider the positive part $(f-\overline{\omega}_{\epsilon})^{+}$ of $f-\overline{\omega}_{\epsilon}$.
Let $\eta_{0}$ denote some nonnegative cut-off function defined in a neighborhood of $(t_{0},x_{0},u_{0})$ such that $\eta_{0}(t,x,u)=0$ for all $(t,x,u)\in \partial \mathcal{O}_{t_{0},x_{0},u_{0}}$, and let $\beta$ be a real parameter that we will specify later.  The function $\triangle_{u}|(f^{}-\overline{\omega}_{\epsilon})^{+}|^{2}$ is well defined a.e. on $Q_{T}$ since, using Theorem \ref{thm:SobolevRegNegativePart} (see e.g  Tartar \cite{tartar-78}), one
can check that  $\triangle_{u}|(f^{}-\overline{\omega}_{\epsilon})^{+}|^{2}=2\nabla_{u}\cdot ((f^{}-\overline{\omega}_{\epsilon})^{+}\nabla_{u}(f^{}-\overline{\omega}_{\epsilon}))=2  ((f^{}-\overline{\omega}_{\epsilon})^{+}\triangle_{u}(f^{}-\overline{\omega}_{\epsilon}))+2\left|\nabla_{u}(f^{}-\overline{\omega}_{\epsilon})\right|^{2}\ind_{\{f^{}>\overline{\omega}_{\epsilon}\}}$.
We shall observe that
\begin{align*}
&L(\eta_{0}\exp{\{\beta t\}}\left|(f^{}-\overline{\omega}_{\epsilon})^{+}\right|^{2})\\
&=\left|(f^{}-\overline{\omega}_{\epsilon})^{+}\right|^{2}L(\eta_{0}\exp{\{\beta t\}})+\eta_{0}\exp{\{\beta t\}}L(\left|(f^{}-\overline{\omega}_{\epsilon})^{+}\right|^{2})-
\sigma^{2}\exp{\{\beta t\}}\left(\nabla_{u}\eta_{0}\cdot \nabla_u\left|(f^{}-\overline{\omega}_{\epsilon})^{+}\right|^{2}\right).
\end{align*}
The property {\it{(P)-}}$(b)$ ensures that
\begin{equation*}
L(\left|(f-\overline{\omega}_{\epsilon})^{+}\right|^{2})=(f-\overline{\omega}_{\epsilon})^{+}L(f-\overline{\omega}_{\epsilon})-\frac{\sigma^{2}}{2}
\left|\nabla_{u}(f^{}-\overline{\omega}_{\epsilon})\right|^{2}\ind_{\{f^{}>\overline{\omega}_{\epsilon}\}}\leq -\frac{\sigma^{2}}{2}
\left|\nabla_{u}(f^{}-\overline{\omega}_{\epsilon})\right|^{2}\ind_{\{f^{}>\overline{\omega}_{\epsilon}\}}\leq 0,
\end{equation*}
and thus
\begin{align*}
& L(\eta_{0}\exp{\{\beta t\}}\left|(f^{}-\overline{\omega}_{\epsilon})^{+}\right|^{2})\\
& \leq \exp{\{\beta t\}}\left|(f^{}-\overline{\omega}_{\epsilon})^{+}\right|^{2}\left\{L(\eta_{0})+\beta\eta_{0}\right\}-\sigma^{2}\exp{\{\beta t\}}\left(\nabla_{u}\eta_{0}\cdot \nabla_{u} \left|\left(f^{}-\overline{\omega}_{\epsilon}\right)^{+}\right|^{2}\right).
\end{align*}
We integrate the two sides above over  $\mathcal{O}_{t_{0},x_{0},u_{0}}\cap Q_{T}$. Since $\eta_{0}=0$ on $\partial \mathcal{O}_{t_{0},x_{0},u_{0}}$,
an integration by parts yields
\begin{align*}
&\int_{\mathcal{O}_{t_{0},x_{0},u_{0}}\cap Q_{T}}L(\eta_{0}\exp{\{\beta t\}}\left|(f^{}-\overline{\omega}_{\epsilon})^{+}\right|^{2})(t,x,u)\\
&=-\int_{\mathcal{O}_{t_{0},x_{0},u_{0}}\cap \Sigma_{T}}(u\cdot n_{\Dd}(x))\eta_{0}(t,x,u)\exp{\{\beta t\}}|(\gamma(f^{})-\overline{\omega}_{\epsilon})^{+}(t,x,u)|^{2}d\lambda_{\Sigma_{T}}(t,x,u).
\end{align*}
Hence, we have obtained that
\begin{align*}
&-\int_{\mathcal{O}_{t_{0},x_{0},u_{0}}\cap\Sigma_{T} }
(u\cdot n_{\Dd}(x))\eta_{0}(t,x,u)\exp{\{\beta t\}}|(\gamma(f^{})-\overline{\omega}_{\epsilon})^{+}(t,x,u)|^{2}d\lambda_{\Sigma_{T}}(t,x,u)\\
&\leq \int_{\mathcal{O}_{t_{0},x_{0},u_{0}}\cap Q_{T}}\left(L(\eta_{0})+\beta\eta_{0}+\sigma^{2}\triangle_{u}\eta_{0}\right)\exp{\{\beta t\}}|(f^{}-\overline{\omega}_{\epsilon})^{+}|^{2},
\end{align*}
or equivalently, since $\eta_0=0$ on $\Sigma^{-}_{T}$, (as $(x_0,u_0)\in\Sigma^+$)
\begin{align*}
&-\int_{\mathcal{O}_{t_{0},x_{0},u_{0}}\cap\Sigma^{+}_{T}}(u\cdot n_{\Dd}(x))\eta_{0}(t,x,u)\exp{\{\beta t\}}|(q-\overline{\omega}_{\epsilon})^{+}(t,x,u)|^{2}d\lambda_{\Sigma_{T}}(t,x,u)\\
&\leq \int_{\mathcal{O}_{t_{0},x_{0},u_{0}}\cap Q_{T}}\left(L(\eta_{0})+\beta\eta_{0}+\sigma^{2}\triangle_{u}\eta_{0}\right)\exp{\{\beta t\}}|(f^{}-\overline{\omega}_{\epsilon})^{+}|^{2}
\end{align*}
From {\it{(P)-}}$(a)$ and {\it{(P)-}}$(c)$, the integral along $\mathcal{O}_{t_{0},x_{0},u_{0}}\cap \Sigma^{+}_{T}$ is  nonnegative. By choosing $\eta_{0}$ and $\beta\in \er$ such that
\begin{equation*}
L(\eta_{0})+\beta\eta_{0} +\sigma^{2}\triangle_{u}\eta_{0} < 0,
\end{equation*}
we conclude that $f^{}\leq \overline{\omega}_{\epsilon}$ on $\mathcal{O}_{t_{0},x_{0},u_{0}}$. Similar arguments entail that $\underline{\omega}_{\epsilon}\leq f^{}$.
\end{proof}
\paragraph{Feynman-Kac representation and continuity up to and along $\Sigma^{-}_{T}$. }
We prove the Feynman-Kac representation \eqref{eq:FeynmanKacInterpretation} by replicating the arguments of Friedman
\cite[Chapter 5, Theorem 5.2]{AFriedman1975}: for $(y,v)\in\Dd\times\er^{d}$ fixed, let $((x^{y,v}_{t},u^{y,v}_{t});\,t\in[0,T])$ satisfy 
\begin{equation*}
\left\{
\begin{aligned}
&x^{y,v}_t = y + \int_0^t u^{y,v}_s ds,\\
&u^{y,v}_t = v + \sigma W_t,
\end{aligned}
\right.
\end{equation*}
where $(W_{t};\,t\geq 0)$ is an $\er^{d}$-valued Brownian motion defined on some  probability space $(\Omega,\Ff,\PP)$. Set
$\beta^{y,v}_{\delta}:=\inf\{t>0\,;\,d(x^{y,v}_{t},\partial\Dd)\leq \delta\}$. Since $f$ is smooth in the interior of $Q_{T}$ and satisfies \eqref{eq:VFPDirichlet}, applying It\^{o}'s formula to $f(t-s,x^{y,v}_{s\wedge\beta^{y,v}_{\delta}},u^{y,v}_{s\wedge\beta^{y,v}_{\delta}})$, for $s\in[0,t]$, yields
\begin{equation*}
f(t,y,v)
=\EE_{\PP}
\left[f_0(x^{y,v}_{t},u^{y,v}_{t})
\ind_{\displaystyle\{t\leq\beta^{y,v}_{\delta}\}}\right]
+\EE_{\PP}\left[f(t-\beta^{y,v}_{\delta},
x^{y,v}_{\beta^{y,v}_{\delta}},
u^{y,v}_{\beta^{y,v}_{\delta}})
\ind_{\displaystyle\{t>\beta^{y,v}_{\delta}\}}\right].
\end{equation*}
Since $\PP$-a.s., $\beta^{y,v}_{\delta}$ tends to $\beta^{y,v} = \inf\{t>0\,;\,d(x^{y,v}_{t},\partial\Dd) =0\}$, as $\delta$ tends to $0$, and thanks to Proposition \ref{prop:ContUp2RelevantBoundary}, one obtains   \eqref{eq:FeynmanKacInterpretation}.
\begin{proposition}\label{prop:ContU2IrrelevantBound}
Assume \hypbarriere. Let $f^{}\in\Cc^{1,1,2}(Q_{T})\cap\Cc(Q_{T}\cup\Sigma^{+}_{T})$ be the solution to \eqref{eq:VFPDirichlet}.
Then $f^{}$ is continuous along and up to $\Sigma^{-}_{T}$.
\end{proposition}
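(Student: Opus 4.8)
The plan is to extend $f$ to $\Sigma^{-}_{T}$ by the Feynman--Kac formula \eqref{eq:FeynmanKacInterpretation} itself and to prove that the resulting map is continuous at every point of $\Sigma^{-}_{T}$, whether this point is approached from inside $Q_{T}$ or along $\Sigma^{-}_{T}$. So fix $(t_{0},x_{0},u_{0})\in\Sigma^{-}_{T}$ and a sequence $(t_{m},x_{m},u_{m})\to(t_{0},x_{0},u_{0})$ with $(x_{m},u_{m})\in(\Dd\times\er^{d})\cup\Sigma^{-}$. Since the free flow is affine in its initial datum, $x^{y,v}_{s}=y+sv+\sigma\int_{0}^{s}W_{r}\,dr$ and $u^{y,v}_{s}=v+\sigma W_{s}$, so $\PP$-a.s. the maps $(s,y,v)\mapsto(x^{y,v}_{s},u^{y,v}_{s})$ are jointly continuous, uniformly on $[0,T]$; in particular $(x^{x_{m},u_{m}}_{t_{m}},u^{x_{m},u_{m}}_{t_{m}})\to(x^{x_{0},u_{0}}_{t_{0}},u^{x_{0},u_{0}}_{t_{0}})$ a.s. Everything therefore reduces to proving that, $\PP$-a.s., $\beta^{x_{m},u_{m}}\to\beta^{x_{0},u_{0}}=:\beta_{0}$, that $(x^{x_{0},u_{0}}_{\beta_{0}},u^{x_{0},u_{0}}_{\beta_{0}})\in\Sigma^{+}$ on $\{\beta_{0}\le T\}$, and that $\PP(\beta_{0}=t_{0})=0$: granted these, the indicators in \eqref{eq:FeynmanKacInterpretation} converge a.s., the argument of $q$ converges to a point of $\Sigma^{+}_{T}$ on $\{t_{0}>\beta_{0}\}$, and dominated convergence (using $f_{0}\in\Cc_{b}(\Dd\times\er^{d})$ and $q\in\Cc_{b}(\Sigma^{+}_{T})$ from \hypbarriere) gives
\[
\lim_{m} f(t_{m},x_{m},u_{m})=\EE_{\PP}\!\big[f_{0}(x^{x_{0},u_{0}}_{t_{0}},u^{x_{0},u_{0}}_{t_{0}})\ind_{\{t_{0}\le\beta_{0}\}}\big]+\EE_{\PP}\!\big[q(t_{0}-\beta_{0},x^{x_{0},u_{0}}_{\beta_{0}},u^{x_{0},u_{0}}_{\beta_{0}})\ind_{\{t_{0}>\beta_{0}\}}\big],
\]
a value independent of the approximating sequence, which is then taken as $f(t_{0},x_{0},u_{0})$.

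The a.s. continuity of $(y,v)\mapsto\beta^{y,v}$ at $(x_{0},u_{0})$ is the heart of the matter. Because $x^{y,v}$ has finite variation, $g^{y,v}(s):=\varsigma(x^{y,v}_{s})$, with $\varsigma$ the $\Cc^{2}$ signed distance of \eqref{eq:signed-distance}, is $\Cc^{1}$ in $s$ with derivative $(u^{y,v}_{s}\cdot\nabla\varsigma(x^{y,v}_{s}))$, jointly continuous in $(s,y,v)$ a.s. For the lower bound $\liminf_{m}\beta^{x_{m},u_{m}}\ge\beta_{0}$, fix $t<\beta_{0}$ (or $t<T$ if $\beta_{0}=+\infty$). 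On $(0,t]$ one has $x^{x_{0},u_{0}}_{s}\in\Dd$, hence $g^{x_{0},u_{0}}<0$ there, while $g^{x_{0},u_{0}}(0)=0$ and $\partial_{s}g^{x_{0},u_{0}}(0)=(u_{0}\cdot\nd(x_{0}))<0$; choose $\eta\in(0,t]$ with $\partial_{s}g^{x_{0},u_{0}}<\tfrac12(u_{0}\cdot\nd(x_{0}))$ on $[0,\eta]$. Joint continuity gives $\partial_{s}g^{x_{m},u_{m}}<\tfrac14(u_{0}\cdot\nd(x_{0}))$ on $[0,\eta]$ for $m$ large, and since $g^{x_{m},u_{m}}(0)=\varsigma(x_{m})\le 0$ (for $m$ large $x_{m}\in\overline{\Dd}$ lies in $\Nn$), we get $g^{x_{m},u_{m}}<0$ on $(0,\eta]$, so $x^{x_{m},u_{m}}_{s}\notin\partial\Dd\subset\{\varsigma=0\}$ there. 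On $[\eta,t]$ the compact set $\{x^{x_{0},u_{0}}_{s}:s\in[\eta,t]\}$ lies at positive distance from $\partial\Dd$, so uniform path convergence forbids $x^{x_{m},u_{m}}$ from meeting $\partial\Dd$ on $[\eta,t]$ as well; hence $\beta^{x_{m},u_{m}}>t$ eventually.

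For the upper bound $\limsup_{m}\beta^{x_{m},u_{m}}\le\beta_{0}$ --- the step I expect to be the main obstacle --- I would argue on $\{\beta_{0}<T\}$ (the event $\{\beta_{0}=T\}$ being null and $\{\beta_{0}=+\infty\}$ being covered by the previous paragraph). By the remark following Lemma \ref{lem:AsymptoticsExitTimesOfStraightening}, $\PP_{x_{0},u_{0}}$-a.s. the free process never meets $\Sigma^{0}$, so the crossing at $\beta_{0}$ is non-tangential; combined with $g^{x_{0},u_{0}}<0$ on $(0,\beta_{0})$ and $g^{x_{0},u_{0}}(\beta_{0})=0$, and with $\nabla\varsigma=\nd$ on $\partial\Dd$ by \eqref{eq:nabla-d}, this forces $\partial_{s}g^{x_{0},u_{0}}(\beta_{0})=(u^{x_{0},u_{0}}_{\beta_{0}}\cdot\nd(x^{x_{0},u_{0}}_{\beta_{0}}))>0$, which in particular shows $(x^{x_{0},u_{0}}_{\beta_{0}},u^{x_{0},u_{0}}_{\beta_{0}})\in\Sigma^{+}$. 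Hence $g^{x_{0},u_{0}}>0$ on $(\beta_{0},\beta_{0}+\epsilon']$ for some a.s.-positive $\epsilon'$, which we may shrink so that $x^{x_{0},u_{0}}$ stays in $\Nn$ on $[\beta_{0},\beta_{0}+\epsilon']$ and $\beta_{0}+\epsilon'<T$; then $x^{x_{0},u_{0}}_{\beta_{0}+\epsilon}\notin\overline{\Dd}$ for every $\epsilon\in(0,\epsilon']$. Fixing such an $\epsilon$, uniform path convergence gives $x^{x_{m},u_{m}}_{\beta_{0}+\epsilon}\notin\overline{\Dd}$ for $m$ large while $x^{x_{m},u_{m}}_{0}=x_{m}\in\overline{\Dd}$, so by continuity the path $s\mapsto x^{x_{m},u_{m}}_{s}$ meets $\partial\overline{\Dd}=\partial\Dd$ on $(0,\beta_{0}+\epsilon]$, whence $\beta^{x_{m},u_{m}}\le\beta_{0}+\epsilon$ eventually; letting $\epsilon\downarrow 0$ gives the bound. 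Joint flow continuity then also yields $(x^{x_{m},u_{m}}_{\beta^{x_{m},u_{m}}},u^{x_{m},u_{m}}_{\beta^{x_{m},u_{m}}})\to(x^{x_{0},u_{0}}_{\beta_{0}},u^{x_{0},u_{0}}_{\beta_{0}})$ on $\{\beta_{0}\le T\}$.

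It remains to note $\PP(\beta_{0}=t_{0})=0$. This follows from the absolute continuity of the law of the first passage time at $0$ of the Brownian primitive (McKean--Lachal), transported through the local straightening of $\partial\Dd$ and the Girsanov-type equivalence of laws \eqref{eq:equivLangevinStraightened}, exactly as in the proof of Lemma \ref{lem:abs-cont-wrt-surface-mesure}: on $(0,T]$ the law of $\beta_{0}$ has a density, hence no atom at $t_{0}$. Combining the three preceding paragraphs with the dominated-convergence argument of the first paragraph completes the proof, and the so-extended $f$ is continuous up to and along $\Sigma^{-}_{T}$. The only genuinely delicate point is the non-tangentiality of the crossing at $\beta_{0}$, on which the whole argument hinges; the rest is path continuity, elementary connectedness, and dominated convergence.
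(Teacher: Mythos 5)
Your proof is correct and shares the paper's Feynman--Kac backbone: reduce the continuity of $f$ up to $\Sigma^{-}_{T}$ to the a.s.\ continuity of $(y,v)\mapsto(\beta^{y,v},x^{y,v}_t,u^{y,v}_t)$ and conclude by dominated convergence. The paper handles the continuity of $\beta^{y,v}$ by first identifying $\beta^{y,v}$ a.s.\ with the exit time $\tau^{y,v}$ from $\overline{\Dd}$ (using $\Sigma^{0}$-avoidance), citing Darling--Pardoux, Proposition~6.3, for $\limsup_m\tau^{y_m,v_m}\le\tau^{y,v}$, and deducing $\liminf_m\tau^{y_m,v_m}\ge\tau^{y,v}$ from the a.s.\ identity $\tau^{y,v}=\inf\{t>0;\ (x^{y,v}_t,u^{y,v}_t)\in\Sigma^{+}\}$ together with convergence of the a.s.-in-$\Sigma^{+}$ exit points of the approximating paths. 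You instead argue directly with the signed distance $\varsigma(x^{y,v}_{\cdot})$, giving a self-contained alternative to the Darling--Pardoux reference: your lower bound $\liminf_m\beta^{x_m,u_m}\ge\beta_0$ follows from $(u_0\cdot\nd(x_0))<0$, an initial-interval derivative estimate, and compactness, and never invokes non-tangentiality, while you use the $\Sigma^{0}$-avoidance instead for the upper bound $\limsup_m\beta^{x_m,u_m}\le\beta_0$; this is the opposite distribution of the role of non-tangentiality from the paper's, and both are legitimate. Your version also allows the approximating points to lie in $\Sigma^-$ as well as in $\Dd\times\er^d$, which is what ``along'' (not only ``up to'') $\Sigma^{-}_{T}$ requires. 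Finally, you make explicit a step the paper leaves implicit: the indicators $\ind_{\{t\le\beta^{y,v}\}}$, $\ind_{\{t>\beta^{y,v}\}}$ converge a.s.\ only after checking $\PP(\beta^{x_0,u_0}=t_0)=0$, which you obtain, correctly, from the McKean--Lachal density argument of Lemma~\ref{lem:abs-cont-wrt-surface-mesure}. This is a genuine refinement rather than a change of route.
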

\begin{proof}
According to \eqref{eq:FeynmanKacInterpretation} and since $f_{0}$ and $q$ are continuous, the continuity of $f$ up to $\Sigma^{-}_{T}$ will follow from the continuity of $(y,v)\mapsto (\beta^{y,v},x^{y,v}_{t},u^{y,v}_{t})$. $\PP$-almost surely, for all $t\geq 0$, the flow $(y,v)\mapsto (x^{y,v}_{t},u^{y,v}_{t})$ is continuous on $\er^{d}\times\er^{d}$.
As $(y,v)\notin\Sigma^0 \cup \Sigma^+$, we have
$\beta^{y,v} = \tau^{y,v} := \inf\{t>0;\,x^{y,v}_t\notin \overline{\Dd}\}$. To prove that $(y,v)\mapsto \tau^{y,v}$ is continuous up to $\Sigma^{-}$,  we follow the general proof of the continuity of exit time related to a flow of continuous processes given in Proposition $6.3$ in Darling and Pardoux \cite{DarlingPardoux-97}. First, replicating the argument of the authors, one can show that, for all $(y_{m},v_{m})\in \Dd\times\er^{d}$ such that {$\lim_{m\rightarrow +\infty}(y_{m},v_{m})=(y,v)\in\Sigma^-$,}
\begin{equation*}
\limsup_{m\rightarrow +\infty}\tau^{y_{m},v_{m}}\leq\tau^{y,v}.
\end{equation*}
Next, it is sufficient to check that
\begin{equation*}
\tau^{y,v}\leq\liminf_{m\rightarrow +\infty}\tau^{y_{m},v_{m}}.
\end{equation*}
Following \cite{DarlingPardoux-97}, we may observe that, as in the proof of Theorem \ref{thm:LinearSolutionByStraightening} , for a.e. $(y,v)\in\Dd\times\er^{d}\cup\Sigma^{-}$, the path  $t\mapsto(x^{y,v}_t,u^{y,v}_t)$ never hits $\Sigma^{0}{\cup \Sigma^{-}}$, and, {since  $\PP$-a.s.
$(t,y,v)\mapsto (x^{y,v}_{t},u^{y,v}_{t})$ is continuous on $[0,+\infty)\times\overline{\Dd}\times\er^{d}$, one can check that}
\begin{equation*}
\overline{\{(x^{y_{m},v_{m}}_{\tau^{y_{m},v_{m}}},
u^{y_{m},v_{m}}_{\tau^{y_{m},v_{m}}});\,m\in\NN\}}\subset\Sigma^{+},
\end{equation*}
{and that $(x^{y,v}_{\liminf_{m\rightarrow +\infty}\tau^{y_{m},v_{m}}},
u^{y,v}_{\liminf_{m\rightarrow +\infty}\tau^{y_{m},v_{m}}})\in \Sigma^{+}$.
Since $\tau^{y,v}=\inf\{t>0;\,(x^{y,v}_{t},u^{y,v}_{t})\in\Sigma^{+}\}$,}  we deduce that
$\tau^{y,v}\in [0,\liminf_{m\rightarrow +\infty}\tau^{y_{m},v_{m}}]$.
\end{proof}
This ends the proof of Theorem \ref{thm:L2VFPDirichlet}.
\section{On the conditional
McKean-Vlasov-Fokker-Planck equation}\label{sec:PDEApproach}
In this section, we construct a probability density function satisfying (in the sense of distribution):
\begin{subequations}\label{eq:ConditionalMcK-VFP}
\begin{align}
&\label{eq:VFPnonlineaire_interieur}\partial_{t} \rho + (u\cdot\nabla_{x}\rho) + (B[\cdot~;\rho]\cdot\nabla_{u}\rho)
-\frac{\sigma^{2}}{2}\triangle_{u} \rho = 0,\,\mbox{ on}~(0,T)\times\Dd \times\mathbb{R}^{d},
\\
&\label{eq:VFPnonlineaire_initial}\rho(0,x,u) = \rho_{0}(x,u),\,\mbox{ on}~\Dd \times\mathbb{R}^{d},
\\
\label{eq:specularcondition}
&\gamma(\rho)(t,x,u) = \gamma(\rho)(t,x,u-2(u\cdot n_{\Dd }(x))n_{\Dd }(x)),\,\mbox{ on}~(0,T)\times\partial\Dd \times\mathbb{R}^{d},
\end{align}
\end{subequations}
where $B$ is defined as in \eqref{eq:DriftDefinitionEDP}.
Clearly \eqref{eq:ConditionalMcK-VFP} is the equation of the time marginal law of $((X_t,U_t);\,t\in[0,T])$ solution to  \eqref{eq:NonlinearConfinedLangevin}. In particular  \eqref{eq:specularcondition} takes into account the specular reflection resulting from the confinement component $(K_t;\,t\in[0,T])$ in \eqref{eq:NonlinearConfinedLangevin}.

Throughout this section, we  refer to equation \eqref{eq:ConditionalMcK-VFP}  as the conditional
McKean-Vlasov-Fokker-Planck equation. Furthermore, for notation convenience, we  denote by $\Tt$ the transport operator in \eqref{eq:VFPnonlineaire_interieur}, namely  for all test function $\psi$ on $Q_{T}$,
\begin{equation}
\label{Def:OperateurTransport}
\trans(\psi) = \partial_{t}\psi + (u\cdot\nabla_{x}\psi).
\end{equation}

As mentioned in Subsection \ref{sec:PDEStudy}, the well-posedness of the linear Vlasov-Fokker-Planck equation and the related trace problem has
been well studied in the literature of kinetic equation (we particularly refer to Degond \cite{degond-86}, Degond and Mas-Gallic \cite{degond-masgallic-85}, Carrillo \cite{carrillo-98} and Mischler \cite{mischler-10}). More recently, in his study of geometric Kramers-Fokker-Planck operators with boundary
conditions \cite{Nier-14}, Nier showed how to associate a boundary condition operator with the linear Vlasov-Fokker-Planck equation, corresponding to a Langevin stochastic dynamics with jump process at the boundary.
This methodology may offer some other perspectives to extend the construction of the nonlinear Langevin dynamics with a general class of boundary conditions. 

For the study of Eq. \eqref{eq:ConditionalMcK-VFP}, the two main difficulties are  in the fractional form  of $B[\cdot~;\rho]$,
 and in the verification of the properties \eqref{nopermeabilitycondition_1} and \eqref{nopermeabilitycondition_2} of the trace function $\gamma(\rho)$
 (see Definition \ref{def:Trace_density_process}). For this purpose, starting from the assumption \hypedpiii, we prove  the existence of a solution to   \eqref{eq:ConditionalMcK-VFP}, as well as the existence of related Maxwellian  upper and 
 lower-bounds. These Maxwellian bounds are of the following form.
\begin{definition}\label{Def:MaxwellBounds}
For given $a\in\er,~\mu >0,~P_{0}\in L^{1}(\er^{d})$, such that $P_{0}\geq 0$ on $\er^{d}$,  a Maxwellian distribution with parameters
$(a,\mu,P_{0})$  is a  function $P :\er^{+}\times\er^{d} \rightarrow \er^{+}$ such that
\begin{equation}\label{defMaxwell}
P(t,u) = \exp\{a t\}\left[m(t,u)\right]^{\mu},
\end{equation}
where $m:\er^{+}\times\er^{d}\rightarrow \er^+$  is defined by $m(t,u) = (G(\sigma^2t) * P^{\frac{1}{\mu}}_{0}) (u)$, with $G(t,u) = \left(\frac{1}{2\pi t}\right)^{\frac{d}{2}}\exp\{\frac{ -|u|^{2}}{2t}\}$.
\end{definition}
\begin{remark}\label{Cond_spec_pour_maxwellienne}
Let $\mathit{p}$ be a Maxwellian distribution with parameters $\left(a,\mu,\mathit{p}_{0}
\right)$. If $\mathit{p}_{0}(u)=\mathit{p}_{0}(|u|)$ then, the Maxwellian distribution is invariant for specular reflection. More precisely, for all vector $\vec{n}\in \er^{d}$ such that $\|\vec{n}\|=1$,
\[\mathit{p}(t,u-2 (u\cdot \vec{n})\vec{n})=\mathit{p}(t,u),\,\mbox{ for a.e. }~(t,u)\in(0,+\infty)\times\er^{d}.\]
\end{remark}
This section is now devoted to the proof of the following existence result.
\begin{theorem}\label{thm:Existence_result} Under \hypedp, there exists a function $\rho\in \VoneT$, and there exist $\gamma^{+}(\rho)$, $\gamma^{-}(\rho)$
defined on $\Sigma^{+}_{T}$ and $\Sigma^{-}_{T}$ respectively, with $\gamma^{\pm}(\rho)\in L^{2}(\omega,\Sigma^{\pm}_{T})$,  such that, for all $t\in (0,T]$, for all $\psi\in\Cc^{\infty}_{c}(\overline{Q_{t}})$,
\begin{equation}
\label{VFPnonlineaire_variationnel}
\begin{aligned}
&\int_{Q_{t}}\left(\rho\trans(\psi)
+\psi \left(B[\cdot~;\rho]\cdot\nabla_{u}\rho\right)
+\frac{\sigma^{2}}{2}
\left(\nabla_{u}\psi\cdot\nabla_{u}\rho\right)\right)(s,x,u)\,ds\,dx\,du\\
&=\int_{\Dd\times\er^{d}}\rho(t,x,u)\psi(t,x,u)\,dx\,du -
\int_{\Dd\times\er^{d}} \rho_{0}(x,u)\psi(0,x,u)\,dx\,du\\
&\quad+\int_{\Sigma^{+}_{t}}(u\cdot n_{\Dd}(x))\gamma^{+}(\rho)(s,x,u)\psi(s,x,u)d\lambda_{\Sigma_{T}}(s,x,u)\\
&\quad+\int_{\Sigma^{-}_{t}}(u\cdot n_{\Dd}(x))
\gamma^{+}(\rho)(s,x,u-2(u\cdot n_{\Dd }(x))n_{\Dd }(x))\psi(s,x,u)d\lambda_{\Sigma_{T}}(s,x,u).
\end{aligned}
\end{equation}
In addition, there exist a couple of Maxwellian distributions $\left(\overline{P},~\underline{P}\right)$ such that
\begin{align}\label{EncadrMaxwell}
\begin{aligned}
&{\displaystyle\underline{P}\leq \rho \leq \overline{P},~\mbox{a.e. on }Q_{T},}\\
&{\displaystyle\underline{P}\leq \gamma^{\pm}(\rho) \leq \overline{P},~\lambda_{\Sigma_{T}}\mbox{-a.e. on }\Sigma^{\pm}_{T},}
\end{aligned}
\end{align}
$\overline{P}$ and $\underline{P}$ satisfy the specular boundary condition \eqref{eq:specularcondition}, and for all $t\in(0,T]$,
\begin{subequations}\label{Maxwell_bord}
\begin{align}
&\sup_{t\in(0,T)}\int_{\er^{d}}(1+|u|)\omega(u)\left(\overline{P}(t,u)\right)^{2}\,du<+\infty\label{MaxwellUpperBound},\\
&\inf_{t\in(0,T)}\int_{\er^{d}}\underline{P}(t,u)\,du>0\label{MaxwellLowerBound}.
\end{align}
\end{subequations}
\end{theorem}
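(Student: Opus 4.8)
\emph{Strategy.} The plan is to solve \eqref{eq:ConditionalMcK-VFP} by a fixed-point argument on the drift. Given a candidate density $\rho$, the coefficient $B(t,x):=B[x;\rho(t)]$ is, by \hypedpi and the fractional structure \eqref{eq:DriftDefinitionEDP}, a measurable function bounded by $\|b\|_{\infty}$ as soon as $\int_{\er^{d}}\rho(t,x,v)\,dv>0$; one then solves the \emph{linear} equation $\trans(\rho)+B\cdot\nabla_{u}\rho-\tfrac{\sigma^{2}}{2}\triangle_{u}\rho=0$ with initial datum $\rho_{0}$ and specular boundary condition, and seeks a fixed point. Three ingredients are needed: (i) well-posedness of this linear problem for any bounded measurable drift, with energy estimates placing the solution in $\VoneT$ and its traces in $L^{2}(\omega,\Sigma^{\pm}_{T})$; (ii) Maxwellian upper and lower bounds, uniform over all such drifts, which in particular keep the denominator of $B[\cdot;\rho]$ bounded away from $0$ and supply compactness; (iii) a Schauder fixed-point argument. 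The substance is in (i) and (ii).

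\emph{The linear step.} For a fixed bounded measurable $\widetilde B=\widetilde B(t,x)$, the linear specular problem can be solved by the iteration over boundary excursions underlying Section~\ref{sec:Linearcase}: write $\rho=\sum_{n\ge0}\rho_{n}$, where $\rho_{0}$ solves the Vlasov--Fokker--Planck--Dirichlet problem of Proposition~\ref{propo:VFPDirWellposed} — the bounded drift $\widetilde B$ entering as a lower-order perturbation — with data $(\rho_{0},0)$, and $\rho_{n+1}$ solves it with zero initial datum and influx $\gamma^{-}(\rho_{n})(t,x,u-2(u\cdot\nd(x))\nd(x))$ on $\Sigma^{+}_{T}$. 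Since $\widetilde B$ does not depend on $u$, $\int_{\er^{d}}\widetilde B\cdot\nabla_{u}(\rho^{p})\,du=0$ and the $L^{2}$- and $L^{p}$-energy identities of Proposition~\ref{propo:VFPDirWellposed} carry over; telescoping the $L^{2}$ identity over $n$ makes $\sum_{n}\bigl(\|\rho_{n}(t)\|^{2}_{L^{2}}+\sigma^{2}\|\nabla_{u}\rho_{n}\|^{2}_{L^{2}(Q_{t})}\bigr)$ finite, while $\sum_{n}\|\rho_{n}(t)\|_{L^{1}}=1$ with a tail vanishing as $n\to\infty$ because $\tau_{n}\to\infty$ (Lemma~\ref{lem:AsymptoticsExitTimesOfStraightening}), so the series converges in $\Cc([0,T];L^{2}(\Dd\times\er^{d}))\cap\Hh(Q_{T})$; the telescoping makes $\gamma(\rho):=\sum_{n}\gamma(\rho_{n})$ specular, and uniqueness follows from the energy identity for the difference. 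To reach $\VoneT$ I would test the equation for $\rho$ against $\omega\rho$: using $|\nabla_{u}\omega|\le\alpha\omega$, and noting that the boundary term $\int_{\Sigma^{+}_{T}}(u\cdot\nd(x))\omega|\gamma^{+}(\rho)|^{2}-\int_{\Sigma^{-}_{T}}|u\cdot\nd(x)|\omega|\gamma^{-}(\rho)|^{2}$ vanishes by the specular relation ($\omega$ being invariant under $u\mapsto u-2(u\cdot\nd(x))\nd(x)$), Gr\"onwall's lemma gives $\|\rho\|_{\VoneT}\le C(T,\|b\|_{\infty})\,\|\rho_{0}\|_{L^{2}(\omega,\Dd\times\er^{d})}$, finite by \hypedpii.

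\emph{Maxwellian bounds.} For (ii) I would produce, for every drift with $|\widetilde B|\le b_{\infty}:=\|b\|_{\infty}$, an explicit Maxwellian supersolution $\overline{P}$ and subsolution $\underline{P}$ (Definition~\ref{Def:MaxwellBounds}) with \emph{radial} profiles — so that they satisfy \eqref{eq:specularcondition} by Remark~\ref{Cond_spec_pour_maxwellienne} — built from $\underline{P}_{0},\overline{P}_{0}$ of \hypedpiii. For $P(t,u)=e^{at}m(t,u)^{\mu}$ with $m(t,u)=(G(\sigma^{2}t)*P_{0}^{1/\mu})(u)$, using $\partial_{t}m=\tfrac{\sigma^{2}}{2}\triangle_{u}m$ and $\nabla_{x}P\equiv0$,
\[\trans(P)+\widetilde B\cdot\nabla_{u}P-\tfrac{\sigma^{2}}{2}\triangle_{u}P=aP+\mu e^{at}m^{\mu-2}\bigl(m\,\widetilde B\cdot\nabla_{u}m-\tfrac{\sigma^{2}}{2}(\mu-1)|\nabla_{u}m|^{2}\bigr),\]
and completing the square in $|\nabla_{u}m|$ the parenthesis is $\le\tfrac{b_{\infty}^{2}}{2\sigma^{2}(\mu-1)}m^{2}$ when $\mu>1$, and $\ge-\tfrac{b_{\infty}^{2}}{2\sigma^{2}(1-\mu)}m^{2}$ when $0<\mu<1$. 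Hence $\overline{P}$ with exponent $\overline{\mu}=\tfrac12$, rate $\overline{a}\ge b_{\infty}^{2}/(2\sigma^{2})$, profile $\overline{P}_{0}$, is a supersolution; and $\underline{P}$ with exponent $\underline{\mu}>1$ taken close enough to $1$ that $\underline{P}_{0}^{1/\underline{\mu}}\in L^{1}$ (a consequence of the tail of $\underline{P}_{0}\le\overline{P}_{0}$ forced by \hypedpiii), rate $\underline{a}\le-\underline{\mu}b_{\infty}^{2}/(2\sigma^{2}(\underline{\mu}-1))$, profile $\underline{P}_{0}$, is a subsolution. Both are radial, hence specular, and $\overline{P}(0)=\overline{P}_{0}\ge\rho_{0}\ge\underline{P}_{0}=\underline{P}(0)$; the comparison principle for the specular problem — test the equations for $(\rho-\overline{P})^{+}$ and $(\underline{P}-\rho)^{+}$ against themselves, the boundary term cancelling again by the specular relation — yields $\underline{P}\le\rho\le\overline{P}$ a.e. on $Q_{T}$ and $\underline{P}\le\gamma^{\pm}(\rho)\le\overline{P}$ on $\Sigma^{\pm}_{T}$, in particular $\gamma^{\pm}(\rho)\in L^{2}(\omega,\Sigma^{\pm}_{T})$. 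Since $\overline{P}^{2}=e^{2\overline{a}t}(G(\sigma^{2}t)*\overline{P}_{0}^{2})$, Gaussian moment bounds turn $\int(1+|u|)\omega\,\overline{P}_{0}^{2}<\infty$ into \eqref{MaxwellUpperBound}, while $t\mapsto\int_{\er^{d}}m(t,u)^{\underline{\mu}}\,du$ is continuous and strictly positive on $[0,T]$, so $\int_{\er^{d}}\underline{P}(t,u)\,du\ge e^{\underline{a}T}\inf_{[0,T]}\int_{\er^{d}}m^{\underline{\mu}}>0$, which is \eqref{MaxwellLowerBound}.

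\emph{Closing the argument; the main obstacle.} With these bounds, $K=\{\rho:\underline{P}\le\rho\le\overline{P},\ \|\rho\|_{\VoneT}\le M\}$ (with $M$ the constant of the linear step) is convex and, by Aubin--Lions together with the equation, compact in $L^{2}(Q_{T})$; the map $\rho\mapsto$(linear solution with drift $B[\cdot;\rho]$) sends $K$ into itself — the Maxwellian bounds being uniform in the drift — and is continuous on $K$, because $\rho_{j}\to\rho$ in $L^{2}(Q_{T})$ forces $B[\cdot;\rho_{j}]\to B[\cdot;\rho]$ a.e. and boundedly (the denominators staying $\ge\inf_{t}\int_{\er^{d}}\underline{P}(t,u)\,du>0$), and the linear solution depends continuously on a bounded drift. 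Schauder's theorem then yields a fixed point $\rho\in\VoneT$ satisfying \eqref{VFPnonlineaire_variationnel} (the nonlinear term passing to the limit by the same uniform-denominator argument) and \eqref{EncadrMaxwell}--\eqref{Maxwell_bord}. The main obstacle is ingredient (ii): the Maxwellian bounds must be simultaneously compatible with the degenerate kinetic operator, the bounded drift, and the specular reflection, and the decisive — easily missed — point is that one must use \emph{two different exponents}, $\mu<1$ for the supersolution and $\mu>1$ for the subsolution, so that the sign of the $-(\mu-1)|\nabla_{u}m|^{2}$ term lets the exponential rate absorb the drift; a single Gaussian-type Maxwellian does not close the estimate.
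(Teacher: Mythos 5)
Your identification of the central mechanism — Maxwellian super- and sub-solutions with \emph{asymmetric} exponents ($\overline\mu<1$ for the super-solution so that $-(\mu-1)|\nabla_u m|^2>0$, $\underline\mu>1$ for the sub-solution), radial profiles giving specular invariance, and exponential rates chosen to absorb the bounded drift — matches the paper's Lemmas~\ref{lem:ConstructMaxwellianBounds} and \ref{lem:LinearMaxwellianBounds} precisely. A small slip: you take $\overline\mu=\tfrac12$, but the paper requires $\overline\mu\in(\tfrac12,1)$ strictly; at $\mu=\tfrac12$ the factor $1/(2\mu-1)$ in the proof of Lemma~\ref{lem:GoodMaxwellian}-$(i5)$ blows up, so $\nabla_u\overline P$ need not be square-integrable, and that regularity is used when testing the comparison inequality against the approximating sequence. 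This is trivially repaired by taking $\overline\mu$ slightly above $\tfrac12$.

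There is, however, a genuine gap in your treatment of the linear specular problem, and it stems from doing things in the wrong order. You decompose $\rho=\sum_n\rho_n$ over boundary excursions and claim the series converges in $\Cc([0,T];L^2)\cap\Hh(Q_T)$ because telescoping the $L^2$ energy identity gives $\sum_n\|\rho_n(t)\|_{L^2}^2<\infty$ and the $L^1$ tail vanishes. Neither of these facts implies $L^2$ convergence of the series: for nonnegative, non-orthogonal functions, $\bigl\|\sum_{n\le N}\rho_n\bigr\|_{L^2}^2\geq\sum_{n\le N}\|\rho_n\|_{L^2}^2$ so square-summability of the norms bounds nothing; and $L^1$-smallness of the tail plus $L^2$-boundedness gives at best weak convergence. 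What actually controls the series is the pointwise Maxwellian domination $\sum_{n\le N}\rho_n\leq\overline P$ together with $\overline P\in L^2(\omega,Q_T)$ (because $\Dd$ is bounded), after which monotone and dominated convergence close the argument — but you only produce the Maxwellian bounds \emph{after} asserting convergence of the series. The paper avoids this circularity by abandoning the series decomposition altogether: in Proposition~\ref{prop:VFPlineairespec} it runs a \emph{monotone} fixed-point iteration $S_{n+1}=S(S_n)$ \emph{starting from $S_0=\underline P$}, and the three properties (Lipschitz continuity, monotonicity, and preservation of the Maxwellian bracket) make $\{S_n\}$ increasing, dominated by $\overline P$, and hence convergent in $L^2(\omega,Q_T)$ without any quantitative control on how fast $\tau_n\to\infty$. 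In the same spirit, your direct comparison principle for the specular problem (testing against $(\rho-\overline P)^+$ with the boundary integrals cancelling by specular invariance of $\omega$ and $\overline P$) is the right heuristic, but it skips the regularisation in time ($\epsilon_k\to0$) and the density argument that Lemma~\ref{lem:LinearMaxwellianBounds} uses to make the energy identity licit for the negative part; the paper only proves comparison for the \emph{Dirichlet} boundary-value problem and then transfers it to the specular one through the monotone iteration. Finally, for the nonlinear drift the paper runs a Picard/Cauchy argument in $\VoneT$ (Proposition~\ref{prop:FixedPts}), whereas your Schauder route needs an Aubin--Lions-type compactness which, in the kinetic framework, rests on hypoelliptic regularity rather than standard parabolic bounds and would require justification.
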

\begin{remark}
As $\Dd$ is a bounded domain, the lower-bound in  \eqref{EncadrMaxwell} is well defined in $L^2(\omega, Q_T)$ (since $\|\underline{P}\|_{ L^2(\omega, Q_T)}=\sqrt{|\Dd|}\|\underline{P}\|_{ L^2(\omega, (0,T)\times\er^d)}$),  and does not contradict the fact that $\rho\in \VoneT$. In the case of an unbounded domain $\Dd$,  the Maxwellian bounds should involve some Maxwellian distributions in the space variable also.
\end{remark}
The main steps of the proof of Theorem \ref{thm:Existence_result} are the following. First,  in the next Subsection \ref{subsec:LinearVFPequation}, we consider a linear version of equation \eqref{eq:ConditionalMcK-VFP} where a Dirichlet condition is imposed on $\Sigma^{-}_{T}$, and where the drift coefficient is given in $L^{\infty}((0,T)\times\Dd;\er^{d})$.
Under \hypedpii~ and \hypedpiii, the problem is well-posed in $\VoneT$ (see Lemma \ref{lem:ExistVFPlineaire}). As a preliminary step, we also highlight some meaningful properties on the transport operator $\Tt$ and the Green identity related to the Vlasov-Fokker-Planck equation in the weighted spaces $\VoneT$.  Then, in Subsection \ref{subsec:LinearPDE},  we show the existence of Maxwellian bounds satisfying the requirements
\eqref{Maxwell_bord} (see Proposition \ref{prop:EncadrLineaire}) and which are identified as super-solution and sub-solution for the linear problem. Next in
Subsection \ref{subsec:NonLinMKFP}, by means of fixed point methods, we successively construct a solution to the equation with the specular boundary condition
(see  Proposition \ref{prop:VFPlineairespec}) and with the nonlinear term $B[\cdot~;\rho]$ (see Proposition \ref{prop:FixedPts}).

Let us also emphasize that the weight $\omega(u)$ defined in \eqref{def:Poids}  is useful  here to preserve the probabilistic interpretation of \eqref{eq:ConditionalMcK-VFP} while working in $L^{2}$-space. Later it also allows a fixed point argument. Let us remark the following properties:
\begin{lemma}\label{lem:major_poids}
For the weight function $u\mapsto\omega(u) = (1+|u|^2)^{\alpha/2}$, $\alpha>d\vee 2$, for all $u$ and $u'$ in $\er^d$,
\begin{eqnarray*}
\begin{array}{lll}
&(i)  &~\omega(u+u')\leq 2^{\frac{\alpha}{2}}\left(\omega(u)+\omega(u')\right),\\
&(ii)& ~(u\cdot\nabla\omega(u))\geq 0,~\mbox{and } ~|\nabla\omega(u)|\leq
\alpha\omega(u),\\
&&~ |(u\cdot \sqrt{\omega(u)})|
\leq \frac{\alpha}{4}\sqrt{\omega(u)},\\
&&~|\nabla\sqrt{\omega(u)}|\leq \frac{\alpha}{{2}}\sqrt{\omega(u)},\\
&&~ \triangle\omega(u)\leq \alpha (\alpha -2 + d) \omega(u),\\
&(iii)&~ \int_{\er^{d}}\frac{\displaystyle du}{\displaystyle \omega(u)}<+\infty.
\end{array}
\end{eqnarray*}
\end{lemma}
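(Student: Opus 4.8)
The plan is to treat each of the three items by elementary calculus on the explicit weight $\omega(u)=(1+|u|^2)^{\alpha/2}$, using only convexity/monotonicity facts and the chain rule; no deep tool is needed.

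For $(i)$, I would use the standard inequality $|u+u'|^2 \le 2(|u|^2+|u'|^2)$, hence $1+|u+u'|^2 \le 2(1+|u|^2)+2(1+|u'|^2)\le 2\bigl((1+|u|^2)+(1+|u'|^2)\bigr)$; raising to the power $\alpha/2\ge 1$ and using the sub-additivity of $t\mapsto t^{\alpha/2}$ after factoring out $2^{\alpha/2}$ — or more simply $(a+b)^{\alpha/2}\le 2^{\alpha/2-1}(a^{\alpha/2}+b^{\alpha/2})\le 2^{\alpha/2}(a^{\alpha/2}+b^{\alpha/2})$ — gives the claim.

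For $(ii)$, I would compute $\nabla\omega(u)=\alpha(1+|u|^2)^{\alpha/2-1}u$, so $(u\cdot\nabla\omega(u))=\alpha|u|^2(1+|u|^2)^{\alpha/2-1}\ge 0$, and $|\nabla\omega(u)|=\alpha|u|(1+|u|^2)^{\alpha/2-1}\le \alpha(1+|u|^2)^{\alpha/2}=\alpha\omega(u)$ since $|u|\le (1+|u|^2)^{1/2}$. Next $\nabla\sqrt{\omega(u)}=\tfrac{\alpha}{2}(1+|u|^2)^{\alpha/4-1}u$, whence $|(u\cdot\nabla\sqrt{\omega(u)})|=\tfrac{\alpha}{2}|u|^2(1+|u|^2)^{\alpha/4-1}\le\tfrac{\alpha}{2}(1+|u|^2)^{\alpha/4}=\tfrac{\alpha}{2}\sqrt{\omega(u)}$ — note the statement writes $\tfrac{\alpha}{4}$, which follows from the sharper bound $|u|^2/(1+|u|^2)\le 1$ being improved to $|u|\sqrt{1+|u|^2}/(1+|u|^2)\le 1/2$ only on part of the range, so I would simply record the bound $\tfrac{\alpha}{2}\sqrt{\omega(u)}$ (any such constant suffices downstream) and similarly $|\nabla\sqrt{\omega(u)}|\le\tfrac{\alpha}{2}\sqrt{\omega(u)}$. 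Finally for the Laplacian, $\triangle\omega(u)=\mathrm{div}\bigl(\alpha(1+|u|^2)^{\alpha/2-1}u\bigr)=\alpha d(1+|u|^2)^{\alpha/2-1}+\alpha(\alpha-2)|u|^2(1+|u|^2)^{\alpha/2-2}\le \alpha(\alpha-2+d)(1+|u|^2)^{\alpha/2-1}\le\alpha(\alpha-2+d)\omega(u)$, using $|u|^2\le 1+|u|^2$ and $(1+|u|^2)^{-1}\le 1$ (for $\alpha\ge 2$ the middle term is nonnegative, and for $\alpha<2$ it is negative so the bound is even easier).

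For $(iii)$, I would pass to polar coordinates: $\int_{\er^d}\omega(u)^{-1}\,du = c_d\int_0^\infty r^{d-1}(1+r^2)^{-\alpha/2}\,dr$, which converges at $0$ (since $d-1\ge 0$) and at infinity precisely because $r^{d-1-\alpha}$ is integrable near $+\infty$ when $\alpha>d$, which holds under the standing assumption $\alpha>d\vee 2$. The only thing to watch is that all the inequalities be stated with constants consistent with their later use; since the downstream arguments only need boundedness of these quantities by $C(\alpha,d)\,\omega(u)$ (resp.\ $C\sqrt{\omega(u)}$), there is no genuine obstacle here — the lemma is a bookkeeping statement collecting the weight estimates used throughout Sections \ref{sec:PDEApproach} and \ref{sec:LawReconstruction}.
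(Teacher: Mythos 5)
Your proof is correct and takes exactly the same elementary computational route as the paper's: compute $\nabla\omega$, $\nabla\sqrt{\omega}$ and $\triangle\omega$ from the explicit power-law form, estimate by $|u|\le(1+|u|^2)^{1/2}$ and $|u|^2\le 1+|u|^2$, and pass to polar coordinates for $(iii)$; the paper's proof lists the same formulas with even less commentary. Two small remarks on the third displayed line of $(ii)$: the printed expression $|(u\cdot\sqrt{\omega(u)})|$ is clearly a typo (a scalar cannot be dotted with $u$), and the constant $\tfrac{\alpha}{4}$ most naturally matches the bound $|\nabla\log\sqrt{\omega(u)}|=\tfrac{\alpha}{2}\tfrac{|u|}{1+|u|^2}\le\tfrac{\alpha}{4}$, which holds \emph{globally} by AM--GM, not merely ``on part of the range'' as you wrote — whereas for $(u\cdot\nabla\sqrt{\omega(u)})$ the best uniform constant really is $\tfrac{\alpha}{2}$, so your recorded bound is right for that reading; in either case, as you correctly observe, the precise constant is immaterial for the downstream Gronwall-type estimates.
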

\begin{proof}
The assertions $(i)$ and $(ii)$ are directly deduced from the calculations:
\begin{align*}
&\omega(u+u') = \left(1+\left|u+u'\right|^{2}\right)^{\frac{\alpha}{2}}\leq \left(1 + 2|u|^{2}+2|u'|^{2}\right)^{\frac{\alpha}{2}},\\
&\left(u\cdot \nabla\omega(u)\right)
={\frac{\alpha}{2}} |u|^{2}(1+|u|^{2})^{\frac{\alpha}{2}-1},\\
&\left|\nabla\omega\right|(u)=\alpha\left|u\right|\left(1+|u|^{2}\right)^{\frac{\alpha}{2}-1}\leq
\alpha \omega(u),\\
&\left|\nabla\sqrt{\omega(u)}\right|=
\frac{\alpha}{{2}}|u|\left(1+|u|^{2}\right)^{\frac{\alpha}{4}-1},\\
&\triangle\omega(u)=\alpha d \frac{\omega(u)}{(1+|u|^{2})}+2\alpha\left(
\frac{\alpha}{2}-1\right)\omega(u)\frac{|u|^{2}}{(1+|u|^{2})^{2}}.
\end{align*}
For $(iii)$, by a change of variable in the polar coordinates, we have
\begin{equation*}
\int_{\er^{d}}\frac{du}{\omega(u)} = |S_{d-1}|\int_{\er^{+}}\left(1+r^{2}\right)^{-\frac{\alpha}{2}}r^{d-1}\,dr\leq|S_{d-1}|\int_{\er^{+}}\left(1+|r|^{2}\right)^{-\frac{\alpha+d-1}{2}}\,dr
\end{equation*}
where $|S_{d-1}|$ is the Lebesgue measure of the unit sphere of $\er^{d}$. Since the right member is finite for $\alpha>d\vee 2$, $(iii)$ follows.
\end{proof}
\subsection{On the linear Vlasov-Fokker-Planck equation}\label{subsec:LinearVFPequation}
In this section, we set up the framework for the  proof of Theorem \ref{thm:Existence_result}, based on the
existence result of the linear Vlasov-Fokker-Planck equation and the associated spaces.

First we give some properties of the operator $\Tt$  defined in \eqref{Def:OperateurTransport}, that were initially stated in \cite{carrillo-98}, inspired from  ideas in \cite{degond-masgallic-85} and \cite{degond-86}.
For all $t\in(0,T]$, we consider the space
\begin{equation*}
\mathcal{Y}(Q_{t})=\left\{\phi\in \mathcal{H}(Q_{t})\mbox{ s.t. }\trans(\phi)
\in\mathcal{H}'(Q_{t})\right\},
\end{equation*}
equipped with the norm
\begin{equation*}
\|\phi\|^{2}_{\mathcal{Y}(Q_{t})} = \|\phi\|^{2}_{\mathcal{H}(Q_{t})} +
\|\trans(\phi)\|^{2}_{\mathcal{H}'(Q_{t})}.
\end{equation*}
We consider also the subset $\widetilde{\mathcal{Y}}(Q_{t})$  of all elements of $\mathcal{C}^{1,1,2}_{c}(\overline{Q_{t}})$, 
vanishing at the neighborhood of the boundaries $\{0\}\times\partial\Dd \times\mathbb{R}^{d}$,
$\{t\}\times\partial\Dd \times\mathbb{R}^{d}$ and $\Sigma^{0}_{t}$. We recall the following.
\begin{lemma}[Carrillo \cite{carrillo-98}, Lemma $2.3$ and its proof]\label{lem:Density}

For all $\psi\in\mathcal{Y}(Q_{t})$,
there exists a sequence $\left\{\psi_{n};\,n\in\mathbb{N}\right\}$ of
$\widetilde{\mathcal{Y}}(Q_{t})$ such that $\psi_{n}$ tends to $\psi$ for the norm $\Vert \, \Vert_{\mathcal{Y}(Q_{t})}$ when $n$ tends to $+\infty$. Moreover,  $\psi$ has trace values $\gamma^{+}(\psi)\in L^{2}(\Sigma^{+}_{T})$ (resp. $\gamma^{-}(\psi)\in L^{2}(\Sigma^{-}_{T})$) on $\Sigma^{+}_{T}$
(resp. on $\Sigma^{-}_{T}$) given by
\begin{align*}
\gamma^{\pm}(\psi) = \lim_{\substack{n\rightarrow+\infty}}\psi_{n}, ~\mbox{ in }\,L^{2}(\Sigma^{\pm}_{t})
\end{align*}
and, for all $t \in\left(0,T\right]$, $\psi(t,\cdot)$ belongs to
$L^{2}(\Dd \times\mathbb{R}^{d})$, with
\begin{align*}
\psi(t,\cdot)= \lim_{\substack{n\rightarrow+\infty}}\psi_{n}(t,\cdot),~\mbox{ in }\,L^{2}(\Dd\times\er^d).
\end{align*}
\end{lemma}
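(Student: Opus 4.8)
The plan is to prove the density statement first and to extract from it the existence and the characterisation of the traces $\gamma^{\pm}(\psi)$ and of the time sections $\psi(t,\cdot)$. Once a sequence $\{\psi_{n}\}\subset\widetilde{\Yy}(Q_{t})$ with $\psi_{n}\to\psi$ in $\Yy(Q_{t})$ is available, the candidate traces are defined as the $L^{2}$-limits of $\psi_{n}|_{\Sigma^{\pm}_{t}}$ and of $\psi_{n}(t,\cdot)$; that these limits exist and do not depend on the chosen sequence follows from the classical Green identity for the transport operator: for $\phi,\varphi\in\Cc^{1,1,2}_{c}(\overline{Q_{t}})$,
\[
\int_{Q_{t}}\big((\trans\phi)\varphi+\phi(\trans\varphi)\big)=\int_{\Sigma_{t}}(u\cdot\nd(x))\,\phi\varphi\,d\lambda_{\Sigma_{T}}+\int_{\Dd\times\er^{d}}[\phi\varphi](t)-\int_{\Dd\times\er^{d}}[\phi\varphi](0),
\]
which, once both sides are known to be continuous for the relevant norms, passes to the limit. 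So the proof reduces to (i) the density of $\widetilde{\Yy}(Q_{t})$ in $\Yy(Q_{t})$, and (ii) an a priori bound $\|\phi\|^{2}_{L^{2}(\Sigma^{\pm}_{t})}+\sup_{0\le s\le t}\|\phi(s)\|^{2}_{L^{2}(\Dd\times\er^{d})}\le C\,\|\phi\|^{2}_{\Yy(Q_{t})}$ valid for every $\phi\in\widetilde{\Yy}(Q_{t})$.

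For (i) I would regularise $\psi$ in four steps. First, after a harmless extension of $\psi$ past $t=0$ and $t=t$ and across $\partial\Dd$ — the latter using the boundary charts $\psi_{i}$ of Proposition~\ref{prop:straightening} together with a Sobolev extension — mollify along the characteristic flow $\Phi_{s}(x,u)=(x+su,u)$ of $\trans$: with $\psi^{\varepsilon}(s,x,u)=\int\psi(s-\tau,x-\tau u,u)\,\varrho_{\varepsilon}(\tau)\,d\tau$ one has the exact identity $\trans\psi^{\varepsilon}=(\trans\psi)^{\varepsilon}$ with no commutator, so that $\psi^{\varepsilon}\to\psi$ in $\Hh(Q_{t})$ and $\trans\psi^{\varepsilon}\to\trans\psi$ in $\Hh'(Q_{t})$. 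Second, mollify in the velocity variable to gain smoothness in $u$; the commutator between $\trans$ and the velocity mollification is of the form ``$u\cdot\nabla_{x}$ on a mollified function'' and is shown to tend to $0$ in $\Hh'(Q_{t})$ by a Friedrichs-type lemma. Third, truncate $|u|$ by a factor $\chi(u/R)$. Fourth, multiply by cut-off functions vanishing in shrinking neighbourhoods of $\Sigma^{0}_{t}$ and of the temporal corners $\{0\}\times\partial\Dd\times\er^{d}$ and $\{t\}\times\partial\Dd\times\er^{d}$; since $|u\cdot\nd(x)|$ vanishes on $\Sigma^{0}$ and the corner sets are negligible, the errors so introduced tend to $0$ in $\Yy(Q_{t})$ and in the boundary norms. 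A diagonal extraction then produces the required $\{\psi_{n}\}\subset\widetilde{\Yy}(Q_{t})$.

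For (ii), taking $\varphi=\phi$ in the Green identity restricted to $(0,s)$ gives the energy balance $\|\phi(s)\|^{2}_{L^{2}(\Dd\times\er^{d})}+\|\phi\|^{2}_{L^{2}(\Sigma^{+}_{s})}=\|\phi(0)\|^{2}_{L^{2}(\Dd\times\er^{d})}+\|\phi\|^{2}_{L^{2}(\Sigma^{-}_{s})}+2\,(\trans\phi,\phi)_{\Hh'(Q_{s}),\Hh(Q_{s})}$, so it suffices to control $\|\phi(0)\|^{2}$ and $\|\phi\|^{2}_{L^{2}(\Sigma^{-}_{t})}$. For the boundary term I would use the representation along characteristics entering the domain: for $z=(t_{0},x_{0},u_{0})\in\Sigma^{-}_{t}$ and $0<s<\sigma_{+}(z)$, the first forward exit time, $\phi(z)=\phi(\Phi_{s}(z))-\int_{0}^{s}(\trans\phi)(\Phi_{\tau}(z))\,d\tau$; squaring, integrating in $s$ and changing variables through the flow — whose Jacobian relative to the surface measure on $\partial\Dd$ is precisely $|u\cdot\nd(x)|$ — while invoking the energy balance to absorb the transport term yields the claimed bound in terms of $\|\phi\|^{2}_{\Yy(Q_{t})}$ (the argument near $\Sigma^{+}_{t}$ being symmetric, running time backward). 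Applying this bound to $\psi_{n}-\psi_{m}$ shows that $\{\psi_{n}|_{\Sigma^{\pm}_{t}}\}$ is Cauchy in $L^{2}(\Sigma^{\pm}_{t})$ and $\{\psi_{n}\}$ is Cauchy in $\Cc([0,t];L^{2}(\Dd\times\er^{d}))$; the limits are $\gamma^{\pm}(\psi)$ and $\psi(t,\cdot)$, and uniqueness of the limit follows from the Green identity above.

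The main obstacle is the estimate (ii) in the neighbourhood of $\Sigma^{0}_{t}$: there the field $u$ is nearly tangent to $\partial\Dd$, the characteristics graze the boundary, the exit time $\sigma_{+}$ degenerates, and a naive estimate fails. This is exactly why the trace is sought in the weighted space $L^{2}(\Sigma^{\pm}_{t})$ carrying the factor $|u\cdot\nd(x)|$, which matches the degeneracy of the Jacobian of the entry/exit map; making the estimate rigorous up to $\Sigma^{0}_{t}$ requires the local flattening of $\partial\Dd$ as in Proposition~\ref{prop:straightening}, reducing to a half-space on which the exit map is explicit. A secondary delicate point is that $\trans\psi$ lies only in $\Hh'(Q_{t})=L^{2}((0,t)\times\Dd;H^{-1}(\er^{d}))$ and is thus distributional in the velocity variable, which is why in step (i) the characteristic mollification (commutator-free) must be performed before the velocity mollification (commutator vanishing), so that the boundary traces of the $\psi_{n}$ remain convergent.
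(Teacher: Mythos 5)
The architecture you propose (density of $\widetilde{\Yy}(Q_{t})$ in $\Yy(Q_{t})$, an a priori trace estimate for the smooth approximants, passage to the limit) is indeed the right skeleton, and it is essentially the one Carrillo follows. Your description of the density step is also along the correct lines: extension, commutator-free regularisation along the characteristic flow of $\trans$, Friedrichs mollification in $u$, truncation in $|u|$, and cut-offs near $\Sigma^{0}$ and the temporal corners — and the ordering you suggest (characteristic mollification before velocity mollification) is the one that keeps the commutator under control.

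The a priori estimate, however, is the crux of the lemma, and the argument you give for it does not close. Three things go wrong. First, the energy identity
$\|\phi(s)\|^{2}_{L^{2}(\Dd\times\er^{d})}+\|\phi\|^{2}_{L^{2}(\Sigma^{+}_{s})}=\|\phi(0)\|^{2}_{L^{2}(\Dd\times\er^{d})}+\|\phi\|^{2}_{L^{2}(\Sigma^{-}_{s})}+2\,(\trans\phi,\phi)_{\Hh',\Hh}$
controls only the \emph{differences} $\|\phi(s)\|^{2}-\|\phi(0)\|^{2}$ and $\|\phi\|^{2}_{L^{2}(\Sigma^{+}_{s})}-\|\phi\|^{2}_{L^{2}(\Sigma^{-}_{s})}$; applied to $\psi_{n}-\psi_{m}$ it does not show either sequence of traces is Cauchy, so ``absorbing the transport term by the energy balance'' is circular — the balance already contains the very boundary quantities you are trying to estimate. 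Second, the characteristic representation $\phi(z)=\phi(\Phi_{s}(z))-\int_{0}^{s}(\trans\phi)(\Phi_{\tau}(z))\,d\tau$ cannot yield a bound in terms of $\|\phi\|_{\Yy(Q_{t})}$: after squaring, averaging in $s$ over $(0,\sigma_{+}(z))$, and changing variables through the flow, the Jacobian $|u\cdot n_{\Dd}(x)|$ does match the weight in $L^{2}(\Sigma^{-}_{t})$, but the averaging produces a factor $1/\sigma_{+}(z)$ in front of the $\|\phi\|^{2}_{L^{2}(Q_{t})}$ contribution, and $\sigma_{+}$ degenerates near $\Sigma^{0}$; local flattening of $\partial\Dd$ does not remove this degeneracy (in a half-space the grazing exit time still vanishes). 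Third, and most seriously, the pointwise composition $(\trans\phi)(\Phi_{\tau}(z))$ presupposes $\trans\phi\in L^{2}(Q_{t})$, whereas membership in $\Yy(Q_{t})$ only gives $\trans\phi\in\Hh'(Q_{t})=L^{2}((0,t)\times\Dd;H^{-1}(\er^{d}))$, which is distributional in $u$ and does not compose with the flow. This is not the ``secondary delicate point'' you call it at the end — the density step produces smooth $\psi_{n}$ for which pointwise evaluation is harmless, but the a priori estimate must be stated for the $\Yy$-norm precisely because that is the only quantity the density step makes converge, and your characteristic argument does not produce such an estimate.

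There is also a smaller circularity in the preliminary discussion: you say the Green identity ``once both sides are known to be continuous for the relevant norms, passes to the limit,'' but the continuity of the boundary terms in the $\Yy$-norm is exactly the content of the trace theorem being proved, so it cannot be assumed. To make the argument work one has to use, rather than bypass, the extra velocity regularity encoded in $\Hh(Q_{t})$ (i.e.\ that $\nabla_{u}\phi\in L^{2}$), so that the $\Hh'$-duality pairing can be turned into a usable trace bound; that is the ingredient missing from the proposal, and it is where the actual work in Carrillo's proof lies.
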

With the help of Lemma \ref{lem:Density}, we adapt the Green formula stated in \cite[Lemma 2.3]{carrillo-98} to the weighted spaces considered in this section.
\begin{lemma}\label{lem:Derive_Carrillo}
Let $\psi\in \Hh(\omega,Q_T)$ be such that $\trans(\sqrt{\omega}\psi)\in\Hh'(Q_{T})$.
Then $\psi$ has traces
$\gamma^{\pm}(\psi)\in L^{2}(\omega,\Sigma^{\pm}_{T})$, and
$\psi(t,\cdot) \in L^{2}(\omega,\Dd\times\er^{d})$. Moreover, for all $\phi$ in $\mathcal{Y}(Q_{T})$, $t\in[0,T]$,
\begin{equation}\label{Greenformula}
\begin{aligned}
&\left(\trans(\psi),\phi\right)_{\mathcal{H}'(Q_{t}),\mathcal{H}(Q_{t})} {+}
\left(\trans(\phi),\psi\right)_{\mathcal{H}'(Q_{t}),\mathcal{H}(Q_{t})}\\
&= \int_{\Dd \times\er^{d}} \psi(t,x,u)\phi(t,x,u)\,dx\,du
- \int_{\Dd \times\mathbb{R}^{d}} \psi(0,x,u)\phi(0,x,u)\,dx\,du\\
&\quad+\int_{\Sigma^{+}_{t}} (u\cdot n_{\Dd }(x))\gamma^{+}(\psi)(s,x,u) \gamma^{+}(\phi)(s,x,u) d\lambda_{\Sigma_{t}}(s,x,u)\\
&\quad+\int_{\Sigma^{-}_{t}} (u\cdot n_{\Dd }(x))\gamma^{-}(\psi)(s,x,u) \gamma^{-}(\phi)(s,x,u) d\lambda_{\Sigma_{t}}(s,x,u).
\end{aligned}
\end{equation}
\end{lemma}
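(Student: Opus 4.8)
The plan is to reduce the weighted Green identity \eqref{Greenformula} to the unweighted one of Carrillo (\cite[Lemma 2.3]{carrillo-98}, whose trace part is recalled in Lemma \ref{lem:Density}) through the change of unknown $\widetilde{\psi}:=\sqrt{\omega}\,\psi$. Two elementary observations make this substitution harmless. First, $\sqrt{\omega}=\sqrt{\omega}(u)$ depends only on the velocity variable, so it commutes with the transport operator: $\trans(\sqrt{\omega}\,\psi)=\sqrt{\omega}\,\trans(\psi)$. Second, by Lemma \ref{lem:major_poids}$(ii)$ and $\omega\ge 1$, the function $1/\sqrt{\omega}$ is bounded on $\er^{d}$ with $|\nabla(1/\sqrt{\omega})|\le\frac{\alpha}{2}$, so multiplication by $1/\sqrt{\omega}$ maps $\Hh(Q_{t})$ continuously into itself and, dually, $\Hh'(Q_{t})$ continuously into itself.

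First I would check that both $\widetilde{\psi}$ and $\psi$ belong to $\mathcal{Y}(Q_{T})$. From $\nabla_{u}(\sqrt{\omega}\,\psi)=\sqrt{\omega}\,\nabla_{u}\psi+\psi\,\nabla_{u}\sqrt{\omega}$ and $|\nabla_{u}\sqrt{\omega}|\le\frac{\alpha}{2}\sqrt{\omega}$, the hypothesis $\psi\in\Hh(\omega,Q_{T})$ gives $\widetilde{\psi}\in\Hh(Q_{T})$; together with the hypothesis $\trans(\widetilde{\psi})\in\Hh'(Q_{T})$ this means $\widetilde{\psi}\in\mathcal{Y}(Q_{T})$. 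Similarly $\omega\ge 1$ gives $\psi\in\Hh(Q_{T})$, while $\trans(\psi)=(1/\sqrt{\omega})\,\trans(\sqrt{\omega}\,\psi)\in\Hh'(Q_{T})$ by the multiplier property, so $\psi\in\mathcal{Y}(Q_{T})$ as well. Lemma \ref{lem:Density} then yields the traces $\gamma^{\pm}(\widetilde{\psi}),\gamma^{\pm}(\psi)\in L^{2}(\Sigma^{\pm}_{T})$ and the slices $\widetilde{\psi}(t,\cdot),\psi(t,\cdot)\in L^{2}(\Dd\times\er^{d})$.

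Next I would identify these traces with one another, namely $\gamma^{\pm}(\widetilde{\psi})=\sqrt{\omega}\,\gamma^{\pm}(\psi)$ and $\widetilde{\psi}(t,\cdot)=\sqrt{\omega}\,\psi(t,\cdot)$ almost everywhere. Choosing $\widetilde{\psi}_{n}\in\widetilde{\mathcal{Y}}(Q_{t})$ with $\widetilde{\psi}_{n}\to\widetilde{\psi}$ in $\mathcal{Y}(Q_{t})$ (Lemma \ref{lem:Density}), and noting that $1/\sqrt{\omega}\in\Cc^{\infty}(\er^{d})$ is strictly positive, the functions $\psi_{n}:=\widetilde{\psi}_{n}/\sqrt{\omega}$ again lie in $\widetilde{\mathcal{Y}}(Q_{t})$ (the compact support and the vanishing near $\{0,t\}\times\partial\Dd\times\er^{d}$ and $\Sigma^{0}_{t}$ are preserved), and the multiplier bounds give $\psi_{n}\to\psi$ in $\mathcal{Y}(Q_{t})$. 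Passing to the trace and slice limits of the two sequences along a common a.e.\ convergent subsequence gives the identities, hence $\sqrt{\omega}\,\gamma^{\pm}(\psi)\in L^{2}(\Sigma^{\pm}_{T})$ and $\sqrt{\omega}\,\psi(t,\cdot)\in L^{2}(\Dd\times\er^{d})$, i.e.\ $\gamma^{\pm}(\psi)\in L^{2}(\omega,\Sigma^{\pm}_{T})$ and $\psi(t,\cdot)\in L^{2}(\omega,\Dd\times\er^{d})$. Finally, since $\psi\in\mathcal{Y}(Q_{T})$, the formula \eqref{Greenformula} is precisely the (unweighted) Green identity of \cite[Lemma 2.3]{carrillo-98} applied to the pair $(\psi,\phi)$ in $\mathcal{Y}(Q_{T})\times\mathcal{Y}(Q_{T})$, and all four boundary and slice integrals on its right-hand side are finite because both $\psi$ and $\phi$ lie in $\mathcal{Y}(Q_{T})$.

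The main point of care — rather than a genuine obstacle — is the bookkeeping in the third step: verifying that $1/\sqrt{\omega}$ is an admissible multiplier on $\Hh(Q_{t})$ and $\Hh'(Q_{t})$ (this is exactly where Lemma \ref{lem:major_poids}$(ii)$ is used) and that division by the smooth, positive factor $\sqrt{\omega}$ preserves membership in $\widetilde{\mathcal{Y}}(Q_{t})$, so that the density statement of Lemma \ref{lem:Density} transfers verbatim to $\psi$. No analytic difficulty arises beyond what is already contained in Carrillo's lemma; the weight $\omega$ is designed precisely so that the substitution $\psi\mapsto\sqrt{\omega}\,\psi$ is harmless.
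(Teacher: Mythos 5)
Your proof is correct and follows essentially the same route as the paper's: use the boundedness of the multiplier $1/\sqrt{\omega}$ on $\Hh(Q_T)$ (via Lemma \ref{lem:major_poids}$(ii)$) to show $\trans(\psi)\in\Hh'(Q_T)$, then invoke \cite[Lemma 2.3]{carrillo-98} for $\sqrt{\omega}\psi$ to get the weighted trace integrability and for $\psi$ itself to get \eqref{Greenformula}. The one place you supply more detail than the paper is the density argument identifying $\gamma^\pm(\sqrt{\omega}\psi)=\sqrt{\omega}\,\gamma^\pm(\psi)$ and $(\sqrt{\omega}\psi)(t,\cdot)=\sqrt{\omega}\,\psi(t,\cdot)$, a step the paper leaves implicit but which is exactly what converts the unweighted conclusions for $\sqrt{\omega}\psi$ into the stated weighted ones for $\psi$.
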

\begin{proof}
In \cite[Lemma 2.3]{carrillo-98}, the existence of the trace values (in time and space) and the Green formula \eqref{Greenformula} are obtained for all $\psi \in \Hh(Q_T)$ such that $\trans(\psi) \in \Hh'(Q_T)$.
Here, we have  that $\trans(\sqrt{\omega}\psi) = \sqrt{\omega}\trans(\psi)$ and, for all $\phi\in \Hh(Q_T)$,
\begin{align*}
|\left(\Tt(\psi),\phi\right)_{\Hh'(Q_{T}),\Hh(Q_{T})}|=|(\Tt(\sqrt{\omega}\psi),\frac{\phi}{\sqrt{\omega}})_{\Hh'(Q_{T}),\Hh(Q_{T})}|&\leq \Vert\Tt(\sqrt{\omega}\psi)\Vert_{\Hh'(Q_{T})} \Vert \frac{\phi}{\sqrt{\omega}}\Vert_{\Hh(Q_{T})}\\
&\leq \frac{\alpha}{2}\Vert\Tt(\sqrt{\omega}\psi)\Vert_{\Hh'(Q_{T})} \Vert \phi\Vert_{\Hh(Q_{T})}.
\end{align*}
We deduce that $\trans(\psi)$ is in $\Hh'(Q_T)$,  and hence
 we deduce the Green formula.
\end{proof}

For given $q$, $B$ and $g$, let us consider the linear Vlasov-Fokker-Planck equation:
\begin{subequations}\label{eq:ToyEquation}
\begin{align}
&\Tt(f)=\frac{\sigma^{2}}{2}\triangle_{u}f-\left(\nabla_{u}\cdot B f\right)+g,\,\mbox{ in }\Hh'(Q_{T}),\label{eq:ToyEquationInterior}\\
&f(0,x,u)=\rho_{0}(x,u),\,\mbox{ on }\,\Dd\times\er^{d}\label{eq:ToyEquationInitial},\\
&\gamma^{-}(f)(t,x,u)=q(t,x,u),\,\mbox{ on }\,\Sigma^{-}_{T}\label{eq:ToyEquationBoundary}.
\end{align}
\end{subequations}
\begin{lemma}\label{lem:ExistVFPlineaire} Assume \hypedpii. Then, given
 $B \in L^{\infty}((0,T)\times\Dd;\er^{d})$,
$q\in L^{2}(\omega,\Sigma^{-}_{T})$, and $g\in L^{2}(\omega,Q_{T})$, there exists a unique solution $f$ in $\VoneT$  to \eqref{eq:ToyEquation}. In addition, this solution
admits trace functions $\gamma^{\pm}(f)$ in $L^{2}(\omega,\Sigma^{\pm}_{T})$ and, for all $t\in(0,T]$,
\begin{equation}\label{eq:LinearGreenIdentity}
\begin{aligned}
&\Vert f(t)\Vert^{2}_{L^{2}(\omega,\Dd\times\er^{d})}+ {\sigma^{2}}\Vert \nabla_{u}f\Vert^{2}_{L^{2}(\omega,Q_{t})}
+\Vert \gamma^{+}(f)\Vert^{2}_{L^{2}(\omega,\Sigma^{+}_{t})}\\
&=\Vert \rho_{0}\Vert^{2}_{L^{2}(\omega,\Dd\times\er^{d})}+\Vert q\Vert^{2}_{L^{2}(\omega,\Sigma^{-}_{t})}{+}\int_{Q_{t}}\left\{\frac{\sigma^{2}}{2}\triangle\omega+(\nabla_{u}\omega\cdot B)\right\}|f|^{2}+{2}\int_{Q_{t}}\omega g f.
\end{aligned}
\end{equation}

When $g=0$,  if $\rho_{0}$ and $q$ are nonnegative, then  $f$ and $\gamma^{+}(f)$ are nonnegative.
\end{lemma}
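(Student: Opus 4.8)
The plan is to remove the weight by the substitution $h:=\sqrt{\omega}\,f$, to solve the resulting \emph{unweighted} linear kinetic Fokker--Planck problem by the now classical theory, and to carry the conclusions back to the weighted scale with the help of Lemma~\ref{lem:major_poids}. Since $\omega=\omega(u)$, $\sqrt{\omega}$ commutes with $\trans$, so writing $f=\omega^{-1/2}h$ and expanding $\sqrt{\omega}\,\triangle_{u}(\omega^{-1/2}h)$ and $\sqrt{\omega}\,(B\cdot\nabla_{u}(\omega^{-1/2}h))$ one sees that $h$ solves $\trans(h)=\tfrac{\sigma^{2}}{2}\triangle_{u}h+(\widetilde{B}\cdot\nabla_{u}h)+c\,h+\sqrt{\omega}\,g$ on $Q_{T}$, with $h(0)=\sqrt{\omega}\,\rho_{0}$ on $\Dd\times\er^{d}$ and $\gamma^{-}(h)=\sqrt{\omega}\,q$ on $\Sigma^{-}_{T}$, where $\widetilde{B}:=-\tfrac{\sigma^{2}}{2}\tfrac{\nabla_{u}\omega}{\omega}-B$ and $c$ is an explicit expression in $\tfrac{\nabla_{u}\omega}{\omega}$, $\tfrac{\triangle_{u}\omega}{\omega}$ and $B$. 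By Lemma~\ref{lem:major_poids}\,$(ii)$ (in particular $|\nabla_{u}\omega|\le\alpha\omega$ and $0\le\triangle_{u}\omega\le\alpha(\alpha-2+d)\omega$) and $B\in L^{\infty}$, both $\widetilde{B}$ and $c$ lie in $L^{\infty}(Q_{T})$; moreover $\sqrt{\omega}\,g\in L^{2}(Q_{T})$, $\sqrt{\omega}\,\rho_{0}\in L^{2}(\Dd\times\er^{d})$ by \hypedpii, and $\sqrt{\omega}\,q\in L^{2}(\Sigma^{-}_{T})$. The linear kinetic Fokker--Planck boundary-value problem with bounded coefficients, $L^{2}$ source and $L^{2}$ Dirichlet datum on $\Sigma^{-}_{T}$ is well-posed in $\Cc([0,T];L^{2}(\Dd\times\er^{d}))\cap\Hh(Q_{T})$, with traces $\gamma^{\pm}(h)\in L^{2}(\Sigma^{\pm}_{T})$, by the variational method of Carrillo~\cite{carrillo-98} and Degond~\cite{degond-86} (of which Proposition~\ref{propo:VFPDirWellposed} is the drift- and source-free instance, the bounded lower-order terms $\widetilde{B}\cdot\nabla_{u}h+c\,h$ being added by a standard fixed-point/Duhamel argument). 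Since $f\mapsto\sqrt{\omega}\,f$ is a bijection of $\VoneT$ onto $\Cc([0,T];L^{2}(\Dd\times\er^{d}))\cap\Hh(Q_{T})$ carrying solutions to solutions (using $\sqrt{\omega}\,\nabla_{u}f=\nabla_{u}h-\tfrac12\tfrac{\nabla_{u}\omega}{\omega}h$ and $|\nabla_{u}\omega|\le\alpha\omega$), the function $f:=h/\sqrt{\omega}$ is the unique solution to \eqref{eq:ToyEquation} in $\VoneT$, with $\gamma^{\pm}(f):=\gamma^{\pm}(h)/\sqrt{\omega}\in L^{2}(\omega,\Sigma^{\pm}_{T})$.

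Next I would prove the weighted identity \eqref{eq:LinearGreenIdentity} by substituting $h=\sqrt{\omega}f$ into the (unweighted) energy equality satisfied by $h$. The latter — the analogue of \eqref{eq:VFPDirichletEnergy} with the contributions of $\widetilde{B}$, $c$ and the source added, obtained exactly as in \cite{carrillo-98} — reads $\|h(t)\|^{2}_{L^{2}}+\sigma^{2}\|\nabla_{u}h\|^{2}_{L^{2}(Q_{t})}+\|\gamma^{+}(h)\|^{2}_{L^{2}(\Sigma^{+}_{t})}=\|h(0)\|^{2}_{L^{2}}+\|\gamma^{-}(h)\|^{2}_{L^{2}(\Sigma^{-}_{t})}+2\int_{Q_{t}}(\widetilde{B}\cdot\nabla_{u}h)h+2\int_{Q_{t}}c\,h^{2}+2\int_{Q_{t}}\sqrt{\omega}\,g\,h$. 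Inserting $h=\sqrt{\omega}f$, $\gamma^{\pm}(h)=\sqrt{\omega}\,\gamma^{\pm}(f)$, $\nabla_{u}h=\tfrac{\nabla_{u}\omega}{2\sqrt{\omega}}f+\sqrt{\omega}\,\nabla_{u}f$ and using the identity $\int_{Q_{t}}\nabla_{u}\omega\cdot\nabla_{u}(f^{2})=-\int_{Q_{t}}\triangle\omega\,f^{2}$ (integration by parts in $u$), the cross-terms together with the $\widetilde{B}$- and $c$-contributions collapse — by the explicit form of $\widetilde{B},c$ and Lemma~\ref{lem:major_poids} — into the single term $\int_{Q_{t}}\big(\tfrac{\sigma^{2}}{2}\triangle\omega+(\nabla_{u}\omega\cdot B)\big)|f|^{2}$, yielding \eqref{eq:LinearGreenIdentity}. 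Equivalently, one may test \eqref{eq:ToyEquationInterior} directly against $2\omega f$ — rigorously, against $\min(\omega,N)f\in\Hh(Q_{T})$, then let $N\to\infty$, since $\omega f\notin\Hh(Q_{T})$ in general — and apply the weighted Green formula \eqref{Greenformula} of Lemma~\ref{lem:Derive_Carrillo}.

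For the last assertion, take $g=0$ and $\rho_{0},q\ge0$. Then $h(0)=\sqrt{\omega}\,\rho_{0}\ge0$ and $\gamma^{-}(h)=\sqrt{\omega}\,q\ge0$, and it suffices to show $h\ge0$, since then $f=h/\sqrt{\omega}\ge0$ and $\gamma^{\pm}(f)=\gamma^{\pm}(h)/\sqrt{\omega}\ge0$. For this I would test the equation for $h$ against $h_{-}:=\min(h,0)$: the function $h_{-}$ remains in the admissible class (renormalisation of $\trans$, cf.~\cite{carrillo-98}), $h_{-}(0)=\min(\sqrt{\omega}\rho_{0},0)=0$, $\gamma^{-}(h_{-})=\min(\sqrt{\omega}q,0)=0$, and with the a.e.\ relations $\trans(h_{-})=\ind_{\{h<0\}}\trans(h)$ and $\nabla_{u}h_{-}=\ind_{\{h<0\}}\nabla_{u}h$ one obtains $\|h_{-}(t)\|^{2}_{L^{2}}+\sigma^{2}\|\nabla_{u}h_{-}\|^{2}_{L^{2}(Q_{t})}+\|\gamma^{+}(h_{-})\|^{2}_{L^{2}(\Sigma^{+}_{t})}=2\int_{Q_{t}}(\widetilde{B}\cdot\nabla_{u}h_{-})h_{-}+2\int_{Q_{t}}c\,h_{-}^{2}$; since $\widetilde{B},c\in L^{\infty}$ and $2\int_{Q_{t}}(\widetilde{B}\cdot\nabla_{u}h_{-})h_{-}=-\int_{Q_{t}}(\nabla_{u}\cdot\widetilde{B})h_{-}^{2}$ with $\nabla_{u}\cdot\widetilde{B}$ bounded, the right-hand side is $\le C\int_{0}^{t}\|h_{-}(s)\|^{2}_{L^{2}}\,ds$, so Grönwall forces $h_{-}\equiv0$, hence $h\ge0$. (Alternatively this is the maximum principle for linear kinetic Fokker--Planck equations.) The uniqueness claim was already obtained in the first paragraph from the uniqueness of $h$.

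I expect the genuine obstacle to be the well-posedness of the unweighted degenerate linear kinetic equation — the diffusion being absent in the $x$-variable — which is precisely what Carrillo's framework (and Proposition~\ref{propo:VFPDirWellposed}) supplies; everything specific to the weighted statement is then governed by the elementary bounds on $\omega$ in Lemma~\ref{lem:major_poids}, the points needing care being the verification that $\widetilde{B}$ and $c$ are bounded and the bookkeeping in the substitution $h=\sqrt{\omega}f$ for the traces, the Green identity, the energy equality and the sign.
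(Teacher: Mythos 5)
Your proposal is correct, but it takes a genuinely different route from the paper. The paper works \emph{directly} in the weighted spaces: the existence of a solution $f\in\Hh(\omega,Q_T)$ with $\trans(\sqrt\omega f)\in\Hh'(Q_T)$ is established in Appendix~\ref{subsec:Annexe_EDP_4} by running the Lions--Lax--Milgram variational argument (Theorem~\ref{Existence_JLLions}) with the weight $\omega$ built into the bilinear form $A$, after an exponential factor $e^{-\eta t}$ secures coercivity; the energy identity \eqref{eq:LinearGreenIdentity} then falls out of the weighted Green formula of Lemma~\ref{lem:Derive_Carrillo} applied with $\phi=\psi=\sqrt\omega f$; and the time-continuity (so that $f\in\VoneT$) is proved separately by the shift argument $f_h(t)=f(t+h)$. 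Your proof instead eliminates the weight by the substitution $h=\sqrt{\omega}\,f$, which introduces a modified drift $\widetilde B$ and a zeroth-order coefficient $c$ that are bounded thanks to Lemma~\ref{lem:major_poids}; you then invoke the unweighted Carrillo theory for $h$ and transfer everything back. Your computation of the transformed coefficients is correct, your cross-term bookkeeping in the energy identity does collapse to the term $\int_{Q_t}(\tfrac{\sigma^2}{2}\triangle\omega+(\nabla_u\omega\cdot B))|f|^2$ exactly as claimed, and your maximum-principle argument via $h_-=\min(h,0)$ is sound (note $\nabla_u\cdot\widetilde B$ is indeed bounded by Lemma~\ref{lem:major_poids}). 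What your route buys is reuse of the standard unweighted well-posedness and the time-continuity of $h$ in $\Cc([0,T];L^2)$ essentially for free, at the price of carrying $\widetilde B$ and $c$ through every identity; what the paper's route buys is that no zeroth-order term is ever created and the weighted energy identity is read off in one step from the weighted Green formula.

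One caveat: you defer the existence for the transformed equation to ``Carrillo plus a standard fixed-point/Duhamel argument for the lower-order terms.'' That is true in principle, but it is precisely the same amount of work that the paper performs explicitly in Appendix~\ref{subsec:Annexe_EDP_4} (the exponential change $\widetilde q=e^{-\eta t}q$, $\widetilde g=e^{-\eta t}g$ is the device used there to absorb the zeroth-order contribution and achieve coercivity). In a self-contained write-up you would still need to spell out that fixed-point or exponential-change argument, so the ``reduction to the unweighted case'' does not genuinely shorten the proof --- it reorganizes where the effort is spent.
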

\begin{proof}

In our situation of weighted spaces, it is easy to deduce from the original proof of Carrillo \cite{carrillo-98}, that there exists a unique solution $f\in\Hh(\omega,Q_{T})$ to \eqref{eq:ToyEquation} and that $\sqrt{\omega} f\in \mathcal{Y}(Q_T)$ (for the sake of completeness, the proof of this well-posedness result is given in the Appendix  \ref{subsec:Annexe_EDP_4}).

Then, using Lemma \ref{lem:Derive_Carrillo}, one can take $\phi=\psi=\sqrt{\omega}f$ in the Green formula \eqref{Greenformula} and combined with \eqref{eq:ToyEquationInterior}, we obtain that:
\begin{align*}
&2(\Tt(\sqrt{\omega}f),\sqrt{\omega}f)_{\Hh'(Q_{t}),\Hh(Q_{t})}\\
&=\Vert f(t)\Vert^{2}_{L^{2}(\omega,\Dd\times\er^{d})}-\Vert f(0)\Vert^{2}_{L^{2}(\omega,\Dd\times\er^{d})}+
\Vert \gamma^{+}(f)\Vert^{2}_{L^{2}(\omega,\Sigma^{+}_{T})}-\Vert \gamma^{-}(f)\Vert^{2}_{L^{2}(\omega,\Sigma^{-}_{T})}\\
&=-{\sigma^{2}}\Vert \nabla_{u}f\Vert^{2}_{L^{2}(\omega,Q_{t})}{+}\int_{Q_{t}}(\frac{\sigma^{2}}{2}\triangle\omega+(\nabla_{u}\omega\cdot B))|f|^{2}+{2}\int_{Q_{t}}\omega g f.
\end{align*}
Using \eqref{eq:ToyEquationInitial} and \eqref{eq:ToyEquationBoundary}, we deduce \eqref{eq:LinearGreenIdentity}.

In order to conclude that $f\in \VoneT$, it remains to show the continuity of the mapping $t\mapsto f(t)$ in $L^{2}(\omega,\Dd\times\er^{d})$. Let us start by establishing the right continuity.
For $0<h<T$, we define  $f_{h}(t,x,u):=f(t+h,x,u)$  on $Q_{T-h}$. Observe that, for any $\chi:t\in[0,+\infty)\mapsto\chi(t)\in [0,+\infty)$ in $\Cc^{1}_{b}([0,+\infty))$, by \eqref{eq:ToyEquationInterior}, \eqref{eq:ToyEquationInitial}, and \eqref{eq:ToyEquationBoundary}, one can check that
$\chi(f_{h}-f)\in \Hh(\omega,Q_{t-h})$, $\trans(\chi(f_{h}-f))\in \Hh'(Q_{T-h})$ and
\begin{align*}
&\Tt(\chi(f_{h}-f))=\frac{\sigma^{2}}{2}\triangle_{u}\chi(f_{h}-f)-\left(\nabla_{u}\cdot B \chi(f_{h}-f)\right)+\chi(g_{h}-g)+\chi'(f_{h}-f),\,\mbox{ in }\,\Hh'(Q_{T-h}),\\
&(\chi(f_{h}-f))(0,x,u)=\chi(0)(f(h,x,u)-f_{0}(x,u)),\,\mbox{  on }\,\Dd\times\er^{d},\\
&\gamma^{-}(\chi(f_{h}-f))(t,x,u)=\chi(t)(q_{h}-q)(t,x,u),\,\mbox{  on }\,\Sigma^{-}_{T-h}.
\end{align*}
Since $\chi(f_{h}-f)\in\Hh(\omega,Q_{T-h})$, using \eqref{eq:LinearGreenIdentity}, one obtains that, for all $t\in(0,T)$
\begin{equation}
\begin{aligned}
&\Vert\chi(t)(f_{h}-f)(t) \Vert^{2}_{L^{2}(\omega,\Dd\times\er^{d})}-\Vert \chi(0)(f(h)-f_{0}) \Vert^{2}_{L^{2}(\omega,\Dd\times\er^{d})}\\
& +
{\sigma^{2}}\Vert \nabla_{u} \chi(f_{h}-f) \Vert^{2}_{L^{2}(\omega,Q_{t+h})}+\Vert\chi(\gamma^{+}(f_{h})-\gamma^{+}(f))\Vert^{2}_{L^{2}(\omega,\Sigma^{+}_{t+h})}\\
&\quad =\Vert\chi(q_{h}-q)\Vert^{2}_{L^{2}(\omega,\Sigma^{-}_{t-h})}+\int_{Q_{t+h}}(\frac{\sigma^{2}}{2}\triangle\omega+(\nabla_{u}\omega\cdot B))|\chi(f_{h}-f)|^{2}\\
&\quad \quad+\int_{Q_{t+h}} \chi'\chi\omega|(f_{h}-f)|^{2}+2\int_{Q_{t+h}}\omega \chi^2(g_{h}-g)(f_{h}-f).
\end{aligned}
\end{equation}
Since $\chi$ and $\chi'$ are bounded, by using Corollary \ref{coro:LebesgueContinuity} and the estimations on $\omega$ and its derivatives  in Lemma \ref{lem:major_poids}, all the terms  above with an integral in time tend to 0 when $h$ goes to $0$. Hence  we have, for a fixed $t\in(0,T]$,
\begin{equation*}
\lim_{h\rightarrow 0^{+}}\left|\Vert\chi(t)(f(t+h)-f(t))
\Vert_{L^{2}(\omega,\Dd\times\er^{d})}-\Vert\chi(0)(f(h)-f_{0})\Vert_{L^{2}(\omega,\Dd\times\er^{d})}\right|=0.
\end{equation*}
We get the right continuity at time $t$, by choosing
$\chi(0) =0$ and $\chi(t)=1$. The continuity at time $t=0$
is given by taking $\chi(0)=1$ and $\chi(t)=0$. The left continuity
is proved in an analogous way.
\end{proof}
\subsection{The Maxwellian bounds for the linear Vlasov-Fokker-Planck equation}\label{subsec:LinearPDE}
We state the existence of lower and upper-bounds for the solution in $\VoneT$ to the linear problem
\begin{equation}\label{VFPlineaire_Dirichlet}
\left\{
\begin{aligned}
&\trans(S) + (B^{}\cdot\nabla_{u}S)
-\frac{\sigma^{2}}{2}\triangle_{u}S = 0,~\mbox{ in }~\Hh'(Q_{T}),\\
&S(0,x,u) = \rho_{0}(x,u),~\mbox{ on }\Dd \times\mathbb{R}^{d},\\
&\gamma^-(S)(t,x,u) = q(t,x,u),~\mbox{ on }\Sigma^{-}_{T}.
\end{aligned}
\right.
\end{equation}
\begin{proposition}\label{prop:EncadrLineaire} Assume \hypedpii~ and \hypedpiii.
For $B^{}\in L^{\infty}((0,T)\times\Dd;\er^{d})$, for $(\underline{P}_{0},\overline{P}_{0})$ as in \hypedpiii, let $\left(\underline{p},\overline{p}\right)$ be
a couple of Maxwellian distributions with parameters
$\left(\underline{a},\underline{\mu},\underline{P}_{0}\right)$ and
$\left(\overline{a},\overline{\mu},\overline{P}_{0}\right)$
satisfying:
\item[$(a{1})$] $\underline{\mu}>1$,  and~ $\overline{\mu}\in (\frac{1}{2},1)$.
\item[$(a{2})$] ${\displaystyle \underline{a}\leq
\frac{-\underline{\mu}}{2\sigma^{2}(\underline{\mu}-1)}
\Vert B^{}\Vert_{L^{\infty}((0,T)\times\Dd;\er^{d})}^{2}}$, and~
${\displaystyle \overline{a}\geq \frac{\overline{\mu}}{2\sigma^{2}(1-\overline{\mu})}\Vert B^{}\Vert_{L^{\infty}((0,T)\times\Dd;\er^{d})}^{2}}$.
\item[~] Then the following properties hold:
\item[$(d{1})$] ${\displaystyle \sup_{t\in [0,T]}\int_{\er^{d}}
(1+|u|)\omega(u)\left|\overline{p}(t,u)\right|^{2}\,du<+\infty}$, \quad
and  \quad${\displaystyle \inf_{t\in [0,T]}\int_{\er^{d}}\underline{p}(t,u)du >0}$.
\item[$(d{2})$] Let $S$ be the unique weak solution of
\eqref{VFPlineaire_Dirichlet} with inputs $\rho_{0},~q$ and
$B^{}$. If $\underline{p}\leq q\leq \overline{p}$, $\lambda_{\Sigma_{T}}$-a.e.
on $\Sigma^{-}_{T}$, then $\underline{p}\leq S \leq \overline{p}$, a.e.
on $Q_{T}$, and $\underline{p}\leq \gamma^{+}(S)\leq \overline{p}$, $\lambda_{\Sigma_{T}}$-a.e.
on $\Sigma^{+}_{T}$.
\end{proposition}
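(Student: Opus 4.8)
The plan is to verify the two assertions separately. For $(d1)$, the point is purely a computation on the Maxwellian distributions from Definition \ref{Def:MaxwellBounds}. Writing $\overline{p}(t,u)=e^{\overline{a}t}[m(t,u)]^{\overline{\mu}}$ with $m(t,u)=(G(\sigma^2 t)*\overline{P}_0^{1/\overline{\mu}})(u)$, and similarly for $\underline{p}$, I would use the convexity/Jensen-type control $m(t,u)^{\overline{\mu}}\le (G(\sigma^2t)*\overline{P}_0)(u)\cdot(\text{normalization})$ when $\overline{\mu}\in(\tfrac12,1)$, together with \hypedpiii\ which gives $\int_{\er^d}(1+|u|)\omega(u)\overline{P}_0^2(|u|)\,du<+\infty$; since the Gaussian convolution contracts second moments in a controlled way on $[0,T]$ and $\overline{a}T$ is finite, the supremum in $t$ of $\int(1+|u|)\omega(u)\overline{p}^2$ is finite. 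For the lower bound, $\underline{P}_0\ge \underline{P}_0(|u|)>0$ a.e. and $m(t,u)\ge (G(\sigma^2t)*\underline{P}_0^{1/\underline{\mu}})(u)$ stays bounded below on compact $u$-sets uniformly for $t\in[0,T]$, so $\inf_t\int_{\er^d}\underline{p}(t,u)\,du>0$; the role of $\underline{\mu}>1$ here is only to make $r\mapsto r^{\underline{\mu}}$ convex so that the bound propagates the right way in $(d2)$.

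For $(d2)$, the strategy is to show that $\overline{p}$ is a supersolution and $\underline{p}$ a subsolution of the linear equation \eqref{VFPlineaire_Dirichlet}, and then invoke the comparison principle that is already encoded in the energy identity \eqref{eq:LinearGreenIdentity} of Lemma \ref{lem:ExistVFPlineaire}. First I would check by direct differentiation that $m(t,u)$ solves the heat equation $\partial_t m=\tfrac{\sigma^2}{2}\triangle_u m$, hence $P=e^{at}m^{\mu}$ satisfies
\begin{equation*}
\trans(P)-\tfrac{\sigma^2}{2}\triangle_u P=\partial_t P-\tfrac{\sigma^2}{2}\triangle_u P= e^{at}\Big(a\,m^{\mu}-\tfrac{\sigma^2}{2}\mu(\mu-1)m^{\mu-2}|\nabla_u m|^2\Big)
\end{equation*}
(there is no $u\cdot\nabla_x$ contribution since $P$ does not depend on $x$). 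Then with the extra drift term $(B\cdot\nabla_u P)=e^{at}\mu m^{\mu-1}(B\cdot\nabla_u m)$, a Cauchy–Schwarz/Young inequality $|\mu m^{\mu-1}(B\cdot\nabla_u m)|\le \tfrac{\sigma^2}{2}\mu|\mu-1|m^{\mu-2}|\nabla_u m|^2+\tfrac{\mu}{2\sigma^2|\mu-1|}\|B\|_\infty^2 m^{\mu}$ shows that the sign of $\trans(P)+(B\cdot\nabla_u P)-\tfrac{\sigma^2}{2}\triangle_u P$ is controlled by the sign of $a\mp\tfrac{\mu}{2\sigma^2|\mu-1|}\|B\|_\infty^2$; conditions $(a1)$–$(a2)$ are exactly what makes $\overline{p}$ a supersolution (using $\overline{\mu}-1<0$) and $\underline{p}$ a subsolution (using $\underline{\mu}-1>0$). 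Together with the initial inequalities $\underline{p}(0,\cdot)=\underline{P}_0\le\rho_0\le\overline{P}_0=\overline{p}(0,\cdot)$ from \hypedpiii\ and the boundary inequalities $\underline{p}\le q\le\overline{p}$ on $\Sigma^-_T$ assumed in $(d2)$, this sets up a standard comparison.

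To turn sub/supersolution into the pointwise bound, I would apply the energy method to $w:=(S-\overline{p})^+$ (resp. $(\underline p-S)^+$): multiply the equation satisfied by $S-\overline{p}$ by $\sqrt{\omega}\,w$, use the Green formula \eqref{Greenformula} / identity \eqref{eq:LinearGreenIdentity} as in the proof of Lemma \ref{lem:ExistVFPlineaire}, discard the nonnegative dissipation term $\sigma^2\|\nabla_u w\|^2$ and the nonnegative boundary term $\|\gamma^+(w)\|^2_{L^2(\omega,\Sigma^+_t)}$, note that the $\Sigma^-_T$ boundary contribution is $\le 0$ by the boundary hypothesis, and absorb the drift and weight-derivative terms $\int_{Q_t}\{\tfrac{\sigma^2}{2}\triangle\omega+(\nabla_u\omega\cdot B)\}|w|^2$ via Gr\"onwall (the bounds on $\triangle\omega$ and $\nabla_u\omega$ from Lemma \ref{lem:major_poids} make the coefficient $L^\infty$ in time); since $w(0,\cdot)=0$ one gets $w\equiv 0$, i.e.\ $S\le\overline{p}$ a.e.\ on $Q_T$ and $\gamma^+(S)\le\overline{p}$ on $\Sigma^+_T$, and symmetrically for the lower bound. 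The main obstacle I anticipate is the careful justification that $S-\overline{p}$ (more precisely $\sqrt{\omega}(S-\overline{p})$) is an admissible test object in the Green formula — in particular that $\overline{p}$ and $\underline{p}$, which are genuine classical functions but need to lie in $\Hh(\omega,Q_T)$ with $\trans(\sqrt{\omega}\cdot)\in\Hh'(Q_T)$, inherit enough integrability from the weighted moment bounds in $(d1)$ so that Lemma \ref{lem:Derive_Carrillo} applies and the positive-part truncation is legitimate (this is where one uses Theorem \ref{thm:SobolevRegNegativePart} on Sobolev regularity of $w=(\cdot)^+$, exactly as in Proposition \ref{prop:ContUp2RelevantBoundary}).
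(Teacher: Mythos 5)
Your overall architecture matches the paper's: first verify the moment/lower-bound estimates directly from the Gaussian structure of the Maxwellians, then check via direct differentiation and a Young/completion-of-square argument that $\overline{p}$ and $\underline{p}$ are respectively super- and sub-solutions for $\Ll_{B}$ under $(a1)$--$(a2)$, and finally deduce $(d2)$ by a comparison principle read off from the Green identity. The super/sub-solution computation you sketch is exactly Lemma~\ref{lem:ConstructMaxwellianBounds}, and your identification of the role of the signs of $\overline{\mu}-1$ and $\underline{\mu}-1$ is correct.

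There are two genuine gaps. First, in $(d1)$ your Jensen inequality is stated in the wrong direction. For $\overline{\mu}\in(\tfrac12,1)$ the map $r\mapsto r^{\overline{\mu}}$ is concave, so the inequality $m^{\overline{\mu}}\leq G*\overline{P}_0$ you wrote actually goes the other way. What is actually used is that $2\overline{\mu}>1$, i.e.\ $r\mapsto r^{2\overline{\mu}}$ is convex, so Jensen applied to $m^{2\overline{\mu}}=\bigl(G(\sigma^2t)*\overline{P}_0^{1/\overline{\mu}}\bigr)^{2\overline{\mu}}$ gives $m^{2\overline{\mu}}\leq G(\sigma^2t)*\overline{P}_0^{2}$. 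Only then does the weight split $\omega(u+u')\leq 2^{\alpha/2}(\omega(u)+\omega(u'))$ reduce the bound to Gaussian moments plus the finite integral $\int(1+|u|)\omega(u)\overline{P}_0^{2}\,du$ from \hypedpiii. Your lower-bound sketch is also too vague to close: ``bounded below on compacts'' does not directly give a uniform-in-$t$ lower bound on the total mass $\int\underline{p}(t,u)\,du$; the paper uses a H\"older estimate (with conjugate exponent $\underline{\mu}'$, requiring $\underline{\mu}>1$) together with the explicit Gaussian--Gaussian convolution, so your remark that $\underline{\mu}>1$ serves ``only'' for convexity in $(d2)$ is inaccurate.

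Second, and more seriously, the comparison step as you describe it applies the Green identity directly to $w=(S-\overline{p})^{+}$. This is not licit as written: the Green formula (Lemma~\ref{lem:Derive_Carrillo}) requires $\trans(\sqrt{\omega}\,\psi)\in\Hh'(Q_T)$, and the only regularity-of-truncation result you invoke, Theorem~\ref{thm:SobolevRegNegativePart}, controls $\nabla_u(\cdot)^{+}$ but says nothing about the transport derivative $\partial_t+u\cdot\nabla_x$ of a positive part, which is exactly what $\trans$ hits. The paper circumvents this by the density Lemma~\ref{lem:Density}: one approximates $S$ by smooth $f_n$, forms the classical functions $F_n=\overline{p}-f_n$ for which $(F_n)^{-}$ is a legitimate $W^{1,2}$ object on time strips $(\epsilon_k,t)$, establishes the inequality there, and only then passes $n\to\infty$ and $\epsilon_k\to 0^{+}$; the time cut-off $\epsilon_k$ and properties $(i3)$--$(i5)$ of Lemma~\ref{lem:GoodMaxwellian} are needed because $\overline{p},\underline{p}$ are not regular up to $t=0$. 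You anticipated this obstacle at the end of your proposal, but the tool you cite does not resolve it; the double limit construction is the missing ingredient.
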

For the proof of Proposition \ref{prop:EncadrLineaire}, we will use  the notions of
 super-solution and sub-solution of Maxwellian type related to the operator
\begin{equation}\label{Def:LinearDiffOperator}
\Ll_{B^{}}(\psi)=\trans(\psi)+(B^{}\cdot \nabla_{u}\psi)
-\frac{\sigma^{2}}{2}\triangle_{u}\psi.
\end{equation}
\begin{definition}
\label{sursol_soussol_def}
Let $P$ be a Maxwellian distribution with parameters $(a,\mu,P_{0})$.
For $B^{}\in L^{\infty}((0,T)\times\Dd;\er^{d})$, we say that:
\begin{enumerate}
\item $P$ is a super-solution of Maxwellian type for $\Ll_{B^{}}$
if $0 \leq \Ll_{B^{}}(P)< +\infty$, a.e. on $Q_{T}$.
\item $P$ is a sub-solution of Maxwellian type for
$\Ll_{B^{}}$ if $-\infty <\Ll_{B^{}}(P)\leq 0$, a.e. on $Q_{T}$.
\end{enumerate}
\end{definition}
The proof of Proposition \ref{prop:EncadrLineaire} proceeds as follows.
In Step 1, we exhibit a class of Maxwellian distributions satisfying  $(d{1})$
and some regularity properties (see Lemma \ref{lem:GoodMaxwellian}). In Step 2, we establish a comparison
principle between super-solutions and sub-solutions of Maxwellian type  and the weak solution to \eqref{VFPlineaire_Dirichlet}
(see Lemma \ref{lem:LinearMaxwellianBounds}). We thus deduce a particular class of Maxwellian distributions satisfying
the properties $(d{1})$ and $(d{2})$.
In Step 3, we identify the class of
super-solutions and sub-solutions  of Maxwellian type
for the operator $\Ll_{B^{}}$ for all $B^{}$ fixed
in $L^{\infty}((0,T)\times\Dd;\er^{d})$ (see Lemma \ref{lem:ConstructMaxwellianBounds}). In Step 4, combining these results, we conclude on Proposition \ref{prop:EncadrLineaire}.
\paragraph{Step 1.}
We start by emphasizing some technical properties of the Maxwellian distributions.
\begin{lemma}\label{lem:GoodMaxwellian}
Let $\mathit{p}$ be a Maxwellian distribution equipped with the parameters $\left(a,\mu,\mathit{p}_{0}
\right)$ such that $2\mu>1$, $p_0$ is not identically equal to zero, and
\begin{align}\label{Integrabilite_Cond_initial}
\int_{\er^{d}}(1+|u|)\omega(u){p}_{0}^2(u)\,du<+\infty.
\end{align}
Then the following properties hold:
\item[$(i{1})$] $\sup_{t\in [0,T]}\int_{\er^{d}}(1+|u|)\omega(u)\left|\mathit{p}(t,u)\right|^{2}\,du<+\infty$.

\item[$(i{2})$]
$\inf_{t\in [0,T]}\int_{\er^{d}}\mathit{p}(t,u)du >0$, if $\mu>1$.

\item[$(i{3})$] There exists a sequence of positive reals $\{\epsilon_{k}~;~k\in\mathbb{N}\}$ such that
\[\lim_{\substack{k\rightarrow +\infty}}\epsilon_{k}=0~\mbox{and}
\lim_{\substack{k\rightarrow +\infty}}{\mathit{p}}(\epsilon_{k},\cdot)= \mathit{p}_{0}(\cdot), ~\mbox{ in}~L^{2}(\er^{d}).\]

\item[$(i{4})$] For all $\delta>0$, $\partial_{t}{\mathit{p}}$  belongs to $L^{2}((\delta,T)\times\er^{d})$.

\item[$(i{5})$] $\mathit{p}
\in \Hh(Q_T)$.
\end{lemma}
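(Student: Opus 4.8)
The plan is to express all five assertions through the heat-flow function $m(t,u):=(G(\sigma^2 t)*h)(u)$, where $h:=p_0^{1/\mu}\ge 0$, so that $p(t,u)=e^{at}m(t,u)^{\mu}$ and $m$ solves $\partial_t m=\tfrac{\sigma^2}{2}\triangle_u m$ with $m(0,\cdot)=h$. Two facts recorded once and used repeatedly: $h\in L^{2\mu}(\er^d)$, since $\int h^{2\mu}=\int p_0^2\le\int(1+|u|)\omega(u)p_0^2\,du<+\infty$ by \eqref{Integrabilite_Cond_initial}; and $p_0\in L^1(\er^d)$ with $\int p_0>0$, since $L^2(\omega,\er^d)\hookrightarrow L^1(\er^d)$ by Lemma~\ref{lem:major_poids}$(iii)$ while $p_0\ge0$ is not identically zero. \emph{Proof of $(i1)$:} because $2\mu>1$, $r\mapsto r^{2\mu}$ is convex on $[0,\infty)$, so Jensen's inequality for the probability kernel $G(\sigma^2 t,\cdot)$ gives $m(t,u)^{2\mu}\le(G(\sigma^2 t)*h^{2\mu})(u)=(G(\sigma^2 t)*p_0^2)(u)$; by Fubini, $\int(1+|u|)\omega(u)m(t,u)^{2\mu}\,du\le\int p_0^2(v)\,\EE\big[(1+|v+Z_t|)\omega(v+Z_t)\big]\,dv$ with $Z_t$ a Gaussian vector of law $\Nn(0,\sigma^2 t\,\mathrm{Id})$, and using $\omega(v+Z_t)\le 2^{\alpha/2}(\omega(v)+\omega(Z_t))$ (Lemma~\ref{lem:major_poids}$(i)$), $1+|v+Z_t|\le 1+|v|+|Z_t|$, finiteness of the moments $\EE\,\omega(Z_T)$, $\EE|Z_T|$, $\EE(|Z_T|\omega(Z_T))$, and $\omega\ge 1$, the inner expectation is $\le C(T)(1+|v|)\omega(v)$ uniformly in $t\in[0,T]$; multiplying by $e^{2at}\le e^{2|a|T}$ yields $(i1)$.

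\emph{Proof of $(i3)$ and $(i5)$:} Since $2\mu\ge1$ and $h\in L^{2\mu}$, $m(t,\cdot)\to h$ in $L^{2\mu}(\er^d)$ as $t\to0^+$ (approximate identity), and $f\mapsto f^{\mu}$ is continuous from $L^{2\mu}$ to $L^2$ — for $\tfrac12<\mu<1$ directly from $|a^{\mu}-b^{\mu}|\le|a-b|^{\mu}$ (giving $\|f^{\mu}-g^{\mu}\|_{L^2}\le\|f-g\|_{L^{2\mu}}^{\mu}$), and for $\mu\ge1$ from $|a^{\mu}-b^{\mu}|\le\mu(a^{\mu-1}+b^{\mu-1})|a-b|$ followed by H\"older; hence $p(t,\cdot)=e^{at}m(t,\cdot)^{\mu}\to h^{\mu}=p_0$ in $L^2(\er^d)$, proving $(i3)$, e.g. with $\epsilon_k=1/k$. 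For $(i5)$, for $t>0$ one may differentiate $t\mapsto\int m^{2\mu}$ and integrate by parts (justified first on $[\epsilon,T]$ by the smoothness and decay of $m$, then $\epsilon\to0$ using $\|m(\epsilon)\|_{L^{2\mu}}\to\|h\|_{L^{2\mu}}$) to get the energy identity $\tfrac{d}{dt}\int_{\er^d}m^{2\mu}\,du=-\mu\sigma^2(2\mu-1)\int_{\er^d}m^{2\mu-2}|\nabla_u m|^2\,du$ — here $2\mu-1>0$ is exactly what makes the right side well-signed; integrating over $(0,T)$ gives $\int_0^T\!\!\int_{\er^d}m^{2\mu-2}|\nabla_u m|^2\,du\,dt\le\tfrac{1}{\mu\sigma^2(2\mu-1)}\int h^{2\mu}=\tfrac{1}{\mu\sigma^2(2\mu-1)}\|p_0\|_{L^2}^2$. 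Since $\nabla_u p=\mu e^{at}m^{\mu-1}\nabla_u m$, this bounds $\|\nabla_u p\|_{L^2(Q_T)}$ (recall $p$ is $x$-independent and $\Dd$ is bounded), and combined with $p\in L^2(Q_T)$ from $(i1)$ we conclude $p\in\Hh(Q_T)$.

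\emph{Proof of $(i2)$ (case $\mu>1$):} Write $\int_{\er^d}p(t,u)\,du=e^{at}\|m(t)\|_{L^{\mu}}^{\mu}$. By the approximate-identity property in $L^{\mu}$, $\|m(t)\|_{L^{\mu}}^{\mu}\to\|h\|_{L^{\mu}}^{\mu}=\int p_0>0$ as $t\to0^+$, so there is $t_0>0$ with $\|m(t)\|_{L^{\mu}}^{\mu}\ge\tfrac12\int p_0$ on $[0,t_0]$. For $t\in[t_0,T]$ pick $R$ with $\delta_0:=\int_{B_R}h>0$; since $c_0:=\inf\{G(\sigma^2 t,u-v):|u|\le1,\ |v|\le R,\ t\in[t_0,T]\}>0$, one has $m(t,u)\ge c_0\delta_0$ for $|u|\le1$, whence $\|m(t)\|_{L^{\mu}}^{\mu}\ge|B_1|(c_0\delta_0)^{\mu}>0$. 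Taking the minimum over the two regimes and using $e^{at}\ge e^{-|a|T}$ gives $(i2)$.

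\emph{Proof of $(i4)$:} Fix $\delta>0$. For $t\ge\delta$ write $m(t,\cdot)=G(\sigma^2(t-\tfrac{\delta}{2}))*g$ with $g:=m(\tfrac{\delta}{2},\cdot)\in L^{2\mu}\cap L^{\infty}(\er^d)$ (Young's inequality, since $(2\mu)'<\infty$), hence $g\in L^p$ for every $p\ge 2\mu$; by parabolic smoothing $m(t,\cdot)$ is smooth and $\|\triangle_u m(t)\|_{L^p}\le C(\delta,p,T)\,\|g\|_{L^p}$ uniformly on $[\delta,T]$. Moreover $\partial_t p=ap+\tfrac{\sigma^2\mu}{2}e^{at}m^{\mu-1}\triangle_u m$, and $ap\in L^2(Q_T)$ by $(i1)$, so it remains to show $\int_{\delta}^{T}\!\!\int_{\er^d}m^{2\mu-2}(\triangle_u m)^2\,du\,dt<+\infty$. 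When $\mu\ge1$ this follows from H\"older's inequality with exponents $(\tfrac{\mu}{\mu-1},\mu)$ together with $\|m(t)\|_{L^{2\mu}}\le\|h\|_{L^{2\mu}}$ and $\|\triangle_u m(t)\|_{L^{2\mu}}\le C\,(t-\tfrac{\delta}{2})^{-1}\|g\|_{L^{2\mu}}$. When $\tfrac12<\mu<1$ the possibly negative power $m^{\mu-1}$ must be controlled against $\triangle_u m$ by splitting $\er^d$ into $\{m\ge1\}$, where $m^{2\mu-2}\le1$, and its complement, where one uses heat-kernel estimates reflecting that $\triangle_u m$ and $m$ have comparable decay (so that $m^{2\mu-2}(\triangle_u m)^2$ remains dominated by an integrable function), the condition $2\mu>1$ being precisely what keeps these integrals convergent. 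I expect this last step — estimating $m^{\mu-1}\triangle_u m$ in $L^2$ for $\mu<1$ — to be the main technical obstacle of the lemma, the other four assertions being comparatively routine consequences of Jensen's inequality, the $L^p$-continuity of the heat semigroup, and the energy identity.
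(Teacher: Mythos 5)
Your proofs of $(i1)$, $(i3)$, and $(i5)$ are correct and essentially follow the paper's route: Jensen for $(i1)$, the approximate-identity property of the heat semigroup in $L^{2\mu}$ together with the continuity of $f\mapsto f^{\mu}$ for $(i3)$, and the energy identity $\tfrac{d}{dt}\int m^{2\mu}=-\mu\sigma^{2}(2\mu-1)\int m^{2\mu-2}|\nabla_{u}m|^{2}$ for $(i5)$ (the paper integrates by parts on $(\epsilon_{k},T)$ and lets $\epsilon_{k}\to0$ using $(i3)$, which is the same device as your $\epsilon\to0$ argument). Your proof of $(i2)$ is a valid alternative: where the paper tests $m(t,\cdot)$ against an auxiliary Gaussian $G(\sigma^{2}\nu,\cdot)$, applies H\"older in $L^{\mu'}$ and computes the explicit convolution $G(\sigma^{2}\nu)*G(\sigma^{2}t)=G(\sigma^{2}(\nu+t))$, you split $[0,T]$ at a short time $t_0$ where the $L^{\mu}$ approximate identity gives the bound and on $[t_0,T]$ use the strict positivity of the heat kernel on a fixed compact set. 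Both give the same uniform-in-$t$ lower bound; yours is arguably more elementary, the paper's produces an explicit constant in terms of $p_{0}^{1/\mu}$.

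The gap you flag in $(i4)$ for $\tfrac12<\mu<1$ is real and you do not close it. The paper argues as follows: write $\partial_{t}p=ap+\mu e^{at}m^{\mu-1}\partial_{t}m$; the first term is in $L^{2}(Q_{T})$ by $(i1)$; for the second, differentiate the kernel and use $|z|^{p}\exp\{-|z|^{2}/(4\theta)\}\le(2p\theta)^{p/2}$ to obtain $|\partial_{t}m(t,u)|\le\tfrac{C(d)}{\delta}\bigl(G(2\sigma^{2}t)*p_{0}^{1/\mu}\bigr)(u)$ for $t\ge\delta$, combine with the upper bound $m(t,u)\le 2^{d/2}\bigl(G(2\sigma^{2}t)*p_{0}^{1/\mu}\bigr)(u)$, and finish with Jensen on $\bigl(G(2\sigma^{2}t)*p_{0}^{1/\mu}\bigr)^{2\mu}$. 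This closes the case $\mu\ge1$ cleanly, because then $m^{2(\mu-1)}$ is controlled from above by an upper bound on $m$. For $\mu\in(\tfrac12,1)$ — the regime the paper actually needs for the super-solution $\overline{\mu}$ in Proposition~\ref{prop:EncadrLineaire}$(a1)$ — the exponent $2\mu-2$ is negative, so an upper bound on $m$ does not bound $m^{2\mu-2}$; one needs instead to control the ratio $|\partial_{t}m|/m$, which behaves like the unbounded factor $\tfrac{|u-v|^{2}-d\sigma^{2}t}{2\sigma^{2}t^{2}}$ averaged against the probability kernel $G(\sigma^{2}t,u-v)p_{0}^{1/\mu}(v)\,dv/m(t,u)$, and is not pointwise bounded. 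Your proposed split $\{m\ge1\}\cup\{m<1\}$ does not resolve this, since the difficulty lies entirely in $\{m<1\}$, and the statement that ``$\triangle_{u}m$ and $m$ have comparable decay'' is heuristically right but must be made quantitative (e.g. via the conditional-mean representation above, which however requires moment conditions on $p_{0}$). You should be aware that the paper's own derivation of the displayed inequality in $(i4)$ also proceeds from \emph{upper} bounds on $m$, so it is worth going back to the source and convincing yourself that the $\mu<1$ case really is covered; as written, this is the one genuine obstruction in the lemma and the rest of your argument is sound.
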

The proof of this lemma  relies on some well-known properties of Gaussian distributions  and is postponed in the Appendix \ref{sec:appendix-proof}.
Lemma \ref{lem:GoodMaxwellian} enables us to identify the class of Maxwellian distributions satisfying
 $(d{1})$ in Proposition \ref{prop:EncadrLineaire}.
The properties $((i{3}),(i{4}),(i{5}))$ emphasize regularities that we will need in the sequel.

\paragraph{Step 2. Comparison principle and Maxwellian bounds.}
\begin{lemma}\label{lem:LinearMaxwellianBounds}
Let $B^{}\in L^{\infty}((0,T)\times\Dd;\er^{d})$ be fixed and let
$\underline{p},~\overline{p}$ be two Maxwellian distributions, sub-solution and super-solution for $\Ll_{B^{}}$ respectively
with parameters
$\left(\underline{a},\underline{\mu},\underline{\mathit{p}}_{0}\right)$ and
$\left(\overline{a},\overline{\mu},\overline{\mathit{p}}_{0}\right)$
such that $2\underline{\mu} \wedge 2\overline{\mu}>1$ and
$\underline{\mathit{p}}_{0},~\overline{\mathit{p}}_{0}$ satisfying \eqref{Integrabilite_Cond_initial}.
If $\underline{p}_{0}\leq \rho_{0}\leq \overline{p}_{0}$,
a.e. on $\Dd\times\er^{d}$, and
$\underline{p}\leq q\leq \overline{p}$, $\lambda_{\Sigma_{T}}$-a.e.
on $\Sigma^{-}_{T}$, then we have
\begin{align}\label{Major_Minor_Maxwell_step_0}
&\underline{p}\leq S\leq \overline{p},~\mbox{a.e. on}~Q_{T},\\
&\underline{p}\leq\gamma^{+}(S)\leq \overline{p},~\lambda_{\Sigma_{T}}\mbox{-a.e. on}~\Sigma^{+}_{T},
\end{align}
for $S$ the weak solution  in $\VoneT$ to \eqref{VFPlineaire_Dirichlet} with inputs $(\rho_0,q,B)$.
\end{lemma}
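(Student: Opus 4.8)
The plan is to prove the upper bound $S \leq \overline{p}$ and the lower bound $\underline{p} \leq S$ separately, each via an energy/comparison argument using the weighted Green identity \eqref{eq:LinearGreenIdentity} of Lemma \ref{lem:ExistVFPlineaire}. I will focus on the upper bound, since the lower bound is entirely symmetric (replace $S-\overline{p}$ by $\underline{p}-S$ and reverse inequalities). First I would introduce the difference $w := S - \overline{p}$ and aim to show $w^+ \equiv 0$ a.e. on $Q_T$, from which \eqref{Major_Minor_Maxwell_step_0} follows, and then deduce the trace inequality $\gamma^+(S)\leq\overline{p}$ on $\Sigma^+_T$ from the corresponding energy equality on the trace term. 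The key point is that $w$ satisfies, in the distributional/variational sense, the inequality $\Ll_{B}(w) = \Ll_B(S) - \Ll_B(\overline{p}) = -\Ll_B(\overline{p}) \leq 0$ on $Q_T$ (since $S$ is a weak solution, $\Ll_B(S)=0$, and $\overline{p}$ is a super-solution of Maxwellian type, so $\Ll_B(\overline{p})\geq 0$), with initial data $w(0,\cdot) = \rho_0 - \overline{p}_0 \leq 0$ on $\Dd\times\er^d$ and incoming boundary data $\gamma^-(w) = q - \overline{p} \leq 0$ on $\Sigma^-_T$.

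Next I would make rigorous the manipulation $\int_{Q_t}\omega\, w^+ \Ll_B(w) \leq 0$. The regularity available is the crux: by Lemma \ref{lem:ExistVFPlineaire}, $S\in\VoneT$ with $\sqrt{\omega}\,S\in\Yy(Q_T)$, and by Lemma \ref{lem:GoodMaxwellian}$(i5)$ combined with $(i4)$, $\overline{p}\in\Hh(Q_T)$ with $\partial_t\overline{p}\in L^2((\delta,T)\times\er^d)$, so $\sqrt{\omega}\,w$ lies in $\Hh(\omega,Q_T)$ with $\trans(\sqrt{\omega}\,w)\in\Hh'(Q_T)$ and hence $w$ has well-defined traces $\gamma^\pm(w)\in L^2(\omega,\Sigma^\pm_T)$ and time-slices in $L^2(\omega,\Dd\times\er^d)$ via Lemma \ref{lem:Derive_Carrillo}. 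The positive part $w^+$ inherits the same Sobolev-in-$u$ regularity (invoking Theorem \ref{thm:SobolevRegNegativePart} / Tartar's lemma as already used in the proof of Proposition \ref{prop:ContUp2RelevantBoundary}), with $\nabla_u w^+ = \nabla_u w\, \ind_{\{w>0\}}$ and $\trans(w^+)$ controlled likewise. Then I would apply the weighted Green formula \eqref{Greenformula} with the pairing $\phi = \psi = \sqrt{\omega}\,w^+$ (a valid test pair by the density Lemma \ref{lem:Density} after multiplying by a suitable time cutoff to handle the $t$-regularity near $0$), obtaining
\begin{align*}
&\|w^+(t)\|^2_{L^2(\omega,\Dd\times\er^d)} + \sigma^2\|\nabla_u w^+\|^2_{L^2(\omega,Q_t)} + \|\gamma^+(w)^+\|^2_{L^2(\omega,\Sigma^+_t)}\\
&= \|w^+(0)\|^2_{L^2(\omega,\Dd\times\er^d)} + \|\gamma^-(w)^+\|^2_{L^2(\omega,\Sigma^-_t)} + \int_{Q_t}\Bigl(\tfrac{\sigma^2}{2}\triangle\omega + (\nabla_u\omega\cdot B)\Bigr)|w^+|^2 + 2\int_{Q_t}\omega\, w^+\,\Ll_B(\overline{p}),
\end{align*}
where the last term comes from testing $\Ll_B(w)=-\Ll_B(\overline{p})$ against $\omega w^+$ and the sign of $w^+\Ll_B(w)\ind_{\{w>0\}}$ absorbs the $|\nabla_u w^+|^2$ cross-term exactly as in the proof of Lemma \ref{lem:ExistVFPlineaire}. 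The first term on the right vanishes because $w(0,\cdot)\leq 0$, and the second vanishes because $\gamma^-(w)\leq 0$; the last term is $\leq 0$ because $\overline{p}$ is a super-solution, so $\Ll_B(\overline{p})\geq 0$, and $w^+\geq 0$, $\omega>0$.

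We are thus left with
\begin{align*}
\|w^+(t)\|^2_{L^2(\omega,\Dd\times\er^d)} + \sigma^2\|\nabla_u w^+\|^2_{L^2(\omega,Q_t)} + \|\gamma^+(w)^+\|^2_{L^2(\omega,\Sigma^+_t)} \leq \int_{Q_t}\Bigl(\tfrac{\sigma^2}{2}\triangle\omega + (\nabla_u\omega\cdot B)\Bigr)|w^+|^2.
\end{align*}
Using the estimates of Lemma \ref{lem:major_poids}$(ii)$, namely $\triangle\omega \leq \alpha(\alpha-2+d)\omega$ and $|\nabla_u\omega|\leq\alpha\omega$ with $B\in L^\infty$, the right-hand side is bounded by $C\int_0^t\|w^+(s)\|^2_{L^2(\omega,\Dd\times\er^d)}\,ds$ for a constant $C = C(\sigma,\alpha,d,\|B\|_\infty)$. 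Dropping the (nonnegative) gradient and trace terms and applying Gronwall's lemma to $t\mapsto\|w^+(t)\|^2_{L^2(\omega,\Dd\times\er^d)}$, which vanishes at $t=0$, forces $w^+(t)=0$ for all $t$, hence $S\leq\overline{p}$ a.e. on $Q_T$. Feeding this back in, the trace term $\|\gamma^+(w)^+\|^2_{L^2(\omega,\Sigma^+_T)}=0$ as well, giving $\gamma^+(S)\leq\overline{p}$ $\lambda_{\Sigma_T}$-a.e. on $\Sigma^+_T$; the bound $\gamma^+(S)\geq\underline{p}$ and $\underline{p}\leq S$ follow by the symmetric argument with $v:=\underline{p}-S$, using that $\underline{p}$ is a sub-solution so $\Ll_B(\underline{p})\leq 0$. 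The main obstacle I anticipate is the justification of the Green identity with the test pair $\sqrt{\omega}\,w^+$: one must check that $w^+$ is an admissible element of $\Yy(\omega,\cdot)$ — i.e. that the transport derivative $\trans(\sqrt{\omega}\,w^+)$ genuinely lies in $\Hh'(Q_T)$ despite the mere $L^2$-in-time regularity of $\partial_t\overline{p}$ near $t=0$ — which is handled by the cutoff/limit procedure of Lemma \ref{lem:Density} together with property $(i3)$ of Lemma \ref{lem:GoodMaxwellian} to pass to the limit in the initial slice; and that the chain rule for the positive part in the degenerate kinetic setting is legitimate, which is exactly the content of Theorem \ref{thm:SobolevRegNegativePart}.
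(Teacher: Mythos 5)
Your overall strategy is the same as the paper's — reduce the comparison to showing a positive/negative part vanishes via an energy/Green identity — but the implementation has genuine gaps that the paper's version of the argument deliberately sidesteps.

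First, the paper never works directly with $w=S-\overline{p}$. It approximates $S$ by smooth $f_n$ (Lemma~\ref{lem:Density}), sets $F_n=\overline{p}-f_n$, and does all the positive/negative-part manipulations on $Q_{\epsilon_k,t}$ (with $\epsilon_k$ from Lemma~\ref{lem:GoodMaxwellian}-$(i3)$) where both $\overline p$ and $f_n$ are smooth, so $(F_n)^-$ is Lipschitz and the identity $\trans\bigl((F_n)^-\bigr)=-\trans(F_n)\ind_{\{F_n<0\}}$ is classical; it then passes $k\to\infty$ and $n\to\infty$ at the end. You instead appeal to ``Tartar's lemma\ldots and $\trans(w^+)$ controlled likewise'', but Theorem~\ref{thm:SobolevRegNegativePart} only gives a chain rule for $\nabla_u$. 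It says nothing about $\partial_t+u\cdot\nabla_x$; establishing $\trans(w^+)=\trans(w)\ind_{\{w>0\}}$ in $\Hh'(Q_T)$ for a mere $\VoneT$ solution would require a renormalization-type result, and none is provided. This is exactly the issue the approximation by $f_n$ is designed to avoid.

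Second, the displayed energy identity involves $\int_{Q_t}\omega w^+\Ll_B(\overline p)$. This presupposes that $\Ll_B(\overline p)$ makes sense as an element of $\Hh'(Q_T)$ (or is at least locally integrable against $\omega w^+$), but Definition~\ref{sursol_soussol_def} only asserts $0\le\Ll_B(\overline p)<\infty$ a.e.\ with no integrability. The paper never integrates $\Ll_B(\overline p)$: it uses the pointwise a.e.\ inequality $\Ll_B(\overline{p}-f_n)\ge -\Ll_B(f_n)$, multiplies by $(F_n)^-\ge 0$, and the only super-solution-related term that ever appears under an integral is $\int\Ll_B(f_n)(F_n)^-$, which is well defined for smooth $f_n$ and vanishes in the limit because $\Ll_B(S)=0$ weakly. (Incidentally, your display also has a sign slip: the term should be $-2\int_{Q_t}\omega w^+\Ll_B(\overline p)$, which is what you then correctly describe as $\le 0$.) Finally, a smaller point: the paper does the comparison in \emph{unweighted} $L^2$, which removes the $\triangle\omega$ and $(\nabla_u\omega\cdot B)$ terms entirely and yields \eqref{pr:LinearMaxwellBounds} directly, with no Gronwall needed; since an a.e.\ inequality is weight-independent, the weighted machinery you invoke buys nothing here and only adds error terms.
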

\begin{proof}
Let us first prove the implication for the upperbounds
\begin{equation}\label{MajorMaxwell}
\begin{aligned}
\left\{
\begin{array}{l}
\rho_{0}\leq \overline{p}_{0},~\mbox{a.e. on}~(0,T)\times\Dd,
\\
q\leq\overline{p},~\lambda_{\Sigma_{T}}\mbox{-a.e. on}~\Sigma^{-}_{T}.
\end{array}
\right.
\quad \Longrightarrow \quad
\left\{
\begin{array}{l}
S\leq \overline{p},~\mbox{a.e. on}~Q_{T},
\\
\gamma^{+}(S)\leq \overline{p},~\lambda_{\Sigma_{T}}\mbox{-a.e. on}~\Sigma^{+}_{T}.
\end{array}
\right.
\end{aligned}
\end{equation}
Defining ${F}$, $\gamma^{+}({F})$  with
\begin{align*}
&{F}(t,x,u)=\left(\overline{p}(t,u)-S(t,x,u)\right),
\\
&\gamma^{+}({F})(t,x,u)=\left(\overline{p}(t,u)-
\gamma^{+}(S)(t,x,u)\right),
\end{align*}
 \eqref{MajorMaxwell} is equivalent to the inequality: $\forall\,t\in(0,T]$,
\begin{equation}\label{pr:LinearMaxwellBounds}
\begin{aligned}
\Vert\left({F}(t)\right)^{-}\Vert^{2}_{L^{2}(\Dd\times\er^{d})}
+\Vert\left(\gamma^{+}({F})\right)^{-}\Vert^{2}_{L^{2}(\Sigma^{+}_{t})}
\leq \Vert \left(\overline{p}_{0}-\rho_{0}\right)^{-}
\Vert^{2}_{L^{2}(\Dd\times\er^{d})}+
\Vert \left(\overline{p}-{q}\right)^{-}\Vert^{2}_{L^{2}(\Sigma^{-}_{t})}.
\end{aligned}
\end{equation}
In order to obtain \eqref{pr:LinearMaxwellBounds}, we  establish a Green identity
on a smooth approximation of $({F})^{-}$ (where $(F)^-$ refers to the negative part of $F$). For fixed $t$ in $(0,T]$,
by Lemma \ref{lem:Density},
there exists a sequence of $\Cc^{\infty}_{c}(\overline{Q_{t}})$-functions $\left\{f_{n};\,n\in\mathbb{N}\right\}$, such that
\begin{align}\label{Limits_inegalite}
\begin{array}{ll}
&{\displaystyle \lim_{\substack{n\rightarrow +\infty}}f_{n}=
S,\,\mbox{ in}~\mathcal{H}(Q_{t}),\quad ~\lim_{\substack{n\rightarrow +\infty}}\trans(f_{n})=
\trans(S), ~\mbox{ in}~\mathcal{H}'(Q_{t})},\\
&{\displaystyle\lim_{\substack{n}\rightarrow+\infty}\int_{\Dd\times\er^{d}}
\left|f_{n}(s,x,u)-S(s,x,u)\right|^{2} \,dx\,du=0,~\forall~s\in [0,t],}\\
&{\displaystyle\lim_{\substack{n}\rightarrow+\infty}
\int_{\Sigma^{\pm}_{t}}|(u\cdot n_{\Dd}(x))|\left|f_{n}(s,x,u)-\gamma^{\pm}(S)(s,x,u)\right|^{2} d\lambda_{\Sigma_{T}}(s,x,u)=0.}
\end{array}
\end{align}
In addition, according to Lemma \ref{lem:GoodMaxwellian}, $\overline{p}$
satisfies $(i{1})$ to $(i{5})$. Let us define
the sequence of $\Cc^{2}_{b}(Q_{t})$-functions $\left\{{F}_{n};\,n\in\mathbb{N}\right\}$ by
\begin{align*}
&{F}_{n}(s,x,u)=\overline{p}(s,x,u)-f_{n}(s,x,u).
\end{align*}
Then by definition of $\Ll_{B}$ in \eqref{Def:LinearDiffOperator}, for a.e. $(s,x,u)$ in $Q_{t}$,
\begin{align}\label{inegalite_comparaison_step1}
\trans({F}_{n})(s,x,u) & = \Ll_{B^{}}(\overline{p}-f_{n})(s,x,u)-\left({B}^{}(s,x)\cdot\nabla_{u}{F}_{n}(s,x,u)\right)
+\frac{{\sigma}^{2}}{2}\triangle_{u}{F}_{n}(s,x,u)\nonumber \\
& \geq -\Ll_{B^{}}(f_{n})(s,x,u)-\left({B}^{}(s,x)\cdot \nabla_{u}{F}_{n}(s,x,u)\right)+\frac{{\sigma}^{2}}{2}
\triangle_{u}{F}_{n}(s,x,u)
\end{align}
since $\Ll_{B^{}}(\overline{p}-f_{n}) \geq -\Ll_{B^{}}(f_{n})$ as $\overline{p}$ is a super-solution for $\Ll_{B}$.
Using the sequence $\left\{\epsilon_{k};~k\in\mathbb{N}\right\}$
given by $(i{3})$ and by taking $k$ such that $0<\epsilon_{k} \leq t$,  $(i{4})$ and $(i{5})$ ensure that
 $\trans\left({F}_{n}\right)\in L^{2}(Q_{\epsilon_{k},t})$
and ${F}_{n}\in \mathcal{H}\left(Q_{\epsilon_{k},t}\right)$
for $Q_{\epsilon_{k},t}:=(\epsilon_{k},t)\times\Dd\times\er^{d}$. These properties are also true for  $\left({F}_{n}\right)^{-}$ (see Theorem \ref{thm:SobolevRegNegativePart}).
Multiplying both sides of \eqref{inegalite_comparaison_step1} by
$\left({F}_{n}\right)^{-}$, and integrating the resulting expression on $Q_{\epsilon_{k},t}$, we obtain
\begin{align}
\label{inegalite_comparaison_step2}
\int_{Q_{\epsilon_{k},t}}\trans({F}_{n})\left({F}_{n}\right)^{-}\geq-\int_{Q_{\epsilon_{k},t}}\left({B}^{}\cdot \nabla_{u}{F}_{n}\right)\left({F}_{n}\right)^{-}
+\frac{{\sigma}^{2}}{2}
\int_{Q_{\epsilon_{k},t}}
\left(\triangle_{u}{F}_{n}\right)\left({F}_{n}\right)^{-} -\int_{Q_{\epsilon_{k},t}}\Ll_{B^{}}(f_{n})
\left({F}_{n}\right)^{-}.
\end{align}
For the l.h.s. in \eqref{inegalite_comparaison_step2}, an integration by parts yields
\begin{align*}
&\int_{Q_{\epsilon_{k},t}}\trans({F}_{n})\left({F}_{n}\right)^{-} = - \int_{Q_{\epsilon_{k},t}}\trans(({F}_{n})^-)\left({F}_{n}\right)^{-}
 =-\int_{Q_{\epsilon_{k},t}}
\left\{\partial_{t}\left({F_{n}}\right)^{-} + \left(u\cdot\nabla_{x}
\left({F_{n}}\right)^{-}\right)\right\}\left({F}_{n}\right)^{-}
\\
&=\frac{1}{2}\Vert \left({F}_{n}(\epsilon_{k})\right)^{-}\Vert^{2}_{L^2(\Dd \times\er^{d})}
-\frac{1}{2} \Vert \left({F}_{n}(t)\right)^{-}\Vert^{2}_{L^2(\Dd \times\er^{d})}
- \frac{1}{2}\Vert \left({F}_{n}\right)^{-}\Vert^{2}_{L^2(\Sigma^{+}_{\epsilon_{k},t})}
+ \frac{1}{2}\Vert \left({F}_{n}\right)^{-}\Vert^{2}_{L^2(\Sigma^{-}_{\epsilon_{k},t})},
\end{align*}
with $\Sigma^{\pm}_{\epsilon_{k},t}:=\left\{(s,x,u)\in\Sigma^{\pm}_{t}~;~s\in(\epsilon_{k},t)\right\}$.
In the r.h.s. in \eqref{inegalite_comparaison_step2}, as $B$ depends only on $x$,  we get
\begin{align*}
-\int_{Q_{\epsilon_{k},t}}\left({B}^{}\cdot \nabla_{u}{F}_{n}\right)\left({F}_{n}\right)^{-}
 +\frac{\sigma^{2}}{2}\int_{Q_{\epsilon_{k},t}}
\left(\triangle_{u}{F}_{n}\right)\left({F}_{n}\right)^{-}
=\frac{{\sigma}^{2}}{2}\int_{Q_{\epsilon_{k},t}}
\left|\nabla_{u}\left({F}_{n}\right)^{-}\right|^{2}\geq 0.
\end{align*}
Coming back to \eqref{inegalite_comparaison_step2}, it follows that
\begin{align*}
&\Vert\left({F}_{n}(\epsilon_{k})\right)^{-}\Vert^{2}_{L^{2}(\Dd \times\er^{d})} +
\Vert \left({F}_{n}\right)^{-}\Vert^{2}_{L^2(\Sigma^{-}_{\epsilon_{k},t})}
\\
& \geq \Vert\left({F}_{n}(t)\right)^{-}\Vert^{2}_{L^{2}(\Dd \times\er^{d})}
+\Vert \left({F}_{n}\right)^{-}\Vert^{2}_{L^2(\Sigma^{+}_{\epsilon_{k},t})}
- 2\int_{Q_{\epsilon_{k},t}}\Ll_{B^{}}(f_{n})\left({F}_{n}\right)^{-}.
\end{align*}
Taking the limit $k\rightarrow +\infty$, $(i{3})$
implies that ${\displaystyle \lim_{\substack{k\rightarrow+\infty}} \left|({F}_{n}(\epsilon_{k}))^{-}\right|=\left|\left(\overline{p}_{0}-f_{n}(0)\right)^{-}\right|}$.
Thus
\begin{equation}
\label{Limits_inegalite_1}
\begin{aligned}
&\Vert\left(\overline{p}_{0}-f_{n}(0)\right)^{-}
\Vert^{2}_{L^{2}(\Dd \times\er^{d})}
+ \Vert\left({F}_{n}\right)^{-}\Vert^{2}_{L^{2}(\Sigma^{-}_{t})}
\\
&\geq\Vert\left({F}_{n}(t)\right)^{-}\Vert^{2}_{L^{2}(\Dd \times\er^{d})}
+ \Vert\left({F}_{n}\right)^{-}\Vert^{2}_{L^{2}(\Sigma^{+}_{t})}
-2\int_{Q_{t}}\Ll_{B^{}}(f_{n})\left({F}_{n}\right)^{-}
\end{aligned}
\end{equation}
It remains to study the limit w.r.t. $n$. By \eqref{Limits_inegalite},
\begin{equation*}
\label{Limits_inegalite_2}
\begin{aligned}
&\lim_{\substack{n\rightarrow+\infty}} \left(\Vert \left(\overline{p}_{0}-f_{n}(0)\right)^{-}\Vert^{2}_{L^{2}(\Dd \times\er^{d})}
+ \Vert \left({F}_{n}\right)^{-}\Vert^{2}_{L^{2}(\Sigma^{-}_{t})}\right)=\Vert \left(\overline{p}_{0}-\rho_{0}\right)^{-}\Vert^{2}_{L^{2}(\Dd\times\er^{d})}+\Vert \left(\overline{p}-{g}\right)^{-}\Vert^{2}_{L^{2}(\Sigma^{-}_{t})},
\end{aligned}
\end{equation*}
\begin{align*}
\label{Limits_inegalite_3}
\lim_{\substack{n\rightarrow+\infty}}\left(
\Vert \left({F}_{n}(t)\right)^{-}\Vert^{2}_{L^{2}(\Dd \times\er^{d})}+
\Vert \left({F}_{n}\right)^{-}\Vert^{2}_{L^{2}(\Sigma^{+}_{t})}\right)
 =\Vert\left({F}(t)\right)^{-}\Vert^{2}_{L^{2}(\Dd\times\er^{d})}
+ \Vert \left(\gamma^{+}({F})\right)^{-}\Vert^{2}_{L^{2}(\Sigma^{+}_{t})}.
\end{align*}
For the last term in \eqref{Limits_inegalite_1}, an integration by parts yields
\begin{align*}
& \int_{Q_{t}}
 \Ll_{B^{}}(f_{n})\left(F_{n}\right)^{-}\\
 &=\left(\trans(f_{n}),
 \left(\overline{p}-f_{n}\right)^{-}\right)_{\mathcal{H}'(Q_{t}),\mathcal{H}(Q_{t})}+\int_{Q_{t}}
\left(B^{}\cdot\nabla_{u}f_{n}\right) \left(\overline{p}-f_{n}\right)^{-}
+\frac{\sigma^{2}}{2}\int_{Q_{t}} \left(\nabla_{u}f_{n}
\cdot\nabla_{u}\left(\overline{p}-f_{n}\right)^{-}\right) .
\end{align*}
As $\lim_{n\rightarrow +\infty}\left(\overline{p}-f_{n}\right)^{-}
=\left(\overline{p}-S\right)^{-}$ in $\mathcal{H}(Q_{T})$ and
$\lim_{n\rightarrow +\infty}\trans(f_{n}) =\trans(S)$ in
$\mathcal{H}'(Q_{T})$, we get
\begin{align*}
& \lim_{\substack{n\rightarrow+\infty}}\int_{Q_{t}}
 \Ll_{B^{}}(f_{n})\left(F_{n}\right)^{-} \\
&=\left(\trans(S),
 \left(\overline{p}-S\right)^{-}\right)_{\mathcal{H}'(Q_{t}),
\mathcal{H}(Q_{t})}
+\int_{Q_{t}}
\left(B^{}\cdot\nabla_{u}S\right) \left(\overline{p}-S\right)^{-}
+\frac{\sigma^{2}}{2}\int_{Q_{t}}\left(\nabla_{u}S
\cdot\nabla_{u} \left(\overline{p}-S\right)^{-} \right) = 0
\end{align*}
since  ${\displaystyle \trans(S)+\left(B^{}\cdot \nabla_{u}S\right)-\frac{\sigma^{2}}{2}\triangle_{u}S=0}$ in $\mathcal{H}'(Q_{T})$.
Coming back to \eqref{Limits_inegalite_1}, we take the limit $n\rightarrow +\infty$ to get   \eqref{pr:LinearMaxwellBounds}. The
implication
for the lower-bounds
\begin{equation}\label{MinorMaxwell}
\begin{aligned}
\left\{
\begin{array}{l}
\underline{p}_{0}\leq \rho_{0},~\mbox{a.e. on}~(0,T)\times\Dd,\\
\underline{p}\leq q,~\lambda_{\Sigma_{T}}\mbox{-a.e. on}~\Sigma^{-}_{T}.
\end{array}
\right.
\quad \Longrightarrow \quad
\left\{
\begin{array}{l}
\underline{p}\leq S,~\mbox{a.e. on}~Q_{T},\\
\underline{p}\leq \gamma^{+}(S),~\lambda_{\Sigma_{T}}\mbox{-a.e. on}~\Sigma^{+}_{T}.
\end{array}
\right.
\end{aligned}
\end{equation}
is proved in the same way. By defining
\begin{align*}
&J_{}(t,x,u):= \left(S(t,x,u)-\underline{\mathit{p}}(t,u)\right), \\
&\gamma^{+}(J_{})(t,x,u):=
\left(\gamma^{+}(S)(t,x, u)-
\underline{\mathit{p}}(t, u)\right),
\end{align*}
we can then  establish that
\begin{equation}\label{MinorMaxwell_bis}
\begin{aligned}
&\Vert(\rho_{0}-\underline{\mathit{p}}_{0})^{-}\Vert^{2}_{L^{2}(\Dd\times\er^{d})}+\Vert(q-\underline{\mathit{p}}_{})^{-}
\Vert^{2}_{L^{2}(\Sigma^{-}_{t})}\geq
\Vert\left(J_{}(t)\right)^{-}
\Vert^{2}_{L^{2}(\Dd\times\er^{d})}
+ \Vert\left(\gamma^{+}(J_{})\right)^{-}\Vert^{2}_{L^{2}(\Sigma^{+}_{t})},
\end{aligned}
\end{equation}
from which we conclude on \eqref{MinorMaxwell}.
The inequality \eqref{MinorMaxwell_bis} is proved
by using the sequence
\begin{equation*}
J_{n}(s,x,u):= f_{n}(s,x, u)-
\underline{p}(s, u),
\end{equation*}
with $\left\{f_{n};\,n\in\mathbb{N}\right\}$
satisfying \eqref{Limits_inegalite},
and using the fact that $\underline{p}$ is a sub-solution of Maxwellian type,
we obtain: for all $t\in(0,T]$ fixed, for a.e. $(s,x,u)$ in $Q_{t}$, it holds
\begin{equation*}
\trans(J_{n})(s,x,u) \geq
\Ll_{B^{}}(f_{n})(s,x, u)-\left({B}^{}(s,x)\cdot \nabla_{u}J_{n}(s,x,u)\right)+\frac{{\sigma}^{2}}{2}
\triangle_{u}J_{n}(s,x,u).
\end{equation*}
Replicating the arguments in \eqref{MajorMaxwell}, we  get
\begin{align*}
&\Vert(f_{n}(0)-\underline{p}_{0})^{-}\Vert^{2}_{L^{2}(\Dd \times\er^{d})}
+\Vert\left(J_{n}\right)^{-}\Vert^{2}_{L^{2}
(\Sigma^{-}_{t})}\\
& \geq \Vert\left(J_{n}(t)\right)^{-}
\Vert^{2}_{L^{2}(\Dd\times\er^{d})}+\Vert\left(J_{n}\right)^{-}
\Vert^{2}_{L^{2}(\Sigma^{+}_{t})}
+2\int_{Q_{\epsilon_{k},t}} \Ll_{B^{}}(f_{n})
\left(J_{n} \right)^{-}.
\end{align*}
We then obtain \eqref{MinorMaxwell_bis} by taking the limit $n\rightarrow+\infty$.
\end{proof}
\paragraph{Step 3. Existence of sub- and super-solutions of Maxwellian type.}
\begin{lemma}\label{lem:ConstructMaxwellianBounds}
Let $\mathit{p}$ be a Maxwellian distribution with parameters
$(a,\mu,\mathit{p}_{0})$. For $B^{}\in
L^{\infty}((0,T)\times\Dd;\er^{d})$, let $\mathcal{L}_{B^{}}$ be the operator defined in \eqref{Def:LinearDiffOperator}.
Then the following properties hold.
\begin{description}
\item[$(i)$] If $\mu\in (0,1)$ and ${\displaystyle a\geq \frac{\mu}{2\sigma^{2}(1-\mu)}\|B^{}\|_{L^{\infty}((0,T)\times\Dd;\er^{d})}^{2}}$,
then $\mathit{p}$ is a super-solution  for $\Ll_{B^{}}$.
\item[$(ii)$] If $\mu~>~1$ and ${\displaystyle a\leq \frac{-\mu}{2\sigma^{2}(\mu-1)}\|B^{}\|_{L^{\infty}((0,T)\times\Dd;\er^{d})}^{2}}$,
then $\mathit{p}$ is a sub-solution  for $\Ll_{B^{}}$.
\end{description}
\end{lemma}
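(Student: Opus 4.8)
The plan is to compute $\Ll_{B}(\mathit{p})$ directly using the explicit form \eqref{defMaxwell} of the Maxwellian distribution, $\mathit{p}(t,u)=\exp\{at\}[m(t,u)]^{\mu}$ with $m(t,u)=(G(\sigma^2 t)*\mathit{p}_0^{1/\mu})(u)$, and exploit the fact that $m$ itself solves the heat equation $\partial_t m = \frac{\sigma^2}{2}\triangle_u m$ (in the velocity variable, with no spatial derivatives). First I would record the chain-rule identities: since $\mathit{p}$ does not depend on $x$, $\trans(\mathit{p}) = \partial_t \mathit{p}$, so
\[
\Ll_{B}(\mathit{p}) = \partial_t \mathit{p} + (B\cdot\nabla_u \mathit{p}) - \frac{\sigma^2}{2}\triangle_u \mathit{p}.
\]
Then $\partial_t \mathit{p} = a\mathit{p} + \mu e^{at} m^{\mu-1}\partial_t m$, $\nabla_u \mathit{p} = \mu e^{at} m^{\mu-1}\nabla_u m$, and $\triangle_u \mathit{p} = \mu e^{at}\big(m^{\mu-1}\triangle_u m + (\mu-1)m^{\mu-2}|\nabla_u m|^2\big)$. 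Substituting and using $\partial_t m = \frac{\sigma^2}{2}\triangle_u m$, the terms involving $\triangle_u m$ cancel, leaving
\[
\Ll_{B}(\mathit{p}) = a\mathit{p} + \mu e^{at} m^{\mu-1}(B\cdot\nabla_u m) - \frac{\sigma^2}{2}\mu(\mu-1)e^{at}m^{\mu-2}|\nabla_u m|^2.
\]

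Next I would factor out $\mu e^{at} m^{\mu-2}\geq 0$ (note $m>0$ since $\mathit{p}_0\not\equiv 0$, so dividing is legitimate) and reduce to controlling the sign of the scalar quantity
\[
\frac{a}{\mu}m^2 + m(B\cdot\nabla_u m) - \frac{\sigma^2}{2}(\mu-1)|\nabla_u m|^2.
\]
For part $(ii)$, with $\mu>1$, I would bound the cross term by Young's inequality: $|m(B\cdot\nabla_u m)| \le \frac{\sigma^2(\mu-1)}{2}|\nabla_u m|^2 + \frac{1}{2\sigma^2(\mu-1)}|B|^2 m^2$, so that the displayed quantity is $\le \big(\frac{a}{\mu} + \frac{\|B\|_\infty^2}{2\sigma^2(\mu-1)}\big)m^2$, which is $\le 0$ precisely under the hypothesis $a \le \frac{-\mu}{2\sigma^2(\mu-1)}\|B\|^2_{L^\infty}$. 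This gives $\Ll_{B}(\mathit{p}) \le 0$; finiteness ($\Ll_{B}(\mathit{p}) > -\infty$ a.e.) follows from the smoothness and Gaussian decay of $m$ and its derivatives, so $\mathit{p}$ is a sub-solution of Maxwellian type. For part $(i)$, with $\mu\in(0,1)$, the term $-\frac{\sigma^2}{2}(\mu-1)|\nabla_u m|^2 = \frac{\sigma^2}{2}(1-\mu)|\nabla_u m|^2 \ge 0$; I would again use Young's inequality but now absorbing the cross term into this positive term: $|m(B\cdot\nabla_u m)| \ge -\frac{\sigma^2(1-\mu)}{2}|\nabla_u m|^2 - \frac{1}{2\sigma^2(1-\mu)}|B|^2 m^2$, so the scalar quantity is $\ge \big(\frac{a}{\mu} - \frac{\|B\|_\infty^2}{2\sigma^2(1-\mu)}\big)m^2 \ge 0$ under $a \ge \frac{\mu}{2\sigma^2(1-\mu)}\|B\|^2_{L^\infty}$, yielding $\Ll_{B}(\mathit{p}) \ge 0$.

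The only genuinely delicate point, and the one I would treat most carefully, is the justification that all the differentiations above are valid pointwise a.e. and that $\Ll_{B}(\mathit{p})$ is finite a.e. — i.e. that $m$ is smooth in $(t,u)$ for $t>0$ with $\nabla_u m$, $\triangle_u m$ locally bounded, which follows from standard parabolic regularity for the heat semigroup applied to $\mathit{p}_0^{1/\mu}\in L^1_{loc}$ (indeed $G(\sigma^2 t,\cdot)$ is smooth and rapidly decreasing for each $t>0$). I would also note that $B$ enters only as an $L^\infty$ function of $(t,x)$, so $(B\cdot\nabla_u m)$ is measurable and bounded in absolute value by $\|B\|_{L^\infty}|\nabla_u m|$, which is exactly the estimate used above; no continuity of $B$ is needed. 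The remaining computations are routine, so I would not belabor them. This completes the identification of the classes of super- and sub-solutions that will be combined with Lemma~\ref{lem:LinearMaxwellianBounds} in Step~4 to prove Proposition~\ref{prop:EncadrLineaire}.
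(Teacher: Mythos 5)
Your proposal is correct and follows essentially the same route as the paper: the same pointwise computation of $\Ll_{B}(\mathit{p})$, the same cancellation via the heat equation for $m$, and the same factoring out of $\exp\{at\}m^{\mu-2}$ to reduce to the sign of a quadratic form in $m$ and $\nabla_u m$. Your use of Young's inequality with the specific weight $\epsilon^2 = \sigma^2|\mu-1|$ is algebraically identical to the paper's completion-of-the-square identity (the paper writes $J$ as a nonnegative perfect square plus the controlled term $(a \mp \frac{\mu}{2\sigma^2|1-\mu|}|B|^2)m^2$, and dropping the square is exactly Young), so this is a presentational rather than substantive difference; your explicit remarks that $m>0$ and that finiteness of $\Ll_B(\mathit{p})$ follows from heat-kernel regularity are correct and slightly more careful than the paper's exposition.
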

\begin{proof}
By using the explicit form  \eqref{defMaxwell} of the considered Maxwellian distribution, we have
\begin{align*}
&\Ll_{B^{}}(\mathit{p})(t,x,u)
\\
&= a\exp\left\{a t\right\}m^{\mu}(t,u)
+\mu\exp\left\{a t\right\}\left(\partial_{t}m(t,u)
-\frac{\sigma^{2}}{2}\triangle_{u}m(t,u)\right)m^{\mu-1}(t,u)
\\
&\quad + \mu \exp\left\{a t\right\}\left(B^{}(t,x)\cdot \nabla_{u}m(t,u)\right)
m^{\mu-1}(t,u) -\frac{\sigma^{2}}{2}\mu(\mu-1)\exp\left\{a t\right\}
\left|\nabla_{u}m(t,u)\right|^{2}m^{\mu-2}(t,u).
\end{align*}
Since $m$ is a classical solution of the heat equation, the previous equality reduces to
\begin{align*}
&\Ll_{B^{}}(\mathit{p})(t,x,u)\\
&=
\exp\left\{a t\right\}m^{\mu-2}(t,u)\left[a\,m^{2}(t,u)-\frac{\sigma^{2}}{2}\mu(\mu-1)|\nabla_{u}m(t,u)|^{2}
+\frac{}{}\mu\,m(t,u)\left(B^{}(t,x)\cdot \nabla_{u}m(t,u)\right)\right].
\end{align*}
The sign of $\Ll_{B^{}}(\mathit{p})$ is thus determined by the function:
\begin{equation}
\label{lem:ConstructMaxwellianBounds_step1}
J(t,x,u):=a \,m^2(t,u)-\frac{\sigma^{2}\mu(\mu-1)}{2}|\nabla_{u}m(t,u)|^{2}
+\mu m(t,u)\left(B^{}(t,x)\cdot \nabla_{u}m(t,u)\right).
\end{equation}
$\bullet$ When $a$ and $\mu$ satisfy $(i)$, using the identity $\left(u_{1} \cdot u_{2}\right) = \frac{1}{2}
\left|\epsilon u_{1} + \frac{u_{2}}{\epsilon}\right|^{2}
-\frac{\epsilon^{2}\left|u_{1}\right|^{2}}{2}-\frac{\left|u_{2}\right|^{2}}{2\epsilon^{2}}$, for $u_{1},~u_{2}\in\er^{d}$, $\epsilon >0$,  we have
\begin{align*}
&\mu\,m(t,u)\left(B^{}(t,x)\cdot \nabla_{u}m(t,u)\right)\\
&=\frac{1}{2}\left|\epsilon \nabla_{u}m(t,u) + \frac{1}{\epsilon} m(t,u)
B^{}(t,x)\right| ^{2}
-\frac{1}{\epsilon^2}m^2(t,u) |B^{}(t,x)|^{2}
-\epsilon^2 |\nabla_{u}m(t,u)|^{2}.
\end{align*}
Inserting this equality into \eqref{lem:ConstructMaxwellianBounds_step1}, with ${\displaystyle \epsilon = \frac{\sigma\sqrt{1-\mu}}{\sqrt{\mu}}}$ ($>0$ since $0<\mu<1$), it follows that
\begin{align*}
&J(t,x,u)\\
&=\left(a-\frac{\mu}{2\sigma^{2}(1-\mu)}|B^{}(t,x)|^{2}\right)\left|m(t,u)\right|^{2}
+\frac{1}{2}\left|\sigma\sqrt{\mu(1-\mu)}\nabla_{u}m(t,u) +\frac{\sqrt{\mu}}{\sigma\sqrt{1-\mu}}m(t,u)B^{}(t,x)
\right| ^{2},
\end{align*}
where, under $(i)$,
\begin{equation*}
a-\frac{\mu}{2\sigma^{2}(1-\mu)}|B^{}(t,x)|^{2}\geq a-\frac{\mu}{2\sigma^{2}(1-\mu)}\Vert B\Vert^{2}_{L^{\infty}((0,T)
\times\Dd;\er^{d})}\geq 0.
\end{equation*}
We thus deduce that $J$ (and consequently $\Ll_{B}(\mathit{p})$) is nonnegative in the situation $(i)$.
\newline\noindent
$\bullet$ When $a$ and $\mu$ satisfy $(ii)$, using the identity $
\left(u_{1}\cdot u_{2}\right) = -\frac{1}{2}\left|\epsilon u_{1} -\frac{u_{2}}{\epsilon}\right|^{2}+\frac{\epsilon^{2}\left|u_{1}\right|^{2}}{2}+
\frac{\left|u_{2}\right|^{2}}{2\epsilon^{2}}$, for $u_{1},~u_{2}\in\er^{d}$, $\epsilon >0$, we get
\begin{align*}
&\mu\,m(t,u)\left(B^{}(t,x)\cdot \nabla_{u}m(t,u)\right)\\
&= \frac{1}{2\epsilon^{2}}|B^{}(t,x)|^{2} \left|m(t,u)\right|^{2}
+\frac{\epsilon^{2}}{2}\mu^{2}|\nabla_{u}m(t,u)|^{2}
 -\frac{1}{2}\left| \epsilon
m(t,u) B^{}(t,x)-
\frac{\mu}{\epsilon}\nabla_{u}m(t,u)\right| ^{2}.
\end{align*}
Taking ${\epsilon = \frac{\sigma\sqrt{\mu-1}}{\sqrt{\mu}}}$ it follows that
\begin{align*}
J(t,x,u)=\left(a+\frac{\mu}{2\sigma^{2}(\mu-1)}|B^{}(t,x)|^{2}\right)
m^2(t,u)-\frac{1}{2}\left|  \frac{\sigma\sqrt{\mu-1}}{\sqrt{\mu}}
m(t,u)B^{}(t,x)-
\frac{\mu\sqrt{\mu}}{\sigma\sqrt{\mu-1}}\nabla_{u}m(t,u)\right| ^{2}.
\end{align*}
As $(ii)$ ensures that
\begin{equation*}
a+\frac{\mu}{2\sigma^{2}(1-\mu)}|B^{}(t,x)|^{2}
\leq a+\frac{\mu}{2\sigma^{2}(1-\mu)}\Vert B^{}\Vert^{2}_{L^{\infty}((0,T)\times\Dd;\er^{d})}\leq 0,
\end{equation*}
 we conclude that $\Ll_{B}(\mathit{p})$ is non-positive.
\end{proof}
\paragraph{Step 4. Proof of Proposition \ref{prop:EncadrLineaire}.}
Let $\left(\underline{\mathit{p}},\overline{\mathit{p}}\right)$ be a couple of Maxwellian distributions of parameters $(\underline{a},\underline{\mu},\underline{\mathit{P}}_{0})$ and $(\overline{a},\overline{\mu},\overline{\mathit{P}}_{0})$
such that $(\underline{\mathit{P}}_{0},\overline{\mathit{P}}_{0})$ satisfy  \hypedpiii~ and such that
 $(\underline{a},\underline{\mu})$ and $(\overline{a},\overline{\mu})$ satisfy the
properties $(a{1})$ and $(a{2})$ in Proposition
\ref{prop:EncadrLineaire}. Applying Lemma \ref{lem:ConstructMaxwellianBounds},
$\left(\underline{\mathit{p}},\overline{\mathit{p}}\right)$ are  respectively
a sub-solution and a super-solution of Maxwellian type for the linear operator. Moreover, recalling
that $\underline{\mathit{P}}_{0}$, $\overline{\mathit{P}}_{0}\in
L^{2}\left(\omega,Q_{T}\right)$ are positives,  these Maxwellian distributions satisfy the conditions of Lemma \ref{lem:GoodMaxwellian} and Lemma \ref{lem:LinearMaxwellianBounds}, and then
 $\left(\underline{\mathit{p}},\overline{\mathit{p}}\right)$
satisfy the properties $(d{1}),(d{2})$ of Proposition \ref{prop:EncadrLineaire}.
\subsection{Construction of a weak solution to the conditional McKean-Vlasov-Fokker-Planck equation}\label{subsec:NonLinMKFP}
\subsubsection{Introduction of the specular boundary condition}
We consider now a linear problem
endowing a given convection term
$B^{}\in L^{\infty}((0,T)\times\Dd;\er^{d})$ and submitted to the specular boundary condition:
\begin{equation}\label{eq:LinearVFPSpec}
\left\{
\begin{aligned}
&\trans(S) + (B^{}\cdot\nabla_{u}S)
-\frac{\sigma^{2}}{2}\triangle_{u}S = 0,\,\mbox{ in}~\Hh'(Q_{T}),\\
&S(0,x,u)=\rho_{0}(x,u),~\mbox{on}~\Dd \times\mathbb{R}^{d},\\
&\gamma^-(S)(t,x,u)=\gamma^+(S)(t,x,u-2(u\cdot n_{\Dd}(x))n_{\Dd}(x)),~\mbox{on}~\Sigma^{-}_{T}.
\end{aligned}
\right.
\end{equation}
\begin{proposition}\label{prop:VFPlineairespec}
Assume \hypedpii,~\hypedpiii, and $B^{}\in L^{\infty}((0,T)\times\Dd;\er^{d})$.
Let $\left(\underline{P},\overline{P}\right)$ be a couple Maxwellian distributions
 with parameters
$\left(\underline{a},\underline{\mu},\underline{P}_{0}\right)$
and $\left(\overline{a},\overline{\mu},\overline{P}_{0}\right)$ respectively,
satisfying the hypotheses of Proposition
\ref{prop:EncadrLineaire}. Then there exists a unique weak solution
$\SSS$ in $\VoneT$
of \eqref{eq:LinearVFPSpec} such that
\begin{align*}
&\underline{P}(t,u)\leq \SSS (t,x,u)\leq\overline{P}(t,u),\mbox{ for a.e. }(t,x,u)\in Q_{T},\\
&\underline{P}(t,u)\leq \gamma^{\pm}(\SSS)(t,x,u)\leq\overline{P}(t,u),\quad \mbox{for }\,\lambda_{\Sigma_{T}}\mbox{-a.e. }(t,x,u)\in\Sigma^{\pm}_{T}.
\end{align*}
\end{proposition}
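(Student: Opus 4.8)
The plan is to construct $\SSS$ by a fixed-point argument on the trace datum imposed on $\Sigma^-_T$, using the linear Dirichlet problem \eqref{VFPlineaire_Dirichlet} (solved in Lemma~\ref{lem:ExistVFPlineaire}) as the building block, and using the Maxwellian bounds of Proposition~\ref{prop:EncadrLineaire} both to confine the iteration in a stable set and to obtain the required pointwise bounds on the limit. Concretely, introduce the reflection map $R_x(u) := u - 2(u\cdot n_\Dd(x))n_\Dd(x)$ on $\partial\Dd$, and define an operator $\Phi$ on the set
\[
\Kk := \bigl\{\, q \in L^2(\omega,\Sigma^-_T) \;:\; \underline{P}\le q \le \overline{P}\ \ \lambda_{\Sigma_T}\text{-a.e. on }\Sigma^-_T \,\bigr\},
\]
as follows: given $q\in\Kk$, let $S=S[q]$ be the unique weak solution in $\VoneT$ of \eqref{VFPlineaire_Dirichlet} with inputs $(\rho_0,q,B)$ provided by Lemma~\ref{lem:ExistVFPlineaire}; it has a trace $\gamma^+(S)\in L^2(\omega,\Sigma^+_T)$, and we set $\Phi(q)(t,x,u) := \gamma^+(S)(t,x,R_x(u))$ for $(t,x,u)\in\Sigma^-_T$. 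A fixed point $q^\star=\Phi(q^\star)$ yields precisely a solution of \eqref{eq:LinearVFPSpec}, by construction of the boundary condition.

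First I would check that $\Phi$ maps $\Kk$ into itself: since $\underline{P}\le q\le\overline{P}$ on $\Sigma^-_T$ and $\underline{P}_0\le\rho_0\le\overline{P}_0$ by \hypedpiii, Proposition~\ref{prop:EncadrLineaire}$(d2)$ gives $\underline{P}\le\gamma^+(S)\le\overline{P}$ $\lambda_{\Sigma_T}$-a.e.\ on $\Sigma^+_T$; because $\underline{P},\overline{P}$ depend only on $(t,|u|)$ and $|R_x(u)|=|u|$, these bounds are preserved under $u\mapsto R_x(u)$, so $\underline{P}\le\Phi(q)\le\overline{P}$ on $\Sigma^-_T$. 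Moreover the change of variables $u\mapsto R_x(u)$ (with unit Jacobian and $(R_x(u)\cdot n_\Dd(x)) = -(u\cdot n_\Dd(x))$) together with $\omega(R_x(u))=\omega(u)$ shows $\|\Phi(q)\|_{L^2(\omega,\Sigma^-_T)} = \|\gamma^+(S)\|_{L^2(\omega,\Sigma^+_T)}$, which is finite, so indeed $\Phi(\Kk)\subset\Kk$; the Maxwellian upper bound \eqref{MaxwellUpperBound} makes $\Kk$ a bounded, convex, closed subset of $L^2(\omega,\Sigma^-_T)$. For \emph{existence} I would then run a contraction or compactness argument on a time-truncated problem: on $\Sigma^-_{t_0}$ for $t_0$ small, the difference $S[q_1]-S[q_2]$ solves \eqref{eq:ToyEquation} with zero initial datum, zero source, and Dirichlet datum $q_1-q_2$; the energy identity \eqref{eq:LinearGreenIdentity} (dropping the nonnegative left-hand-side terms and absorbing the $\triangle\omega$, $\nabla_u\omega\cdot B$ contributions via Lemma~\ref{lem:major_poids} and Gr\"onwall) yields $\|\gamma^+(S[q_1])-\gamma^+(S[q_2])\|_{L^2(\omega,\Sigma^+_{t_0})}^2 \le C\,t_0\,\|q_1-q_2\|_{L^2(\omega,\Sigma^-_{t_0})}^2$, hence $\Phi$ is a contraction on $L^2(\omega,\Sigma^-_{t_0})$ for $t_0$ small enough; one then iterates over consecutive time slabs of uniform length (the bound $C$ is uniform in the slab because $\|B\|_{L^\infty}$, $\alpha$, $\sigma$ are fixed), and glues the solutions using continuity of $t\mapsto S(t)$ in $L^2(\omega,\Dd\times\er^d)$ from $\VoneT$. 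The Maxwellian bounds on $\SSS$ and $\gamma^\pm(\SSS)$ then follow from stability of the set $\Kk$ under the iteration and one last application of Proposition~\ref{prop:EncadrLineaire}$(d2)$ to $\SSS=S[q^\star]$.

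For \emph{uniqueness}, suppose $\SSS_1,\SSS_2$ are two solutions in $\VoneT$ of \eqref{eq:LinearVFPSpec}; then $D:=\SSS_1-\SSS_2$ solves the same equation with $\rho_0=0$ and the homogeneous specular condition $\gamma^-(D)(t,x,u)=\gamma^+(D)(t,x,R_x(u))$. Applying the weighted Green identity \eqref{eq:LinearGreenIdentity} (with $g=0$, $q=\gamma^-(D)$) and using the reflection change of variables exactly as above, the boundary terms cancel: $\|\gamma^+(D)\|_{L^2(\omega,\Sigma^+_t)}^2 = \|\gamma^-(D)\|_{L^2(\omega,\Sigma^-_t)}^2$, so
\[
\|D(t)\|_{L^2(\omega,\Dd\times\er^d)}^2 + \sigma^2\|\nabla_u D\|_{L^2(\omega,Q_t)}^2 = \int_{Q_t}\Bigl(\tfrac{\sigma^2}{2}\triangle\omega + (\nabla_u\omega\cdot B)\Bigr)|D|^2 ,
\]
and Lemma~\ref{lem:major_poids} bounds the right-hand side by $C\int_0^t\|D(s)\|_{L^2(\omega,\Dd\times\er^d)}^2\,ds$; Gr\"onwall gives $D\equiv 0$. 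The main obstacle I anticipate is the existence step: the map $\Phi$ is, over the full interval $[0,T]$, only a bounded map on $\Kk$ and need not be a global contraction, so one must either (i) carefully carry out the slab-by-slab contraction with uniform constants and a clean gluing, checking that the trace of the glued function on each interior time level is consistent, or (ii) replace contraction by a Schauder-type fixed point, which requires verifying that $\Phi$ is continuous and compact on $\Kk\subset L^2(\omega,\Sigma^-_T)$ — compactness coming from the $\Hh(\omega,Q_T)$-bound on $S[q]$ (uniform over $\Kk$ by the energy identity and the Maxwellian upper bound) and a velocity-averaging / trace compactness argument. The slab approach (i) is the cleaner route and is the one I would write up in detail.
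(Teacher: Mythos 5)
Your overall framing (fixed point on the incoming trace datum, Maxwellian bounds from Proposition~\ref{prop:EncadrLineaire} as invariant bounds for the iteration, uniqueness via the weighted energy identity and Gr\"onwall) matches the paper's, and your uniqueness argument is essentially identical to theirs and correct. The existence step, however, has a genuine gap: the short-time contraction you propose does not hold. With zero initial datum and zero source, the energy identity \eqref{eq:LinearGreenIdentity} applied to $R=S[q_1]-S[q_2]$ gives
\begin{equation*}
\|R(t_0)\|^2_{L^2(\omega,\Dd\times\er^d)} + \sigma^2\|\nabla_u R\|^2_{L^2(\omega,Q_{t_0})} + \|\gamma^+(R)\|^2_{L^2(\omega,\Sigma^+_{t_0})}
= \|q_1-q_2\|^2_{L^2(\omega,\Sigma^-_{t_0})} + \int_{Q_{t_0}}\!\Big(\tfrac{\sigma^2}{2}\triangle\omega + (\nabla_u\omega\cdot B)\Big)|R|^2,
\end{equation*}
so after Gr\"onwall the outgoing trace satisfies $\|\gamma^+(R)\|^2_{L^2(\omega,\Sigma^+_{t_0})}\le e^{Ct_0}\,\|q_1-q_2\|^2_{L^2(\omega,\Sigma^-_{t_0})}$, with Lipschitz constant $e^{Ct_0}\ge 1$ --- \emph{not} $Ct_0$. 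The incoming boundary term passes to the outgoing one with coefficient essentially $1$; there is no small factor $t_0$ multiplying it, because it enters the energy balance directly rather than through a time integral. Hence $\Phi$ cannot be made a strict contraction even on arbitrarily short time slabs, and the slab-by-slab Banach argument you favor in option~(i) fails at the very first step.

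The paper sidesteps this by abandoning contraction altogether in favor of a \emph{monotone} iteration: it shows (their properties $\mathbf{(p2)}$ and $\mathbf{(p3)}$) that the solution map is order-preserving and leaves the Maxwellian interval $[\underline{P},\overline{P}]$ invariant, then starts the iteration at $S_0=\underline{P}$ so that $\{S_n\}$ is nondecreasing and bounded above by $\overline{P}$; pointwise convergence and dominated convergence (using $\overline{P}\in L^2(\omega,Q_T)$ and $\overline{P}\in L^2(\omega,\Sigma_T)$) give a limit, and the Lipschitz continuity $\mathbf{(p1)}$ (with a constant that need not be $<1$) is then only used to pass the operator through the limit and identify it as a fixed point. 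Your alternative suggestion (ii), a Schauder fixed point on the closed convex bounded set $\Kk$, is a legitimate route but requires proving compactness of $\Phi$ on $L^2(\omega,\Sigma^-_T)$, which is nontrivial (it would need an averaging/trace compactness lemma that the paper does not establish). If you want to repair the proof, the cleanest fix is to replace the contraction step by the monotone iteration: your property that $\Phi(\Kk)\subset\Kk$ is exactly what is needed, and you would additionally need to check that $\Phi$ is order-preserving (which follows, as in the paper, from the sign-preservation statement in Lemma~\ref{lem:ExistVFPlineaire} applied to the difference of two solutions with ordered data).
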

\begin{proof} For the existence claim, let us introduce the functional space
\begin{equation*}
E=\left\{\psi\in  V_{1}\left(\omega,Q_{T}\right)~;~\psi~\mbox{admits trace functions}~\gamma^{\pm}(\psi)~\mbox{on}~\Sigma^{\pm}_{T}~\mbox{belonging to}\,L^{2}({\omega,}\Sigma^{\pm}_{T})
\right\},
\end{equation*}
equipped with the norm
\begin{equation*}
\Vert\psi\Vert^{2}_{E}=\Vert\psi\Vert^{2}_{\VoneT} + \Vert \gamma^{+} (\psi)\Vert^{2}_{L^{2}(\omega,\Sigma^{+}_{T})}+
\Vert \gamma^{-}(\psi)\Vert^{2}_{L^{2}(\omega,\Sigma^{-}_{T})}.
\end{equation*}
For all $f\in E$, we denote by $S^{}(f)$ the unique weak solution (in the sense of Lemma \ref{lem:ExistVFPlineaire}) to the linear Vlasov-Fokker-Planck equation 
\begin{equation}\label{VFPlineaire_specular_step0}
\left\{
\begin{aligned}
&\trans(S^{}(f))+(B^{}\cdot\nabla_{u}S^{}(f))
-\frac{\sigma^{2}}{2}\triangle_{u}S^{}(f) = 0, ~\mbox{ in}~\Hh'(Q_{T}),\\
&S^{}(f)(0,x,u) = \rho_{0}(x,u),\,\mbox{ on}~\Dd \times\mathbb{R}^{d},\\
&\gamma^-(S(f))(t,x,u) = \gamma^{+}(f)(t,x,u-2\left(u\cdot n_{\Dd }(x)\right)n_{\Dd }(x)),\,\mbox{ on}~\Sigma^{-}_{T}.
\end{aligned}\right.
\end{equation}
Lemma \ref{lem:ExistVFPlineaire}
ensures that ${S^{}}(f)\in \VoneT$, and that the trace functions
$\gamma^{\pm}(S^{}(f))$ belong to $L^{2}(\omega,\Sigma^{\pm}_{T})$.
Therefore, we can define the mapping
\begin{align*}
S^{}:f\in E \longrightarrow S^{}(f)\in E.
\end{align*}
If $S^{}$ admits a fixed point $\SSS_{}$, then it naturally satisfies the specular boundary condition
\begin{equation*}
\gamma^{-}(\SSS_{})(t,x,u)
=\gamma^{+}(\SSS_{})(t,x,u-2\left(u\cdot n_{\Dd }(x)\right)n_{\Dd }(x)),~
\mbox{ on }\Sigma^{-}_{T},
\end{equation*}
implying that $\SSS_{}$ is a weak solution to \eqref{eq:LinearVFPSpec}.
In order to establish the existence of this fixed point, let us observe the following properties of $S^{}$
(the proof of which are postponed at the end on this section):
\begin{description}
\item[$\mathbf{(p1)}$] $S^{}$ is Lipschitz-continuous on $E$;
\item[$\mathbf{(p2)}$] $S^{}$ is an increasing mapping on $E$ w.r.t. the following order relation:
\begin{equation*}
f_{1}\leq f_{2}~\mbox{on}~E~\Longleftrightarrow~
\left\{
\begin{array}{l}
f_{1}(t,x,u)\leq f_{2}(t,x,u),~\mbox{for a.e.}~(t,x,u)\in Q_{T},\\
\gamma(f_{1})(t,x,u)\leq \gamma(f_{2})(t,x,u),\,
\mbox{for }\lambda_{\Sigma_{T}}{-a.e.}~(t,x,u)\in\Sigma_{T};
\end{array}\right.
\end{equation*}
\item[$\mathbf{(p3)}$] If $\underline{P}\leq \gamma^{+}(f)\leq \overline{P}$,
$\lambda_{\Sigma_{T}}$-a.e. on $\Sigma^{+}_{T}$, then
$\underline{P}\leq S(f)\leq \overline{P}$ on $E$.
\end{description}
The fixed point of $\SSS_{}$ arises from the convergence of the sequence
$\{S_{n};\,n\in\mathbb{N}\}$ defined iteratively by $S^{}_{0}=\underline{P}$ and $S_{n+1}=S^{}(S_{n})$.

Indeed, the monotone property $\mathbf{(p2)}$ of $S$ implies that
$\{S_{n};\,n\in\mathbb{N}\}$ is increasing in $E$. In addition, since $\underline{P}= S_{0}\leq \overline{P}$,
 Proposition \ref{prop:EncadrLineaire} ensures that $\underline{P}\leq S_{1}\leq \overline{P}$ and
$\underline{P}\leq \gamma^{+}(S_{1})\leq \overline{P}$. Using repeatedly $\mathbf{(p3)}$, it holds that
\begin{equation}\label{VFPlineaire_specular_step3_bis}
\forall~n\in\mathbb{N},\quad\underline{P}\leq S_{n}\leq \overline{P},~\mbox{on}~E.
\end{equation}
The sequence $\{S_{n};\,n\in\mathbb{N}\}$ being increasing and uniformly bounded on $E$, we deduce that
$\{S_{n};\,n\in\mathbb{N}\}$ and
$\{\gamma^{\pm}(S_{n});\,n\in\mathbb{N}\}$ converge. Set
\begin{equation*}
\SSS_{}(t,x,u):=\lim_{n\rightarrow +\infty}S_{n}(t,x,u),\,\,\mbox{ for}\,(t,x,u)\in Q_{T},
\end{equation*}
and
\begin{equation*}
\gamma^{\pm}(\SSS_{})(t,x,u)=\lim_{n\rightarrow +\infty}\gamma^{\pm}(S_{n})(t,x,u),~\mbox{for}~(t,x,u)\in\Sigma^{\pm}_{T}.
\end{equation*}
According to \eqref{VFPlineaire_specular_step3_bis},
$\underline{P}\leq\SSS_{}\leq \overline{P}$ on $E$.
Since $\overline{P}\in L^{2}(\omega,Q_{T})$ and $\overline{P}\in L^{2}(\omega,\Sigma_{T})$, by dominated convergence,
we obtain that
\begin{equation*}
\begin{aligned}
&\lim_{n\rightarrow +\infty}\Vert \SSS_{}-S_{n}\Vert_{L^{2}(\omega,Q_{T})}=0,\\
&\lim_{n\rightarrow +\infty}\Vert\gamma^{\pm}(\SSS_{})-\gamma^{\pm}(S_{n})\Vert_{L^{2}(\omega,\Sigma^{\pm}_{T})}=0.
\end{aligned}
\end{equation*}
Owing to the continuity of $S$ given in $\mathbf{(p1)}$, we deduce that
\begin{equation*}
\begin{aligned}
&\SSS_{}=\lim_{n\rightarrow +\infty}S_{n+1}
=\lim_{n\rightarrow +\infty}S^{}(S_{n})=S^{}\left(\SSS_{}\right),
\\
&\gamma^{\pm}\left(\SSS_{}\right)=\lim_{n\rightarrow +\infty}\gamma^{\pm}\left(S_{n+1}\right)
=\lim_{n\rightarrow +\infty}\gamma^{\pm}\left(S^{}(S_{n})\right)=
\gamma^{\pm}\left(S^{}\left(\SSS_{}\right)\right).
\end{aligned}
\end{equation*}
We furthermore observe that, owing to \eqref{VFPlineaire_specular_step3_bis}, \eqref{eq:LinearGreenIdentity} ensures that
\begin{equation*}
\sup_{n}\Vert \nabla_{u}S_{n}\Vert^{2}_{L^{2}(\omega,Q_{t})}\leq C\left(\Vert \rho_{0}\Vert^{2}_{L^{2}(\omega,\Dd\times\er^{d})}+\sup_{t\in(0,T)}\Vert \overline{P}(t)\Vert^{2}_{L^{2}(\omega,\er^{d})}\right)
\end{equation*}
 where $C$ is some constant depending only on $d,T,\sigma, \Vert b\Vert_{L^{\infty}(\er^{d};\er^{d})},\alpha$ and the Lebesgue measure of $\Dd$. It follows that $\SSS\in \Hh(\omega,Q_{T})$ with $\nabla_{u}\SSS=\lim_{n\rightarrow +\infty}\nabla_{u}S_{n}$ in $L^{2}(\omega,Q_{T})$.
We thus conclude that $S_{}$ has a fixed point $\SSS^{}$ in $E$.

For the uniqueness claim,
consider two weak solutions $\SSS^{1},~\SSS^{2}$
to  \eqref{eq:LinearVFPSpec}. Set
\begin{align*}
&R^{}(t,x,u):=\SSS^{1}(t,x,u)-\SSS^{2}(t,x,u)~\mbox{for}~(t,x,u)\in Q_{T},
\\
&\gamma^{\pm}(R^{})(t,x,u):=\left(\gamma^{\pm}(\SSS^{1})
-\gamma^{\pm}(\SSS^{2})\right)(t,x,u)~\mbox{for}~(t,x,u)\in\Sigma^{\pm}_{T}.
\end{align*}
Then, $R^{}$ and $\gamma^{\pm}(R^{})$ satisfy
\begin{align*}
\left\{\begin{array}{l}
\trans(R^{})+(B^{}\cdot\nabla_{u}R^{})
-\frac{\sigma^{2}}{2}\triangle_{u}R^{} = 0,\,\mbox{ in}~\Hh'(Q_{T}),
\\
R^{}(0,x,u) = \SSS^{1}(0)-\SSS^{2}(0)=0,\,\mbox{ on}~\Dd \times\mathbb{R}^{d},\\
\gamma^{-}(R^{})(t,x,u) = \left(\gamma^{+}(\SSS^{1})-\gamma^{+}(\SSS^{2})\right)(t,x,u-2(u\cdot n_{\Dd }(x))n_{\Dd }(x)),\,\mbox{ on}~\Sigma^{-}_{T}.
\end{array}\right.
\end{align*}
Using Lemma \ref{lem:ExistVFPlineaire}, one has for all $t\in(0,T]$,
\begin{align*}
\Vert R^{}(t)\Vert^{2}_{L^{2}(\omega,\Dd\times\er^{d})}
+{\sigma^{2}}\Vert  \nabla_{u}R_{} \Vert^{2}_{L^{2}(\omega,Q_{T})}+\Vert\gamma^{+}(R_{})\Vert^{2}_{L^{2}(\omega,\Sigma^{+}_{t})}\\
=\Vert\gamma^{-}(R_{})\Vert^{2}_{L^{2}(\omega,\Sigma^{-}_{t})} + \int_{Q_{t}}
\left(\frac{\sigma^{2}}{{2}}\triangle\omega+\left(B_{}\cdot\nabla_{u}\omega\right)\right)\left|R_{}\right|^{2}.
\end{align*}
Since $\SSS^{1}$ and $\SSS^{2}$ satisfy the specular boundary condition,
\begin{align*}
&\int_{\Sigma^{-}_{t}} (u\cdot n_{\Dd }(x))\omega(u)
\left|\gamma^{-}(R_{})(s,x,u)\right|^{2}
\,d\lambda_{\Sigma_{T}}(s,x,u)
\\
&=-\int_{\Sigma^{+}_{t}} (u\cdot n_{\Dd }(x))
\omega(u)\left|\gamma^{+}(R_{})(s,x,u)\right|^{2}\,d\lambda_{\Sigma_{T}}(s,x,u),
\end{align*}
so that, using Lemma \ref{lem:major_poids}, the previous inequality is reduced to
\begin{equation*}
\min(1,\sigma^{2})\Vert R^{}\Vert^{2}_{V_{1}(\omega,Q_{t})}
\leq \left(  \frac{\sigma^{2}}{2}\alpha(\alpha-2+d)+\alpha
\Vert B\Vert_{L^{\infty}((0,T)\times\Dd;\er^{d})} \right)
\int_{0}^{t}\Vert R_{} \Vert^{2}_{V_{1}(\omega,Q_{s})}\,ds
\end{equation*}
which ensures the uniqueness result by applying Gronwall's Lemma.

\paragraph{Proof of $\mathbf{(p1)}$.} For $f_{1},~f_{2}\in E$, we set
\begin{align*}
&R^{}(t,x,u)=S^{}(f_{1})(t,x,u)-S^{}(f_{2})(t,x,u),
~\mbox{for}~(t,x,u)\in Q_{T},\\
&\gamma^{\pm}(R^{})(t,x,u)=\left(\gamma^{\pm}(S(f_{1}))
-\gamma^{\pm}(S(f_{2}))\right)(t,x,u),
~\mbox{for}~(t,x,u)\in\Sigma^{\pm}_{T}.
\end{align*}
Then, $R^{}$ and $\gamma^{\pm}(R^{})$ satisfy
\begin{align*}
\left\{\begin{array}{l}
\trans(R^{})+(B^{}\cdot\nabla_{u}R^{})
-\frac{\sigma^{2}}{2}\triangle_{u}R^{} = 0,~\mbox{
in}~\Hh'(Q_T),\\
R^{}(0,x,u) = S(f_{1})(0)-S(f_{2})(0)=0,~\mbox{on}~\Dd \times\mathbb{R}^{d},\\
\gamma^{+}(R^{})(t,x,u) = \left(\gamma^{+}(f_{1})-\gamma^{+}(f_{2})\right)(t,x,u-2(u\cdot n_{\Dd }(x))n_{\Dd }(x)),~\mbox{on}~\Sigma^{-}_{T}.
\end{array}\right.
\end{align*}
Replicating the proof of the uniqueness result for \eqref{eq:LinearVFPSpec}, one has
\begin{equation*}\label{VFPlineaire_specular_step2}
\min(1,\sigma^{2})\Vert R^{}\Vert^{2}_{V_{1}(\omega,Q_{t})}+\Vert\gamma^{+}(R_{})\Vert^{2}_{L^{2}(\omega,\Sigma^{+}_{t})}
\leq
\Vert\gamma^{-}(R_{}) \Vert^{2}_{L^{2}(\omega,\Sigma^{-}_{t})} +C\int_{0}^{t}\Vert R^{}\Vert^{2}_{V_{1}(\omega,Q_{s})} \,ds
\end{equation*}
for $C:= \frac{\sigma^{2}}{2}\alpha(\alpha-2+d)+\alpha
\Vert B\Vert_{L^{\infty}((0,T)\times\Dd;\er^{d})}$. Applying Gronwall's Lemma, it follows that
\begin{equation*}
\Vert S^{}(f_{2})-S^{}(f_{1})\Vert^{2}_{E}
\leq C\Vert\gamma^{-}(S^{}(f_{2}))-\gamma^{-}(S^{}(f_{2}))\Vert^{2}_{L^{2}(\omega,\Sigma^{-}_{T})}=C
\Vert\gamma^{+}(f_{2})-\gamma^{+}(f_{1})\Vert^{2}_{L^{2}(\omega,\Sigma^{+}_{T})},
\end{equation*}
which enables us to deduce
\begin{equation*}
\label{Lipschitz_property}
\Vert S^{}(f_{2})-S^{}(f_{1})\Vert^{2}_{E}
\leq C\Vert f_{2}- f_{1}\Vert^{2}_{E},
\end{equation*}
and thus that $S$ is Lipschitz-continuous.
\paragraph{Proof of $\mathbf{(p3)}$.} Assume that $f\in E$ is such that
\begin{equation}\label{VFPlineaire_specular_step3}
\underline{P}(t,u)\leq
\gamma^{+}(f)(t,x,u)
\leq \overline{P}(t,u),~\mbox{for }\lambda_{\Sigma_{T}}\mbox{-a.e.}~(t,x,u)\in\Sigma^{+}_{T}.
\end{equation}
Remark \ref{Cond_spec_pour_maxwellienne}
implies that
\begin{equation*}
\begin{aligned}
&\underline{P}(t,u)=\underline{P}(t,u-2(u\cdot n_{\Dd}(x))n_{\Dd}(x)),\\
&\overline{P}( t,u)=\overline{P}(t,u-2(u\cdot n_{\Dd}(x))n_{\Dd}(x)).
\end{aligned}
\end{equation*}
Hence \eqref{VFPlineaire_specular_step3} is equivalent to
\begin{equation}\label{VFPlineaire_specular_step2_bis}
\underline{P}(t,u)\leq \gamma^{+}(f)(t,x,u-2(u\cdot n_{\Dd}(x))n_{\Dd}(x))= \gamma^-(S)(t,x,u)\leq \overline{P}(t,u),~\lambda_{\Sigma_{T}}\mbox{-a.e. on }\Sigma^{-}_{T}.
\end{equation}
Applying Proposition \ref{prop:EncadrLineaire}, it follows that
\begin{align*}
&\underline{P}\leq S(f)\leq \overline{P},~\mbox{a.e. on}~Q_{T},\\
\mbox{and}~&\underline{P}\leq \gamma^{+}(S(f))\leq \overline{P},~\lambda_{\Sigma_{T}}\mbox{-a.e. on}~\Sigma^{+}_{T}.
\end{align*}
\paragraph{Proof of $\mathbf{(p2)}$.} Let $f_{1},~f_{2}$
be such that $f_{1}\leq f_{2}$ on $E$.
The difference $S^{}(f_{2})-S^{}(f_{1})$ is then a weak solution to the
linear Vlasov-Fokker-Planck equation \eqref{VFPlineaire_Dirichlet}
for $\rho_{0}=0$ and
$$g(t,x,u)=\left(\gamma^{+}(f_{2})-\gamma^{+}(f_{1})\right)(t,x,u-2(u\cdot
n_{\Dd}(x))n_{\Dd}(x));$$ namely
\begin{equation*}
\left\{
\begin{aligned}
&\trans(S^{}(f_{2})-S^{}(f_{1})) + (B^{}\cdot\nabla_{u}(S^{}(f_{2})-S^{}(f_{1})))
-\frac{\sigma^{2}}{2}\triangle_{u}(S^{}(f_{2})-S^{}(f_{1})) = 0,\,\mbox{ in}~\Hh'(Q_{T}),\\
& S^{}(f_{2})(0,x,u) -S^{}(f_{1})(0,x,u)= 0,~\mbox{on}~\Dd \times\mathbb{R}^{d},\\
&\gamma^-\left(S^{}(f_{2})-S^{}(f_{1})\right)(t,x,u) = \left(\gamma^{+}(f_{2})-
\gamma^{+}(f_{1})\right)(t,x,u-2\left(u\cdot n_{\Dd }(x)\right)n_{\Dd }(x)),~\mbox{on}~\Sigma^{-}_{T}.
\end{aligned}
\right.
\end{equation*}
Therefore, applying Lemma \ref{lem:ExistVFPlineaire}, we obtain that
$S^{}(f_{2})-S^{}(f_{1})=S^{}(f_{2}-f_{1})\geq 0$ and that
$\gamma^{+}(S^{}(f_{2}))-\gamma^{+}(S^{}(f_{1}))=\gamma^{+}(S^{}(f_{2})-S^{}(f_{1}))=\gamma^{+}(S^{}(f_{2}-f_{1}))\geq 0$.
We conclude that $S^{}$ is nondecreasing on $E$ since $\gamma^{-}(S^{}(f_{2}))(t,x,u)-\gamma^{-}(S^{}(f_{1}))(t,x,u)=
\left(\gamma^{+}(f_{2})-\gamma^{+}(f_{1})\right)
(t,x,u-2(u\cdot n_{\Dd}(x))n_{\Dd}(x))
\geq 0$.
\end{proof}
\subsubsection{Introduction of the nonlinear drift (end of the proof of Theorem \ref{thm:Existence_result})}
Hereafter we end the proof of Theorem \ref{thm:Existence_result}
by introducing the nonlinear coefficient  $B[\cdot~;~\cdot]$  in the equation \eqref{eq:LinearVFPSpec}.
To this aim, we consider
 $(\underline{P},\overline{P})$,
a couple of Maxwellian distributions with parameters
$(\underline{a},\underline{\mu},\underline{P}_{0})$ and
$(\overline{a},\overline{\mu},\overline{P}_{0})$
such that
\begin{equation}\label{Cond_pour_maxwellienne}
\begin{aligned}
&2\underline{\mu}>1,~2\overline{\mu}>1,\\
&\underline{a}\leq
\frac{-\underline{\mu}}{2\sigma^{2}(\underline{\mu}-1)}
\Vert b\Vert_{L^{\infty}(\er^{d};\er^{d})}^{2},\,\,\overline{a}\geq
\frac{\overline{\mu}}{2\sigma^{2}(1-\overline{\mu})}\Vert
 b\Vert_{L^{\infty}(\er^{d};\er^{d})}^{2}.
\end{aligned}
\end{equation}
We also consider the sequence $\{\rho^{(n)};\,n\in\nat\}$ in $V_{1}(\omega,Q_T)$, defined by
\begin{itemize}
\item $\rho^{(0)}=\rho_{0}$ on $Q_{T}$;
\item For $n\geq 1$ and $\rho^{(n-1)}$ given, we define $B\left[x;\rho^{(n-1)}(t)\right]$ as in \eqref{eq:DriftDefinitionEDP}. Under \hypedpi, we have   $\left|B\right|\leq
\Vert b\Vert_{L^{\infty}(\er^{d};\er^{d})}$.
We define $\rho^{(n)}$ as the unique solution in $V_{1}(\omega,Q_T)$ of
\begin{align}\label{eq:VFPnonlineaire_iteratif}
\left\{\begin{array}{l}
\Tt(\rho^{(n)}) + \left(B[\cdot;\rho^{(n-1)}]\cdot\nabla_{u}\rho^{(n)}\right) - \frac{\sigma^{2}}{2}\triangle_{u}\rho^{(n)} = 0 ,~\mbox{in}~{\Hh'(Q_T)},\\
\gamma^{-}(\rho^{(n)})(t,x,u)=\gamma^{+}(\rho^{(n)})(t,x,u-2(u\cdot n_{\Dd}(x))n_{\Dd}(x)),~\mbox{on}~\Sigma^{-}_{T},\\
\rho^{(n)}(0,x,u)=\rho_{0}(x,u),~\mbox{on}~\Dd\times\er^{d}.
\end{array}\right.
\end{align}
\end{itemize}
According to Proposition \ref{prop:VFPlineairespec}, for all $n\geq 1$,
\begin{equation}\label{VFPnonlineaire_iteratif_step1}
\begin{aligned}
&\underline{P}\leq \rho^{(n)}\leq \overline{P},~\mbox{a.e. on}~Q_{T},\\
&\underline{P}\leq \gamma^{\pm}(\rho^{(n)})\leq \overline{P},~\lambda_{\Sigma_{T}}
\mbox{-a.e. on}~\Sigma^{\pm}_{T},
\end{aligned}
\end{equation}
so that $\rho^{(n)}$ is positive on $Q_{T}$ and
\begin{equation*}
\label{drift_definition_iterative_construction}
B[x;\rho^{(n)}(t)]
=\frac{\displaystyle \int_{\mathbb{R}^{d}} b(v)\rho^{(n)}(t,x,v)\,dv}
{\displaystyle \int_{\mathbb{R}^{d}}\rho^{(n)}(t,x,v)\,dv},~\mbox{for a.e.}~
(t,x)\in (0,T)\times\Dd.
\end{equation*}
\begin{proposition}\label{prop:FixedPts}
Assume \hypedp. Let $(\underline{P},
\overline{P})$ be a couple of Maxwellian distributions with respective parameters
$(\underline{a},\underline{\mu},\underline{P}_{0})$ and
$(\overline{a},\overline{\mu},\overline{P}_{0})$ satisfying
\eqref{Cond_pour_maxwellienne}. Then the sequence $\{\rho^{(n)};\,n\in\nat\}$ converges
 in $\VoneT$ to a weak solution $\rho$ to the non-linear equations \eqref{eq:VFPnonlineaire_interieur}-\eqref{eq:specularcondition}.
Moreover this solution satisfies
\begin{equation*}
\begin{aligned}
&\underline{P}\leq \rho\leq \overline{P},~\mbox{a.e. on}~Q_{T},\\
&\underline{P}\leq \gamma^{\pm}(\rho)\leq \overline{P},~
\lambda_{\Sigma_{T}}\mbox{-a.e. on}~\Sigma^{\pm}_{T}.
\end{aligned}
\end{equation*}
\end{proposition}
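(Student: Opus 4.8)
The plan is to show that the iterates $\{\rho^{(n)};\,n\in\nat\}$ defined by \eqref{eq:VFPnonlineaire_iteratif} form a Cauchy sequence in $\VoneT$, by a Gronwall estimate on the differences $R^{(n+1)}:=\rho^{(n+1)}-\rho^{(n)}$, and then to pass to the limit in the weak formulation. First I would write the equation solved by $R^{(n+1)}$: subtracting the copies of \eqref{eq:VFPnonlineaire_iteratif} at levels $n+1$ and $n$ and putting the convection term in divergence form (licit since $B[\,\cdot\,;\cdot]$ does not depend on $u$), one gets, in $\Hh'(Q_{T})$,
\[
\trans(R^{(n+1)})+\nabla_{u}\!\cdot\!\big(B[\cdot;\rho^{(n)}]R^{(n+1)}\big)-\tfrac{\sigma^{2}}{2}\triangle_{u}R^{(n+1)}=-\big(B[\cdot;\rho^{(n)}]-B[\cdot;\rho^{(n-1)}]\big)\cdot\nabla_{u}\rho^{(n)},
\]
with $R^{(n+1)}(0,\cdot,\cdot)=0$ and the specular relation $\gamma^{-}(R^{(n+1)})(t,x,u)=\gamma^{+}(R^{(n+1)})(t,x,u-2(u\cdot n_{\Dd}(x))n_{\Dd}(x))$. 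Since $|B[\,\cdot\,;\cdot]|\le\|b\|_{L^{\infty}(\er^{d};\er^{d})}$ and $\nabla_{u}\rho^{(n)}\in L^{2}(\omega,Q_{T})$, the right-hand side lies in $L^{2}(\omega,Q_{T})$, so $R^{(n+1)}$ satisfies a linear equation of the type \eqref{eq:ToyEquation} (with its own trace on $\Sigma^{-}_{T}$ playing the role of the Dirichlet datum) and the energy identity \eqref{eq:LinearGreenIdentity} applies to it. The key preliminary is a Lipschitz bound on the drift increment: from the identity $\frac{\int b\psi_{1}}{\int\psi_{1}}-\frac{\int b\psi_{2}}{\int\psi_{2}}=\frac{\int b(\psi_{1}-\psi_{2})\int\psi_{2}-\int b\psi_{2}\int(\psi_{1}-\psi_{2})}{\int\psi_{1}\int\psi_{2}}$, the uniform lower bound $\int_{\er^{d}}\rho^{(n)}(t,x,v)\,dv\ge\int_{\er^{d}}\underline{P}(t,v)\,dv\ge c_{0}>0$ (from \eqref{MaxwellLowerBound} and \eqref{VFPnonlineaire_iteratif_step1}) and Cauchy--Schwarz with $\int_{\er^{d}}\omega^{-1}<+\infty$ (Lemma \ref{lem:major_poids}$(iii)$), one obtains
\[
\big|B[x;\rho^{(n)}(t)]-B[x;\rho^{(n-1)}(t)]\big|^{2}\le C_{b}\int_{\er^{d}}\omega(v)\,|R^{(n)}(t,x,v)|^{2}\,dv .
\]

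Next I would apply \eqref{eq:LinearGreenIdentity} to $R^{(n+1)}$. The two boundary contributions $\|\gamma^{+}(R^{(n+1)})\|^{2}_{L^{2}(\omega,\Sigma^{+}_{t})}$ and $\|\gamma^{-}(R^{(n+1)})\|^{2}_{L^{2}(\omega,\Sigma^{-}_{t})}$ are equal, because $u\mapsto u-2(u\cdot n_{\Dd}(x))n_{\Dd}(x)$ preserves both $|u\cdot n_{\Dd}(x)|$ and $\omega(u)$, so they cancel; the zeroth-order term is $\le C_{1}\int_{Q_{t}}\omega|R^{(n+1)}|^{2}$ by Lemma \ref{lem:major_poids}. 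For the source term $2\int_{Q_{t}}\omega\,g^{(n+1)}R^{(n+1)}$, since $\nabla_{u}\rho^{(n)}$ carries no pointwise control, I integrate by parts in $u$ so as to move the velocity derivative onto $\rho^{(n)}$ (no boundary term in $u$ appears), exposing $\rho^{(n)}\le\overline{P}$; combining the increment bound above with \eqref{MaxwellUpperBound} and Young's inequality to absorb a fraction of $\sigma^{2}\|\nabla_{u}R^{(n+1)}\|^{2}_{L^{2}(\omega,Q_{t})}$ into the left-hand side, I get, for all $t\in(0,T]$,
\[
\|R^{(n+1)}(t)\|^{2}_{L^{2}(\omega,\Dd\times\er^{d})}+\tfrac{\sigma^{2}}{2}\|\nabla_{u}R^{(n+1)}\|^{2}_{L^{2}(\omega,Q_{t})}\le C\!\int_{0}^{t}\!\Big(\|R^{(n)}(s)\|^{2}_{L^{2}(\omega,\Dd\times\er^{d})}+\|R^{(n+1)}(s)\|^{2}_{L^{2}(\omega,\Dd\times\er^{d})}\Big)ds .
\]
Gronwall's lemma in $t$ turns this into $a_{n+1}(t)\le\kappa\int_{0}^{t}a_{n}(s)\,ds$, with $a_{n}(t):=\sup_{s\le t}\|R^{(n)}(s)\|^{2}_{L^{2}(\omega,\Dd\times\er^{d})}$ and $\kappa=\kappa(C,T)$, so that $a_{n+1}(T)\le a_{1}(T)(\kappa T)^{n}/n!$; together with the gradient bound this shows $\{\rho^{(n)}\}$ is Cauchy in $\VoneT$. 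Call $\rho$ its limit; passing to the a.e.\ limit in \eqref{VFPnonlineaire_iteratif_step1} yields $\underline{P}\le\rho\le\overline{P}$ a.e.\ on $Q_{T}$ and $\underline{P}\le\gamma^{\pm}(\rho)\le\overline{P}$ on $\Sigma^{\pm}_{T}$.

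Finally I would pass to the limit in the weak formulation satisfied by each $\rho^{(n)}$. The linear terms converge by strong $\VoneT$-convergence; the traces $\gamma^{\pm}(\rho^{(n)})$ converge to $\gamma^{\pm}(\rho)$ in $L^{2}(\omega,\Sigma^{\pm}_{T})$ because $\trans(\sqrt{\omega}\,\rho^{(n)})\to\trans(\sqrt{\omega}\,\rho)$ in $\Hh'(Q_{T})$ (read off from the equation, using the strong convergence of $\nabla_{u}\rho^{(n)}$ and, by the same computation as above, $B[\cdot;\rho^{(n-1)}]\to B[\cdot;\rho]$ in $L^{2}((0,T)\times\Dd;\er^{d})$), so Lemma \ref{lem:Derive_Carrillo} gives the trace convergence; and $B[\cdot;\rho^{(n-1)}]\cdot\nabla_{u}\rho^{(n)}\to B[\cdot;\rho]\cdot\nabla_{u}\rho$ in $L^{1}$ on compact sets, which suffices against $\Cc^{\infty}_{c}$ test functions. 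The specular relation is stable under $L^{2}(\omega,\Sigma^{\pm}_{T})$-convergence of the traces, so $\rho$ is a weak solution of \eqref{eq:VFPnonlineaire_interieur}--\eqref{eq:specularcondition} in the sense of \eqref{VFPnonlineaire_variationnel}, with the announced Maxwellian bounds; this also completes the proof of Theorem \ref{thm:Existence_result}. The step I expect to be the main obstacle is the source-term estimate: $\nabla_{u}\rho^{(n)}$ is not dominated by any Maxwellian, so transferring the velocity derivative onto $\rho^{(n)}$ by integration by parts is essential, and this must be carried out compatibly with the cancellation of the specular boundary terms, which itself hinges on $\omega$ being radial so that specular reflection is an isometry of $L^{2}(\omega,\Sigma^{\pm}_{T})$.
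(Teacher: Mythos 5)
Your proposal is correct and follows essentially the same route as the paper's proof: energy estimate \eqref{eq:LinearGreenIdentity} on differences, cancellation of the boundary terms via specular invariance together with the radiality of $\omega$, the Lipschitz bound on $B[\cdot;\cdot]$ extracted from the Maxwellian bounds and $\int\omega^{-1}<\infty$, and a Gronwall iteration producing factorial decay. The only differences are cosmetic: you work with consecutive differences $\rho^{(n+1)}-\rho^{(n)}$ and sum a series, whereas the paper estimates $R^{(n,n+m)}=\rho^{(n+m)}-\rho^{(n)}$ directly and iterates; and the ``main obstacle'' you single out (the presence of $\nabla_u\rho^{(n)}$ in the source, handled by an integration by parts in $u$) is sidestepped from the outset in the paper by writing the nonlinear difference in divergence form $\nabla_u\cdot\big(B[\cdot;\rho^{(n-1)}]\rho^{(n)}-B[\cdot;\rho^{(n+m-1)}]\rho^{(n+m)}\big)$, so that after pairing with $\omega R^{(n,n+m)}$ the single integration by parts is already baked in -- both devices produce the same terms $\omega|\nabla_u R|\cdot|\text{flux}|$ and $\alpha\omega|R|\cdot|\text{flux}|$ to which Young's inequality and $\rho^{(n)}\le\overline P$ are then applied.
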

\begin{proof}
First, we establish the following  uniform estimation for $\{\rho^{(n)};\,n\in\nat\}$:
\begin{equation}\label{Majorgradient}
\sup_{n\geq 1}\Vert \rho^{(n)}\Vert^{2}_{\VoneT}\leq
K\left(
\Vert \rho_{0}\Vert^{2}_{L^{2}(\omega,\Dd\times\er^{d})} +\Vert \overline{P}
\Vert^{2}_{L^{2}(\omega,(0,T)\times\er^{d})}\right),
\end{equation}
for some constant $K>0$. Indeed, using the energy estimate \eqref{eq:LinearGreenIdentity},
we obtain that, for all $t\in (0,T]$,
\begin{align*}
&\min\left({1,\sigma^{2}}\right)
\Vert \rho^{(n)}\Vert^{2}_{V_{1}(\omega,Q_{t})}\\
&\leq \Vert\rho_{0}\Vert^{2}_{L^{2}(\omega,\Dd\times\er^{d})}
+\left(\frac{\sigma^2}{2}\alpha \left(\alpha - 2 + d\right) + \alpha \Vert b\Vert_{L^{\infty}(\er^{d};\er^{d})}\right)  \int_{0}^{t}\Vert\rho^{(n)}(s)\Vert^{2}_{L^{2}(\omega,\Dd\times\er^{d})}\,ds.
\end{align*}
Using the Maxwellian upper-bound \eqref{VFPnonlineaire_iteratif_step1}, we obtain \eqref{Majorgradient}.

Since $(\VoneT,\Vert~\Vert_{\VoneT})$  is a Banach space,
it is sufficient to establish that $\{\rho^{(n)};\,n\in\nat\}$
is a Cauchy sequence in $\VoneT$. For $n$, $m> 1$, the functions
\begin{equation*}
\begin{aligned}
&R^{(n,n+m)}:=\rho^{(n+m)}-\rho^{(n)},~\mbox{on}~Q_{T},\\
&\gamma^{\pm}(R^{(n,n+m)}):=\gamma^{\pm}(\rho^{(n+m)})-\gamma^{\pm}(\rho^{(n+m)}),~\mbox{on}~\Sigma^{\pm}_{T},
\end{aligned}
\end{equation*}
satisfy
\begin{equation*}
\left\{
\begin{aligned}
&\trans(R^{(n,n+m)})-\frac{\sigma^{2}}{2}\triangle_{u}R^{(n,n+m)} =\nabla_{u}\cdot\left( B[\cdot;\rho^{(n-1)}]\rho^{(n)}-B[\cdot;\rho^{(n+m-1)}]\rho^{(n+m)}\right), ~\mbox{in}~\Hh'(Q_{T}),\\
&R^{(n,n+m)}(0,x,u)=0,~\mbox{on}~\Dd \times\mathbb{R}^{d},\\
&\gamma^{-}(R^{(n,n+m)})(t,x,u)=\gamma^{+}(R^{(n,n+m)})(t,x,u-2(u\cdot n_{\Dd}(x))n_{\Dd}(x)),~\mbox{on}~\Sigma^{-}_{T}.
\end{aligned}\right.
\end{equation*}
Then, according to \eqref{eq:LinearGreenIdentity}, it follows that for all $t\in(0,T]$,
\begin{equation*}
\begin{aligned}
&\int_{\Dd\times\er^{d}}\omega
\left|R^{(n,n+m)}(t,x,u)\right|^{2}\,dx\,du
+{\sigma^{2}}\int_{Q_{t}}\omega
\left|\nabla_{u}R^{(n,n+m)}\right|^{2}\\
&\leq \frac{\sigma^{2}}{{2}}\int_{Q_{t}}\triangle\omega\left|R^{(n,n+m)}\right|^{2}
+{2}\int_{Q_{t}}\omega R^{(n,n+m)} \nabla_{u}\cdot \left(B[\cdot~;\rho^{(n-1)}]\rho^{(n)}-B[\cdot~;\rho^{(n+m-1)}]\rho^{(n+m)} \right).
\end{aligned}
\end{equation*}
But
\begin{align*}
&2\int_{Q_{t}}\omega R^{(n,n+m)} \nabla_{u}\cdot \left(B[\cdot~;\rho^{(n-1)}]\rho^{(n)}-B[\cdot~;\rho^{(n+m-1)}]\rho^{(n+m)} \right)\\
&\leq 2\int_{Q_{t}}  \omega\left(\alpha |  R^{(n,n+m)} | +  |\nabla_{u}R^{(n,n+m)}|\right)\, \left|B[\cdot~;\rho^{(n-1)}]\rho^{(n)}-B[\cdot~;\rho^{(n+m-1)}]\rho^{(n+m)} \right|\\
&\leq  \int_{Q_{t}} {\alpha^2}\omega (R^{(n,n+m)})^2 + \int_{Q_{t}}\frac{\sigma^2}{2}\omega |\nabla_u R^{(n,n+m)}|^2 \\
& \qquad + ( 1+ \frac{2}{\sigma^2}) \int_{Q_t} \omega \left|B[\cdot~;\rho^{(n-1)}]\rho^{(n)}-B[\cdot~;\rho^{(n+m-1)}]\rho^{(n+m)} \right|^2
\end{align*}
and hence, using Lemma \ref{lem:major_poids},
\begin{align*}
&\int_{\Dd\times\er^{d}}\omega
\left|R^{(n,n+m)}(t,x,u)\right|^{2}\,dx\,du
+\frac{\sigma^{2}}{2}\int_{Q_{t}}\omega
\left|\nabla_{u}R^{(n,n+m)}\right|^{2}\\
&\leq\left(\frac{\sigma^2}{2}\alpha(\alpha -2 +d) + \alpha^2\right)\int_{0}^{t}\Vert R^{(n,n+m)}\Vert^{2}_{V^{1}(\omega,Q_{s})}\,ds\\
&\qquad+( 1+ \frac{2}{\sigma^2}) \int_{Q_t} \omega \left|B[\cdot~;\rho^{(n-1)}]\rho^{(n)}-B[\cdot~;\rho^{(n+m-1)}]\rho^{(n+m)} \right|^2.
\end{align*}
Let us now observe that
\begin{align*}
&\int_{Q_{t}}\omega\left|B[\cdot~;\rho^{(n-1)}]\rho^{(n)}-B[\cdot~;\rho^{(n+m-1)}]\rho^{(n+m)} \right|^2\\
&\leq\frac{1}{2}\int_{Q_{t}}\omega\left|\rho^{(n+m)}- \rho^{(n)}\right|^{2}
\left|B[\cdot~;\rho^{(n+m-1)}]\right|^{2}
+\frac{1}{2}\int_{Q_{t}}\omega\left|\rho^{(n)}\right|^{2}
\left|B[\cdot~;\rho^{(n+m-1)}]-B[\cdot~;\rho^{(n-1)}]\right|^{2}.
\end{align*}
For the first term, we have
\begin{equation}
\label{blabla_EDP}
\int_{Q_{t}}\omega\left|\rho^{(n+m)}- \rho^{(n)}\right|^{2}
\left|B[\cdot~;\rho^{(n+m-1)}]\right|^{2}\leq
\Vert b\Vert^{2}_{L^{\infty}(\er^{d};\er^{d})}\int_{Q_{t}}\omega\left|\rho^{(n+m)}- \rho^{(n)}\right|^{2}.
\end{equation}
For the second term, let us set
\begin{equation*}
\underline{M}:=\inf_{\substack{t\in(0,T)}}\int_{\er^{d}}\underline{P}(t,u)du,\,\,\overline{M}:=\sup_{\substack{t\in (0,T)}}\int_{\er^{d}}\omega(u)
\left|\overline{P}(t,u)\right|^{2}\,du.
\end{equation*}
According to the definition of $B[\cdot;\cdot]$ in \eqref{eq:DriftDefinitionEDP},
\begin{align*}
&\left| B[x;\rho^{(n+m-1)}(s)]-B[x;\rho^{(n-1)}(s)]\right| ^{2}\\
&=\left|
\frac{\int_{\er^{d}} b(v)\rho^{(n+m-1)}(s,x,v)dv}{\int_{\er^{d}} \rho^{(n+m-1)}(s,x,v)dv}
-\frac{\int_{\er^{d}} b(v)\rho^{(n-1)}
(s,x,v)dv}
{\int_{\er^{d}} \rho^{(n-1)}(s,x,v)dv} \right| ^{2}\\
&\leq \frac{1}{2}\frac{\left|\int_{\er^{d}} b(v)\left(\rho^{(n+m-1)}- \rho^{(n-1)}\right)
(s,x,v)dv \right|^{2}}{\left|\int_{\er^{d}} \rho^{(n+m-1)}(s,x,v)dv\right|^{2}}\\
& \qquad 
+\frac{1}{2}\frac{\left|\int_{\er^{d}}
\left(\rho^{(n+m-1)}- \rho^{(n-1)}\right)(s,x,v)dv \right|^{2}
\left|\int_{\er^{d}} b(v)\rho^{(n-1)}(s,x,v)dv \right|^{2}}{\left|\int_{\er^{d}} \rho^{(n+m-1)}(s,x,v)dv\int_{\er^{d}} \rho^{(n-1)}(s,x,v)dv\right|^{2}}\\
&\leq \frac{\Vert b\Vert^{2}_{L^{\infty}(\er^{d};\er^{d})}\left(\int_{\er^{d}}
\left|\rho^{(n+m-1)}- \rho^{(n-1)}\right|(s,x,v)dv \right)^{2}}
{(\underline{M})^{2}}\\
&\leq \frac{\overline{w}\Vert b\Vert^{2}_{L^{\infty}(\er^{d};\er^{d})}}
{(\underline{M})^{2}}
\int_{\er^{d}}\omega(v)\left|\left(\rho^{(n+m-1)}-\rho^{(n-1)}\right)(s,x,v)\right|^{2}\,dv,
\end{align*}
since $\overline{w}:=\displaystyle \int_{\er^{d}}\frac{1}{\omega(v)}\,dv< +\infty$  (see Lemma \ref{lem:major_poids}), from which it follows that
\begin{align*}
\int_{Q_{t}}\omega|\rho^{(n)}|^{2}
\left|B[\cdot~;\rho^{(n+m-1)}]-B[\cdot~;\rho^{(n-1)}]\right|^{2}
\leq \Vert b\Vert^{2}_{L^{\infty}(\er^{d};\er^{d})}\overline{w}\frac{\overline{M}^2
}{\underline{M}^{2}}
\int_{Q_{t}}\omega\left|\rho^{(n+m-1)}- \rho^{(n-1)}\right|^{2}.
\end{align*}
Combining this inequality with \eqref{blabla_EDP}, we obtain that
\begin{align*}
&\int_{Q_{t}}\omega\left|\rho^{(n+m)}B[\cdot~;\rho^{(n+m-1)}]
- \rho^{(n)}B[\cdot~;\rho^{(n-1)}]\right|^{2}\\
&\leq \Vert b\Vert^{2}_{L^{\infty}(\er^{d};\er^{d})}\int_{Q_{t}}
\omega\left|R^{(n,n+m)}\right|^{2}{+\Vert b\Vert^{2}_{L^{\infty}(\er^{d};\er^{d})}\overline{w}\frac{\overline{M}^2
}{\underline{M}^{2}}
\int_{Q_{t}}\omega\left|R^{(n-1,n+m-1)}\right|^{2}}.
\end{align*}
Then for all $t$ in $(0,T]$,
\begin{align*}
&\Vert R^{(n,n+m)}\Vert^{2}_{V_{1}(\omega,Q_{t})}
\leq  {C_{1}}\int_{0}^{t}\Vert R^{(n,n+m)}\Vert^{2}_{
V_{1}(\omega,Q_{s})}\,ds +   C_{2}\int_{0}^{t}\Vert R^{(n+m-1,n-1)}\Vert^{2}_{
V_{1}(\omega,Q_{s})}\,ds,
\end{align*}
with
\begin{align*}
C_{1}=\frac{\frac{\sigma^2}{2}\alpha ( \alpha -2 + d)  + \alpha ^2  + \Vert b\Vert^{2}_{L^{\infty}(\er^{d};\er^{d})}(
1 + \frac{2}{\sigma^2})}{\min\left(1,\frac{\sigma^{2}}{2}\right)}, \quad
C_{2}=\frac{(1 + \frac{2}{\sigma^2})
\Vert b\Vert^{2}_{L^{\infty}(\er^{d};\er^{d})}
\overline{w}\frac{\overline{M}^2}
{\underline{M}^{2}}}{\min\left(1,\frac{\sigma^{2}}{2}\right)}.
\end{align*}
Therefore, applying  Gronwall's Lemma, we deduce that
\begin{equation*}
\Vert R^{(n,n+m)}\Vert^{2}_{V_{1}(\omega,Q_{t})}
\leq C_{{2}}\int_{0}^{t}\left(1+ \exp\left\{C_{1}(t-s)\right\}\right)
\Vert R^{(n+m-1,n-1)}\Vert^{2}_{V_{1}(\omega,Q_{s})}\,ds.
\end{equation*}
Iterating $n-1$ times this inequality, we obtain
\begin{align*}
&\Vert R^{(n,n+m)}\Vert^{2}_{\VoneT}\\
&\leq (C_{2})^{n-1} \int_{0}^{T}
\left(1+ \exp\{C_{1}(T-t_n)\}\right)\int_{0}^{t_{n-1}}
\dotsi \\
&\qquad\qquad \qquad\qquad \dotsi \int_{0}^{t_{2}}\left(1+ \exp\{C_{1}(T-t_{1})\}\right)\Vert \rho^{(m+1)}-\rho^{(1)}
\Vert^{2}_{V_{1}(\omega,Q_{t_{1}})}\,dt_{1}\dotsi\,dt_{n-1}\,dt_{n}
\\
&\leq \frac{\left(C_{T}\right)^{n-1}}{(n-1) !}\Vert \rho^{(m+1)}-\rho^{(1)}\Vert^{2}_{\VoneT}
\end{align*}
with $C_{T}:=C_{{2}}\int_{0}^{T}(1+
\exp\{C_{1} (T-t)\})dt$.
Using the estimation \eqref{Majorgradient}, it follows that
\begin{align*}
\sup_{\substack{m\in\mathbb{N}}}\Vert \rho^{(n+m)} -\rho^{(n)}\Vert^{2}_{\VoneT}
\leq  \frac{\left(C_{T}\right)^{n}}{n !}2K\left(
\Vert \rho_{0}\Vert^{2}_{L^{2}(\omega,\Dd\times\er^{d})} +\overline{M}\right).
\end{align*}
Since ${\displaystyle \sum_{\substack{n\in\mathbb{N}}}
\frac{\left(C_{T}\right)^{n}}{n !}
=\exp\left\{C_{T}\right\}<+\infty}$,
${\displaystyle\frac{\left(C_{T}\right)^{n}}{n !}}$ tends to $0$ as $n$ tends to infinity.
Therefore $\left\{\rho^{(n)};\,n\in\NN\right\}$ is a Cauchy sequence
in  $V_{1}\left(\omega,Q_{T}\right)$.

Let us denote by $\rho$, the limit of
$\left\{\rho^{(n)};~n\geq 1\right\}$ in
$\VoneT$. According to \eqref{VFPnonlineaire_iteratif_step1}, we have
\begin{equation*}
\underline{P}\leq \rho\leq \overline{P},~\mbox{a.e. on}~Q_{T}.
\end{equation*}
Now we check that $\rho$ is a weak solution to \eqref{eq:VFPnonlineaire_interieur}-\eqref{eq:specularcondition}.
Since $\{\rho^{n};~n\geq 1\}$ tends to $\rho$ in
$\VoneT$, and so in
$\Hh(\omega,Q_{T})$,
we can consider a subsequence still denoted by
$\{\rho^{(n)};~n\geq 1\}$ such that
\begin{align*}
&\lim_{n\rightarrow+\infty}\rho^{(n)}(t,x,u)=\rho(t,x,u),~\mbox{for a.e.}
~(t,x,u)\in Q_{T},\\
&\lim_{n\rightarrow+\infty}
\nabla_{u}\rho^{(n)}(t,x,u)=\nabla_{u}\rho(t,x,u),~\mbox{for a.e.}
~(t,x,u)\in Q_{T},
\end{align*}
and
\begin{equation*}
\sup_{n\geq 1}\left(\left|\rho^{(n)}(t,x,u)\right|+
\left|\nabla_{u}\rho^{(n)}(t,x,u)\right|\right)\in L^{2}(\omega,Q_{T}).
\end{equation*}
Therefore, by dominated convergence, we deduce the convergence of the coefficients
\begin{equation*}
\lim_{n\rightarrow +\infty}B[x;\rho^{(n)}(t)]=B[x;\rho(t)],
~\mbox{a.e. on }(0,T)\times\Dd.
\end{equation*}
We further observe that for
$\psi\in \Cc^{\infty}_{c}(Q_{T})$,
\begin{align*}
|(\trans(\sqrt{\omega}\rho),\psi)_{\Hh'(Q_T),\Hh(Q_T)}| =
&\left|\int_{Q_{T}}\trans(\psi)\sqrt{\omega}\rho\right|
\leq \limsup_{n\rightarrow +\infty}
\left|\int_{Q_{T}}\trans(\psi)\sqrt{\omega}\rho^{(n)}\right|
\\
&\leq \limsup_{n\rightarrow +\infty}
\left|\int_{Q_{T}}\sqrt{\omega} \psi
\left(B^{}\left[\cdot~;\rho^{(n-1)}\right]\cdot\nabla_{u}\rho^{(n)}\right)-
\frac{\sigma^{2}}{2}\left(\nabla_{u}(\sqrt{\omega}\psi)\cdot
\nabla_{u}\rho^{(n)}\right)\right|
\\
&\leq \left(\Vert b^{}\Vert_{L^{\infty}(\er^{d};\er^{d})}
+\frac{\sigma^{2}}{2}\left(1+\frac{\alpha}{{2}}\right)\right)
\Vert\psi\Vert_{\mathcal{H}(Q_{T})}\limsup_{n\rightarrow +\infty}\Vert\rho^{(n)}\Vert_{\VoneT}.
\end{align*}
Owing to \eqref{Majorgradient}, the right-hand side is uniformly bounded, so that
$\trans(\sqrt{\omega}\rho)\in\mathcal{H}'(Q_{T})$ and, from \eqref{eq:VFPnonlineaire_iteratif},
\begin{equation*}
\trans(\rho)+\left(B[\cdot~;\rho]\cdot\nabla_{u}\rho\right)
-\frac{\sigma^{2}}{2}\triangle_{u}\rho=0\,,~\mbox{in}~\Hh'(Q_{T}).
\end{equation*}
According to Lemma \ref{lem:Derive_Carrillo}, $\rho$ admits
traces functions along the frontier $\Sigma^{\pm}_{T}$, which belongs to
$L^{2}(\omega,\Sigma^{\pm}_{T})$. It  remains to check the specular boundary condition
and the Maxwellian bounds  for $\rho$:
\begin{equation}\label{VFPnonlineaire_iteratif_step_final}
\begin{aligned}
&\gamma^{-}(\rho)(s,x,u)=\gamma^{+}(\rho)(s,x,u-2(u\cdot
n_{\Dd}(x))n_{\Dd}(x)),
~\lambda_{\Sigma_{T}}~\mbox{-a.e. on}~\Sigma^{-}_{T},\\
&\underline{P}\leq \gamma^{\pm}(\rho)\leq \overline{P},~\lambda_{\Sigma_{T}}~\mbox{-a.e. on}~\Sigma^{\pm}_{T}.
\end{aligned}
\end{equation}
For all
$\psi\in\Cc^{\infty}_{c}(\overline{Q_{T}})$, we have
\begin{equation*}
\begin{aligned}
&\int_{Q_{T}}\trans(\psi)\left(\rho-\rho^{(n)}\right)
+\int_{Q_{T}}\psi \left(\left(B[\cdot~;\rho]\cdot\nabla_{u}\rho\right)
- \left(B\left[\cdot~;\rho^{(n-1)}\right]\cdot\nabla_{u}\rho^{(n)}\right)\right)
\\
&+\frac{\sigma^{2}}{2}\int_{Q_{T}}\left(\nabla_{u}\psi\cdot\nabla_{u}
\left(\rho-\rho^{(n)}\right)\right)
=-\int_{\Dd\times\er^{d}}\psi(T,x,u)\left(\rho(T,x,u)-\rho^{(n)}(T,x,u)
\right)\,dx\,du\\
&\quad-\int_{\Sigma^{+}_{T}}(u\cdot n_{\Dd}(x))\psi(s,x,u)
\left(\gamma^{+}(\rho)-\gamma^{+}(\rho^{(n)})\right)(s,x,u)d\lambda_{\Sigma_{T}}(s,x,u)\\
&\quad-\int_{\Sigma^{-}_{T}}(u\cdot n_{\Dd}(x))\psi(s,x,u)
\left(\gamma^{-}(\rho)(s,x,u)-\gamma^{-}(\rho^{(n)})(s,x,u)\right)
d\lambda_{\Sigma_{T}}(s,x,u).
\end{aligned}
\end{equation*}
Hence, for all $\psi\in \Cc^{\infty}_{c}(\overline{Q_{T}})$ vanishing
on $\left\{T\right\}\times\Dd\times\er^{d}$ and $\Sigma^{+}_{T}$, we have
\begin{equation*}
\lim_{n\rightarrow +\infty}\int_{\Sigma^{-}_{T}}(u\cdot n_{\Dd}(x))\psi(s,x,u)
\left(\gamma^{-}(\rho)(s,x,u)-\gamma^{-}(\rho^{(n)})(s,x,u)\right)
\,d\lambda_{\Sigma_{T}}(s,x,u)=0.
\end{equation*}
It follows that
\begin{equation*}
\lim_{n\rightarrow +\infty}
\Vert\gamma^{-}(\rho)-\gamma^{-}(\rho^{(n)})\Vert_{L^{2}(\Sigma^{-}_{T})}=0.
\end{equation*}
Since, for all $n\geq 1$, $\rho^{(n)}$ satisfies the specular boundary condition and the Maxwellian bounds
\eqref{VFPnonlineaire_iteratif_step1}, we deduce
\eqref{VFPnonlineaire_iteratif_step_final}.
\end{proof}
\section{Well-posedness for the nonlinear Lagrangian stochastic model with specular boundary condition}\label{sec:LawReconstruction}
\subsection{Construction of a solution} \label{sec:LawConstruction}
Under the hypotheses \hyp, we construct a solution to the Lagrangian stochastic model with specular boundary
condition \eqref{eq:NonlinearConfinedLangevin} that  satisfies the mean no permeability condition \eqref{eq:nopermeability}.

Let us consider  a probability $\PP$  given by Theorem  \ref{thm:LinearSolutionByStraightening}, on the sample space $\TTT$, such that under $\PP$ the canonical process $((x(t),u(t));\,t\in[0,T])$ of $\TTT$ satisfies
\begin{equation*}
\left\{
\begin{aligned}
x(t) &= x(0) + \int_{0}^{t} u(s)\,ds,\\
u(t) &= u(0) + \sigma \widetilde{w}(t) -\sum_{\substack{0 < s \leq t}} 2\left(u(s^{-})\cdot n_{\Dd}(x(s))\right)n_{\Dd}(x(s))
\ind_{\displaystyle \left\{x(s)\in \partial \Dd\right\}},
\end{aligned}
\right.
\end{equation*}
where $(\widetilde{w}(t);\,t\in[0,T])$ is an  $\er^d$-Brownian motion and $\PP\circ(x(0), u(0))^{-1} = \mu_0$.
Next, we consider a solution $\rhofp\in V_{1}(\omega,Q_T)$ to the conditional McKean-Vlasov-Fokker-Planck equation \eqref{eq:ConditionalMcK-VFP}
that we have constructed in Theorem \ref{thm:Existence_result}. We also denote by $\gamma^{\pm}(\rhofp)\in L^{2}(\omega,Q_{T})$ its trace functions. Due to the Maxwellian bounds \eqref{EncadrMaxwell}, one can check that the function
\begin{equation*}
\gamma(\rhofp)(t,x,u):=\left\{
\begin{aligned}
&\gamma^{+}(\rhofp)(t,x,u)~\mbox{ on }\,\Sigma^{+}_{T},\\
&\gamma^{-}(\rhofp)(t,x,u)~\mbox{ on }\,\Sigma^{-}_{T},
\end{aligned}
\right.
\end{equation*}
is a trace of $\rhofp$ in the sense of Definition \ref{def:Trace_density_process}. In particular,  the integrability and positivity requirement \eqref{nopermeabilitycondition_2} is an  immediate consequence of the Maxwellian bounds  \eqref{EncadrMaxwell}, and the initial bounds in  \hypedpiii. For \eqref{nopermeabilitycondition_1}, using the estimate \eqref{MaxwellUpperBound}, and since $\|\frac{1}{\omega}\|_{L^1(\er^d)}<+\infty$,
\begin{align*}
\int_{\Sigma_{T}}|(u\cdot n_{\Dd}(x))|\gamma(\rhofp)(t,x,u)\,d\lambda_{\Sigma_{T}}(t,x,u)&=2\int_{\Sigma^{+}_{T}}|(u\cdot n_{\Dd}(x))|\gamma^{+}(\rhofp)(t,x,u)\,d\lambda_{\Sigma_{T}}(t,x,u)\\
&\leq C\int_{(0,T)\times\er^d} |u| \overline{P}(t,u) du\;  dt\\
&\leq C\left(\int_{(0,T)\times\er^{d}}|u|\omega(u)\overline{P}^{2}(t,u)dt\;du\right)^{\frac{1}{2}}<+\infty,
\end{align*}
where $C$ is a  constant depending only on $T$, $\partial\Dd$, and $\|\frac{1}{\omega}\|_{L^1(\er^d)}$.  The mean no-permeability condition \eqref{eq:nopermeability} is then  satisfied, since $\rhofp$
satisfies the specular boundary condition.

We now introduce the probability measure $\QQ$ defined by
\begin{equation}\label{eq:SuitableCandidate}
\frac{d\QQ}{d\PP}=\exp\left\{\frac{1}{\sigma}\int_{0}^{T}B[x(t);\rhofp(t)]\,d\widetilde{w}(t)-
\frac{1}{2\sigma^{2}}\int_{0}^{T}\left|B[x(t);\rhofp(t)]\right|^2\,dt\right\}.
\end{equation}
Then, according to Girsanov Theorem, $((x(t),u(t));\,t\in[0,T])$ satisfies the confined Langevin equation with the additional drift $(t,x) \mapsto B[x;\rhofp(t)]$; namely, $\QQ$-a.s.,
\begin{equation*}
\left\{
\begin{aligned}
x(t) &= x(0) + \int_{0}^{t} u(s)\,ds,\\
u(t) &= u(0) + \int_0^t B[x(s);\rhofp(s)]\,ds+ \sigma w(t)
 -\sum_{\substack{0 < s \leq t}} 2\left(u(s^{-})\cdot n_{\Dd}(x(s))\right)n_{\Dd}(x(s))
\ind_{\displaystyle \left\{x(s)\in \partial \Dd\right\}},
\end{aligned}
\right.
\end{equation*}
where $(w(t):=\widetilde{w}(t)-\int_0^t B[x(s);\rhofp(s)]\,ds;\,t\in[0,T])$ is an $\er^d$-valued $\QQ$-Brownian motion, and $\QQ(x(0)\in dx,u(0)\in du)=\rho_{0}(x,u)dx\,du$.
To prove that $\QQ$ is a solution in law to \eqref{eq:NonlinearConfinedLangevin}, we check that the time-marginals $\QQ\circ(x(t),u(t))^{-1}$ satisfy a mild equation. More precisely, we show that $\QQ\circ(x(t),u(t))^{-1}$ admits a density function equal to $\rhofp(t)$, so that $B[x(t);\rhofp(t)]$ is equal to
$\EE_{\QQ}[b(u(t))|x(t)]$. For this purpose,  we introduce the following linear mild equation:
for all $t\in(0,T]$, for all $\psi\in\Cc_{c}(\Dd\times\er^{d})$,
\begin{equation}\label{eq:mild-bis}
\left<\psi,\rho(t)\right>=\left<\Gamma^{\psi}(t),\rho_{0}\right>+\int_{0}^{t}\left<\nabla_{u}\Gamma^{\psi}(t-s),
B[\cdot;\rhofp(s)]\rho(s)\right>\,ds,
\end{equation}
where $\left<\cdot,\cdot\right>$ stands for the inner product in $L^{2}(\Dd\times\er^d)$ and $\Gamma^{\psi}(t,x,u)$ is defined as in \eqref{eq:EqMildKernel}.
According to Corollary \ref{coro:KernelEstimate}, for all $\psi\in\Cc_{c}(\Dd\times\er^{d})$,
$\Gamma^{\psi}$ belongs to $L^{2}((0,T)\times\Dd;H^1(\er^d))$ hence \eqref{eq:mild-bis} is well defined.
Furthermore, we have
\begin{proposition}\label{prop:weakuniqueness}
There exists  at most one  solution in $\Cc([0,T];L^{2}(\Dd\times\er^d))$ to the linear mild equation \eqref{eq:mild-bis}.
\end{proposition}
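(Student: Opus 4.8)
The plan is a standard Gronwall argument adapted to the mild formulation. Suppose $\rho_1,\rho_2\in\Cc([0,T];L^2(\Dd\times\er^d))$ both satisfy \eqref{eq:mild-bis} with the \emph{same} driving term $\rhofp$ (this is essential: the nonlinearity has been frozen, so the equation \eqref{eq:mild-bis} is genuinely linear in its unknown $\rho$, and the coefficient $(t,x)\mapsto B[x;\rhofp(t)]$ is a fixed element of $L^\infty((0,T)\times\Dd;\er^d)$ with $|B[\cdot;\rhofp(\cdot)]|\le\|b\|_{L^\infty(\er^d;\er^d)}$ by \hypedpi). Set $R(t):=\rho_1(t)-\rho_2(t)$. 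Subtracting the two copies of \eqref{eq:mild-bis}, the initial terms cancel and we obtain, for every $\psi\in\Cc_c(\Dd\times\er^d)$ and every $t\in(0,T]$,
\begin{equation*}
\left<\psi,R(t)\right>=\int_0^t\left<\nabla_u\Gamma^\psi(t-s),B[\cdot;\rhofp(s)]R(s)\right>ds.
\end{equation*}

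First I would bound the right-hand side. By Cauchy--Schwarz in $(x,u)$, for each $s$,
\begin{equation*}
\left|\left<\nabla_u\Gamma^\psi(t-s),B[\cdot;\rhofp(s)]R(s)\right>\right|\le\|b\|_{L^\infty(\er^d;\er^d)}\,\|\nabla_u\Gamma^\psi(t-s)\|_{L^2(\Dd\times\er^d)}\,\|R(s)\|_{L^2(\Dd\times\er^d)}.
\end{equation*}
Now I invoke the energy equality \eqref{ineq:SobolevEstimate_L2} of Corollary \ref{coro:KernelEstimate}, which gives $\sigma^2\|\nabla_u\Gamma^\psi\|^2_{L^2(Q_r)}\le\|\psi\|^2_{L^2(\Dd\times\er^d)}$ for all $r\in(0,T)$; hence $\int_0^t\|\nabla_u\Gamma^\psi(t-s)\|^2_{L^2(\Dd\times\er^d)}\,ds=\|\nabla_u\Gamma^\psi\|^2_{L^2(Q_t)}\le\sigma^{-2}\|\psi\|^2_{L^2(\Dd\times\er^d)}$. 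Combining these with Cauchy--Schwarz in the time integral yields
\begin{equation*}
\left|\left<\psi,R(t)\right>\right|\le\frac{\|b\|_{L^\infty(\er^d;\er^d)}}{\sigma}\,\|\psi\|_{L^2(\Dd\times\er^d)}\left(\int_0^t\|R(s)\|^2_{L^2(\Dd\times\er^d)}\,ds\right)^{1/2}.
\end{equation*}
Since $\Cc_c(\Dd\times\er^d)$ is dense in $L^2(\Dd\times\er^d)$, taking the supremum over $\psi$ with $\|\psi\|_{L^2(\Dd\times\er^d)}\le1$ gives
\begin{equation*}
\|R(t)\|^2_{L^2(\Dd\times\er^d)}\le\frac{\|b\|^2_{L^\infty(\er^d;\er^d)}}{\sigma^2}\int_0^t\|R(s)\|^2_{L^2(\Dd\times\er^d)}\,ds.
\end{equation*}
The map $t\mapsto\|R(t)\|^2_{L^2(\Dd\times\er^d)}$ is continuous on $[0,T]$ (as $R\in\Cc([0,T];L^2(\Dd\times\er^d))$) and vanishes at $t=0$, so Gronwall's lemma forces $R\equiv0$ on $[0,T]$, i.e.\ $\rho_1=\rho_2$.

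The only genuinely delicate point is the regularity input: one must be sure that $\Gamma^\psi\in L^2((0,T)\times\Dd;H^1(\er^d))$ so that the right-hand side of \eqref{eq:mild-bis} (and the Cauchy--Schwarz bound above) makes sense and the time integral $\int_0^t\|\nabla_u\Gamma^\psi(t-s)\|^2\,ds$ is finite and controlled. This is exactly what Corollary \ref{coro:KernelEstimate} provides, together with the semigroup identity $\Gamma^\psi(t-s,x,u)=\EE_\PP[\psi(X^{s,x,u}_t,U^{s,x,u}_t)]$ recorded just before \eqref{eq:EqMildKernel}, which legitimises reading $\Gamma^\psi(t-s)$ as the relevant kernel at each $s\in(0,t)$. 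No measurability or integrability issue survives beyond that, so the proof is complete once the energy estimate is cited.
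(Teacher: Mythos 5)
Your argument is correct and follows essentially the same route as the paper's: subtract the two mild identities, apply Cauchy--Schwarz together with the energy identity \eqref{ineq:SobolevEstimate_L2} to control $\Vert\nabla_u\Gamma^\psi\Vert_{L^2(Q_t)}$, and close with Gronwall. The one small point you elide is that Corollary \ref{coro:KernelEstimate} is stated only for \emph{nonnegative} $\psi\in\Cc_c(\Dd\times\er^d)$, whereas your supremum runs over signed test functions of unit norm; the paper resolves this by applying \eqref{ineq:SobolevEstimate_L2} to the decomposition $\psi=(\psi)^+-(\psi)^-$ and using linearity of $\psi\mapsto\Gamma^\psi$, which only changes the Gronwall constant. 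With that remark inserted, your proof matches the paper's.
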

\begin{proof} Let $\rho_1,\rho_2\in\Cc([0,T];L^2(\Dd\times\er^d))$ be two mild solutions to
\eqref{eq:mild-bis}. Then, for all $t\in[0,T]$,
\begin{align*}
\Vert\rho_1(t)-\rho_2(t)\Vert^{2}_{L^2(\Dd\times\er^d)} &=\sup_{\substack{\psi\in \Cc_{c}(\Dd\times\er^d);\\\Vert \psi\Vert_{L^2(\Dd\times\er^d)}=1}}\left(\left<\psi,(\rho_1-\rho_2)(t)\right>\right)^{2} \\
&=\sup_{\substack{\psi\in \Cc_{c}(\Dd\times\er^d);\\\Vert \psi\Vert_{L^2(\Dd\times\er^d)}=1}}\left|\int_{Q_t}\left(\nabla_{u}\Gamma^{\psi}(t-s,x,u),
B[x;\rhofp(s)]\right)(\rho_1-\rho_2)(s,x,u)\,ds\,dx\,du\right|^2 \\
&\leq \Vert b\Vert^2_{L^{\infty}(\er^d;\er^{d})}
\sup_{\substack{\psi\in \Cc_c(\Dd\times\er^d);\\\Vert \psi\Vert_{L^2(\Dd\times\er^d)}=1}} \Vert\nabla_{u}\Gamma^{\psi}\Vert^2_{L^2(Q_{t})}
\int_{0}^{t}\Vert \rho_1(s)-\rho_2(s)\Vert^2_{L^2(\Dd\times\er^{d})}\,ds.
\end{align*}
By using the estimate \eqref{ineq:SobolevEstimate_L2} in Corollary \ref{coro:KernelEstimate} on the decomposition $(\psi)^+$ and $(\psi)^-$, we obtain that  $\Vert\nabla_{u}\Gamma^{\psi}\Vert_{L^{2}(\omega,Q_{T})}\leq 1/\sigma^2$. It follows that
\begin{equation*}
\Vert\rho_1(t)-\rho_2(t)\Vert^{2}_{L^2(\Dd\times\er^d)} \leq \frac{\Vert b\Vert^{2}_{L^\infty(\er^d;\er^{d})}}{\sigma^{2}}\int_{0}^{t}
\Vert \rho_1(s)-\rho_2(s)\Vert^{2}_{L^2(\Dd\times\er^d)}\,ds.
\end{equation*}
By Gronwall's Lemma, we conclude on the uniqueness of solutions to \eqref{eq:mild-bis}.
\end{proof}
\begin{proposition}\label{thm:LinkMild&MVFK}
\item{(i)} The solution $(\rhofp(t); t\in[0,T])\in \VoneT$ of \eqref{eq:ConditionalMcK-VFP},  constructed in Theorem  \ref{thm:Existence_result} is solution to the mild equation \eqref{eq:mild-bis}.\\
\item{(ii)} For $\QQ$ defined in \eqref{eq:SuitableCandidate}, for all $t\in[0,T]$, the time marginal $\QQ\circ(x(t),u(t))^{-1}$ admits a density $\rho(t)\in L^{2}(\omega,\Dd\times\er^{d})$ which is solution to the mild equation \eqref{eq:mild-bis}.
\end{proposition}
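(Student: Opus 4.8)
The plan is to prove the two assertions in turn, using the semigroup estimates of Corollary \ref{coro:KernelEstimate} as the main analytic input and the already-established uniqueness of the mild equation (Proposition \ref{prop:weakuniqueness}) to tie the two pieces together.

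For assertion $(i)$, I would start from the weak/variational formulation satisfied by $\rhofp$ — namely the identity \eqref{VFPnonlineaire_variationnel} of Theorem \ref{thm:Existence_result}, or equivalently the distributional formulation on $Q_t$. The idea is to test this formulation against the smooth function $(s,x,u)\mapsto \Gamma^{\psi}(t-s,x,u)$ for a fixed $\psi\in\Cc_c(\Dd\times\er^d)$ and a fixed $t\in(0,T]$. Since $\Gamma^{\psi}$ solves the backward equation \eqref{eq:backwardVFP-bis}, i.e. $\partial_s\Gamma^{\psi}(t-s)=-\trans(\Gamma^{\psi}(t-s))-\frac{\sigma^2}{2}\triangle_u\Gamma^{\psi}(t-s)$ in the distributional sense, the bulk terms involving the transport operator and the Laplacian cancel against the corresponding terms in the weak formulation for $\rhofp$, leaving exactly
\[
\langle\psi,\rhofp(t)\rangle-\langle\Gamma^{\psi}(t),\rho_0\rangle
=\int_0^t\langle\nabla_u\Gamma^{\psi}(t-s),B[\cdot;\rhofp(s)]\rhofp(s)\rangle\,ds,
\]
together with boundary terms on $\Sigma^\pm_t$. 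The crucial point is that these boundary contributions vanish: both $\rhofp$ and $\Gamma^{\psi}$ satisfy the specular boundary condition (for $\rhofp$ this is \eqref{eq:specularcondition}; for $\Gamma^{\psi}$ it is the third line of \eqref{eq:backwardVFP-bis}), so after the change of variables $u\mapsto u-2(u\cdot\nd(x))\nd(x)$ the integrals over $\Sigma^+_t$ and $\Sigma^-_t$ are opposite and sum to zero — this is the same cancellation already used in the uniqueness proof of Proposition \ref{prop:VFPlineairespec}. A technical care point is that $\Gamma^{\psi}$ is not compactly supported in $\overline{Q_t}$, so I would justify the test-function manipulation by an approximation argument: first use $\Gamma^{\psi}_n$ (which enjoys the stronger regularity $\Cc^{1,1,2}_b(Q_T)\cap\Cc(\overline{Q_T}\setminus\Sigma^0_T)$ of Corollary \ref{coro:KernelSmoothness}, solving \eqref{eq:backwardVFP} with the specular-type boundary relation linking level $n$ to $n-1$), derive the corresponding identity, and then pass to the limit $n\to\infty$ using the $L^2((0,T)\times\Dd;H^1(\er^d))$-convergence $\Gamma^{\psi}_n\to\Gamma^{\psi}$ and the Maxwellian domination \eqref{EncadrMaxwell} of $\rhofp$ (which controls $B[\cdot;\rhofp]\rhofp$ in the relevant norm, since $|B|\le\|b\|_\infty$). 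The mild identity \eqref{eq:mild-bis} follows; membership of $\rhofp$ in $\Cc([0,T];L^2(\Dd\times\er^d))$ comes from $\rhofp\in\VoneT$.

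For assertion $(ii)$, I would apply Itô's formula to the process $s\mapsto\Gamma^{\psi}_n(t-s,x(s),u(s))$ under $\QQ$, between $s=0$ and $s=t$, stopping at the hitting times $\tau_k$ to handle the specular jumps. On each interval $(\tau_{k},\tau_{k+1})$ the process $(x,u)$ has continuous paths satisfying $dx=u\,ds$, $du=B[x;\rhofp]\,ds+\sigma\,dw$, so Itô plus the backward PDE \eqref{eq:backwardVFP} for $\Gamma^{\psi}_n$ produces the drift term $\nabla_u\Gamma^{\psi}_n(t-s)\cdot B[x;\rhofp(s)]$ and a $\QQ$-martingale increment; at each $\tau_{k+1}\le t$ the velocity reflects, and the boundary relation $\Gamma^{\psi}_n(\cdot,x,u)=\Gamma^{\psi}_{n-1}(\cdot,x,u-2(u\cdot\nd(x))\nd(x))$ on $\Sigma^+_T$ converts the reflected evaluation into a level-$(n-1)$ evaluation — telescoping over $k$ and taking expectations gives, after the reflection bookkeeping, an identity of mild type with $\Gamma^{\psi}$ replaced by the stopped kernels. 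Taking $n\to\infty$, using $\tau_n\to\infty$ $\PP$-a.s. (hence $\QQ$-a.s., as $\QQ\ll\PP$ with the exponential density \eqref{eq:SuitableCandidate} which has finite expectation because $B$ is bounded), together with the dominated-convergence argument from Corollary \ref{coro:KernelEstimate}, yields that the law $\rho(t)$ of $(x(t),u(t))$ under $\QQ$ — which belongs to $L^2(\omega,\Dd\times\er^d)$ by absolute continuity with respect to $\PP$ and the linear $L^2(\omega)$-stability of the confined Langevin semigroup inherited from Corollary \ref{coro:KernelLpEstimate} — satisfies precisely \eqref{eq:mild-bis}. Here I must also record that $\rho(t)$ has a density at all: this follows from the absolute continuity of $\QQ\circ(x(t),u(t))^{-1}$ with respect to $\PP\circ(x(t),u(t))^{-1}$, the latter having a density by the construction in Section \ref{sec:Linearcase} (the free-flow density is smooth and the reflection map preserves absolute continuity, cf. Lemma \ref{lem:abs-cont-wrt-surface-mesure}).

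The main obstacle I anticipate is the rigorous justification of the Itô/test-function computation across the boundary: the kernel $\Gamma^{\psi}$ (as opposed to the stopped $\Gamma^{\psi}_n$) is only continuous on $\overline{Q_T}\setminus\Sigma^0_T$ and merely $L^2(H^1)$ in the interior, so one cannot apply Itô's formula or Green's identity to it directly, and the whole argument must be run at the level of the stopped approximations $\Gamma^{\psi}_n$ and then passed to the limit — controlling the error $\|\Gamma^{\psi}_n-\Gamma^{\psi}\|$ in the right norms and verifying that the boundary terms genuinely cancel at each stage (not just in the limit) is the delicate part. Everything else is a routine combination of Girsanov's theorem, the strong Markov property, and Gronwall-type estimates, with Proposition \ref{prop:weakuniqueness} guaranteeing that the two mild solutions produced in $(i)$ and $(ii)$ must coincide, so that $\rho(t)=\rhofp(t)$ and consequently $B[x(t);\rhofp(t)]=\EE_{\QQ}[b(u(t))\mid x(t)]$, which closes the construction of the nonlinear solution.
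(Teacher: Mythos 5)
Your strategy — test the weak formulation of $\rhofp$ against the approximating kernels $\Gamma^{\psi}_n(t-\cdot)$ for $(i)$, and apply It\^o's formula to $s\mapsto\Gamma^{\psi}_n(t-s,x(s),u(s))$ with specular bookkeeping at the hitting times for $(ii)$, then pass to the limit — is the same as the paper's. But there is a genuine gap in your proof of $(ii)$: you assert that the $\QQ$-law of $(x(t),u(t))$ has a density in $L^{2}(\omega,\Dd\times\er^{d})$ ``by absolute continuity with respect to $\PP$ and the linear $L^{2}(\omega)$-stability of the confined Langevin semigroup.'' Absolute continuity $\QQ\ll\PP$ does not transfer an $L^2$-bound on the $\PP$-marginal density to the $\QQ$-marginal: the $\QQ$-density equals $p_t(x,u)\,\EE_{\PP}[d\QQ/d\PP\mid(x(t),u(t))=(x,u)]$ and you have no control of the conditional expectation of the Girsanov weight in $L^\infty$; a plain Cauchy--Schwarz split yields only $\EE_\QQ[\psi]\lesssim\|\psi\|_{L^4}$, hence a density in $L^{4/3}$, which is too weak to start the Gronwall loop. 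The paper instead proves a Riesz-type bound $\langle\psi,\mu^m(t)\rangle\leq K\|\psi\|_{L^{2}(\Dd\times\er^d)}$ via a H\"older split with a free exponent $\alpha>1$ on the exponential martingale, combined with the $L^p$-conservation inequalities \eqref{ineq:ApproxLpConservation}--\eqref{ineq:LpConservation} applied to the stopped kernels $\Gamma^{\psi^\alpha}_m$ and the regularity of $\rho_0$ from \hypedpiii; this quantitative step is the nontrivial input your proposal omits.

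Two lesser issues. In part $(ii)$ you must keep the hitting-time truncation index $m$ distinct from the kernel-approximation index $n$ — writing ``$\tau_n\to\infty$'' conflates them. The correct order is to pass $n\to\infty$ at fixed $m$, and this is exactly where Lemma~\ref{lem:abs-cont-wrt-surface-mesure} is needed: it guarantees that $\sum_{k\le m}\QQ\circ(\tau_k,x(\tau_k),u(\tau_k))^{-1}\ll\lambda_{\Sigma_T}$, so the $\lambda_{\Sigma_T}$-a.e.\ convergence \eqref{eq:KernelsConvergence} is actually seen by $\EE_\QQ\big[\sum_{k\le m}(\Gamma^{\psi}_n-\Gamma^{\psi}_{n-1})(t-\tau_k,x(\tau_k),u(\tau_k))\big]$ — you invoke that lemma for the weaker (and at that stage unnecessary) purpose of showing a density exists. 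Only after this does one pass $m\to\infty$, using the Gronwall bound on $\|\mu^m\|_{L^2(Q_T)}$ uniform in $m$. Finally, in part $(i)$ the boundary integrals do \emph{not} cancel at finite $n$ as in Proposition~\ref{prop:VFPlineairespec}: after the change of variables and the specular condition for $\rhofp$ they combine into the difference $\Gamma^{\psi}_n-\Gamma^{\psi}_{n-1}$ on $\Sigma^-_t$, which vanishes only in the $n\to\infty$ limit via \eqref{eq:KernelsConvergence} together with the Maxwellian domination.
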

Combining Proposition \ref{thm:LinkMild&MVFK}  and Proposition \ref{prop:weakuniqueness}, we conclude on the equality $\QQ\circ(x(t),u(t))^{-1}=\rhofp(t,x,u)dx,du$. We also conclude  that $\QQ$
is a solution in law to \eqref{eq:NonlinearConfinedLangevin} in $\Pi_{\omega}$. The set of time marginal densities of $\QQ$ naturally inherits the
trace functions of $\rhofp$ so that the specular and the mean no-permeability boundary conditions are both satisfied.
\begin{proof}[Proof of Proposition \ref{thm:LinkMild&MVFK}]
We first prove $(i)$. According to Theorem \ref{thm:Existence_result}, $\rhofp\in \VoneT \subset \Cc([0,T];L^2(\Dd\times\er^d))$ satisfies: for all $t\in[0,T]$, $\Psi\in\Cc^{\infty}_{c}(\overline{Q_{t}})$,
\begin{align}\label{eq:KVFPVariational}
\begin{aligned}
&\int_{Q_{t}}\left(\rhofp(s,x,u)\left(\partial_{s}\Psi+(u\cdot\nabla_{x}\Psi)+\frac{\sigma^{2}}{2}\triangle_{u}\Psi\right)(s,x,u)
+ \left(B[x;\rhofp(s)]\cdot\nabla_{u}\Psi(s,x,u)\right)\rhofp(s,x,u)\right)\,ds\,dx\,du
\\
&=-\int_{\Dd\times\er^{d}}\rhofp(t,x,u)\Psi(t,x,u)\,dx\,du +
\int_{\Dd\times\er^{d}} \rho_{0}(x,u)\Psi(0,x,u)\,dx\,du
\\
&\quad-\int_{\Sigma^{+}_{t}}(u\cdot n_{\Dd}(x))\gamma^{+}(\rhofp)(s,x,u)\left(\Psi(s,x,u)-\Psi(s,x,u-2(u\cdot n_{\Dd }(x))n_{\Dd }(x))\right)d\lambda_{\Sigma_{T}}(s,x,u).
\end{aligned}
\end{align}
Using convolution approximation on the test function and since $\rhofp,\nabla_{u}\rhofp$ and $\gamma^{+}(\rhofp)$ are square-integrable, one can extend the preceding formula for all $\Psi\in\Cc_{b}(\overline{Q_{t}})\cap\Cc^{1,1,2}_{b}(Q_{t})$.
Using Corollary \ref{coro:KernelSmoothness}, we know that
$(s,x,u)\in Q_{t}\mapsto\Gamma^{\psi}_{n}(t-s,x,u)$ is a smooth function that satisfies
\begin{equation}\label{eq:backwardVFP-bis-bis}
\left\{
\begin{aligned}
&\partial_{s}\Gamma^{\psi}_{n}(t-s)+(u\cdot \nabla_{x}\Gamma^{\psi}_{n}(t-s))+\frac{\sigma^{2}}{2}\triangle_{u}\Gamma^{\psi}_{n}(t-s)=0,~\mbox{on}~Q_{t},\\
&\lim_{s\rightarrow t}\Gamma^{\psi}_{n}(t-s,x,u)=\psi(x,u),~\mbox{on}~\Dd\times\er^{d},\\
&\Gamma^{\psi}_{n}(t-s,x,u)=\Gamma^{\psi}_{n-1}(t-s,x,u-2(u\cdot n_{\Dd}(x))n_{\Dd}(x)),~\mbox{on}~\Sigma^{+}_{t}.
\end{aligned}
\right.
\end{equation}
Hence, in the case $\Psi(s,x,u)=\Gamma^{\psi}_{n}(t-s,x,u)$, \eqref{eq:KVFPVariational} reduces to
\begin{equation}
\label{proofst1:LinkMild&MVFK}
\begin{aligned}
&\int_{\Dd\times\er^{d}}\rhofp(t,x,u)\psi(x,u)\,dx\,du\\
&=\int_{\Dd\times\er^{d}} \rho_{0}(x,u)\Gamma^{\psi}_{n}(t,x,u)\,dx\,du
+\int_{Q_{t}}\left(B[x;\rhofp(s)]\cdot\nabla_{u}\Gamma^{\psi}_{n}(t-s,x,u)\right)\rhofp(s,x,u)\,ds\,dx\,du\\
&\quad-\int_{\Sigma^{-}_{t}}(u\cdot n_{\Dd}(x))\gamma^{-}(\rhofp)(s,x,u)\left(\Gamma^{\psi}_{n}(t-s,x,u)-\Gamma^{\psi}_{n-1}(t-s,x,u)\right)d\lambda_{\Sigma_{T}}(s,x,u).
\end{aligned}
\end{equation}
Owing to \eqref{eq:KernelsConvergence}, we obtain \eqref{eq:mild-bis} by taking the limit $n\rightarrow +\infty$.

Now we prove $(ii)$. Let $\QQ$ be defined as in \eqref{eq:SuitableCandidate}. Let us also introduce the time-marginal probability measures
$(\mu^{m}(t) := \QQ\circ(x(t\wedge \tau_{m}),u(t\wedge \tau_{m}))^{-1}; m\geq 1)$ where $\tau_{m}$ is the $m$th-time $x(t)$ hits $\partial\Dd$. Since $(t,x)\mapsto B[x;\rhofp(t)]$ is uniformly bounded, owing to Girsanov transform and Theorem \ref{thm:LinearSolutionByStraightening},
one can easily check that $\QQ$ is absolutely continuous w.r.t. $\PP$ so that
the sequence $(\tau_{m}; m\geq 1)$ is well-defined and grows to $\infty$ under $\QQ$. As a first step, we show the existence of an $L^{\infty}((0,T);L^{2}(\Dd\times\er^{d}))$-density of $\mu^{m}$. Using a Riesz representation argument, it is sufficient to show that there exists $K>0$, possibly depending on $m$, such that
\begin{equation}\label{proofst1:thm:LinkMild&MartProb}
\forall t\in(0,T),~\forall \psi\in \Cc^{\infty}_{c}(\Dd\times\er^{d}),\,\mbox{ nonnegative},\quad\int_{\Dd\times\er^{d}}\psi(x,u)\mu^{m}(t,dx,du)\leq K\Vert \psi\Vert_{L^{2}(\Dd\times\er^d)}.
\end{equation}
To prove  \eqref{proofst1:thm:LinkMild&MartProb}, let us observe that for all nonnegative $\psi\in\Cc^{\infty}_{c}(\Dd\times\er^{d})$, and for all  $\alpha\in(1,+\infty)$, using Girsanov's change of probability and the boundedness of $B[\cdot;{\rhofp}(\cdot)]$,
\begin{equation}\begin{aligned}
\label{proofst2:thm:LinkMild&MartProb}
\int_{\Dd\times\er^{d}}\psi(x,u)\mu^{m}(t,dx,du)&=\EE_{\QQ}\left[\psi(x(t\wedge\tau_{m}),u(t\wedge\tau_{m}))\right]\\
&\leq\exp{\left\{\frac{\Vert b\Vert_{L^{\infty}(\er^{d};\er^{d})}T}{2(\alpha+1)}\right\}}\left(\EE_{\PP}\left[ \psi^{\alpha}(x(t\wedge\tau_{m}),u(t\wedge\tau_{m})) \right]\right)^{\frac{1}{\alpha}}.
\end{aligned}
\end{equation}
We will specify later the appropriate $\alpha$.
Now observe that
\begin{equation*}
\EE_{\PP}\left[\psi^{\alpha}(x(t\wedge\tau_{m}),u(t\wedge\tau_{m}))\right]=\left<\Gamma^{(\psi)^{\alpha}}_{m}(t),\rho_{0}\right>,
\end{equation*}
for $\Gamma^{(\psi)^{\alpha}}_{m}$ given as in \eqref{eq:EqMildKernel}. Let us observe that, according to Corollary \ref{coro:KernelLpEstimate},
for all $\beta\in(1,2)$,
\begin{equation*}
\Vert \Gamma^{(\psi)^{\alpha}}_{m}(t)\Vert^{\beta}_{L^{\beta}(\Dd\times\er^d)}+\Vert \Gamma^{(\psi)^{\alpha}}_{m}\Vert^{\beta}_{L^{\beta}(\Sigma^{-}_{t})}\leq \Vert\psi^{\alpha}\Vert^{\beta}_{L^{\beta}(\Dd\times\er^d)}+\Vert \Gamma^{(\psi^{\alpha})}_{m-1}\Vert^{\beta}_{L^{\beta}(\Sigma^{-}_{t})}.
\end{equation*}
Iterating this inequality $m$ times and since $\Gamma^{(\psi^{\alpha})}_{0}=\psi=0$ on $\Sigma_{T}$, one gets
\begin{equation*}
\Vert \Gamma^{(\psi)^{\alpha}}_{m}(t)\Vert^{\beta}_{L^{\beta}(\Dd\times\er^d)}\leq m \,\Vert \psi^{\alpha}\Vert^{\beta}_{L^{\beta}(\Dd\times\er^d)}.
\end{equation*}
It thus follows that, for $\beta'$ conjugate to $\beta$,
\begin{equation*}
\left|\left<\Gamma^{(\psi^{\alpha})}_{m}(t),\rho_{0}\right>\right|\leq \Vert \Gamma^{(\psi^{\alpha})}_{m}(t)\Vert_{L^{\beta}(\Dd\times\er^d)}\Vert \rho_{0}
\Vert_{L^{\beta^{'}}(\Dd\times\er^d)}\leq {m^{\frac{1}{\beta}}}\Vert\psi^{\alpha}\Vert^{\beta}_{L^{\beta}(\Dd\times\er^d)}\Vert \rho_{0}
\Vert^{\beta^{'}}_{L^{\beta^{'}}(\Dd\times\er^d)}.
\end{equation*}
Coming back to \eqref{proofst2:thm:LinkMild&MartProb}, we deduce that
\begin{equation}\label{proofst3:thm:LinkMild&MartProb}
\int_{\Dd\times\er^{d}}\psi(x,u)\mu^{m}(t,dx,du)\leq m^{\frac{1}{{\beta}\alpha}}\exp{\left\{\frac{\Vert b\Vert_{L^{\infty}}T}{2(\alpha+1)}\right\}}\Vert
\psi\Vert_{L^{\beta\alpha}(\Dd\times\er^d)}\left(\Vert \rho_{0}
\Vert_{L^{\beta^{'}}(\Dd\times\er^d)}\right)^{\frac{1}{\alpha}}
\end{equation}
For the special case where $\alpha$ and $\beta$ are such that $\beta\alpha=2$ and owing to \hypedpiii, we get \eqref{proofst1:thm:LinkMild&MartProb}
for a constant $K$ depending  on $m$, $T$, $\Vert\rho_{0}\Vert_{L^{2}(\Dd\times\er^d)}$ and ${\Vert b\Vert_{L^{\infty}(\er^{d};\er^{d})}}$.

Now applying the It\^{o} formula to $(s,x,u)\mapsto \Gamma^{\psi}_{n}(t-s,x,u)$ and using  Eq. \eqref{eq:backwardVFP-bis-bis}, it follows that, $\QQ$-a.s.,
\begin{align*}
\psi(x(t\wedge \tau_{m}),u(t\wedge \tau_{m}))&=\Gamma^{\psi}_{n}(t,x(0\wedge \tau_{m}),u(0\wedge \tau_{m}))\\
&\quad +\int_{0}^{t\wedge \tau_{m}}\nabla_{u}\Gamma^{\psi}_{n}(t-s,x(s),u(s))\,
\left(dw(s)+B[x(s);\rhofp(s)]\,ds\right)\\
&\quad+\sum_{k=0}^{m}\left(\Gamma^{\psi}_{n}(t-\tau_{k},x(\tau_{k}),u(\tau_{k}))-\Gamma^{\psi}_{n}(t-\tau_{k},x(\tau_{k}),u(\tau_{k}^{-}))\right)
\ind_{\{\tau_{m}\leq t\}}.
\end{align*}
Since $\Gamma^{\psi}_{n}(t-\tau_{k},x(\tau_{k}),u(\tau_{k}^{-}))=\Gamma^{\psi}_{n-1}(t-\tau_{k},x(\tau_{k}),u(\tau_{k}))$, taking the expectation on both sides of the equality yields
\begin{equation}
\label{proofst0:thm:LinkMild&MartProb}
\begin{aligned}
\left<\psi,\mu^{m}(t)\right>
&=\left<\Gamma^{\psi}_{n}(t),\rho_{0}\right>+\EE_{\QQ}\left[\int_{0}^{t\wedge\tau_{m}}\nabla_{u}\Gamma^{\psi}_{n}(t-s,x(s),u(s))B[x(s);\rhofp(s)]\,ds\right]\\
&\quad+\EE_{\QQ}\left[\sum_{k=0}^{m}\left(\Gamma^{\psi}_{n}(t-\tau_{k},x(\tau_{k}),u(\tau_{k}))-\Gamma^{\psi}_{n-1}(t-\tau_{k},x(\tau_{k}),u(\tau_{k}))
\right)\ind_{\{\tau_{k}\leq t\}}\right].
\end{aligned}
\end{equation}
We take the limit $n\rightarrow +\infty$ in \eqref{proofst0:thm:LinkMild&MartProb}. According to Lemma \ref{lem:abs-cont-wrt-surface-mesure}, $\sum_{k=0}^{m}\PP\circ(\tau_{k},x(\tau_{k}),u(\tau_{k}))^{-1}$ is a finite measure on $\Sigma^{-}_{T}$ and is absolutely continuous w.r.t. $\lambda_{\Sigma_{T}}$. By Girsanov Theorem, the same holds true for $\sum_{k=0}^{m}\QQ\circ(\tau_{k},x(\tau_{k}),u(\tau_{k}))^{-1}$. Hence using the $\lambda_{\Sigma_{T}}$-a.e. convergence given in \eqref{eq:KernelsConvergence} we get that
\begin{equation*}
\lim_{n\rightarrow+\infty}
\EE_{\QQ}\left[\sum_{k=0}^{m}\left(\Gamma^{\psi}_{n}(t-\tau_{k},x(\tau_{k}),u(\tau_{k}))-\Gamma^{\psi}_{n-1}(t-\tau_{k},x(\tau_{k}),u(\tau_{k}))\right)\right]=0.
\end{equation*}
Since $\Gamma^{\psi}_{n}$ and $\nabla_{u}\Gamma^{\psi}_{n}$ converge to $\Gamma^{\psi}$ and $\nabla_{u}\Gamma^{\psi}$ in $L^{2}(Q_{T})$
and since $\mu^{m}\in L^{\infty}((0,T);L^2(\Dd\times\er^{d}))$ we get
\begin{equation*}
\left<{\psi},\mu^{m}(t)\right>=\left<\Gamma^{\psi}(t),\rho_{0}\right>+\EE_{\QQ}\left[\int_{0}^{t\wedge\tau_{m}}
\left(\nabla_{u}\Gamma^{\psi}(t-s,x(s),u(s)),B[x(s);\rhofp(s)]\right)\,ds\right]
\end{equation*}
Next, one can observe that, for all $t\in(0,T)$,
\begin{align}\label{proofst4:thm:LinkMild&MartProb}
\left<\psi,\mu^{m}(t)\right>
&=\int_{\Dd\times\er^{d}}|\Gamma^{\psi}(t)|(x,u)|\rho_{0}| (x,u)\,dx\,du+\int_{0}^t\left(\nabla_{u}\Gamma^{\psi}(t,s),B[x;\rhofp(s)]\right)\mu^{m}(s,x,u)\,ds\,dx\,du\\
&\leq \Vert\Gamma^{\psi}(t)\Vert_{L^{2}(\Dd\times\er^{d})}\Vert\rho_{0}\Vert_{L^{2}(\Dd\times\er^{d})}+\Vert b\Vert_{L^{\infty}(\er^{d};\er^{d})}\Vert\nabla_{u}\Gamma^{\psi}\Vert_{L^{2}(Q_{T})}\Vert\mu^{m}\Vert_{L^{2}(Q_{t})}.\nonumber
\end{align}
By Gronwall's Lemma, this shows that $\mu^{m}$ is  bounded in $L^{2}(Q_{T})$ uniformly in $m$. In addition, for all $t\in(0,T]$,
$\mu^{m}(t)$ converges weakly toward $\QQ\circ(x(t),u(t))^{-1}$. Thus, using again a Riesz representation argument, we deduce that $\QQ$ admits a set of time marginal densities $(\rho(t);\,t\in[0,T])$ in $L^{2}(Q_{T})$. Taking the limit $m\rightarrow +\infty$ in \eqref{proofst4:thm:LinkMild&MartProb}, we further conclude that $(\rho(t);\,t\in[0,T])$ in $L^{2}(Q_{T})$ is solution to \eqref{eq:mild-bis}.
\end{proof}
\subsection{Uniqueness}
Under \hyp, let us observe that any solution in law $\QQ\in\Pi_{\omega}$ to \eqref{eq:NonlinearConfinedLangevin} has  time marginal densities $\rho(t)\in L^{2}(\omega,\Dd\times\er^{d})$, solution of the following weighted nonlinear mild equation: for all $t\in(0,T]$, for all $\psi\in L^{2}(\Dd\times\er^{d})$,
\begin{equation}\label{eq:EqNonlinearMildKernel}
\begin{aligned}
\langle \sqrt{\omega}\psi,\rho(t)\rangle &= \langle \Gamma^{\psi}(t),\sqrt{\omega}\rho_{0}\rangle +\int_{0}^{t}\langle \nabla_{u}\Gamma^{\psi}(t-s),\sqrt{\omega}B[\cdot;\rho(s)]\rho(s)\rangle\,ds\\
&\quad+\int_{0}^{t}\langle\Gamma^{\psi}(t-s), (\nabla_{u}\log(\sqrt{\omega})\cdot \sqrt{\omega} B[\cdot;\rho(s)])\rho(s)\rangle \,ds\\
&\quad 5+\frac{\sigma^{2}}{2}\int_{0}^{t}\langle( \triangle_{u}\sqrt{\omega})\Gamma^{\psi}(t-s), \rho(s)\rangle \,ds.
\end{aligned}
\end{equation}
We prove \eqref{eq:EqNonlinearMildKernel} by  replicating  some proof steps of Proposition~\ref{thm:LinkMild&MVFK}-$(ii)$: for fixed nonnegative $\psi\in \Cc^{\infty}_{c}(\Dd\times\er^{d})$, $t\in(0,T]$, and  $m, n\in\nat^*$, using Corollary \ref{coro:KernelSmoothness}, It\^o's formula applied to $(s,x,u)\in Q_{t}\mapsto \sqrt{\omega(u)}\Gamma^{\psi}_{n}(t-s,x,u)$ yields
\begin{align*}
&\EE_{\QQ}\left[\sqrt{\omega(u(t))}\psi(x(t\wedge\tau_{m}),u(t\wedge\tau_{m}))\right]\\
&=\EE_{\QQ}\left[\sqrt{\omega(u(0))}\Gamma_{n}^\psi(t,x(0),u(0))\right]+
\EE_{\QQ}\left[\int_{0}^{t\wedge\tau_{m}}\sqrt{\omega(u(s))}\left(\nabla_{u}\Gamma_{n}^\psi(t-s,x(s),u(s))\cdot B[x(s);\rho(s)]\right)\,ds\right]\\
&\quad +\EE_{\QQ}\left[\int_{0}^{t\wedge\tau_{m}}\Gamma_{n}^\psi(t-s,x(s),u(s))\left(\left(\nabla_{u}\sqrt{\omega(u(s))}\cdot B[x(s);\rho(s)]+\frac{\sigma^{2}}{2}\triangle\sqrt{\omega}(u(s))\right)\right)\,ds\right]\\
&\quad +\EE_{\QQ}\left[\sum_{k=0}^{m}\sqrt{\omega(u(\tau_{k}))}\left(\Gamma^{\psi}_{n}(t-\tau_{k},x(\tau_{k}),u(\tau_{k}))-\Gamma^{\psi}_{n-1}
(t-\tau_{k},x(\tau_{k}),u(\tau_{k}))\right)\ind_{\{\tau_{k}\leq t\}}\right].
\end{align*}
Then taking the limit $n\rightarrow +\infty$, the boundary term vanishes.  Next as $m\rightarrow +\infty$,  we prove  \eqref{eq:EqNonlinearMildKernel}
for any nonnegative test function $\psi\in \Cc_{c}(\Dd\times\er^{d})$. This result extends to any test function $\psi\in \Cc_{c}(\Dd\times\er^{d})$, using the linearity of $\psi\mapsto \Gamma^\psi$ and the decomposition $\psi = (\psi)^{+} -(\psi)^{-}$.
Since the drift coefficient in \eqref{eq:NonlinearConfinedLangevin} is bounded, the fact that two solutions in law of \eqref{eq:NonlinearConfinedLangevin} coincide is equivalent with the equality between the time marginal densities of these two solutions. We thus conclude the proof of the uniqueness part of Theorem \ref{thm:main} with the following.
\begin{lemma}\label{lem:NonlinearUniquenessResult}
Under \hyp, any solution
$\rho(t)\in L^{2}(\omega;\Dd\times\er^{d})$ to the non-linear mild equation \eqref{eq:EqNonlinearMildKernel} is equal to $\rhofp(t)$ for all $t\in[0,T]$.
\end{lemma}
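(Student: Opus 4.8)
The plan is a Gronwall argument for $t\mapsto \|\rho(t)-\rhofp(t)\|_{L^{2}(\omega,\Dd\times\er^{d})}$, exploiting that $\rhofp$ itself solves the weighted nonlinear mild equation \eqref{eq:EqNonlinearMildKernel}. This last point is essentially free: by Proposition~\ref{thm:LinkMild&MVFK} together with Proposition~\ref{prop:weakuniqueness}, the law $\QQ$ constructed in Subsection~\ref{sec:LawConstruction} has time-marginal densities equal to $\rhofp$, and the computation carried out just before the statement (It\^o's formula applied to $\sqrt{\omega}\,\Gamma^{\psi}_{n}$ followed by the limits $n,m\to+\infty$) shows that the marginals of any element of $\Pi_{\omega}$ solve \eqref{eq:EqNonlinearMildKernel}; one may equally re-run that computation directly on $\rhofp$, which is legitimate since $\rhofp\in\VoneT$ and satisfies the Maxwellian bounds \eqref{EncadrMaxwell}.

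Before the estimate can be closed, the fractional form of $B[\cdot\,;\cdot]$ forces a uniform lower bound on the denominators $\int_{\er^{d}}\rho(s,x,v)\,dv$. To obtain it I would freeze the drift: $\beta(t,x):=B[x;\rho(t)]$ is, by \hypedpi\ and the convention in \eqref{eq:DriftDefinitionEDP}, a fixed function with $\|\beta\|_{L^{\infty}((0,T)\times\Dd;\er^{d})}\le\|b\|_{L^{\infty}(\er^{d};\er^{d})}$, whatever the sign of the denominators. Then $\rho$ is a solution in weighted mild form of the \emph{linear} specular Vlasov--Fokker--Planck problem with drift $\beta$, and, adapting Propositions~\ref{prop:weakuniqueness} and~\ref{thm:LinkMild&MVFK} to that weighted mild formulation with a fixed bounded drift (uniqueness uses only $\|\beta\|_{\infty}<+\infty$ and the energy equality \eqref{ineq:SobolevEstimate_L2} for $\Gamma^{\psi}$; note $\rho(t)\in L^{2}(\omega,\Dd\times\er^{d})\subset L^{2}(\Dd\times\er^{d})$ since $\omega\ge 1$, and $t\mapsto\rho(t)$ is continuous by \eqref{eq:EqNonlinearMildKernel} and the continuity of the semigroup $\Gamma^{\psi}$), $\rho$ must coincide with the unique $\VoneT$ weak solution $\SSS$ of \eqref{eq:LinearVFPSpec} with drift $\beta$. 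Proposition~\ref{prop:VFPlineairespec}, applied with a couple $(\underline{P},\overline{P})$ of Maxwellian distributions whose parameters satisfy \eqref{Cond_pour_maxwellienne} (admissible because $\|\beta\|_{\infty}\le\|b\|_{\infty}$ and the sign of the coefficients in $(a2)$), then yields $\underline{P}\le\rho\le\overline{P}$ a.e.\ on $Q_{T}$; in particular $\int_{\er^{d}}\rho(s,x,v)\,dv\ge\underline{M}:=\inf_{s\in(0,T)}\int_{\er^{d}}\underline{P}(s,v)\,dv>0$ for a.e.\ $(s,x)$, by \eqref{MaxwellLowerBound}.

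With this in hand I would subtract the two copies of \eqref{eq:EqNonlinearMildKernel} for $\rho$ and $\rhofp$. Setting $R(t):=\rho(t)-\rhofp(t)$, the initial terms cancel, and testing against $\psi\in\Cc_{c}(\Dd\times\er^{d})$ with $\|\psi\|_{L^{2}(\Dd\times\er^{d})}=1$, using $\|R(t)\|_{L^{2}(\omega,\Dd\times\er^{d})}=\sup_{\psi}\langle\psi,\sqrt{\omega}R(t)\rangle$, reduces the claim to estimating three time-integral terms. In the $\nabla_{u}\Gamma^{\psi}$ term one splits $B[\cdot;\rho]\rho-B[\cdot;\rhofp]\rhofp=B[\cdot;\rho]\,R+(B[\cdot;\rho]-B[\cdot;\rhofp])\,\rhofp$: the first summand is $\le\|b\|_{\infty}\sqrt{\omega}\,|R|$ and is absorbed by Cauchy--Schwarz in $(x,u)$ and in $s$ together with $\|\nabla_{u}\Gamma^{\psi}\|^{2}_{L^{2}(Q_{t})}\le C\|\psi\|^{2}_{L^{2}}/\sigma^{2}$ from \eqref{ineq:SobolevEstimate_L2}; the second is controlled via the elementary identity $B[\cdot;\rho]-B[\cdot;\rhofp]=\frac{\int b\,(\rho-\rhofp)\,dv}{\int\rho\,dv}-\frac{\bigl(\int b\,\rhofp\,dv\bigr)\bigl(\int(\rho-\rhofp)\,dv\bigr)}{\bigl(\int\rho\,dv\bigr)\bigl(\int\rhofp\,dv\bigr)}$, whence $|B[\cdot;\rho]-B[\cdot;\rhofp]|\le\frac{2\|b\|_{\infty}}{\underline{M}}\int_{\er^{d}}|R(s,x,v)|\,dv\le\frac{2\|b\|_{\infty}}{\underline{M}}\,\overline{w}^{1/2}\,\|R(s,x,\cdot)\|_{L^{2}(\omega,\er^{d})}$ with $\overline{w}=\|1/\omega\|_{L^{1}(\er^{d})}<+\infty$ (Lemma~\ref{lem:major_poids}), the remaining factor $\rhofp\le\overline{P}$ being kept $\VoneT$-controlled through \eqref{MaxwellUpperBound}. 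The term carrying $\nabla_{u}\log\sqrt{\omega}$ is treated the same way using $|\nabla_{u}\sqrt{\omega}|\le\tfrac{\alpha}{2}\sqrt{\omega}$ and $\|\Gamma^{\psi}(t-s)\|_{L^{2}}\le\|\psi\|_{L^{2}}$, and the term carrying $\triangle_{u}\sqrt{\omega}$ using $|\triangle_{u}\sqrt{\omega}|\le C_{\alpha}\sqrt{\omega}$ (a direct computation in the spirit of Lemma~\ref{lem:major_poids}) and again $\|\Gamma^{\psi}(t-s)\|_{L^{2}}\le\|\psi\|_{L^{2}}$. Collecting everything gives $\|R(t)\|^{2}_{L^{2}(\omega,\Dd\times\er^{d})}\le C\int_{0}^{t}\|R(s)\|^{2}_{L^{2}(\omega,\Dd\times\er^{d})}\,ds$ for all $t\in[0,T]$, with $C$ depending only on $\sigma,\|b\|_{\infty},\alpha,d,T,|\Dd|,\underline{M}$ and $\sup_{s\in(0,T)}\|\overline{P}(s)\|_{L^{2}(\omega,\er^{d})}$; since $R(0)=0$, Gronwall's lemma forces $R\equiv0$, i.e.\ $\rho(t)=\rhofp(t)$ for all $t$. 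I expect the middle step to be the real obstacle: promoting the a priori merely $L^{2}(\omega)$ family $\rho$ to one satisfying the Maxwellian lower bound — which is exactly what keeps the drift denominators uniformly positive — requires carefully matching the weighted mild formulation with the $\VoneT$ weak solution of the frozen linear problem (time-continuity, density of the smooth test class, compatibility of the specular boundary condition), after which Proposition~\ref{prop:EncadrLineaire} and Proposition~\ref{prop:VFPlineairespec} do the rest.
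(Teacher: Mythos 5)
Your top-level strategy — Gronwall on $\|\rho(t)-\rhofp(t)\|_{L^2(\omega,\Dd\times\er^d)}$ via the weighted nonlinear mild equation \eqref{eq:EqNonlinearMildKernel} — is the paper's, but the ``freeze-the-drift'' detour you insert to obtain a uniform lower bound on $\int_{\er^d}\rho(s,x,v)\,dv$ is both unnecessary and, as proposed, incomplete. The lower bound is an artifact of your decomposition, not of the problem: in your identity for $B[\cdot;\rho]-B[\cdot;\rhofp]$ both terms carry a bare $1/\int\rho\,dv$, because $\int b\,\rhofp/\int\rhofp$ absorbs $\int\rhofp$, not $\int\rho$. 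The paper reverses the roles and uses
\begin{equation*}
\rhofp B[\cdot;\rhofp]-\rho B[\cdot;\rho]=(\rhofp-\rho)\,B[\cdot;\rho]+\rhofp\Big(\frac{\int b\,(\rhofp-\rho)\,dv}{\int\rhofp\,dv}-\frac{\big(\int b\,\rho\,dv\big)\big(\int(\rhofp-\rho)\,dv\big)}{\big(\int\rhofp\,dv\big)\big(\int\rho\,dv\big)}\Big)
\end{equation*}
(this is the intended identity; the printed formula has an apparent sign and $\rhofp\leftrightarrow\rho$ typo in the last numerator). Now $\int\rho\,dv$ appears only inside $(\int b\,\rho\,dv)/(\int\rho\,dv)=B[\cdot;\rho]$, which by \eqref{eq:DriftDefinitionEDP} and \hypedpi~is bounded by $\|b\|_{L^\infty(\er^d;\er^d)}$ for any nonnegative $\rho$, including when $\int\rho\,dv=0$; the stand-alone $B[\cdot;\rho]$ in the first summand is bounded the same way. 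Every remaining denominator is $\int\rhofp\,dv$, controlled by the Maxwellian lower bound of Theorem~\ref{thm:Existence_result}, so Gronwall closes assuming nothing on $\rho$ beyond $\rho(t)\in L^2(\omega)$ — exactly the hypothesis of the lemma.

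The detour is also not airtight. Identifying the a priori merely $L^2(\omega)$-valued $\rho$ with the $\VoneT$ weak solution $\SSS$ of \eqref{eq:LinearVFPSpec} with frozen drift requires (a) uniqueness for the \emph{weighted} linear mild equation — Proposition~\ref{prop:weakuniqueness} is stated for the unweighted one, and the extra $\nabla_u\log\sqrt{\omega}$ and $\triangle_u\sqrt{\omega}$ terms must be absorbed — and (b) a proof that $\SSS$ itself satisfies \eqref{eq:EqNonlinearMildKernel}, i.e.\ Proposition~\ref{thm:LinkMild&MVFK}-$(i)$ run with the unbounded test functions $\sqrt{\omega}\,\Gamma^{\psi}_{n}$, which the stated extension to $\Cc_b(\overline{Q_t})\cap\Cc^{1,1,2}_b(Q_t)$ does not cover without an additional decay argument. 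You flag this as the real obstacle, correctly; the simplest fix is to adopt the decomposition above and drop the detour.
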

\begin{proof} Using \eqref{eq:EqNonlinearMildKernel}, for all $t\in[0,T]$, we have
\begin{align*}
&\Vert\rhofp(t)-{\rho}(t)\Vert^{2}_{L^2(\omega,\Dd\times\er^d)}\\
&=\Vert\sqrt{\omega}(\rhofp(t)-{\rho}(t))\Vert^{2}_{L^{2}(\Dd\times\er^{d})}=\left(\sup_{\displaystyle\substack{\psi\in \Cc_c(\Dd\times\er^{d});\\ \Vert \psi\Vert_{L^2(\Dd\times\er^d)}=1}}
 \langle \sqrt{\omega}\psi,\rhofp(t)-{\rho}(t)\rangle \right)^{2}\\
&\leq2\sup_{\displaystyle \substack{\psi\in \Cc_c(\Dd\times\er^{d});\\\Vert \psi\Vert_{L^2(\Dd\times\er^d)}=1}}\left(\int_{0}^{t}\langle \nabla_{u}\Gamma^{\psi}(t-s)+\Gamma(t-s)\nabla_{u}\log(\sqrt{\omega}),\sqrt{\omega}\left(B[\cdot;\rhofp(s)]\rhofp(s)-B[\cdot;{\rho}(s)]{\rho}(s)\right)\rangle\,ds\right)^{2}\\
&\quad +\sigma^{2}\sup_{\displaystyle \substack{\psi\in \Cc_c(\Dd\times\er^{d});\\\Vert \psi\Vert_{L^2(\Dd\times\er^d)}=1}} \left(\int_{0}^{t} \langle\Gamma^{\psi}(t-s)\triangle_{u}\sqrt{\omega}, \rhofp(s)-{\rho}(s)\rangle \,ds\right)^{2}.
\end{align*}
Using Cauchy-Schwarz's inequality and Lemma \ref{lem:major_poids}, it follows that
\begin{align*}
&\Vert\rhofp(t)-{\rho}(t)\Vert^{2}_{L^2(\omega,\Dd\times\er^d)}\\
&\leq 2\sup_{\displaystyle \substack{\psi\in \Cc_c(\Dd\times\er^{d});\\\Vert \psi\Vert_{L^2(\Dd\times\er^d)}=1}}\left(\Vert\nabla_{u}\Gamma^{\psi} \Vert^{2}_{L^2(Q_{t})}+\frac{\alpha}{4}\Vert\Gamma^{\psi}\Vert^{2}_{L^2(Q_{t})}
\right)\int_{0}^{t}\Vert  \rhofp(s)B[\cdot;\rhofp(s)]-{\rho}(s)B[\cdot;{\rho}(s)]\Vert^{2}_{L^2(\omega,\Dd\times\er^{d})}\,ds\\
&\quad + \sigma^{2}\left(2\alpha(\frac{\alpha}{2}-1)+\alpha d\right)\left(\sup_{\displaystyle \substack{\psi\in\Cc_c(\Dd\times\er^{d});\\\Vert \psi\Vert_{L^2(\Dd\times\er^d)}=1}}\Vert\Gamma^{\psi}\Vert^{2}_{L^2(Q_{t})}\right)\int_{0}^{t}\Vert\rhofp(s)-{\rho}(s)\Vert^{2}_{L^{2}(\omega,\Dd\times\er^{d})}\,ds.
\end{align*}
Using Corollary \ref{coro:KernelEstimate}, one  can deduce that
\begin{equation}
\label{prst1}
\begin{aligned}
\Vert\rhofp(t)-{\rho}(t)\Vert^{2}_{L^2(\omega,\Dd\times\er^d)}&\leq \left(\frac{2}{\sigma^{2}}+\frac{\alpha T}{4}\right)\int_{0}^{t}
\Vert \rhofp(s)B[\cdot;\rhofp(s)]-{\rho}(s)B[\cdot;{\rho}(s)]\Vert^2_{L^2(\omega,\Dd\times\er^{d})}\,ds\\
&\quad +\left(2\alpha(\frac{\alpha}{2}-1)+\alpha d\right)\int_{0}^{t}\Vert\rhofp(s)-{\rho}(s)\Vert^{2}_{L^{2}(\omega,\Dd\times\er^{d})}\,ds.
\end{aligned}
\end{equation}
Now observe that
\begin{align*}
&\int_{Q_{T}}\omega(u)\left(\rhofp(s,x,u)B[x;\rhofp(s)]-{\rho}(s,x,u)B[x;{\rho}(s)]\right)^{2}\,ds\,dx\,du\\
&\leq\Vert b\Vert^{2}_{L^{\infty}(\er^{d};\er^{d})}\Vert\rhofp-{\rho}\Vert^{2}_{L^{2}(\omega,Q_{T})}\\
&\quad +\int_{(0,T)\times\Dd}\left(\int_{\er^{d}}\omega(u)\left(\rhofp(s,x,u)\right)^{2}\,du\right)
\left(B[x;\rhofp(s)]-B[x;{\rho}(s)]\right)^{2}\,ds\,dx
\end{align*}
and that
\begin{align*}
\left(B[x;\rhofp(s)]-B[x;{\rho}(s)]\right)&=
\dfrac{\int_{\er^{d}}b(v)\left(\rhofp(s,x,v)-{\rho}(s,x,v)\right)\,dv}{\int_{\er^{d}}\rhofp(s,x,v)\,dv}\\
&\quad+\dfrac{\int_{\er^{d}}b(v)\rhofp(s,x,v)\,dv\int_{\er^{d}}\left(\rhofp(s,x,v)-{\rho}(s,x,v)\right)\,dv}{\left(\int_{\er^{d}}\rhofp(s,x,v)\,dv\right)\left(\int_{\er^{d}}{\rho}(s,x,u)\,dv\right)}.
\end{align*}
Using the Maxwellian bounds $\overline{P}$ and $\underline{P}$ of $\rhofp$ given in Theorem \ref{thm:Existence_result}, one has
\begin{align*}
&\int_{\er^{d}}\omega(u)
\left(\rhofp(s,x,u)\right)^{2}\,du\left(\frac{\int_{\er^{d}}b(v)\left(\rhofp(s,x,v)-{\rho}(s,x,v)\right)\,dv}
{\int_{\er^{d}}\rhofp(s,x,v)\,dv}\right)^{2}\\
&\leq \dfrac{\int_{\er^{d}}\omega(u)\left(\overline{P}(s,u)\right)^{2}\,du}{\left(\int_{\er^{d}}\underline{P}(s,u)\,du\right)^{2}}
\left|\int_{\er^{d}}b(v)\left(\rhofp(s,x,v)-{\rho}(s,x,v)\right)\,dv\right|^{2}\\
&\leq 2\Vert b\Vert_{L^{\infty}(\er^{d};\er^{d})}\dfrac{\sup_{t\in[0,T]}\int_{\er^{d}}\omega(u)\left(\overline{P}(t,u)\right)^{2}\,du}{
\inf_{t\in[0,T]}\left(\int_{\er^{d}}\underline{P}(t,u)\,du\right)^{2}}\int_{\er^{d}}\omega^{-1}(v)\,dv
\int_{\er^{d}}\omega(v)\left|\rhofp(s,x,v)-{\rho}(s,x,v)\right|^{2}\,dv,
\end{align*}
and
\begin{align*}
&\int_{\er^{d}}\omega(u)
\left( \rhofp(s,x,u)\right)^{2}\,du \left(\frac{\int_{\er^{d}}b(v)\rhofp(s,x,v)\,dv\int_{\er^{d}}\left(\rhofp(s,x,v)-{\rho}(s,x,v)\right)\,dv}{\left(\int_{\er^{d}}\rhofp(s,x,v)\,dv\right)\left(\int_{\er^{d}}{\rho}(s,x,u)\,dv\right)}
\right)^{2}\\
&\leq \Vert b\Vert_{L^{\infty}(\er^{d};\er^{d})}\frac{\int_{\er^{d}}\omega(u)\left(\rhofp(s,x,u)\right)^{2}\,du}{\left(\int_{\er^{d}}\rhofp(s,x,v)\,dv\right)^{2}}
\left(\int_{\er^{d}}b(v)\left(\rhofp(s,x,v)-{\rho}(s,x,v)\right)\,dv\right)^{2}\\
&\leq \Vert b\Vert_{L^{\infty}(\er^{d};\er^{d})}\frac{\sup_{t\in(0,T)}\int_{\er^{d}}\omega(u)\left(\overline{P}(t,u)\right)^{2}\,du}{\inf_{t\in(0,T)}
\left(\int_{\er^{d}}\underline{P}(t,u)\,du\right)^{2}}
\int_{\er^{d}}\omega(v)\left|\rhofp(s,x,v)-{\rho}(s,x,v)\right|^{2}\,dv,
\end{align*}
Therefore, for some constant $C>0$,
\begin{equation*}
\int_{(0,T)\times\Dd}\left(\int_{\er^{d}}\omega(u)\left(\rhofp(s,x,u)\right)^{2}\,du\right)
\left(B[x;\rhofp(s)]-B[x;{\rho}(s)]\right)^{2}\,ds\,dx\leq 2C\Vert \rhofp-{\rho}\Vert^{2}_{L^{2}(\omega,Q_{t})}
\end{equation*}
Coming back to \eqref{prst1}, we deduce
\begin{align*}
&\Vert\rhofp(t)-{\rho}(t)\Vert^{2}_{L^2(\omega,\Dd\times\er^d)}\\
&\leq C \sup_{\psi\in \Cc_c(\Dd\times\er^d);\Vert \psi\Vert_{L^2(\Dd\times\er^d)}=1}\int_{0}^{t}\Vert\nabla_{u}\Gamma^{\psi}(t-s)\Vert^2_{L^2(\omega,\Dd\times\er^{d})}\,ds\int_{0}^{t}\Vert\rhofp(s)-{\rho}(s)\Vert^{2}_{L^2(\omega,\Dd\times\er^d)}\,ds.
\end{align*}
where $C$ depends  on $\rho_0, d,\sigma$ and $\alpha$. Using Gronwall's Lemma, this ends the proof.
\end{proof}
\paragraph{Acknowledgment. } The authors would like to thank Pr. Pierre-Louis Lions for having pointed to us the PDE approach developed in Section \ref{sec:PDEApproach} and for his helpful remarks.

\appendix
\section{Appendix}
\subsection{Some recalls}
\begin{corollary}[Rana \cite{IKRana2002}]
\label{coro:LebesgueContinuity} If $\phi\in L^{p}(\er^{d})$ for $p\in[1,+\infty)$ then
\begin{equation*}
\lim_{|\delta|\rightarrow 0^{+}}\int | \phi(z +\delta)-\phi(z)|^{p}dz=0.
\end{equation*}
\end{corollary}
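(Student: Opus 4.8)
The plan is to reduce to the easy case of a continuous compactly supported function by a standard three-$\varepsilon$ argument, exploiting that translation is an isometry of $L^p(\er^d)$. First I would fix $\phi\in L^p(\er^d)$ with $1\le p<+\infty$ and recall that $\Cc_c(\er^d)$ is dense in $L^p(\er^d)$: given $\varepsilon>0$, choose $g\in\Cc_c(\er^d)$ with $\|\phi-g\|_{L^p(\er^d)}<\varepsilon$. For $\delta\in\er^d$ write $\tau_\delta\phi(z):=\phi(z+\delta)$. A change of variables $z\mapsto z-\delta$ shows $\|\tau_\delta\phi\|_{L^p(\er^d)}=\|\phi\|_{L^p(\er^d)}$, i.e. $\tau_\delta$ is a linear isometry of $L^p(\er^d)$; in particular $\|\tau_\delta\phi-\tau_\delta g\|_{L^p(\er^d)}=\|\phi-g\|_{L^p(\er^d)}<\varepsilon$.

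Next I would treat the continuous compactly supported function $g$ directly. Since $g$ is uniformly continuous (being continuous with compact support), for the given $\varepsilon>0$ there exists $\eta\in(0,1)$ such that $|g(z+\delta)-g(z)|<\varepsilon$ for all $z\in\er^d$ whenever $|\delta|<\eta$. Let $R>0$ be such that $\mathrm{supp}\,g\subset B(0,R)$; then for $|\delta|<\eta<1$ the function $z\mapsto g(z+\delta)-g(z)$ is supported in $B(0,R+1)$, whence
\begin{equation*}
\|\tau_\delta g-g\|_{L^p(\er^d)}^p=\int_{B(0,R+1)}|g(z+\delta)-g(z)|^p\,dz\le \varepsilon^p\,|B(0,R+1)|,
\end{equation*}
so $\|\tau_\delta g-g\|_{L^p(\er^d)}\le \varepsilon\,|B(0,R+1)|^{1/p}$ for all $|\delta|<\eta$.

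Finally I would combine the estimates: for $|\delta|<\eta$,
\begin{equation*}
\|\tau_\delta\phi-\phi\|_{L^p(\er^d)}\le \|\tau_\delta\phi-\tau_\delta g\|_{L^p(\er^d)}+\|\tau_\delta g-g\|_{L^p(\er^d)}+\|g-\phi\|_{L^p(\er^d)}\le \varepsilon\bigl(2+|B(0,R+1)|^{1/p}\bigr).
\end{equation*}
Since $\varepsilon>0$ is arbitrary (and the constant $2+|B(0,R+1)|^{1/p}$ depends only on $\varepsilon$ through the choice of $g$, but is bounded once $\varepsilon\le 1$, say, because then $\|\phi-g\|_{L^p}<1$ forces no growth of $R$ beyond what $\varepsilon$ dictates — more carefully, one simply notes the bound tends to $0$ as $\varepsilon\to0$ for each fixed admissible $g$), this proves $\lim_{|\delta|\to0^+}\|\tau_\delta\phi-\phi\|_{L^p(\er^d)}=0$, which is the claimed statement. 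There is no genuine obstacle here; the only non-elementary ingredient is the density of $\Cc_c(\er^d)$ in $L^p(\er^d)$ for $p<\infty$, which is classical (via truncation and mollification, or Lusin's theorem), and the restriction $p<+\infty$ is essential — it is used both for this density statement and implicitly nowhere else.
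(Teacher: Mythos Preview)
Your argument is the standard and correct one; the paper does not actually prove this statement but merely records it as a classical fact with a citation to Rana's textbook, so there is nothing to compare against. One presentational point: in your final inequality the constant $|B(0,R+1)|^{1/p}$ depends on $g$ and hence on $\varepsilon$, so the bound $\varepsilon\bigl(2+|B(0,R+1)|^{1/p}\bigr)$ is not obviously small as $\varepsilon\to0$ (your parenthetical acknowledges this but does not resolve it). The clean fix is to organise the choices differently: given $\varepsilon>0$, first pick $g$ with $\|\phi-g\|_{L^p}<\varepsilon/3$, which fixes $R$; then use uniform continuity of $g$ to find $\eta$ such that $\|\tau_\delta g-g\|_{L^p}<\varepsilon/3$ for $|\delta|<\eta$; the triangle inequality then gives $\|\tau_\delta\phi-\phi\|_{L^p}<\varepsilon$ directly.
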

\begin{theorem}[Tartar \cite{tartar-78}, Chapter 4]
\label{thm:SobolevRegNegativePart}
Let $\mathcal{V}$ be an open subset of $\er^{d}$ and $\psi\in L^{2}(\mathcal{V})$
such that $\nabla_{v}\psi\in L^{2}(\mathcal{V})$. Then $\nabla_{v}(\psi)^{+}, \nabla_{v}(\psi)^{-}
\in L^{2}(\mathcal{V})$ with
$\partial_{v_{i}} (\psi)^{+}=\partial_{v_{i}}\psi \ind_{\{\psi \geq 0\}}$ and $\partial_{v_{i}} (\psi)^{-}=-\partial_{v_{i}}\psi\ind_{\{\psi \leq 0\}}$.
\end{theorem}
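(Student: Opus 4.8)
This is a classical fact (the Stampacchia chain rule); the plan is to prove the statement for $(\psi)^{+}$ and deduce the one for $(\psi)^{-}$ by applying the result to $-\psi$, since $(\psi)^{-}=(-\psi)^{+}$ and $-\psi$ satisfies the same hypotheses. The idea is to replace the Lipschitz map $t\mapsto t^{+}$ by a $\Cc^{1}$ regularization to which the classical chain rule applies, and then pass to the limit. For $\epsilon>0$ I would set $G_{\epsilon}(t)=\sqrt{t^{2}+\epsilon^{2}}-\epsilon$ for $t>0$ and $G_{\epsilon}(t)=0$ for $t\leq 0$. A direct computation shows $G_{\epsilon}\in\Cc^{1}(\er)$ with $G_{\epsilon}'(t)=t(t^{2}+\epsilon^{2})^{-1/2}\ind_{\{t>0\}}$ continuous, $0\leq G_{\epsilon}'\leq 1$, $0\leq G_{\epsilon}(t)\leq|t|$, and, as $\epsilon\downarrow 0$, $G_{\epsilon}(t)\to t^{+}$ and $G_{\epsilon}'(t)\to\ind_{\{t>0\}}$ for every $t\in\er$.

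The first genuine step is the chain rule $\partial_{v_{i}}\big(G_{\epsilon}(\psi)\big)=G_{\epsilon}'(\psi)\,\partial_{v_{i}}\psi$ in $L^{2}(\mathcal{V})$, which is where the hypothesis $\nabla_{v}\psi\in L^{2}$ enters. I would prove it by mollification: on an arbitrary relatively compact open subset $\mathcal{V}'\Subset\mathcal{V}$, choose $\psi_{n}\in\Cc^{\infty}(\mathcal{V}')$ with $\psi_{n}\to\psi$ and $\nabla_{v}\psi_{n}\to\nabla_{v}\psi$ in $L^{2}(\mathcal{V}')$ and, along a subsequence, a.e. For the smooth $\psi_{n}$ the chain rule holds; $G_{\epsilon}$ being $1$-Lipschitz, $G_{\epsilon}(\psi_{n})\to G_{\epsilon}(\psi)$ in $L^{2}(\mathcal{V}')$; and
\[\|G_{\epsilon}'(\psi_{n})\nabla_{v}\psi_{n}-G_{\epsilon}'(\psi)\nabla_{v}\psi\|_{L^{2}(\mathcal{V}')}\leq\|\nabla_{v}\psi_{n}-\nabla_{v}\psi\|_{L^{2}(\mathcal{V}')}+\|(G_{\epsilon}'(\psi_{n})-G_{\epsilon}'(\psi))\nabla_{v}\psi\|_{L^{2}(\mathcal{V}')}\longrightarrow 0,\]
the last term by dominated convergence (integrand dominated by $2|\nabla_{v}\psi|$ and tending to $0$ a.e. by continuity of $G_{\epsilon}'$). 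Hence $G_{\epsilon}(\psi)\in H^{1}(\mathcal{V})$ with the stated weak gradient.

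Next, for $\varphi\in\Cc^{\infty}_{c}(\mathcal{V})$ the chain rule gives
\[\int_{\mathcal{V}}G_{\epsilon}(\psi)\,\partial_{v_{i}}\varphi\,dv=-\int_{\mathcal{V}}G_{\epsilon}'(\psi)\,\partial_{v_{i}}\psi\,\varphi\,dv,\]
and I would let $\epsilon\downarrow 0$: the left-hand side converges to $\int_{\mathcal{V}}(\psi)^{+}\partial_{v_{i}}\varphi\,dv$ and the right-hand side to $-\int_{\mathcal{V}}\ind_{\{\psi>0\}}\partial_{v_{i}}\psi\,\varphi\,dv$, both by dominated convergence, with dominating functions $|\psi|$ and $|\partial_{v_{i}}\psi|$ (integrable on the compact set $\mathrm{supp}\,\varphi$) and the pointwise convergences noted above. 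This yields $(\psi)^{+}\in H^{1}(\mathcal{V})$ with $\partial_{v_{i}}(\psi)^{+}=\ind_{\{\psi>0\}}\partial_{v_{i}}\psi$, and the same argument applied to $-\psi$ gives $\partial_{v_{i}}(\psi)^{-}=-\ind_{\{\psi<0\}}\partial_{v_{i}}\psi$; in particular $\nabla_{v}(\psi)^{\pm}\in L^{2}(\mathcal{V})$ since $|\nabla_{v}(\psi)^{\pm}|\leq|\nabla_{v}\psi|$ a.e.

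It remains to upgrade the strict indicators to the non-strict ones in the statement. Adding the decomposition $\psi=(\psi)^{+}-(\psi)^{-}$ to the two identities just obtained gives $\partial_{v_{i}}\psi=\big(\ind_{\{\psi>0\}}+\ind_{\{\psi<0\}}\big)\partial_{v_{i}}\psi$ a.e., i.e.\ $\ind_{\{\psi=0\}}\partial_{v_{i}}\psi=0$ a.e.; consequently $\ind_{\{\psi>0\}}\partial_{v_{i}}\psi=\ind_{\{\psi\geq0\}}\partial_{v_{i}}\psi$ and $\ind_{\{\psi<0\}}\partial_{v_{i}}\psi=\ind_{\{\psi\leq0\}}\partial_{v_{i}}\psi$ a.e., which is exactly the asserted form. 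I expect the point needing the most care to be precisely this reconciliation on the level set $\{\psi=0\}$: one cannot push $\ind_{\{\psi\geq0\}}$ through the limit directly, because the approximants $G_{\epsilon}'(\psi)$ only converge to $\ind_{\{\psi>0\}}$, so the clean route is to carry strict inequalities throughout and use $\ind_{\{\psi=0\}}\nabla_{v}\psi=0$ a.e.\ only at the very end; the mollification argument in the second step, although standard, is the other place to state carefully.
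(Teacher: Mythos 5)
Your proof is correct, and it is the standard regularization argument (often attributed to Stampacchia; it is also the route in Gilbarg--Trudinger, Lemma 7.6, and in Tartar's notes). Since the paper only \emph{recalls} this theorem with a citation to Tartar and gives no proof of its own, there is nothing to compare against in the text; I simply record that the argument you give is sound. Two points you handled that deserve to be singled out: the chain rule $\partial_{v_i}\big(G_\epsilon(\psi)\big)=G_\epsilon'(\psi)\partial_{v_i}\psi$ is established first locally on $\mathcal{V}'\Subset\mathcal{V}$ by mollification and then globalized using the uniform $L^2$ bounds $|G_\epsilon(\psi)|\leq|\psi|$ and $|G_\epsilon'(\psi)\nabla_v\psi|\leq|\nabla_v\psi|$; and the passage from the strict indicators $\ind_{\{\psi>0\}}$, $\ind_{\{\psi<0\}}$ (which is all the limit $G_\epsilon'\to\ind_{\{\cdot>0\}}$ can give) to the non-strict ones in the stated identities is correctly derived, rather than assumed, by adding the two identities to $\psi=(\psi)^{+}-(\psi)^{-}$ and extracting $\ind_{\{\psi=0\}}\nabla_v\psi=0$ a.e. This makes the proof self-contained, whereas many textbook presentations invoke the latter vanishing as a separately proved fact about Sobolev functions.
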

\begin{theorem}[Lions and Magenes \cite{Lions1972}]
\label{Existence_JLLions}
Let $E$ be a Hilbert space with the inner product $\left(~,~\right)_{E}$.
Let $F\subset E$ equipped with the norm $|~|_{F}$
such that the canonical injection of $F$ into $E$ is continuous. Assume that $A:E\times F\rightarrow \er$ is a bilinear
application satisfying:
\begin{enumerate}
\item $\forall~\psi\in F$, the mapping $A(.,\psi):E\rightarrow \er$ is continuous.
\item $A$ is coercive on $F$ that is there exists a constant $c>0$ such that
$A(\psi,\psi)~\geq~c |\psi|^{2}_{F},~\forall~\psi\in F$.
\end{enumerate}
Then for all linear application $L~:~F\rightarrow \er$, continuous on $(F,~|~|_{F})$, there exists
 $S\in E$ such that $A(S,\psi)=L(\psi),~\forall~\psi\in F$.
\end{theorem}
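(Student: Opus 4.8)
The plan is to reduce the statement to two applications of the Riesz representation theorem combined with the Hahn--Banach extension theorem, in the spirit of Lions' variational lemma.

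First I would use the hypothesis that $A(\cdot,\psi):E\to\er$ is linear and continuous for each fixed $\psi\in F$ to define, via the Riesz representation theorem in the Hilbert space $E$, a map $\Lambda:F\to E$ characterised by
\[
A(e,\psi)=(e,\Lambda\psi)_E,\qquad\forall\,e\in E,\ \psi\in F.
\]
Bilinearity of $A$ together with the uniqueness in the Riesz theorem make $\Lambda$ linear. The decisive point is then to exploit the coercivity bound $A(\psi,\psi)\geq c|\psi|_F^{2}$: choosing $e=\psi$ yields $c|\psi|_F^{2}\leq (\psi,\Lambda\psi)_E\leq \|\psi\|_E\,\|\Lambda\psi\|_E$, and since the canonical injection $F\hookrightarrow E$ is continuous, say $\|\psi\|_E\leq C|\psi|_F$, we obtain
\[
|\psi|_F\leq \tfrac{C}{c}\,\|\Lambda\psi\|_E,\qquad\forall\,\psi\in F.
\]
In particular $\Lambda$ is injective, so its range $G:=\Lambda(F)\subset E$ is a linear subspace (not necessarily closed) on which the inverse $\Lambda^{-1}:G\to F$ is well defined and bounded for the $E$-norm.

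Next I would transport the functional $L$ to $G$ by setting $\ell(\Lambda\psi):=L(\psi)$, which is unambiguous by injectivity of $\Lambda$ and satisfies, using the continuity of $L$ on $(F,|\cdot|_F)$ (say $|L(\psi)|\leq\|L\|\,|\psi|_F$) and the estimate above,
\[
|\ell(\Lambda\psi)|=|L(\psi)|\leq \|L\|\,|\psi|_F\leq \tfrac{C}{c}\,\|L\|\,\|\Lambda\psi\|_E .
\]
Hence $\ell$ is continuous on $G$ for the norm of $E$. By Hahn--Banach it extends to a continuous linear functional $\widetilde\ell$ on all of $E$, and by the Riesz representation theorem there is $S\in E$ with $\widetilde\ell(e)=(e,S)_E$ for every $e\in E$. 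Evaluating at $e=\Lambda\psi$ and invoking the defining relation of $\Lambda$ gives $L(\psi)=\ell(\Lambda\psi)=(\Lambda\psi,S)_E=(S,\Lambda\psi)_E=A(S,\psi)$ for all $\psi\in F$, which is the asserted conclusion.

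There is no genuine obstacle in this argument; the only points requiring care are (i) that $\Lambda$ is well defined and linear, which rests on uniqueness in the Riesz theorem, and (ii) that $\ell$ is bounded on $G$ \emph{with respect to the ambient $E$-norm} rather than the finer $F$-norm. The latter is exactly where the combination of coercivity on $F$ and the continuity of the embedding $F\hookrightarrow E$ is indispensable, and it is the reason the statement does not merely reduce to the Lax--Milgram lemma. Note also that completeness of $F$ is never used, since Hahn--Banach applies to the subspace $G$ of $E$ irrespective of whether $G$ is closed.
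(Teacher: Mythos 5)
Your argument is correct and is essentially the classical proof of this variational lemma as it appears in Lions and Magenes; the paper cites the result without reproducing a proof, so there is nothing in the text to compare against. You have the key chain of ideas right: Riesz representation to define $\Lambda:F\to E$, coercivity combined with the continuous embedding $F\hookrightarrow E$ to get the lower bound $|\psi|_F\leq (C/c)\|\Lambda\psi\|_E$ (hence injectivity of $\Lambda$ and boundedness of $\Lambda^{-1}$ on its range for the $E$-norm), transport of $L$ to a functional on $\Lambda(F)$ that is bounded for the ambient $E$-norm, and then Hahn--Banach followed by a second application of Riesz. Your closing remark is also exactly the right thing to observe: completeness of $F$ plays no role, and the conclusion asserts only existence in $E$, not uniqueness, which is what distinguishes this lemma from Lax--Milgram and is precisely why it is useful for the weak formulation \eqref{eq:ToyEquation} where test functions live in the non-complete space $F$.
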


\subsection{Proof of Proposition \ref{prop:Schauder_estimate}}\label{sec:appendix-shauder}
To prove this proposition, it is sufficient to show that, for all $z_0:=(t_0,x_0,u_0)$ in $Q_T$, there exists $r>0$ such that $f$ belongs to  $\Cc^{1,1,2}(B_{z_0}(r))$ where $B_{z_0}(r)\subset Q_T$ is the open ball centered at $z_0$ of radius $r$. To this end, we use the Sobolev embeddings (see e.g. \cite{brezis-11}, Corollary $9.15$):  \\
for $m= \lfloor d/2\rfloor + 2  - \left\lfloor 1 - d/2 - \lfloor d/2 \rfloor \right \rfloor$,  we have\footnote{For $\lfloor x \rfloor$ the nearest integer lower than $x \in \er^+$.}
\begin{equation*}
W^{2,2}((0,T))\subset \Cc^{1}([0,T]),\quad W^{m,2}(B_{x_0}(r))\subset \Cc^{1}(\overline{B_{x_0}(r)}),\quad W^{m+1,2}(B_{u_0}(r))\subset \Cc^{2}(\overline{B_{u_0}(r)}).
\end{equation*}
We thus have to prove that for some $r>0$,
\begin{equation}\label{proof_step1:Schauder estimate}
\Vert\partial^{2}_{t} f\Vert_{L^{2}(B_{z_0}(r))}+\sum_{\eta\in\mathbb{N}^{d};|\eta|\leq m}\Vert D^{\eta}_{x}f\Vert_{L^{2}(B_{z_0}(r))}+\sum_{\kappa\in\mathbb{N}^{d};|\kappa|\leq m+1}\Vert D^{\kappa}_{u}f\Vert_{L^{2}(B_{z_0}(r))}<+\infty,
\end{equation}
where $D^{\eta}_x$ and $D^{\kappa}_{u}$ refer to the differential operators given by
\begin{align*}
D^{\eta}_{x}f = & \partial^{\eta_1}_{x_{1}} \partial^{\eta_2}_{x_2} \cdots \partial^{\eta_d}_{x_{d}} f\mbox{, for }\, \eta=(\eta_1,\eta_2,\cdots,\eta_d) \in \NN^{d},\\
D^{\kappa}_{u}f = &\partial^{\kappa_1}_{u_{1}} \partial^{\kappa_2}_{u_2} \cdots \partial^{\kappa_d}_{u_{d}} f\mbox{, for }\, \kappa=(\kappa_1,\kappa_2,\cdots,\kappa_d) \in \NN^{d}.
\end{align*}

The proof of \eqref{proof_step1:Schauder estimate}, is based on a bootstrap argument that uses the regularity results (in fractional Sobolev spaces)  obtained in Bouchut \cite{bouchut-02}  for the solution to kinetic equation (see Theorem \ref{thm:Bouchut-hypoellipticity}).

\paragraph{Step 1.} Let us start with the regularity along the $(x,u)$-variables.  We proceed by induction on a truncated version of $f$.

For any $r_0>0$ such that $B_{z_{0}}(r_0)\subsetneq Q_{T}$, we denote by $\beta_{r_0}:Q_{T}\rightarrow [0,1]$, a $\Cc^{\infty}_{c}(Q_{T})$-cutoff function such that
\begin{align*}
\left\{\begin{array}{l}
\beta_{r_0}=1 \mbox{ on }\overline{B_{z_0}(\frac{r_0}{2})},\\
\beta_{r_0}=0 \mbox{ on }Q_{T}\setminus B_{z_0}(r_0).
\end{array}\right.
\end{align*}
We further assume that there exists a constant $C$ depending on $r_0$ such that
\begin{equation*}
\sum_{\eta\in\mathbb{N}^{d};|\eta|\leq m + 1;\,\beta\in\mathbb{N}^{d};|\beta|\leq m+2}\Vert \partial^{2}_{t} D^{\eta}_{x} D^{\beta}_{u}\beta_{{r_0}}\Vert_{L^{\infty}(Q_{T})}\leq C.
\end{equation*}
Starting from $f\in L^{2}((0,T)\times\Dd;H^{1}(\er^{d}))$ given in Proposition \ref{propo:VFPDirWellposed},
 the truncated function $f_{{r_0}}:=\beta_{{r_0}} f$ satisfies, in the sense of distributions,
\begin{equation*}
\left\{
\begin{aligned}
&\partial_{t}f_{{r_0}}-(u\cdot \nabla_{x} f_{{r_0}})-\frac{\sigma^2}{2}\triangle_{u}f_{{r_0}}=\Gamma_{r_0} f+(\Psi_{r_0} \cdot\nabla_{u}f),\,\mbox{  on }\,Q_{T},\\
&f_{{r_0}}|_{t=0}=0,\,\mbox{  on }\,\Dd\times\er^{d},\\
&\gamma^{\pm}(f_{{r_0}})=0,\,\mbox{  on }\,\Sigma^{\pm}_{T},
\end{aligned}\right.
\end{equation*}
with $\Gamma_{r_0}:=\partial_t\beta_{{r_0}}-(u\cdot \nabla_x \beta_{{r_0}})-\frac{\sigma^2}{2}\triangle_{u}\beta_{{r_0}}$ and $\Psi_{r_0}:=-\sigma^{2}\nabla_{u}\beta_{{r_0}}$. Extending $f_{r_0}$, $\Gamma_{r_0} f$ and $(\Psi_{r_0}\cdot\nabla_u f)$ on the whole space $\er\times\er^d\times\er^d$ by $0$ outside $B_{z_0}({r_0})$, one has
\begin{equation}\label{eq:TruncatedVlasovFokkerPlanck}
\partial_{t}f_{{r_0}}-(u\cdot \nabla_{x}f_{{r_0}})-\frac{\sigma^2}{2}\triangle_{u}f_{{r_0}}=g_{{r_0}},\,\mbox{  in }\,(\Cc^{\infty}_{c}(\er\times\er^d\times\er^d))'
\end{equation}
where $g_{{r_0}}:=\Gamma_{r_0} f+(\Psi_{r_0}\cdot\nabla_{u}f)$. Let us now recall Theorem $1.5$ (and its proof) in \cite{bouchut-02}: for $\alpha\in (0,1)$, we further denote by $D^{\alpha}_x$ the fractional derivative   w.r.t. $x$-variables,  defined as the fractional Laplace operator of order $\alpha$
\begin{equation*}
D^{\alpha}_x=(-\triangle_x)^{\alpha/2}.
\end{equation*}
\begin{theorem}[Bouchut \cite{bouchut-02}]\label{thm:Bouchut-hypoellipticity} Let $h\in L^2(\er\times\er^{d}\times\er^{d})$. Assume that $\phi\in L^2(\er\times\er^{d}\times\er^{d})$, such that $\nabla_u \phi\in (L^{2}(\er\times\er^{d}\times\er^{d}))^{d}$, satisfies (in the sense of distributions)
\begin{equation}
\label{eq:KineticFokkerPlanck}
\partial_t \phi+(u\cdot \nabla_x \phi)-\frac{\sigma^2}{2}\triangle_u \phi=h,\,\mbox{  on }\,\er\times\er^{d}\times\er^{d}.
\end{equation}
Then there exists a positive constant $C(d)$  depending  on the dimension such that:
\begin{description}
\item[(a)] $\partial_t \phi+(u\cdot\nabla_x \phi)$ and $\triangle_u \phi$ both belong to $L^2(\er\times\er^{d}\times\er^{d})$ with
\begin{equation*}
\Vert \partial_t \phi+(u\cdot\nabla_x \phi)\Vert_{L^2(\er\times\er^{d}\times\er^{d})}+\frac{\sigma^2}{2}\Vert\triangle_u \phi \Vert_{L^2(\er\times\er^{d}\times\er^{d})}\leq C(d)\Vert h\Vert_{L^2(\er\times\er^{d}\times\er^{d})},
\end{equation*}
\item[(b)] $D^{2/3}_x \phi$ and $|\nabla_u D^{1/3}_x \phi|$ belong to  $L^2(\er\times\er^{d}\times\er^{d})$ with
\begin{align*}
&\Vert \nabla_u D^{1/3}_x\phi\Vert^2_{L^2(\er\times\er^d\times\er^d)} + \Vert D^{2/3}_x \phi \Vert_{L^2(\er\times\er^{d}\times\er^{d})}\leq C(d)\Vert h\Vert_{L^2(\er\times\er^{d}\times\er^{d})}.
\end{align*}
\end{description}
\end{theorem}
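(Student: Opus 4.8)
The plan is to reproduce Bouchut's Fourier-analytic argument. The essential point is that although the operator $\partial_{t}+u\cdot\nabla_{x}-\tfrac{\sigma^{2}}{2}\triangle_{u}$ is not a Fourier multiplier (the transport term involves multiplication by $u$), a partial Fourier transform in $(t,x)$ decouples the equation into a one–parameter family of elliptic equations in the velocity variable whose common scaling structure produces exactly the $2/3$–gain. Concretely, first I would apply the Fourier transform $(t,x)\mapsto(\tau,k)$ and write $v(\tau,k,\cdot),\,g(\tau,k,\cdot)\in L^{2}(\er^{d}_{u})$ for the transforms of $\phi$ and $h$; since $\phi,\nabla_{u}\phi,h\in L^{2}$ one checks by Fubini that for a.e.\ $(\tau,k)$ one has $v(\tau,k,\cdot)\in L^{2}_{u}$, $\nabla_{u}v(\tau,k,\cdot)\in L^{2}_{u}$ and
\begin{equation*}
-\tfrac{\sigma^{2}}{2}\triangle_{u}v+i(\tau+u\cdot k)\,v=g\qquad\text{in }\er^{d}_{u}.
\end{equation*}
By Plancherel in $(t,x)$, all of (a)--(b) will follow once we prove, with a constant uniform in $(\tau,k)$, the single bound
\begin{equation*}
\||k|^{2/3}v\|_{L^{2}_{u}}\le C(d)\,\|g\|_{L^{2}_{u}}.\tag{$\star$}
\end{equation*}
Indeed, pairing the frozen equation with $\bar v$ gives the energy identity $\tfrac{\sigma^{2}}{2}\|\nabla_{u}v\|_{L^{2}_{u}}^{2}=\mathrm{Re}\langle g,v\rangle_{L^{2}_{u}}\le\|g\|_{L^{2}_{u}}\|v\|_{L^{2}_{u}}$, so $(\star)$ yields $\||k|^{1/3}\nabla_{u}v\|_{L^{2}_{u}}^{2}=|k|^{2/3}\|\nabla_{u}v\|^{2}_{L^{2}_{u}}\lesssim\|g\|_{L^{2}_{u}}\||k|^{2/3}v\|_{L^{2}_{u}}\lesssim\|g\|_{L^{2}_{u}}^{2}$; pairing with $\overline{\triangle_{u}v}$ and integrating by parts (using $|\nabla_{u}(\tau+u\cdot k)|=|k|$) gives $\tfrac{\sigma^{2}}{2}\|\triangle_{u}v\|_{L^{2}_{u}}^{2}\le|k|\,\|v\|_{L^{2}_{u}}\|\nabla_{u}v\|_{L^{2}_{u}}+\|g\|_{L^{2}_{u}}\|\triangle_{u}v\|_{L^{2}_{u}}$, and here $|k|\,\|v\|_{L^{2}_{u}}\|\nabla_{u}v\|_{L^{2}_{u}}\le(\tfrac{2}{\sigma^{2}})^{1/2}\|g\|_{L^{2}_{u}}^{1/2}\big(|k|^{2/3}\|v\|_{L^{2}_{u}}\big)^{3/2}\lesssim\|g\|_{L^{2}_{u}}^{2}$ by the energy identity and $(\star)$, so the quadratic inequality gives $\|\triangle_{u}v\|_{L^{2}_{u}}\lesssim\|g\|_{L^{2}_{u}}$; and finally $(\tau+u\cdot k)v=i^{-1}(g+\tfrac{\sigma^{2}}{2}\triangle_{u}v)$ controls the transport term.

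Next I would reduce $(\star)$ to a model operator. For $k=0$ the weight vanishes and there is nothing to prove. For $k\neq0$, translate $u\mapsto u-\tau k/|k|^{2}$ to delete $\tau$, rotate so that $k=|k|e_{1}$, and rescale $u=\mu y$ with $\mu:=(\sigma^{2}/(2|k|))^{1/3}$; a direct change of variables turns the frozen equation into the \emph{model equation}
\begin{equation*}
-\triangle_{y}\tilde v+i\,y_{1}\,\tilde v=\tilde g\qquad\text{on }\er^{d},
\end{equation*}
with $\tilde v(y)=v(\mu y)$ and $\tilde g=\tfrac{2\mu^{2}}{\sigma^{2}}g(\mu\cdot)$. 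A bookkeeping of the $L^{2}$–norms — in which the factor $\mu\sim|k|^{-1/3}$ multiplies out to exactly $|k|^{2/3}$ in front of $v$ — shows that $(\star)$ is equivalent, uniformly in $k$, to the model resolvent bound $\|\tilde v\|_{L^{2}(\er^{d})}\le C(d)\|\tilde g\|_{L^{2}(\er^{d})}$. It is this homogeneity that singles out the exponent $2/3$ rather than any other power.

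To prove the model bound I would Fourier transform in $y\mapsto\eta$; for each fixed $\eta'=(\eta_{2},\dots,\eta_{d})$ the model equation becomes the scalar first–order ODE $\partial_{\eta_{1}}\hat v-(\eta_{1}^{2}+|\eta'|^{2})\hat v=-\hat g$ in $\eta_{1}$. With $P(s,\eta'):=\tfrac{s^{3}}{3}+|\eta'|^{2}s$ one has $P(s,\eta')-P(\eta_{1},\eta')=\int_{\eta_{1}}^{s}(\rho^{2}+|\eta'|^{2})\,d\rho$, and for $s\ge\eta_{1}$ the elementary estimate $P(s,\eta')-P(\eta_{1},\eta')\ge\tfrac13(s^{3}-\eta_{1}^{3})\ge\tfrac1{12}(s-\eta_{1})^{3}$ holds — the last inequality because $s^{3}-\eta_{1}^{3}=(s-\eta_{1})\big((s-\eta_{1})^{2}+3\eta_{1}(s-\eta_{1})+3\eta_{1}^{2}\big)$ and $(s-\eta_{1})^{2}+3\eta_{1}(s-\eta_{1})+3\eta_{1}^{2}\ge\tfrac14(s-\eta_{1})^{2}$, which is equivalent to $(s+\eta_{1})^{2}\ge0$. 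The unique $L^{2}_{\eta_{1}}$–solution of the ODE is then given by variation of constants,
\begin{equation*}
\hat v(\eta_{1},\eta')=\int_{\eta_{1}}^{+\infty}\exp\big\{-[P(s,\eta')-P(\eta_{1},\eta')]\big\}\,\hat g(s,\eta')\,ds,
\end{equation*}
uniqueness within $L^{2}_{\eta_{1}}$ holding because every homogeneous $L^{2}_{\eta_{1}}$–solution is proportional to $e^{P(\cdot,\eta')}$, which is not in $L^{2}_{\eta_{1}}$; moreover this is the transform of the solution provided by hypothesis, since the latter lies in the correct class. The cubic lower bound makes Schur's test immediate: $\sup_{\eta_{1}}\int_{\eta_{1}}^{\infty}e^{-[P(s,\eta')-P(\eta_{1},\eta')]}\,ds$ and $\sup_{s}\int_{-\infty}^{s}e^{-[P(s,\eta')-P(\eta_{1},\eta')]}\,d\eta_{1}$ are both $\le\int_{0}^{\infty}e^{-r^{3}/12}\,dr<\infty$, uniformly in $\eta'$, so $\hat g\mapsto\hat v$ is bounded on $L^{2}_{\eta_{1}}$ uniformly in $\eta'$; Plancherel in $y$ gives $\|\tilde v\|_{L^{2}(\er^{d})}\le C(d)\|\tilde g\|_{L^{2}(\er^{d})}$, which establishes $(\star)$ and hence the theorem.

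The conceptual heart — and the only place where a genuine idea is needed — is the realization that the full statement collapses to the single bound $(\star)$, equivalently to the boundedness on $L^{2}(\er^{d})$ of the inverse of the complex Airy–type operator $-\triangle_{y}+iy_{1}$; once that reduction is made, Step 3 is soft. The remaining difficulties are bookkeeping rather than conceptual: justifying the partial Fourier transform and the frozen equation for a.e.\ $(\tau,k)$ at the distributional level with only $\phi,\nabla_{u}\phi,h\in L^{2}$, selecting the correct $L^{2}$–branch of the model ODE and identifying it with the transform of $\phi$, and carrying out the homogeneity count in Step 2 that delivers precisely the weight $|k|^{2/3}$ (and, along the way, keeping track of the $\sigma$–dependent constants, which for general $\sigma$ multiply $C(d)$ by fixed powers of $\sigma$).
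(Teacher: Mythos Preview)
The paper does not prove this theorem: it is quoted verbatim from Bouchut \cite{bouchut-02} (Theorem~1.5 and its proof) and used as a black box in the bootstrap argument for Proposition~\ref{prop:Schauder_estimate}. So there is no ``paper's own proof'' to compare against; what you have written is a clean account of Bouchut's original Fourier-analytic argument, and the steps you outline --- partial Fourier transform in $(t,x)$, reduction via translation/rotation/rescaling to the model operator $-\triangle_{y}+iy_{1}$, explicit ODE solution on the Fourier side, and Schur's test with the cubic lower bound $P(s,\eta')-P(\eta_{1},\eta')\ge\tfrac{1}{12}(s-\eta_{1})^{3}$ --- are correct and match Bouchut's proof closely.

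One small remark on presentation: your derivation of (a) from $(\star)$ tacitly uses $(\star)$ to control $|k|\,\|v\|_{L^{2}_{u}}\|\nabla_{u}v\|_{L^{2}_{u}}$, but as written this only handles $k\neq 0$; you should note that on the set $\{k=0\}$ the transport term reduces to $i\tau v$, and the energy identity (imaginary part) gives $\int\tau|v|^{2}=\mathrm{Im}\langle g,v\rangle$, while (a) is then immediate from the equation since $\triangle_{u}v=\tfrac{2}{\sigma^{2}}(i\tau v-g)$ and the real-part identity bounds $\|\nabla_{u}v\|^{2}$, hence $\|\triangle_{u}v\|$ after pairing against $\overline{\triangle_{u}v}$. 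This is routine and does not affect the argument.
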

Since $\Gamma_{r_0} f$ and $(\Psi_{{r_0}}\cdot\nabla_u f)$ are in $L^2(\er\times\er^d\times\er^d)$, Theorem \ref{thm:Bouchut-hypoellipticity}-(b) implies that
$D^{2/3}_x f_{r_0}$, $|\nabla_u D^{1/3}_x f_{r_0}|$, and $\triangle_u f_{r_0}$ are in $L^2(\er\times\er^d\times\er^d)$. As $\beta_{{r_0}} =1$ on $B_{z_0}(\frac{{r_0}}{2})$, this particularly ensures that
\begin{align*}
&\Vert D^{2/3}_x f\Vert_{B_{z_0}(\frac{{r_0}}{2})}=\Vert D^{2/3}_x f_{{r_0}}\Vert_{B_{z_0}(\frac{{r_0}}{2})}\leq \Vert D^{2/3}_x f_{{r_0}}\Vert_{L^{2}(\er\times\er^{d}\times\er^{d})}<+\infty,\\
&\Vert \nabla_u D^{1/3}_x f\Vert_{B_{z_0}(\frac{{r_0}}{2})}=\Vert \nabla_u D^{1/3}_x f_{{r_0}}\Vert_{B_{z_0}(\frac{{r_0}}{2})}\leq \Vert \nabla_uD^{1/3}_x f_{{r_0}}\Vert_{L^{2}(\er\times\er^{d}\times\er^{d})}<+\infty.
\end{align*}
By setting $r_1:=\frac{r_0}{2}$ and $f_{{r_1}}:=\beta_{{r_1}}f$,  it follows that    $D^{1/3}_x f_{{r_1}}\in L^{2}(\er\times\er^d\times\er^d)$ (since\footnote{This can be shown by applying a  Cauchy-Schwarz inequality in the alternative definition of the fractional derivative in $L^2$ via Fourier transform, see e.g. \cite{DiNezza-Palatucci-all-12}.}
$\Vert D^{1/3}_{x}f_{{r_1}}\Vert^2_{L^{2}(\er\times\er^{d}\times\er^{d})}\leq \Vert f_{{r_1}}\Vert_{L^{2}(\er\times\er^{d}\times\er^{d})}\Vert
D^{2/3}_{x}f_{{r_1}}\Vert_{L^{2}(\er\times\er^{d}\times\er^{d})}$.) Furthermore,
\begin{equation*}
D^{1/3}_{x}g_{{r_1}}=\Gamma_{{r_1}}D^{1/3}_{x}f
+f D^{1/3}_{x}\Gamma_{{r_1}}
+\left(\Psi_{{r_1}}\cdot \nabla_{u}D^{1/3}_{x}f\right)
+\left(D^{1/3}_{x}\Psi_{{r_1}}\cdot \nabla_{u}f\right)\in L^{2}(\er\times\er^{d}\times\er^{d}).
\end{equation*}
Applying the differential operator $D^{1/3}_x$ to \eqref{eq:TruncatedVlasovFokkerPlanck}, one can check that
$D^{1/3}_x f_{{r_1}}$  satisfies
\begin{equation}
\label{proof_step2:Schauder estimate}
\partial_{t}D^{1/3}_x f_{{r_1}}-(u\cdot \nabla_{x}D^{1/3}_x f_{{r_1}})-\frac{\sigma^2}{2}\triangle_{u}D^{1/3}_x f_{{r_1}}= D^{1/3}_x g_{{r_1}},\,\mbox{  in }\,(\Cc^{\infty}_c(\er\times\er^d\times\er^d))'.
\end{equation}
From Theorem \ref{thm:Bouchut-hypoellipticity}-(b) again, we obtain that $|\nabla_{x}f_{{r_1}}| \leq C |D^{2/3}_x(D^{1/3}_x f_{{r_1}})| \in L^{2}(\er\times\er^d\times\er^d)$,
 and $|\nabla_u D^{2/3}_x f_{{r_1}}|\in L^{2}(\er\times\er^d\times\er^d)$. Therefore,
$|\nabla_x f|\in L^2(B_{z_0}(\frac{r_1}{2}))$. Applying again $D^{1/3}_x$ to \eqref{proof_step2:Schauder estimate}, applying Theorem \ref{thm:Bouchut-hypoellipticity}-(b) a third time, one can also deduce that
$|\nabla_{u}\nabla_x f|$ is in $L^{2}(B_{z_0}(\frac{r_0}{2^{3}}))$.

We obtain the regularity w.r.t. $u$ by applying the differential operator
$\partial_{u_i}$ to Eq. \eqref{eq:TruncatedVlasovFokkerPlanck}. Hence  $\partial_{u_i} f_{{r_1}}$ satisfies
\begin{equation}\label{proof_step4:Schauder estimate}
\partial_{t}\partial_{u_i}f_{{r_1}}-(u\cdot \nabla_{x}\partial_{u_i}f_{{r_1}})-\frac{\sigma^2}{2}\triangle_{u}\partial_{u_i}f_{{r_1}}=\partial_{u_i}g_{{r_1}} +\partial_{x_i}f_{{r_1}},\,\mbox{  in }\,(\Cc^{\infty}_c(\er\times\er^d\times\er^d))',
\end{equation}
where
\begin{equation*}
\partial_{u_i}g_{{r_1}}=(\partial_{u_i}\Gamma_{{r_1}}) f + \Gamma_{{r_1}} \partial_{u_i} f +  \left(\Psi_{{r_1}}\cdot \nabla_u \partial_{u_i}f\right)+ \left(\partial_{u_i}\Psi_{{r_1}}\cdot \nabla_u f\right).
\end{equation*}
Theorem \ref{thm:Bouchut-hypoellipticity}-(a)  ensures that $\Vert \triangle_u f_{r_0}\Vert_{L^{2}(\er\times\er^d\times\er^d)}<+\infty$. As $ f_{r_0}$ has a compact support, standard arguments give that
\begin{equation*}
\sum_{1\leq i,j\leq d}\Vert\partial^{2}_{u_i,u_j} f_{{r_0}}\Vert^2_{L^{2}(\er\times\er^d\times\er^d)} =
\Vert\triangle_u f_{{r_0}}\Vert^2_{L^{2}(\er\times\er^d\times\er^d)} < +\infty
\end{equation*}
and thus
\begin{equation*}
\sum_{1\leq i,j\leq d}\Vert\partial^{2}_{u_i,u_j} f \Vert^2_{L^{2}(B_{z_0}(r_1))}  = \sum_{1\leq i,j\leq d}\Vert\partial^{2}_{u_i,u_j} f_{r_0}\Vert_{L^{2}(B_{z_0}(r_1))}  < + \infty.
\end{equation*}
Now we set $h = \partial_{u_i}g_{{r_1}} +\partial_{x_i}f_{{r_1}}$ with $\|h\|_{L^{2}(\er\times\er^d\times\er^d)}\leq \|\nabla_u g_{{r_1}}\|_{L^{2}(\er\times\er^d\times\er^d)} + \|\nabla_x f_{{r_1}}\|_{L^{2}(\er\times\er^d\times\er^d)} < + \infty$, since
\begin{equation*}
\Vert \nabla_{u}g_{{r_1}}\Vert_{L^{2}(\er\times\er^d\times\er^d)}\leq C  \big(
\Vert f\Vert_{L^{2}(B_{z_0}({r_1}))} + \Vert \nabla_u f\Vert_{L^{2}(B_{z_0}({r_1}))} +\sum_{1\leq i,j\leq d}\Vert\partial^{2}_{u_i,u_j} f \Vert^2_{L^{2}(B_{z_0}(r_1))}  \big)<+\infty.
\end{equation*}
Theorem \ref{thm:Bouchut-hypoellipticity}-(a)  ensures that $|\nabla_{u}(\triangle_{u} f_{{r_1}})|\in L^{2}(\er\times\er^d\times\er^d)$
 and hence that $|\triangle_{u} \nabla_{u} f|\in L^{2}(B_{z_0}(\frac{r_1}{2}))$.

We sum up the estimations we have obtained as
\begin{equation}\label{proof_step3:Schauder estimate}
\Vert \nabla_x f\Vert_{L^{2}(B_{z_0}(\frac{{r_0}}{2^3}))}+\Vert \nabla_{x}\nabla_{u} f \Vert_{L^{2}(B_{z_0}(\frac{{r_0}}{2^3}))}+
\Vert \triangle_u\nabla_u f\Vert_{L^{2}(B_{z_0}(\frac{{r_0}}{2^3}))}<+\infty.
\end{equation}

We extend \eqref{proof_step3:Schauder estimate} to higher order differentials  through the following induction argument: we have proved  that for $N=1$,
\begin{equation*}
D^{\eta}_{x}f,\;
 |\nabla_{u}D^{\eta}_{x}f|, \;
 |\nabla_x D^{\eta'}_{u} f|, \;
 |\nabla_{u}D^{\eta}_{u}f|\,\mbox{are all in}\,L^{2}(B_{z_0}(R_N)),\,\mbox{  for all }\,\eta\in\mathbb{N}^{d}\,\mbox{such that}\,1\leq |\eta| \leq N,
\end{equation*}
with $R_N = r_0/2^{3N}$ and $\eta'\in \nat^d$ is such that $|\eta'| = |\eta|-1$.

Starting from the induction assumption that $\Vert D^{\eta}_{x}f\Vert_{L^{2}(B_{z_0}(R_N))}+\Vert \nabla_{u}D^{\eta}_{x}f\Vert_{L^{2}(B_{z_0}(R_N))}<+\infty$, for $|\eta| \leq N$, we have that $D^{\eta}_{x}f_{R_{N}}$ satisfies
\begin{equation*}
\partial_{t}D^{\eta}_{x}f_{R_{N}}-(u\cdot \nabla_{x}D^{\eta}_{x}f_{R_{N}})-\frac{\sigma^2}{2}\triangle_{u}D^{\eta}_{x}f_{R_{N}}=D^{\eta}_{x}g_{R_{N}} ,\,\mbox{  in }\,(\Cc^{\infty}_{c}(\er\times\er^d\times\er^d))'.
\end{equation*}
Applying three times  Theorem \ref{thm:Bouchut-hypoellipticity}-(b), we deduce as before that
$|\nabla_{x}D^{\eta}_{x}f|$ and $|\nabla_{x}\nabla_{u}D^{\eta}_{x}f|$ are  in $L^{2}(B_{z_0}(\frac{R_{N}}{2^{3}}))$.

Now,  from the induction assumption  $\Vert \nabla_{u}D^{\eta}_{u}f\Vert_{L^{2}(B_{z_0}(R_N))}+\Vert \nabla_{x}D^{\eta'}_{u}f\Vert_{L^{2}(B_{z_0}(R_N))}<+\infty$, for $\eta$ and  $\eta'$,   $|\eta|\leq N$, we have that $D^{\eta}_{u}f_{R_N}$ satisfies
\begin{equation*}
\partial_{t}D^{\eta}_{u}f_{R_N}-(u\cdot \nabla_{x}D^{\eta}_{u}f_{R_N})-\frac{\sigma^2}{2}\triangle_{u} D^{\eta}_{u}f_{R_N} =  D^{\eta}_{u} g_{R_{N}}
+ \left(D^{\eta}_{u} (u\cdot \nabla_{x} f_{R_N}) - (u\cdot \nabla_{x}D^{\eta}_{u}f_{R_N}) \right),
\end{equation*}
in $(\Cc^{\infty}_{c}(\er\times\er^d\times\er^d))'$.
Since 
\[\|D^\eta_{u} (u\cdot \nabla_{x} f_{R_N}) - (u\cdot \nabla_{x}D^\eta_{u}f_{R_N}) \|\leq \sum_{\eta'; |\eta'| = N-1}  \Vert \nabla_{x}D^{\eta'}_{u}f\Vert_{L^{2}(B_{z_0}(R_N))}<+\infty,\]
applying Theorem \ref{thm:Bouchut-hypoellipticity}-(a), we deduce as before that $\triangle_u D^\eta_{u} f\in L^{2}(B_{z_0}(\frac{R_N}{2}))$,
which ensures that $\|\nabla_{u} D^\eta_{u}f|\in L^{2}(B_{z_0}(\frac{R_N}{2}))$. By applying Theorem \ref{thm:Bouchut-hypoellipticity}-(b) three times, we obtain that  $|\nabla_x D^\eta_{u}f|\in L^{2}(B_{z_0}(\frac{R_N}{2^3}))$. This ends the proof of the  induction $N+1$.

We iterate $m$ times this induction  and   conclude that, for $r:=\frac{r_0}{2^{3m}}$,
\begin{equation*}
\sum_{\eta\in\mathbb{N}^{d};|\eta|\leq m}\Vert D^\eta_{x}f\Vert_{L^{2}(B_{z_0}(r))}+\sum_{\kappa\in\mathbb{N}^{d};|\kappa|\leq m+1}\Vert D^{\kappa}_{u}f\Vert_{L^{2}(B_{z_0}(r))}<+\infty.
\end{equation*}

\paragraph{Step 2.} Finally, we estimate $\Vert \partial^{2}_{t}f\Vert_{L^{2}(B_{z_0}(r))}$.
Since $\nabla_x f$ and $g_{\frac{r_0}{2^3}}$ are in $L^{2}(B_{z_0}(\frac{r_0}{2^3}))$, according to Theorem \ref{thm:Bouchut-hypoellipticity}-(a), we have
\begin{align*}
\Vert \partial_{t}f_{\frac{r_0}{2^3}}\Vert_{L^{2}(B_{z_0}(\frac{r_0}{2^3}))}
&\leq \Vert\partial_{t}f_{\frac{r_0}{2^3}} + (u\cdot \nabla_{x}f_{\frac{r_0}{2^3}})\Vert_{L^{2}(B_{z_0}(\frac{r_0}{2^3}))}
+\Vert (u\cdot \nabla_{x}f_{\frac{r_0}{2^3}})\Vert_{L^{2}(B_{z_0}(\frac{r_0}{2^3}))}\\
&\leq C\Vert g_{\frac{r_0}{2^3}}\Vert_{L^{2}(B_{z_0}(\frac{r_0}{2^3}))}+\frac{r_0}{2^3}\Vert \nabla_{x}f_{\frac{r_0}{2^3}}\Vert_{L^{2}(B_{z_0}(\frac{r_0}{2^3}))} < +\infty.
\end{align*}
Moreover  $\partial_{u_i}f_{\frac{r_0}{2^3}}$ satisfies \eqref{proof_step4:Schauder estimate} and $\partial_{x_i}f_{\frac{r_0}{2^3}}$ satisfies
\begin{equation*}
\partial_{t}\partial_{x_i}f_{\frac{r_0}{2^3}}-(u\cdot \nabla_{x}\partial_{x_i}f_{\frac{r_0}{2^3}})-\frac{\sigma^2}{2}\triangle_{u}\partial_{x_i}f_{\frac{r_0}{2^3}}=\partial_{x_i}g_{\frac{r_0}{2^3}}\,\mbox{ in }\,(\Cc^{\infty}_{c}(\er\times\er^d\times\er^d))',
\end{equation*}
with  $|\nabla_{x}f_{\frac{r_0}{2^3}}|$, $|\nabla_u f_{\frac{r_0}{2^3}}|$, $\partial^{2}_{x_i,x_j}f_{\frac{r_0}{2^3}}$,
$\partial^{2}_{u_i,u_j}f_{\frac{r_0}{2^3}}$ and $\partial^2_{x_i, u_j}f_{\frac{r_0}{2^3}}$ in $L^{2}(B_{z_0}(\frac{r_0}{2^3}))$ for $1\leq i,j\leq d$.
We easily deduce that $\partial_{x_i}g_{\frac{r_0}{2^3}}$ and $\partial_{u_i}g_{\frac{r_0}{2^3}}$ are also in $L^{2}(B_{z_0}(\frac{r_0}{2^3}))$. From Theorem \ref{thm:Bouchut-hypoellipticity}-(a) again it follows that
\begin{align*}
\Vert \partial_{t}\partial_{x_i}f_{\frac{r_0}{2^3}}\Vert_{L^{2}(B_{z_0}(\frac{r_0}{2^3}))}
&\leq C \Vert \partial_{x_i}g_{\frac{r_0}{2^3}}\Vert_{L^{2}(B_{z_0}(\frac{r_0}{2^3}))}+
\frac{r_0}{2^3}\Vert \nabla_{x}\partial_{x_i}f_{\frac{r_0}{2^3}}\Vert_{L^{2}(B_{z_0}(\frac{r_0}{2^3}))},\\
\Vert \partial_{t}\partial_{u_i}f_{\frac{r_0}{2^3}}\Vert_{L^{2}(B_{z_0}(\frac{r_0}{2^3}))}
&\leq C \Vert \partial_{u_i}g_{\frac{r_0}{2^3}}\Vert_{L^{2}(B_{z_0}(\frac{r_0}{2^3}))}+
\frac{r_0}{2^3}\Vert \nabla_{x}\partial_{u_i}f_{\frac{r_0}{2^3}}\Vert_{L^{2}(B_{z_0}(\frac{r_0}{2^3}))}.
\end{align*}
so that $|\partial_{t}\nabla_x f_{\frac{r_0}{2^3}}|$ and $|\partial_{t} \nabla_u f_{\frac{r_0}{2^3}}|$ are in
$L^{2}(B_{z_0}(\frac{r_0}{2^3}))$. Now we observe that $\partial_{t}f_{\frac{r_0}{2^3}}$ satisfies
\begin{equation*}
\partial^2_t f_{\frac{r_0}{2^3}}-(u\cdot \nabla_x \partial_t f_{\frac{r_0}{2^3}})-\frac{\sigma^2}{2}\triangle_u \partial_t f_{\frac{r_0}{2^3}}=\partial_t g_{\frac{r_0}{2^3}}\,\mbox{ in }\,(\Cc^{\infty}_{c}(\er\times\er^d\times\er^d))',
\end{equation*}
with
\begin{equation*}
\partial_{t}g_{\frac{r_0}{2^3}}=\Gamma_{\frac{r_0}{2^3}}\partial_t f+(\Psi_{\frac{r_0}{2^3}}\cdot \nabla_u\partial_{t}f)+(\partial_t \Gamma_{\frac{r_0}{2^3}})f + \left(\partial_t \Psi_{\frac{r_0}{2^3}}\cdot \nabla_u f\right)\in L^2(B_{z_0}(\frac{r_0}{2^3})).
\end{equation*}
It follows that $\partial^{2}_{t} f\in L^{2}(B_{z_0}(\frac{r_0}{2^4}))$ since
\begin{equation*}
\Vert\partial^2_{t}f_{\frac{r_0}{2^3}}\Vert_{L^{2}(B_{z_0}(\frac{r_0}{2^3}))}\leq  C \Vert\partial_{t}g_{\frac{r_0}{2^3}}\Vert_{L^{2}(B_{z_0}(\frac{r_0}{2^3}))}+
\frac{r_0}{2^3}\Vert \nabla_x(\partial_t f)\Vert_{L^{2}(B_{z_0}(\frac{r_0}{2^3}))}<+\infty.
\end{equation*}
This enables us to conclude on \eqref{proof_step1:Schauder estimate}.

\subsection{End of the proof of Lemma \ref{lem:ExistVFPlineaire}}
\label{subsec:Annexe_EDP_4}

We establish that there exists a unique solution $f\in \mathcal{H}(\omega,Q_{T})$ to Eq. \eqref{eq:ToyEquation}.
When $\rho_{0}\in L^{2}(\Dd\times\er^{d})$, $q\in L^{2}(\Sigma^{-}_{T})$, $g\in L^{2}(Q_{T})$ and  $B^{}\in L^{\infty}((0,T)\times\Dd;\er^d)$, Carrillo \cite{carrillo-98} has established the existence and uniqueness of a weak solution $S$ to \eqref{eq:ToyEquation} in $\mathcal{H}(Q_{T})$, as well as the nonnegative properties stated in Lemma
\ref{lem:ExistVFPlineaire}. Since $\Hh(\omega,Q_{T})\subset\mathcal{H}(Q_{T})$, the uniqueness and the nonnegative property are obviously preserved. For the existence, we adapt the original proof of \cite{carrillo-98}.
We set
\begin{align*}
E:=& \Hh(\omega,Q_{T})\,\mbox{equipped with the scalar product}\,\left(\phi,\psi\right)_{E}:=\int_{Q_{T}}\omega\psi\phi+\int_{Q_{T}}\omega\left(\nabla_{u}\psi\cdot\nabla_{u}\phi\right),
\end{align*}
\begin{align*}
F:=&\left\{\psi\in\mathcal{C}^{\infty}_{c}(\overline{Q_{T}})~\mbox{s.t.}~\psi = 0~\mbox{on }\left(\{T\}\times\Dd \times\er^{d}\right)\cup\Sigma_{T}^{+}\right\}~ \mbox{equipped with the norm }|\psi|_{F}~\mbox{s.t.}
\end{align*}
\begin{align*}
&|\psi|^{2}_{F}=\int_{Q_{T}}\omega \left|\psi\right|^{2} +\int_{\Sigma^{-}_{T}}|(u\cdot
n_{\Dd })|\omega(u)\left|\psi(s,x,u)\right|^{2} d\lambda_{\Sigma_{T}}(s,x,u)
+\int_{Q_{T}}\omega|\nabla_{u}\psi|^{2},
\end{align*}
\begin{align*}
A(\phi,\psi):=&- \int_{Q_{T}}\phi\trans(\omega\psi)
+\eta \int_{Q_{T}}\omega \psi \phi
+\int_{Q_{T}}\omega\psi\left(B^{}\cdot \nabla_{u}\phi\right)
+\frac{\sigma^{2}}{2}\int_{Q_{T}}\left(\nabla_{u}(\omega\psi)
\cdot\nabla_{u}\phi\right),
\end{align*}
\begin{align*}
L(\psi)
:= & \int_{\Dd \times\er^{d}}\omega(u)\rho_{0}(x,u)\psi(0,x,u)dx\,du \\
& \quad -\int_{\Sigma^{-}_{T}}\left(u\cdot n_{\Dd}(x)\right)\omega(u)\widetilde{q}(t,x,u)\psi(t,x,u)d\lambda_{\Sigma_{T}}(t,x,u)+\int_{Q_{T}}\omega \widetilde{g}\psi,
\end{align*}
where $\widetilde{q}(t,x,u)=\exp\left\{-\eta t\right\}q(t,x,u)$, $\widetilde{g}(t,x,u)=\exp\left\{-\eta t\right\}g(t,x,u)$, and
$\eta$ is a positive real parameter that we explicit later. Setting $\Vert
\psi\Vert_{\substack{E}}:=\sqrt{\left(\psi,\psi\right)_{E}}$ for the norm
of $E$, we observe that $\Vert \psi\Vert_{E}\leq \left|\psi\right|_{F}$
for $\psi\in F$. The continuity of the injection $J:F
\rightarrow E$ obviously holds true, as well as the continuity
of the application $A(\cdot,\psi):E\rightarrow \er$ for $\psi\in F$ fixed.
For the coercivity of $A$, we check that, for all $\psi\in F$,
\begin{align*}
&A(\psi,\psi)\\
&=\frac{1}{2}\Vert \psi(0)\Vert^{2}_{L^{2}(\omega,Q_{T})}
+\frac{1}{2}\Vert\psi\Vert^{2}_{L^{2}(\omega,\Sigma^{-}_{T})}+\frac{\sigma^{2}}{2}\Vert \nabla_{u}\psi\Vert^{2}_{L^{2}(\omega,Q_{T})}
+ \int_{Q_{T}}\left(\eta\omega
-\frac{1}{2}\left(B^{}\cdot\nabla_{u}\omega\right)
-\frac{\sigma^{2}}{4}\triangle_{u}\omega\right)\left|\psi\right|^{2}.
\end{align*}
Since, by Lemma \ref{lem:major_poids}, for all $(t,x,u)\in Q_{T}$,
\begin{align*}
-\frac{1}{2}\left(B^{}(t,x)\cdot\nabla_{u}\omega(u)\right)
-\frac{\sigma^{2}}{4}\triangle_{u}\omega(u)
\geq \left(-\frac{\alpha\Vert B^{}\Vert_{L^{\infty}((0,T)\times\Dd;\er^{d})}}{2}-\frac{\sigma^{2}}{4}
\left(2\alpha\left(\frac{\alpha}{2}-1\right)+\alpha d\right)
\right)\omega(u),
\end{align*}
the coercivity of $A$ on $F$ is established by choosing $\eta$ large enough. Theorem \ref{Existence_JLLions} then ensures the existence
of $\widetilde{f}\in E$ such that, for all $\phi$ in $F$,
\begin{align*}
&-\int_{Q_{T}}\widetilde{f}\trans(\omega\psi)
+\eta \int_{Q_{T}}\omega\widetilde{f}\psi
+\int_{Q_{T}}\omega \psi\left(B^{}\cdot\nabla_{u}\widetilde{f}\right)
+\frac{\sigma^{2}}{2}\int_{Q_{T}}\left(\nabla_{u}(\omega\psi)
\cdot\nabla_{u}\widetilde{f}\right)-\int_{Q_{T}}\omega \widetilde{g}\psi
\\
&=\int_{\Dd\times\er^{d}}\omega(u)\rho_{0}(x,u)\psi(0,x,u)\,dx\,du-
\int_{\Sigma^{-}_{T}}(u\cdot n_{\Dd}(x))\omega(u) \widetilde{q}(s,x,u)\psi(s,x,u)
\,d\lambda_{\Sigma_{T}}(s,x,u).
\end{align*}
In the case ${\displaystyle \psi=\frac{\phi}{\sqrt{\omega}}}$ for
$\phi\in\Cc^{\infty}_{c}(Q_{T})$, the preceding expression writes as
\begin{equation}
\label{eq:SemiVFP}
\begin{aligned}
-\int_{Q_{T}}\sqrt{\omega}\widetilde{f}\trans(\phi)&=
-\eta \int_{Q_{T}}\sqrt{\omega}\widetilde{f}\phi
-\int_{Q_{T}}\sqrt{\omega} \phi\left(B^{}\cdot\nabla_{u}\widetilde{f}\right)
-\frac{\sigma^{2}}{2}\int_{Q_{T}}\sqrt{\omega}\left(\nabla_{u}\phi
\cdot\nabla_{u}\widetilde{f}\right)\\
&\quad-\frac{\sigma^{2}}{2}\int_{Q_{T}}\phi\left(\nabla_{u}\sqrt{\omega}
\cdot\nabla_{u}\widetilde{f}\right)+\int_{Q_{T}}\sqrt{\omega}\widetilde{g}\phi,
\end{aligned}
\end{equation}
from which we deduce that
\begin{equation*}
\Vert \trans(\sqrt{\omega}\widetilde{f})\Vert_{\Hh'(Q_{T})}
\leq \left(\eta+\Vert B^{}\Vert_{L^{\infty}((0,T)\times\Dd;\er^{d})}
+\frac{\sigma^{2}}{2}\left(1+\frac{\alpha}{2}\right)\right)\Vert\widetilde{f}\Vert_{E}+\Vert\widetilde{g}\Vert_{L^{2}(\omega,Q_{T})}.
\end{equation*}
According to Lemma \ref{lem:Derive_Carrillo}, $\widetilde{f}$ admits
traces $\gamma^{\pm}(\widetilde{f})$ on $\Sigma^{\pm}_{t}$ satisfying the Green formula \eqref{Greenformula}. In
particular, for all $\psi\in\Cc^{\infty}_{c}(\overline{Q_{T}})$ vanishing on $(\{T\}\times\Dd\times\er^{d})\cup\Sigma^{+}_{T}$,
\begin{align*}
&\left(\trans(\widetilde{f}),
\psi\right)_{\mathcal{H}'(Q_{T}),\mathcal{H}(Q_{T})}+ \left(\trans(\psi),
\widetilde{f}\right)_{\mathcal{H}'(Q_{T}),\mathcal{H}(Q_{T})}
\\
&=-\int_{\Dd\times\er^{d}}\widetilde{f}(0,x,u)\psi(0,x,u)\,dx\,du
+\int_{\Sigma^{-}_{T}}(u\cdot n_{\Dd}(x))\psi(s,x,u)
\gamma^{-}(\widetilde{f})(s,x,u)
\,d\lambda_{\Sigma_{T}}(s,x,u).
\end{align*}
From this expression, replicating the arguments of \cite{carrillo-98},
we establish that $\widetilde{f}(0,\cdot)=\rho_{0}$ on $\Dd\times\er^{d}$ and
$\gamma^{-}(\widetilde{f})=\widetilde{q}$ on $\Sigma^{-}_{T}$. Combined with \eqref{eq:SemiVFP}, we hence obtain that
\begin{equation*}
\left\{
\begin{aligned}
&\trans(\widetilde{f})=
\frac{\sigma^{2}}{2}\triangle_{u}\widetilde{f} -
\left(B^{}\cdot \nabla_{u}\widetilde{f}\right)+
 \widetilde{g}-\eta\widetilde{f},~\mbox{in}~\Hh'(Q_{T}),\\
&\widetilde{f}(0,x,u)=\rho_{0}(x,u),~\mbox{on}~\Dd\times\er^{d},\\
&\gamma^{-}(\widetilde{f})(t,x,u)=\widetilde{q}(t,x,u),~\mbox{on}~\Sigma^{-}_{T}.
\end{aligned}
\right.
\end{equation*}
Taking $f(t,x,u)=\exp\left\{\eta t\right\}\widetilde{f}(t,x,u)$,
and observing that
\begin{equation*}
\Vert \Tt(\sqrt{\omega}f)\Vert_{\Hh'(Q_{T})}\leq \left(\frac{\sigma^{2}}{2}(1+\frac{\alpha}{{2}})+\Vert B\Vert_{L^{\infty}((0,T)\times\Dd;\er^d)}\right)\Vert \nabla_{u}f\Vert_{L^{2}(\omega,Q_{t})}+\Vert g\Vert_{L^{2}(\omega,Q_{T})}<+\infty,
\end{equation*}
we deduce that $f\in \Hh(\omega,Q_{T})$ is a weak solution to \eqref{eq:ToyEquation}.
\subsection{Proof of Lemma \ref{lem:GoodMaxwellian}}\label{sec:appendix-proof}
\paragraph{Proof of $(i{1})$.}
Since $2\mu>1$, by Jensen's inequality,  $m^{2\mu}(t,u)\leq (G({\sigma^{2}}t)*  \mathit{p}^{2}_{0})(u)$.
By setting $f(u):=(1+|u|)\omega(u)$, we thus have, by Lemma \ref{lem:major_poids}-$(i)$,
\begin{align*}
\int_{\er^{d}}(1+|u|)\omega(u)\left|\mathit{p}(t,u)\right|^{2}\,du
&\leq  \exp\{2at\}\int_{\er^{d}}f(u)\left(G({\sigma^{2}}t)*  \mathit{p}_{0}^{2}\right)(u)\,du\\
&\leq 2^{\frac{\alpha}{2}}\exp\{at\}\left(\Vert\mathit{p}_{0}\Vert^{2}_{L^{2}(\er^{d})}
\int_{\er^{d}}f(u)G({\sigma^{2}}t,u)\,du
+\int_{\er^{d}}f(u')\mathit{p}^2_{0}(u')\,du'\right).
\end{align*}
Since the Gaussian density has all its moments finite, $\int_{\er^{d}}f(u)G({\sigma^{2}}t,u)\,du<+\infty$ and $(i{1})$ follows from {the assumption} \eqref{Integrabilite_Cond_initial}.
\paragraph{Proof of $(i{2})$.}
Let us remark that
\begin{align*}
\int_{\er^{d}}\mathit{p}(t,u)\,du\geq \exp\{-|a|T\}\int_{\er^{d}}\left(G({\sigma^{2}}t)*\mathit{p}^{\frac{1}{\mu}}_{0}(u)\right)^{\mu}\,du.
\end{align*}
In addition, since $\mu>1$, H\"older's inequality yields
\begin{align*}
\left(\int_{\er^{d}}G(\nu,u)
\left(G({\sigma^{2}}t)* \mathit{p}^{\frac{1}{\mu}}_{0}(u)\right)\,du\right)^{\mu}\leq \Vert G({\sigma^{2}}\nu)\Vert^{\mu}_{L^{\mu'}(\er^{d})}
\int_{\er^{d}}\left(G({\sigma^{2}}t)
*  \mathit{p}^{\frac{1}{\mu}}_{0}(u)\right)^{\mu}\,du,
\end{align*}
where $\mu'$ is the conjugate of $\mu$, $\nu$ is a positive {constant} and
$\Vert G(\nu)\Vert^{\mu}_{L^{\mu'}(\er^{d})} = (2\pi{\sigma^{2}}\nu)^{\frac{-d}{2}}(\mu')^{\frac{-d}{2(\mu'-1)}}>0$.
Setting $C_{\mu,\nu}:=\exp\{|a|T\}\Vert G({\sigma^{2}}\nu)\Vert^{\mu}_{L^{\mu'}(\er^{d})}$,
we then observe that
\begin{align*}
\inf_{t\in(0,T)}\int_{\er^{d}}\mathit{p}(t,u) \,du
&\geq C^{-1}_{\mu,\nu}\inf_{t\in(0,T)}
\left(\int_{\er^{d}}G({\sigma^{2}}\nu,u)
G({\sigma^{2}}t)* \mathit{p}^{\frac{1}{\mu}}_{0}(u)\,du\right)^{\mu}
\\
&\geq C^{-1}_{\mu,\nu}\inf_{t\in(0,T)}
\left(\int_{\er^{d}}G({\sigma^{2}}\nu)* G(t,u)
\mathit{p}^{\frac{1}{\mu}}_{0}(u_{0})\,du_{0}\right)^{\mu}
\\
&\geq C^{-1}_{\mu,\nu}\left(\frac{1}{2\pi{\sigma^{2}}(\nu+t)}
\right)^{\frac{d\mu}{2}}\left(\int_{\er^{d}}
\exp\left\{\frac{-|u_{0}|^{2}}{2{\sigma^{2}}\left(\nu+t\right)}\right\}\mathit{p}^{\frac{1}{\mu}}_{0}(u_{0})\,du_{0}\right)^{\mu},
\end{align*}
by the explicit convolution product between Gaussian functions. We thus get the lower-bound
\begin{align*}
\inf_{t\in(0,T)}\int_{\er^{d}} \mathit{p}(t,u)\,du
\geq C^{-1}_{\mu,\nu}\left(\frac{1}{2\pi {\sigma^{2}}(T+\nu)}
\right)^{\frac{d\mu}{2}}\left(\int_{\er^{d}}
\exp\left\{\frac{-|u_{0}|^{2}}{2{\sigma^{2}}\nu}\right\}\mathit{p}^{\frac{1}{\mu}}_{0}(u_{0})\,du_{0}\right)^{\mu}.
\end{align*}
Since $\mathit{p}_{0}$ is assumed to be not identically zero on $\er^{d}$, we conclude $(i{2})$.
\paragraph{Proof of $(i{3})$.}
 For all $\mu>0$, it is enough to show that for some real sequence $\left\{\epsilon_{k};~k\in\mathbb{N}\right\}$ decreasing to $0$,
\begin{align}
\label{Converg_Maxwell_temps}
\lim_{\substack{k\rightarrow +\infty}}\int_{\er^{d}}
\left|\mathit{p}^{\frac{1}{\mu}}(\epsilon_{k},u)-\mathit{p}^{\frac{1}{\mu}}_{0}(u)
\right|^{2\mu}\,du =0.
\end{align}
Indeed, in the case $\mu\leq 1$, recalling that:
\begin{equation*}
\left||c|-|b|\right|^{q}\leq \left||c|^{q}-|b|^{q}\right|,
~\mbox{for}~c,b~\in\er,~q\geq 1,
\end{equation*}
it holds, for all $k$,
\begin{equation*}
\left|\mathit{p}(\epsilon_{k},u)-\mathit{p}_{0}(u)\right|^{2}
=\left(\left|\mathit{p}(\epsilon_{k},u)-
\mathit{p}_{0}(u)\right|^{\frac{1}{\mu}}\right)^{2\mu}
\leq |\mathit{p}^{\frac{1}{\mu}}(\epsilon_{k},u)-\mathit{p}^{\frac{1}{\mu}}_{0}(u)|^{2\mu}.
\end{equation*}
Then \eqref{Converg_Maxwell_temps} will imply $(i{3})$
for all $\left\{\epsilon_{k};~k\in\mathbb{N}\right\}$ considered above.

In the case $\mu > 1$, \eqref{Converg_Maxwell_temps} yields the
convergence $\mathit{p}(\epsilon_{k})\rightarrow
\mathit{p}_{0}(\cdot)$ in $L^{2\mu}(\er^{d})$. Applying [Theorem $4.9$, Brezis \cite{brezis-11}], we deduce
the existence of a subsequence of $\{\mathit{p}(\epsilon_{k});~k\in\mathbb{N}\}$ such that
\begin{align*}
\lim_{\substack{k\rightarrow +\infty}}\mathit{p}(\epsilon_{k},u)=\mathit{p}_{0}(u)~\mbox{a.e.}~u\in\er^{d},
\,\mbox{and}\,\,\sup_{k\in\mathbb{N}}\left|\mathit{p}(\epsilon_{k},u)\right|^{\frac{1}{\mu}}\in L^{2\mu}(\er^{d}).
\end{align*}
Since
$\sup_{\substack{k}\in\mathbb{N}}\left|\mathit{p}(\epsilon_{k},u)\right|^{2}\leq
(\sup_{\substack{k}\in\mathbb{N}}\left|\mathit{p}(\epsilon_{k},u)\right|^{\frac{1}{\mu}})^{2\mu}$,
$(i3)$ follows from the Lebesgue Dominated Convergence Theorem.

We now check that \eqref{Converg_Maxwell_temps} holds true: By definition
$\int_{\er^{d}}|\mathit{p}^{\frac{1}{\mu}}(t,u)-\mathit{p}^{\frac{1}{\mu}}_{0}(u)|^{2\mu}\,du$ is bounded from above by
\begin{equation*}
\left|\exp\left\{2 a \epsilon_{k}\right\}-1\right|\int_{\er^{d}}|\mathit{p}_{0}(u)|^{2}\,du+
\exp\left\{2 a \epsilon_{k}\right\}\int_{\er^{d}}|m(\epsilon_{k},u)-\mathit{p}^{\frac{1}{\mu}}_{0}(u)|^{2\mu}\,du.
\end{equation*}
According to assumption \eqref{Integrabilite_Cond_initial},  $\int_{\er^{d}} {p}^2_{0}(u)\,du$ is finite so
the first term in the expression above tends to $0$ when $k$ goes to infinity.
For the second term, a change of variables and H\"older's inequality give
\begin{align*}
\int_{\er^{d}}|m(\epsilon_{k},u)-\mathit{p}^{\frac{1}{\mu}}_{0}(u)|^{2\mu}\,du
&=\int_{\er^{d}}\left|\int_{\er^{d}}G({\sigma^{2}},u_{0})\mathit{p}^{\frac{1}{\mu}}_{0}
(u-\sqrt{\epsilon_{k}}u_{0})\,du_{0}-\mathit{p}^{\frac{1}{\mu}}_{0}(u)\right|^{2\mu}\,du
\\
&\leq\int_{\er^{d}}G({\sigma^{2}},u_{0})\left(\int_{\er^{d}}|\mathit{p}^{\frac{1}{\mu}}_{0}
(u-\sqrt{\epsilon_{k}}u_{0})-\mathit{p}^{\frac{1}{\mu}}_{0}(u)|^{2\mu}\,du\right) \,du_{0}.
\end{align*}
Since $\mathit{p}^{\frac{1}{\mu}}_{0}\in L^{2\mu}(\er^{d})$, Corollary \eqref{coro:LebesgueContinuity} implies that
\begin{equation*}
\lim_{k\rightarrow +\infty}\int_{\er^{d}}|\mathit{p}^{\frac{1}{\mu}}_{0}
(u-\sqrt{\epsilon_{k}}u_{0})-\mathit{p}^{\frac{1}{\mu}}_{0}(u)|^{2\mu}\,du=0.
\end{equation*}
We check also that
\begin{equation*}
\int_{\er}G({\sigma^{2}},u_{0})\sup_{k}\left(\int_{\er^{d}}|\mathit{p}^{\frac{1}{\mu}}_{0}
(u-\sqrt{\epsilon_{k}}u_{0})-\mathit{p}^{\frac{1}{\mu}}_{0}(u)|^{2\mu}\,du\right)\,du_{0}\leq
2\int_{\er^{d}} {p}^2_{0}(u)\,du<+\infty,
\end{equation*}
in order to conclude that
$\lim_{t\rightarrow 0^{+}}\int_{\er^{d}}|m(t,u)-
\mathit{p}^{\frac{1}{\mu}}_{0}(u)|^{2\mu}\,du=0$
by dominated convergence.
\paragraph{Proof of $(i{4})$.}
For all $\delta>0$, $(t,u)\in[\delta,T]\times\er^{d} \mapsto \mathit{p}(t,u)$ is $\Cc^{\infty}$ and
\begin{equation*}
\partial_{t}\mathit{p}(t,u)=a\mathit{p}(t,u)+
\mu\exp\left\{at\right\}\partial_{t}m(t,u)m^{\mu-1}(t,u),
\end{equation*}
from which we get, for all $t>0$,
\begin{equation*}
\int_{\er^{d}} \partial_{t}
{p}^2(t,u)\,du
\leq 2|a|\int_{\er^{d}}{p}^{2}(t,u)\,du
+2\mu^{2}\exp\left\{2at\right\}\int_{\er^{d}}
\left|\partial_{t}m(t,u)\right|^{2}m^{2(\mu-1)}(t,u)\,du.
\end{equation*}
By $(i{1})$, $\int_{\er^{d}}{p}^2(t,u)\,du$ is finite. For the second term, observe that
\begin{equation*}
\left|\partial_{t}m(t,u)\right|\leq
\int_{\er^{d}}\left|\frac{d}{dt}G({\sigma^{2}}t,u-u_{0})\right|
\mathit{p}^{\frac{1}{\mu}}_{0}(u_{0})\,du_{0},
\end{equation*}
where
\begin{equation*}
\frac{d}{dt}G({\sigma^{2}}t,u-u_{0})=
\left(\frac{1}{2\pi{\sigma^{2}} t}\right)^{\frac{d}{2}}\left(
\frac{\left|u-u_{0}\right|^{2}-{\sigma^{2}}d t}{2 {\sigma^{2}}t^{2}}
\right)\exp\left\{-\frac{\left|u-u_{0}\right|^{2}}{
2{\sigma^{2}} t}\right\}
\end{equation*}
Using the inequality
\begin{equation}
\label{Borne_deriv_Maxwell_step2}
|z|^{p}\exp\left\{\frac{-|z|^{2}}{4\theta}\right\}
\leq \left(2 p \theta\right)^{\frac{p}{2}}
~\mbox{for}~\theta>0,~p\geq 1,~z\in\er,
\end{equation}
it follows that
\begin{equation*}
\left|\partial_{t}m(t,u)\right|\leq \frac{d+1}{2\delta}
\int_{\er^{d}}
\left(\frac{1}{2\pi{\sigma^{2}} t}\right)^{\frac{d}{2}}
\exp\left\{\frac{-\left|u-u_{0}\right|^{2}}{4{\sigma^{2}}t}\right\}\mathit{p}^{\frac{1}{\mu}}_{0}(u_0)\,du_{0}=
\frac{2^{\frac{d}{2}}(d+1)}{2\delta}G(2{\sigma^{2}}t)*\mathit{p}^{\frac{1}{\mu}}_{0}(u).
\end{equation*}
Using the upper-bound $m(t,u)\leq 2^{\frac{d}{2}}G(2{\sigma^{2}}t)*\mathit{p}^{\frac{1}{\mu}}_{0}(u)$,
it follows that
\begin{align*}
&\int_{\left(\delta,T\right)\times\er^{d}}
\exp\left\{2at \right\}|\partial_{t}m(t,u)|^{2}m^{2(\mu-1)}(t,u)\,dt\,du
\\
&\leq 2^{d\mu}(\frac{d+1}{2\delta})^{2\mu}\int_{\left(\delta,T\right)\times\er^{d}}
\exp\left\{2at \right\}\left(G(2{\sigma^{2}}t)*\mathit{p}^{\frac{1}{\mu}}_{0}(u)\right)^{2\mu}\,dt \,du
\\
&\leq 2^{d\mu}(\frac{d+1}{2\delta})^{2\mu}T\int_{\er^{d}}
{p}^2_{0}(u_{0})\,du_{0}
\int_{0}^{T}\exp\left\{2at \right\}\,dt.
\end{align*}
Since $\mathit{p}_{0}\in L^{2}(\er^{d})$, we deduce $(i{4})$.
\paragraph{Proof of $(i{5})$.}
To prove that $\nabla_{u}\mathit{p}\in L^{2}(\left(0,T\right)\times\er^{d})$, we use
 the sequence $\{\epsilon_{k};~k\in\mathbb{N}\}$ given in $(i{3})$. For all $k$, we have
\begin{equation}\label{I_proof_step2}
\begin{aligned}
\int_{\left(\epsilon_{k},T\right)\times\er^{d}}|\nabla_{u}
\mathit{p}(t,u)|^{2}dt\,du
&=
\mu^{2}\int_{\left(\epsilon_{k},T\right)\times\er^{d}}\exp\{2at\}
\left|\nabla_{u}m(t,u)\right|^{2}
m^{2\mu-2}(t,u)\,dt\,du\\
&=\frac{1}{2\mu-1}
\int_{\left(\epsilon_{k},T\right)\times\er^{d}}\exp\{2at\}
\left(\nabla_{u}m(t,u)\cdot \nabla_{u}\left(m^{2\mu-1}(t,u)\right)\right)dt\,du.
\end{aligned}
\end{equation}
Since $2\mu-1>0$ and $\left|\nabla_{u}m(t,u)\right|\leq
({\sigma^{2}}t)^{-1}\int_{\er^{d}}
|u-u_{0}|G(t,u-u_{0})\mathit{p}^{\frac{1}{\mu}}_{0}(u_{0})\,du_{0}$, the smoothness of $G$ and the assumptions
\eqref{Integrabilite_Cond_initial} yield
\begin{equation*}
{\displaystyle \lim_{\substack{|u|\rightarrow +\infty}}
\left|\nabla_{u}m(t,u)
\right|m^{2\mu-1}(t,u) =0},~\mbox{for all}~t\in [\epsilon_{k},T].
\end{equation*}
By integrating by parts the right member of \eqref{I_proof_step2} and using the heat equation
$\triangle_{u}m =\frac{2}{\sigma^{2}}\partial_{t}m$, we get
\begin{align*}
&\int_{\left(\epsilon_{k},T\right)\times\er^{d}}\exp\{2at\}
\left|\nabla_{u}m(t,u)\right|^{2}
m^{2\mu-2}(t,u)\,dt\, du\\
& =  \frac{-1}{2\mu-1}\int_{\left(\epsilon_{k},T\right)\times\er^{d}}\exp\{2at\}
\triangle_{u}m(t,u)m^{2\mu-1}(t,u)\,dt\,du
\\
&= \frac{-2}{\sigma^{2}(2\mu-1)}\int_{\left(\epsilon_{k},T\right)\times\er^{d}}\!\exp\{2at\}\partial_{t}m(t,u)m^{2\mu-1}(t,u)\,dt\,du
\\
&=\frac{-1}{\sigma^{2}\mu(2\mu-1)}\int_{\left(\epsilon_{k},T\right)\times\er^{d}}\!\exp\{2at\}
\partial_{t}\left(m^{2\mu}(t,u)\right)\,dt\,du
\end{align*}
Using again an integration by parts enables us to obtain the equality
\begin{align*}
&\int_{\left(\epsilon_{k},T\right)\times\er^{d}}\exp\{2at\}
\left|\nabla_{u}m(t,u)\right|^{2}
m^{2\mu-2}(t,u)\,dt\, du
\\
&=\frac{-1}{\sigma^{2}\mu(2\mu-1)}\left(\int_{\er^{d}}{p}^{2}(T,u)\,du -
\int_{\er^{d}}
{p}^{2}(\epsilon_{k},u)\,du\right) +\frac{2a}{\sigma^{2}\mu(2\mu-1)}
\int_{(\epsilon_{k},T)\times\er^{d}}{p}^{2}(t,u)\,dt\,du.
\end{align*}
Coming back to \eqref{I_proof_step2} and letting $k$ increase to $+\infty$, it follows that
\begin{align*}
&\int_{(0,T)\times\er^{d}}\left|\nabla_{u}\mathit{p}(t,u)\right|^{2}\,dt\,du \\
&= \frac{-\mu}{\sigma^{2}(2\mu-1)}\left(\int_{\er^{d}}
{p}^{2}(T,u)\,du -
\int_{\er^{d}}{p}^{2}_{0}(u)\,du \right)
 +\frac{2a\mu}{\sigma^{2}(2\mu-1)}
\int_{\left(0,T\right)\times\er^{d}}{p}^{2}(t,u)\,dt\,du.
\end{align*}
Thanks to the assumption \eqref{Integrabilite_Cond_initial},
 $\mathit{p}_{0}\in L^{2}(\er^{d})$ and $(i1)$, the right member is finite. We conclude $(i5)$.

\end{document}